\documentclass[reqno]{amsart}

\usepackage{color}
\usepackage{mathrsfs}
\usepackage{amscd}
\usepackage{amsmath}
\usepackage{latexsym}
\usepackage{amsfonts}

\usepackage{amssymb}
\usepackage{amsthm}
\usepackage{graphicx}
\usepackage{hyperref}
\usepackage{makecell}
\usepackage{array,color}
\usepackage{booktabs}
\usepackage{multirow}
\usepackage{bm}
\usepackage{bbm}
\usepackage{verbatim}
\usepackage{booktabs}
\usepackage{tabularx}
\usepackage{array}
\usepackage{booktabs}
\usepackage{multirow}
\usepackage{makecell}
\usepackage{float}
\usepackage{booktabs}
\usepackage{longtable}
\usepackage{pdflscape}
\usepackage{booktabs}
\usepackage{longtable}
\usepackage{ragged2e}
\usepackage{tabularx}
\newcolumntype{Y}{>{\RaggedRight\arraybackslash}X}

\usepackage{array}
\usepackage{multirow}

\newcolumntype{C}[1]{%
	>{\centering\arraybackslash}m{#1}}

\newtheorem{theorem}{Theorem}[section]
\newtheorem{proposition}[theorem]{Proposition}

\newtheorem{lemma}[theorem]{Lemma}
\newtheorem{definition}[theorem]{Definition}

\newtheorem{remark}[theorem]{Remark}

\newcommand\R{\mathbb{R}}

\newcommand{\medoplus}
{\mathbin{\scalebox{1.2}{$\oplus$}}}

\def\Id{\mathop{\mathrm{Id}}\nolimits}
\def\<{{\langle}}
\def\>{{\rangle}}

\numberwithin{equation}{section}

\title[Decay estimates for Beam equations with potentials in dimension two]
{Decay Estimates for the two-dimensional Beam Equation with Potentials}

\author{Shuangshuang Chen, Han Cheng, Zijun Wan and Xiaohua Yao\textsuperscript{\dag}}

\address{Shuangshuang Chen, Department of Mathematics, Central China Normal University, Wuhan, 430079, P.R. China}
\email{chenss@mails.ccnu.edu.cn}

\address {Han Cheng, Institute of Applied Physics and Computational Mathematics, Beijing, 100088, People's Republic of China}
\email{chmathh@163.com}

\address{Zijun Wan, Department of Mathematics, Central China Normal University, Wuhan, 430079, P.R. China}
\email{zijunwan@mails.ccnu.edu.cn}

\address{Xiaohua Yao, Department of Mathematics and Key Laboratory of Nonlinear Analysis and Applications(Ministry of Education), Central China Normal University, Wuhan, 430079, P.R. China}
\email{yaoxiaohua@ccnu.edu.cn}

\date{\today}
\keywords{Decay estimate; Beam equation; Fourth order Schr\"odinger operator; Zero resonance}

\begin{document}

\begin{abstract}

This paper establishes the  sharp time-decay estimates for the following
two-dimensional beam  (plate)  equation with a real-valued
decaying potential $V$:
\[
\partial_t^2u+(\Delta^2+V)u=0,
\qquad
u(0,x)=f(x),\quad
\partial_tu(0,x)=g(x).
\]
When zero is \textit{a regular point or a first-kind resonance} of $H=\Delta^2+V$, we  prove the sharp $L^1\to L^\infty$ estimates for the solution operators:
	\begin{align*}
		\left\|\cos(t\sqrt{H})P_{\mathrm{ac}}(H)\right\|_{L^1\to L^\infty} +
		\left\|\frac{\sin(t\sqrt{H})}{t\sqrt{H}}P_{\mathrm{ac}}(H)\right\|_{L^1\to L^\infty} \lesssim \frac{1}{|t|},
	\end{align*}
	and obtain an enhanced decay $(|t|\log|t|)^{-1}$ in logarithmically weighted spaces $L^1_\omega\to L^\infty_{-\omega}$ with $\omega(x)=\log(2+|x|)$. This reveals that the potential $V$ effectively shifts the intrinsic zero resonance of  $\Delta^2$, endowing the  perturbed  sine evolution with a logarithmic gain $(\log|t|)^{-1}$ unattainable in the free case.

	For \textit{second-kind resonances}, 
    the unweighted decay of  both propagators deteriorates to the sharp rate
$
|t|^{-1}(\log|t|)^2
$
exactly when  one nonzero resonance
state $\phi_1$ has the trace moment
\(\langle |x|^2V,\phi_1\rangle\neq 0\), and another nonzero
state $\phi_2$ makes all the second-order moments
\(\langle x_ix_jV,\phi_2\rangle=0\). This two channel
conditions are the only sources of the logarithmic loss. If either
 is absent, the unweighted decay returns to 
\(|t|^{-1}\) and  the weighted bound reaches $(|t|\log|t|)^{-1}$ for the cosine, perfectly matching the free $\Delta^2$ dynamics.

	For \textit{third-kind resonances or zero eigenvalue}, a \(d\)-wave
 gives the strongest threshold obstruction, reducing  both propagators to the logarithmic decay
\((\log|t|)^{-1}\). Several improved estimates are  obtained  without $d$-waves. In particular, for the pure  eigenvalue case (i.e., neither $d$- nor $p$-wave resonance),  both propagators recover the optimal unweighted  rate $|t|^{-1}.$ 

	
	These results provide a complete picture of the time-decay behavior for the two-dimensional plate equation with a potential, covering all possible threshold singularities at zero energy.
\end{abstract}

	\maketitle
	\tableofcontents
	\section{Introduction and main results}
	\subsection{Introduction}	In this paper, we investigate time decay estimates for the following
two-dimensional beam (or plate) equation with a real-valued potential
$V(x)$:
\begin{equation}\label{beam_equation}
\begin{cases}
\partial_t^2u+\Delta^2u+V(x)u=0, \qquad x\in\mathbb R^2,\\
u(0,x)=f(x), \qquad \partial_tu(0,x)=g(x).
\end{cases}
\end{equation}
The two-dimensional beam (or plate) equation is a fundamental model
originating from classical elasticity theory, dating back to
Kirchhoff and Love, and describes the transverse vibrations of thin
elastic plates, see, e.g., 
\cite{Kirchhoff1850,Love1927,TimoshenkoWoinowskyKrieger1959}. In the
unperturbed case, the dynamics are governed by the biharmonic
operator $\Delta^2$, which reflects the intrinsic bending stiffness
of the material. In realistic contexts, spatially decaying potential
perturbations $V(x)$ are typically used as a simplified model for
structural heterogeneities, localized defects, or material
impurities.

Let $V(x)$ satisfy $ |V(x)| \lesssim \langle x \rangle^{-\mu}$ for some $\mu > 0$. By the Kato-Rellich theorem, $H=\Delta^2+V$ is a self-adjoint operator on $L^2(\mathbb{R}^n)$ with domain $H^4(\mathbb{R}^n)$ (Sobolev space). The solution to the beam equation \eqref{beam_equation} can be explicitly expressed as
\[
	u(t, x) = \cos (t \sqrt{H }) f(x) + \frac{\sin (t \sqrt{H })}{\sqrt{H}} g(x).
\]
Since the negative eigenvalues of $H$ can lead to exponentially growing modes, we will restrict our attention to the evolution projected onto the absolutely continuous spectrum. That is, we focus on the continuous part of the solution:
\[
	u_c(t, x) = \cos (t \sqrt{H })P_{\mathrm{ac}}(H) f(x) + \frac{\sin (t \sqrt{H })}{\sqrt{H}} P_{\mathrm{ac}}(H) g(x),
\]
where $P_{\mathrm{ac}}(H)$ denotes the orthogonal projection onto the absolutely continuous spectrum space of $H$. Under the assumption that $H$ has no embedded positive eigenvalues (see Subsection \ref{eigenvalue} below for a sufficient condition), there exist several interesting works about the decay estimates of the continuous part $u_{c}(t, x)$.
 In dimension $n=3$, Chen et al. \cite{CLSY_ArXiv} proved that the $L^1 \to L^\infty$ decay rates for the solution operators are ${O}(|t|^{-3/2})$ when zero is a regular point or a first-kind resonance, while the decay rate changes when other types of zero-energy resonances occur. In dimension $n=1$, Chen-Wan-Yao \cite{CWY} established an $O(|t|^{-1/2})$ decay rate, which remains unchanged  in the presence of resonances. For higher dimensions ($n\geq5$), the decay rate $O(|t|^{-n/2})$ in the regular case can be deduced using wave operator methods developed by Erdo\u{g}an and Green \cite{EG21,Erdogan-Green23}.

It is worth noting that decay estimates in even dimensions, particularly $n=2$, are more challenging. This difficulty arises from logarithmic singularities and delicate threshold obstructions at zero energy. For the Schr\"odinger operator $-\Delta+V$ in two dimensions, Schlag \cite{Schlag-2005} first established $L^1\to L^\infty$ estimates when zero is regular. Later, Erdo\u{g}an and Green \cite{Erdogan-Green} systematically treated zero-energy resonances and obtained improved decay estimates in log-weighted spaces for the regular case \cite{EG-2013}.
For the beam equation with potentials, we need to consider the fourth-order Schr\"odinger operator $H=\Delta^2+V$ in $\mathbb{R}^2$. However, the bi-Laplacian $\Delta^2$ exhibits high-order degeneracy at zero energy. This degeneracy, combined with the inherent logarithmic singularities in two dimensions, makes the dispersive analysis of $\Delta^2+V$ a challenging task.

In the free case (i.e., $V=0$), this  simplifies to $\sqrt{H} = -\Delta$, and
it is known that the following optimal decay estimates  in $\mathbb{R}^2$ hold (see Proposition \ref{free_case}):
   \[
\|\cos(t\Delta)\|_{L^1\to L^\infty} + \Big\|\frac{\sin(t\Delta)}{t\Delta}\Big\|_{L^1\to L^\infty} \sim {|t|}^{-1}.  
    \]
More interestingly, in the weighted space setting $L^1_\omega (\R^2)\to L^{\infty}_{-\omega}(\R^2)$ with $\omega(x) = \log(2+|x|)$, the decay rate of cosine evolution operator can be improved: 
 \begin{align} \label{free_weighted2} \|\cos(t\Delta)\|_{L^1_\omega\to L^\infty_{-\omega}} \sim ({|t|\log |t|})^{-1},       \end{align} 
which is strictly unattainable for both $e^{it\Delta}$ and $\sin(t\Delta)/(t\Delta)$ in the  weighted space. This enhanced phenomenon \eqref{free_weighted2} exactly happens in these dimensions $n \equiv 2 \pmod{4}$ (see Remark \ref{free_cos_higher_dim}). 

For a decaying potential $V$, understanding how $\cos(t\sqrt{H})$ and $\sin(t\sqrt{H})/(t\sqrt{H})$ decay over time requires focusing on delicate behavior at low frequencies. This low-frequency dynamics becomes significant due to the creation of new zero resonances of the operator $H$, which occur  when  $V$ strongly couples with $\Delta^2$ in low-energy regimes.
As described in Definition \ref{definition1}, zero can be a regular point, one of three types of resonances, or an eigenvalue. These different threshold configurations lead to distinct dispersive behaviors.

When zero is either a regular point or a first-kind resonance of $H$,  we  not only establish  the following sharp $L^1 \to L^\infty$ estimates for the perturbed  operators (see Theorem \ref{main_theorem-1}):
 \begin{align*}
    \left\|\cos (t \sqrt{H}) P_{\mathrm{ac}}(H)\right\|_{L^1 \to L^{\infty}} +
    \bigg\|\frac{\sin (t \sqrt{H})}{t\sqrt{H}} P_{\mathrm{ac}}(H)\bigg\|_{L^1 \to L^{\infty}} \lesssim {|t|}^{-1},
    \end{align*}
but also obtain the following improved weighted decay estimates:    
 \begin{align}\label{the first and regular weigthed estimate}
    \left\|\cos (t \sqrt{H}) P_{\mathrm{ac}}(H)\right\|_{L_\omega^1 \to L_{-\omega}^{\infty}}+
	\bigg \|\frac{\sin (t \sqrt{H})}{t\sqrt{H}} P_{\mathrm{ac}}(H)\bigg \|_{L^1_\omega \rightarrow L^{\infty}_{-\omega}}\lesssim ({|t|\log |t|})^{-1}.
    \end{align}
It is worth noting that the free sine evolution $\sin(t\Delta)/(t\Delta)$ only satisfies a bound $\sim |t|^{-1}$ in logarithmically weighted spaces $L^1_\omega \to L^\infty_{-\omega}$. Hence this reveals that  the potential $V$  effectively modified the intrinsic zero resonance of $\Delta^2$, so that  the perturbed sine evolution $\sin(t\sqrt{H})/(t\sqrt{H})$ gains an additional logarithmic factor $(\log|t|)^{-1}$ in the regular or the first kind resonance case that is distinct from the free case $V=0$.

The second-kind resonance case exhibits a more subtle threshold
phenomenon. Although the free bi-Laplacian $\Delta^2$ itself has a
second-kind resonance at zero, the perturbed operator may either
retain the free decay rate or develop a sharp logarithmic loss. To
describe the criterion, define
\[
\mathfrak M_2(\phi)
=
\bigl(
\langle x_ix_jV,\phi\rangle
\bigr)_{i,j=1}^2,
\qquad
\tau(\phi)
=
\operatorname{tr}\mathfrak M_2(\phi)
=
\langle |x|^2V,\phi\rangle,
\]
where \(\phi\) belongs to the second-kind resonance space of \(H\).
Theorem \ref{main_theorem-2} shows that the unweighted
\(L^1\to L^\infty\) decay estimate has the sharp order
$
|t|^{-1}(\log|t|)^2
$
precisely when two distinct second-moment channels coexist: the
resonance space contains a state \(\phi_1\) with
$
\tau(\phi_1)\neq0,
$
and also a nonzero state \(\phi_2\) with
$
\mathfrak M_2(\phi_2)=0.
$
Thus the logarithmic loss is a mixed-channel effect.  If either
channel is absent, the unweighted decay returns to 
\(|t|^{-1}\) and  the weighted bound achieves $(|t|\log|t|)^{-1}$ for the cosine, perfectly matching the free $\Delta^2$ dynamics.

Finally, we consider third-kind resonances and zero eigenvalues.
Theorem \ref{main_theorem-31} shows that the presence of a
\(d\)-wave resonance gives the strongest threshold obstruction:
it only satisfies the logarithmic decay:
\begin{equation*}  
    \left\|\cos (t \sqrt{H}) P_{\mathrm{ac}}(H)\right\|_{L^1 \to L^{\infty}} +    
    \bigg\|\frac{\sin (t \sqrt{H})}{t\sqrt{H}} P_{\mathrm{ac}}(H)\bigg\|_{L^1 \to L^{\infty}} \sim ({\log |t|})^{-1}.    
    \end{equation*}
Moreover, we  obtain several improved  estimates in the absence of a $d$-wave. In particular, in the pure $L^2$ eigenvalue case (i.e., neither $d$-wave nor $p$-wave), we not only recover the unweighted $|t|^{-1}$ bound,  but also
obtain the enhanced  bound $(|t|\log|t|)^{-1}$ as in \eqref{the first and regular weigthed estimate} if the fourth-order
tensor
$
T_{ijkl}(\phi)
=
\langle x_ix_jx_kx_lV,\phi\rangle
$
is isotropic,  namely,  there exists a scalar \(\gamma_\phi\) such that
$
T_{ijkl}(\phi)
=
\gamma_\phi
\bigl(
\delta_{ij}\delta_{kl}
+
\delta_{ik}\delta_{jl}
+
\delta_{il}\delta_{jk}
\bigr).
$

These results provide a complete classification of
time-decay estimates for 2D plate equation with a
decaying potential. They cover all possible threshold singularities at zero and identify the precise moment
conditions responsible for logarithmic losses or logarithmic gains
in the long-time dynamics.

\subsection{Some notations}\label{Notations}
In this subsection, we collect some notations used throughout the paper.
	\begin{itemize}

\item For $a,b\in\mathbb{R}^+$, $a\lesssim b$ (resp. $a\gtrsim b$) means $a\le cb$ (resp. $a\ge cb$) for some $c>0$. In particular, if $b\lesssim a\lesssim b$, we write $a\sim b$. Moreover, for $a\in \mathbb{R}$,   $a\pm$ denote $a\pm \epsilon$   for any arbitrarily small  $\epsilon >0$.
        \vskip0.2cm
		\item We write $f(\lambda) = {O}_k(g(\lambda))$ if
    \[
        \frac{d^\ell}{d\lambda^\ell} f(\lambda) = O\Bigl( \frac{d^\ell}{d\lambda^\ell} g(\lambda) \Bigr), \quad \ell = 0, 1, \cdots,k.
    \]
   \vskip 0.2cm
   \item The notation $\mathcal{O}(f(t))$ refers to an operator satisfying $\|\mathcal{O}(f(t))\|_{L^1_\omega \rightarrow L^{\infty}_{-\omega}} \lesssim |f(t)|.$
   \vskip 0.2cm
   \item  Let \(\mathbb{B}(L^2)\) denote the space of bounded linear operators on \(L^2\). For \(A\in\mathbb{B}(L^2)\), we write \(A^*\) for its adjoint.
        \vskip0.2cm
		
		\item Let $\chi_1 \in C_c^{\infty}(\mathbb{R})$ such that $\chi_1(\lambda)=1$ for $|\lambda|<\lambda_0\ll1$ and $\chi_1(\lambda)=0$ if $|\lambda|>2\lambda_0$, where $\lambda_0$ is some sufficiently small positive constant depending on low energy expansion of $\left( M^{\pm}(\lambda)\right)^{-1}$ in Theorem \ref{thm:M_inverse}. Define $\chi_2(\lambda):=1-\chi_1(\lambda)$ and $\widetilde{\chi}_j(\lambda):=\chi_j(\lambda^4)$  for  $j=1,2.$
\vskip0.2cm
\item
    $ \langle f, g \rangle := \int_{\mathbb{R}^2} f(x) \overline{g(x)} \, dx$ denotes the inner product  or the dual pair.
	For $x\in\mathbb{R}^n$, write $\langle x \rangle := 1 + | x |$.
		For $s \in \mathbb{R}$, we  define the weighted $L^2(\mathbb{R}^2) $ spaces:
		$$L^{2,s}(\mathbb{R}^2) := \{ f \in L_{\text {loc}}^2(\mathbb{R}^2) \mid  \langle \cdot \rangle^s f \in L^2(\mathbb{R}^2) \} .$$
\end{itemize}
	\subsection{Main results}\label{main result}
  In the following, we first give the precise definition of zero resonance types of $H=\Delta^2+V(x)$ on $\mathbb{R}^2$.
For $\sigma \in \mathbb{R}$, let $W_{-\sigma}\left(\mathbb{R}^2\right)$ denote the intersection space
$$
W_{-\sigma}\left(\mathbb{R}^2\right)=\bigcap_{s>\sigma} L^{2,-s}\left(\mathbb{R}^2\right) ,
$$
which is increasing in $\sigma\in\R$ and satisfies $L^{2,-\sigma}(\mathbb{R}^2) \subset W_{-\sigma}(\mathbb{R}^2)$. In particular, $W_0\left(\mathbb{R}^2\right) \supset L^2\left(\mathbb{R}^2\right)$.
\begin{definition}\label{definition1}
{\rm Let $H = \Delta^2 + V(x)$ with $|V(x)| \lesssim \langle x \rangle^{-\mu}$ for some $\mu > 0$. We say that
\begin{itemize}
\item[(i)] Zero is a {\it first kind resonance}  of $H$ if the equation $H\phi = 0$  has only zero solution  in $W_{-1}(\mathbb{R}^2)$, but has a nonzero solution  $\phi \in W_{-2}(\mathbb{R}^2)$ and there exists a polynomial $\sum_{|\alpha|=1} C_\alpha x^\alpha$ such that $\phi-\sum_{|\alpha|=1} C_\alpha x^\alpha\in W_{-1}(\mathbb{R}^2)$.  
\vskip0.1cm

\item[(ii)] Zero is a {\it second kind resonance} of $H$ if
the equation $H\phi = 0$ has a nonzero solution  in $W_{-1}(\mathbb{R}^2)$, but only has a zero solution  $\phi$ in $W_{0}(\mathbb{R}^2)$.
\vskip0.1cm

\item[(iii)] Zero is a {\it third kind resonance} of $H$ if the equation $H\phi = 0$ has a nonzero solution  in $W_0(\mathbb{R}^2)$, but there exists only a zero solution  $\phi$ in  $L^2(\mathbb{R}^2)$.

\vskip0.1cm
\item[(iv)] Zero is an \emph{eigenvalue}  of $H$ if there exists a nonzero $\phi \in L^2(\mathbb{R}^2)$ satisfying $H\phi = 0$.
\vskip0.1cm

\item[(v)] Zero is a {\it regular point} of $H$ if $H$ has neither zero eigenvalues nor zero resonances.
		\end{itemize}}
	\end{definition}
For convenience, we use $\mathbf{k} \in \{0,1,2,3,4\}$ to denote zero energy types: $\mathbf{k}=0$ (regular), $\mathbf{k}=1,2,3$ (resonances), $\mathbf{k}=4$ (eigenvalue).
Moreover, two equivalent characterizations of zero resonances for $H$ will be provided in Proposition~\ref{characterizations} and  Proposition~\ref{characterizations Q} in Section~\ref{subsec:Q}.

 We now state our main results, which are organized into the following four theorems  based on the nature of the zero-energy resonances. To begin, we define the logarithmic weight function $\omega(x) = \log(2+|x|)$ and the associated weighted spaces
\[
L^p_{\pm \omega} := \bigl\{ f \in L_{\mathrm{loc}}^p(\mathbb{R}^2) \bigm| \omega(x)^{\pm 1} f \in L^p(\mathbb{R}^2) \bigr\}, \qquad p = 1, \infty.
\]

\begin{theorem}\label{main_theorem-1}
Let $H = \Delta^2 + V$ and $|V(x)| \lesssim \langle x \rangle^{-11-}$. 
Assume that $H$ has no positive embedded eigenvalues and that zero is \textbf{a regular point or a first-kind resonance} of $H$.
Then 
\begin{equation}\label{regular_first}
    \left\|\cos (t \sqrt{H}) P_{\mathrm{ac}}(H)\right\|_{L^1 \to L^{\infty}} +
    \bigg\|\frac{\sin (t \sqrt{H})}{t\sqrt{H}} P_{\mathrm{ac}}(H)\bigg\|_{L^1 \to L^{\infty}} \lesssim |t|^{-1},
\end{equation}where $P_{\mathrm{ac}}(H)$ denotes the projection onto the absolutely continuous spectrum space of $H$. Moreover, in the weighted space setting $L^1_\omega \to L^{\infty}_{-\omega}$, we can obtain the improved decay estimate:
\begin{equation}\label{regular_first_weight}
    \left\|\cos (t \sqrt{H}) P_{\mathrm{ac}}(H)\right\|_{L^1_\omega \to L^{\infty}_{-\omega}} +
    \bigg\|\frac{\sin (t \sqrt{H})}{t\sqrt{H}} P_{\mathrm{ac}}(H)\bigg\|_{L^1_\omega \to L^{\infty}_{-\omega}} \lesssim \frac{1}{|t| \log|t|}, \quad |t|\ge2.
\end{equation}
\end{theorem}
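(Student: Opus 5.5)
We reduce the problem to oscillatory integrals of the resolvent via Stone's formula. Write $H=\Delta^2+V$ and use $\lambda^4$ as the spectral variable. Then for the cosine
\begin{equation*}
\cos(t\sqrt H)P_{\mathrm{ac}}(H)(x,y)=\frac{2}{\pi i}\int_0^\infty \cos(t\lambda^2)\,\lambda^3\bigl[R_V^+(\lambda^4)-R_V^-(\lambda^4)\bigr](x,y)\,d\lambda ,
\end{equation*}
and the kernel of $\sin(t\sqrt H)/(t\sqrt H)$ is obtained by replacing $\cos(t\lambda^2)$ with $\sin(t\lambda^2)/(t\lambda^2)$. We split with $\widetilde\chi_1(\lambda)+\widetilde\chi_2(\lambda)=1$.

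For the free resolvent we use the identity
\begin{equation*}
R_0(\lambda^4)=\frac{1}{2\lambda^2}\bigl[R^{-\Delta}(\lambda^2)-R^{-\Delta}(-\lambda^2)\bigr],
\end{equation*}
whose kernel is expressed through Hankel functions $H_0^{(1)}(\lambda|x-y|)$ and $K_0(\lambda|x-y|)$.

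\textbf{High energy.} We iterate the resolvent identity a finite number of times:
\begin{equation*}
R_V=R_0-R_0VR_0+\dots+R_0(VR_0)^{m-1}VR_V(VR_0)^{m-1}VR_0 .
\end{equation*}
The Born terms are handled by stationary phase in $\lambda$: the phase is $t\lambda^2\pm\lambda(|x-x_1|+\dots)$, which gives $|t|^{-1}$ uniformly in $x,y$. The tail term uses the limiting absorption principle and the decay of $\partial_\lambda^k R_V^\pm$ in weighted $L^2$ for $\lambda\gtrsim1$, which holds because there are no positive embedded eigenvalues. High energy contributes $O(|t|^{-1})$, and it also contributes $O(|t|^{-N})$-type gains in the weighted norm, since there is no zero singularity there.

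\textbf{Low energy.} Set $v=|V|^{1/2}$ and $U=\operatorname{sgn}V$. We use the symmetric resolvent identity
\begin{equation*}
R_V^\pm=R_0^\pm-R_0^\pm v\,(M^\pm(\lambda))^{-1}v R_0^\pm,\qquad M^\pm=U+vR_0^\pm v,
\end{equation*}
and we invoke the expansion of $(M^\pm(\lambda))^{-1}$ from Theorem \ref{thm:M_inverse} for $k=0,1$. For the regular case it has the form
\begin{equation*}
(M^\pm)^{-1}=\frac{1}{\lambda^2 g^\pm(\lambda)}\,S+QD_0Q+\text{(terms $O_3(\lambda^{\epsilon})$)},
\end{equation*}
where $g^\pm(\lambda)\sim\log\lambda$ and $Q$ is the projection orthogonal to $v$ and $xv$. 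The first-kind case adds pieces like $\frac{1}{\lambda^2(\log\lambda)^{\cdots}}S_1$, whose projections kill $v$ and $x v$ moments.

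The main work is to show that each resulting operator term $R_0^\pm vAvR_0^\pm$ has a kernel that is an oscillatory integral $\int e^{\pm it\lambda^2}\lambda^3 F(\lambda,x,y)\widetilde\chi_1\,d\lambda$ with $|F|$ and $|\partial_\lambda F|$ controlled so that integration by parts once yields $|t|^{-1}$. The key cancellation is that $Q$ kills the constant and linear parts of the free kernel expansion
\begin{equation*}
R_0^\pm(\lambda^4)(x,y)=\frac{a^\pm}{\lambda^2}+b\log\lambda\,|x-y|^2\cdots+G_0(x,y)+\dots
\end{equation*}
Expanding $R_0(x,y_1)-R_0(x,0)$ by Taylor in $y_1$ lets the orthogonality absorb the singular $\lambda^{-2}$ and $\log\lambda$ factors. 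The remaining $|x|^{2}\log|x|$-type growth is paired against $v$, which requires the decay $\langle x\rangle^{-11-}$.

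For the weighted estimate we use $\omega(x)\omega(y)$ to absorb $\log|x-y_1|$. The remaining leading terms then come with $1/(\log\lambda)$ or $\lambda^\epsilon$ prefactors. We then use the lemma that
\begin{equation*}
\int_0^\infty e^{it\lambda^2}\lambda\,\widetilde\chi_1(\lambda)\,\frac{d\lambda}{\log\lambda}=O\bigl((|t|\log|t|)^{-1}\bigr),
\end{equation*}
together with the fact that the pure $a^\pm\lambda^{-2}$ free piece cancels between the $\pm$ terms. For the cosine it is even in a suitable sense; for sine the $1/(t\lambda^2)$ factor is compensated by $S$-terms whose leading part is $1/(\lambda^2\log\lambda)$, and this is precisely the perturbative gain.

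\textbf{Main obstacle.} I expect the main obstacle to be the low-energy terms for the sine operator in the weighted norm. There one must verify that the free $\lambda^{-2}$-singular part of $\sin(t\lambda^2)/(t\lambda^2)$ is exactly cancelled by the $\lambda^{-2}(g^\pm)^{-1}S$ part of $(M^\pm)^{-1}$, up to an error with an extra $(\log\lambda)^{-1}$. I would first compute the combination $R_0-R_0vSvR_0$ at leading order explicitly, showing it equals a kernel with $1/\log\lambda$ coefficient. After that I would apply the $\log$-oscillatory lemma.
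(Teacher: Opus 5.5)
There is a genuine gap in the low-energy part, and it sits exactly where you place the main obstacle. The expansion you write for $(M^\pm(\lambda))^{-1}$ is the two-dimensional Schr\"odinger template, and it has the wrong homogeneity for $\Delta^2$. Since $M^\pm(\lambda)=a_0^\pm\lambda^{-2}\|V\|_{L^1}P+g_0^\pm(\lambda)vG_2v+T_0+\cdots$ is \emph{large} on the span of $v$, its inverse is \emph{small} there. Theorem~\ref{thm:M_inverse} gives:
\begin{itemize}
\item on the span of $v$, the block $\lambda^2Q_0\mathcal M_{0,0}^\pm(\lambda)Q_0$ with $\mathcal M_{0,0}^\pm=(a_0^\pm)^{-1}\|V\|_{L^1}^{-1}Q_0+\lambda(-\log\lambda)^{3/2}\Lambda^\pm(\lambda)$;
\item cross blocks of order $\lambda$ up to logarithms;
\item blocks of order one on $Q_1L^2\oplus Q_2L^2$, where the $Q_1$ block is of order $(\log\lambda)^{-1}$ and your $QD_0Q$ is the $Q_2$ block;
\item in the first-kind case, an extra $Q_3$ block of size $|\log\lambda|$ at order $\lambda^0$.
\end{itemize}
Nothing of size $\lambda^{-2}(\log\lambda)^{-1}$ occurs. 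By homogeneity, such a block sandwiched between two factors $a_0^\pm\lambda^{-2}$ could not cancel the free $a_0^\pm\lambda^{-2}$ anyway.

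Consequently, your mechanism for the weighted sine bound is not the right one. The free term $a_0^\pm\lambda^{-2}$ does not cancel between $\pm$: $a_0^+-a_0^-=i/4$, and this difference is exactly what produces the $\frac{1}{8|t|}$ in Proposition~\ref{prop_free_sin}. It is instead cancelled \emph{exactly} by the $Q_0$ block, because $a_0^\pm\lambda^{-2}\cdot\lambda^{2}(a_0^\pm\|V\|_{L^1})^{-1}\cdot a_0^\pm\lambda^{-2}\langle v,v\rangle=a_0^\pm\lambda^{-2}$. All remaining terms then gain a power of $\lambda$ (at least $\lambda^{1/2}$ in the amplitude), not a factor $(\log\lambda)^{-1}$. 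So the computation you plan, showing that $R_0-R_0vSvR_0$ has a $1/\log\lambda$ leading coefficient, would not come out, and your $1/\log\lambda$ oscillatory lemma plays no role.

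The second missing idea is how the weights enter. The power gain is paid for by polynomial spatial growth that $\omega$ cannot absorb. For example, $Q_1vR_0^\pm(\lambda^4)(\cdot,y)$ contains $-2b_0(\log\lambda)\,y\cdot Q_1(xv)$, and the free remainder is $O_2(\lambda^2|x-y|^4)$. Hence the non-leading terms are only bounded by $|t|^{-5/4}\langle x\rangle^4\langle y\rangle^4$. You must interpolate this with the uniform unweighted bound:
\[
\min\bigl(|t|^{-1},|t|^{-5/4}\langle x\rangle^4\langle y\rangle^4\bigr)\lesssim\frac{\omega(x)\omega(y)}{|t|\log|t|}.
\]
Absorbing $\log|x-y_1|$ alone does not yield \eqref{regular_first_weight}. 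The same interpolation is needed for the high-energy cosine, where the perturbed kernels are only $O(|t|^{-2}\langle x\rangle^2\langle y\rangle^2)$; there is no $O(|t|^{-N})$ gain in logarithmically weighted norms.

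Finally, the uniform low-energy bound does not follow from a single integration by parts. The amplitudes carry the phase $e^{\pm i\lambda(|x|+|y|)}$, so their $\lambda$-derivatives are not bounded uniformly in $x,y$. You need the representations $Q_\alpha vR_0^\pm=\lambda^{-1-\delta_{\alpha0}}\Omega_{\alpha,\pm}$ with decay $\langle\lambda y\rangle^{-1/2}$. For the $\mathcal M_{3,3}^\pm$ block you need $\mathcal J_{3,\pm}$ instead, whose extra factor of $\lambda$ offsets the $|\log\lambda|$. This must be followed by a van der Corput and dyadic argument around the stationary point $\lambda=(|x|+|y|)/(2|t|)$, as in Lemma~\ref{oscillatory}.
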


\begin{remark}
{\rm
We provide several comments regarding the decay estimates \eqref{regular_first} and \eqref{regular_first_weight}:  
\begin{itemize}  
    \item  
    For the unweighted estimate \eqref{regular_first}, the $|t|^{-1}$ decay rate exactly matches the intrinsic dispersive rate of the free operator in $\mathbb{R}^2$ (see Proposition \ref{free_case}):   
    \[
\|\cos(t\Delta)\|_{L^1\to L^\infty} + \Big\|\frac{\sin(t\Delta)}{t\Delta}\Big\|_{L^1\to L^\infty} \sim |t|^{-1}.  
    \]
    This suggests that the estimate \eqref{regular_first} is expected to be optimal in the unweighted setting $L^1\to L^\infty$.
    \item    
    For the weighted estimate \eqref{regular_first_weight}, it is noteworthy that the perturbed sine evolution $\sin (t \sqrt{H})/(t\sqrt{H})$ exhibits an extra logarithmic decay factor of $(\log|t|)^{-1}$ compared to the following free weighted estimates (see Proposition \ref{free_case}):  
       \[\Big\|\frac{\sin(t\Delta)}{t\Delta}\Big\|_{L^1_\omega\to L^\infty_{-\omega}} \sim |t|^{-1}.   \]
This reflects that the potential $V$ effectively shifts the intrinsic zero resonance of  $\Delta^2$, endowing the  perturbed  sine evolution with the logarithmic gain.
  
\item  
 Moreover, we point out that the decay rate $\mu$ of the potential $V$   in Theorem~\ref{main_theorem-1}  is determined by the types of zero energy. The decay requirement guarantees the
	asymptotic expansion  in Theorem~\ref{thm:M_inverse}.	In contrast, the high-energy analysis requires only the weaker decay
	condition $\mu>4$ (see Theorem~\ref{main_theorem_high}). Hence the decay rate in Theorem~\ref{main_theorem-1}  may not be optimal, even in the regular case.
    \end{itemize}}  
\end{remark}

Next to consider the case of \textbf{the second-kind zero resonance},   let us introduce the space of resonance solutions in $W_{-1}(\mathbb{R}^2)$:  
\[
\mathcal{E} = \big\{ \phi \in W_{-1}(\mathbb{R}^2) \ \big| \ H\phi = 0 \big\}. 
\] 
When zero is a second-kind resonance of $H$, Definition \ref{definition1} implies that $\mathcal{E} \neq \{0\}$ and  $\mathcal{E} \cap W_0(\mathbb{R}^2) = \{0\}$.     
To classify the dispersive behavior based on the second-order potential moments on $\mathcal{E},$
we define the
second-moment map
\begin{equation}\label{eq:second-moment-map}
	\mathfrak{M}_2:
	\mathcal{E}
	\longrightarrow
	\operatorname{Sym}_2(\mathbb{C}),
	\qquad
	\mathfrak{M}_2(\phi)
	=
	\left(
	\langle x_ix_jV,\phi\rangle
	\right)_{i,j=1}^2,
\end{equation}
and its trace
\begin{equation}\label{eq:radial-second-moment}
	\tau(\phi)
	:=
	\operatorname{tr}\mathfrak{M}_2(\phi)
	=
	\langle |x|^2V,\phi\rangle.
\end{equation}

The condition
$
\tau\not\equiv0
$ on $\mathcal{E}$
means that the resonance space contains a state with nonzero trace
second moment, whereas
$
\ker\mathfrak{M}_2\neq\{0\}
$
means that it contains a nonzero state whose  all second-order moments 
vanish.
 Theorem \ref{main_theorem-2} shows that
a logarithmic loss occurs only when these two distinct channels
coexist. 
\begin{theorem}\label{main_theorem-2}
	Let $H=\Delta^2+V$ with
	$
	|V(x)|
	\lesssim
	\langle x\rangle^{-14-}.
	$
	Assume that $H$ has no positive embedded eigenvalues and that zero
	is a second-kind resonance of $H$. Let $\mathfrak{M}_2$ and $\tau$
	be defined by \eqref{eq:second-moment-map} and
	\eqref{eq:radial-second-moment}, respectively.
	
	\begin{itemize}
		\item[(i)]
		Assume that
	$\tau\not\equiv0$
			on $\mathcal{E}$ and $
			\ker\mathfrak{M}_2\neq\{0\}.$
		Then 
	\begin{align}\label{log-cos-sin}
			&
			\left\|
			\cos(t\sqrt H)P_{\mathrm{ac}}(H)
			\right\|_{L^1\to L^\infty}
			+
			\bigg\|
			\frac{\sin(t\sqrt H)}{t\sqrt H}
			P_{\mathrm{ac}}(H)
			\bigg\|_{L^1\to L^\infty}
			\lesssim
			\frac{\bigl(\log(2+|t|)\bigr)^2}{|t|},\end{align}
            and in the weighted setting $L^1_\omega\to L^\infty_{-\omega},$ for $|t|\geq 2,$
            \begin{align*}
		\left\|
			\cos(t\sqrt H)P_{\mathrm{ac}}(H)
			\right\|_{L^1_\omega\to L^\infty_{-\omega}}\lesssim \frac{\log|t|}{|t|}\ \ \text{and} \ \ \bigg\|
		\frac{\sin(t\sqrt H)}{t\sqrt H}
			P_{\mathrm{ac}}(H)
			\bigg\|_{L^1_\omega\to L^\infty_{-\omega}}
			\sim
			\frac{(\log|t|)^2}{|t|}. 
		\end{align*}
		
		\item[(ii)]
		Assume that
			$\tau\equiv0$
			on $\mathcal{E}$
			or
			$\ker\mathfrak{M}_2=\{0\}.$
	Then   
	the  estimates further improve to match the optimal bounds for the free case $\Delta^2$, namely,  
	\begin{align}  
		&\ \ \ \ \ \ \ \ \ \ \ \ \ \ \  \ \  \ \ \ \  \ \ \  \left\| \cos (t \sqrt{H})  P_{\mathrm{ac}}(H)\right\|_{L^1 \to L^{\infty}} +  
		\bigg\|\frac{\sin (t \sqrt{H})}{t\sqrt{H}} P_{\mathrm{ac}}(H)\bigg\|_{L^1 \to L^{\infty}} \lesssim |t|^{-1}, \nonumber \\
		\label{2-best}  
		& \ \ \ \ \left \|\cos (t \sqrt{H}) P_{\mathrm{ac}}(H)\right\|_{L^1_\omega\to L^\infty_{-\omega}} \lesssim\frac{1}{|t|\log |t|}\ \ \text{and}\ \ \bigg\|\frac{\sin (t \sqrt{H})}{t\sqrt{H}} P_{\mathrm{ac}}(H)\bigg\|_{L^1_\omega\to L^\infty_{-\omega}}\sim|t|^{-1}, \quad |t| \ge 2.    
	\end{align} 
	\end{itemize}
\end{theorem}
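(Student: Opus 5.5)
We will use the Stone formula to represent both propagators via the perturbed resolvent of $H$. Writing $H=\Delta^2+V$ and $R_V^\pm(\lambda^4)=(H-\lambda^4\mp i0)^{-1}$, we have
\[
\cos(t\sqrt H)P_{\mathrm{ac}}(H)=\frac{2}{\pi i}\int_0^\infty \cos(t\lambda^2)\lambda^3\bigl[R_V^+(\lambda^4)-R_V^-(\lambda^4)\bigr]\,d\lambda ,
\]
and the same formula holds for the sine propagator with $\cos(t\lambda^2)$ replaced by $\sin(t\lambda^2)/(t\lambda^2)$. We insert $1=\widetilde\chi_1(\lambda)+\widetilde\chi_2(\lambda)$. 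The high-energy part is handled by Theorem~\ref{main_theorem_high}, which gives $|t|^{-1}$ (and $(|t|\log|t|)^{-1}$ weighted after one more integration by parts). Only the low-energy part needs new work. There we use the symmetric resolvent identity
\[
R_V^\pm=R_0^\pm-R_0^\pm v\,(M^\pm(\lambda))^{-1}v R_0^\pm, \qquad v=|V|^{1/2}.
\]
The free term is Proposition~\ref{free_case}: it gives $|t|^{-1}$ unweighted, $(|t|\log|t|)^{-1}$ weighted for the cosine, and exactly $|t|^{-1}$ weighted for the sine. This already explains the $\sim|t|^{-1}$ in \eqref{2-best} once the perturbative part is shown to be smaller.

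The core is the expansion of $(M^\pm(\lambda))^{-1}$ from Theorem~\ref{thm:M_inverse} in the case $\mathbf k=2$. Its singular terms have the form
\[
\lambda^{-2}(a\log\lambda+b)^{-1}S_jA S_j,\quad \lambda^{-2}S_2BS_2,\quad \lambda^{-2}(\log\lambda)^{k}\text{-type terms},
\]
where $S_2$ is the Riesz projection associated with the second-kind resonance space $\mathcal E$ (Proposition~\ref{characterizations Q}). We combine each term with the expansions of $R_0^\pm(\lambda^4)(x,y)$. The orthogonality relations $S_jvx^\alpha=0$ for $|\alpha|\le1$ allow Taylor expansion of the kernels $R_0^\pm(x-\cdot)$ to second order, producing the factor $\lambda^2|x-\cdot|^2$ times logarithms. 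This cancels the $\lambda^{-2}$ singularity. The coefficients that remain are exactly the moments $\langle x_ix_jV,\phi\rangle$ for $\phi\in\mathcal E$, via $S_2v$ and the identity $\phi=-G_0v\,\cdot$ in the characterization.

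We then decompose $\mathcal E=\ker\mathfrak M_2\oplus(\ker\mathfrak M_2)^\perp$ and further split off the trace part. In the generic term the isotropic (trace) projection of the second-order Taylor term pairs with the $\log\lambda$ piece of the free kernel, since the free expansion contains $\lambda^2|x|^2\log\lambda$ only through the Laplacian-like, radial combination. The traceless components pair with pure $\lambda^2$ powers. For states in $\ker\mathfrak M_2$ the second-order term vanishes, but then $(M^\pm)^{-1}$ restricted to that piece carries an extra $(\log\lambda)$ factor in the denominator structure.

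The main obstacle is identifying the worst kernel. We expect it to have the form
\[
\int_0^\infty e^{\pm it\lambda^2}\lambda\,(\log\lambda)^2\,\widetilde\chi_1(\lambda)\,F(\lambda,x,y)\,d\lambda ,
\]
arising only when a $\tau$-nonzero state, which contributes one $\log\lambda$ from the free kernel, couples through the block inverse to a $\ker\mathfrak M_2$ state, which contributes another $\log\lambda$. Bounding it by $|t|^{-1}(\log|t|)^2$ uniformly in $x,y$ is done with the oscillatory lemma: split at $\lambda=|t|^{-1/2}$, and integrate by parts once in $\lambda^2$ on the outer region. For sharpness of the weighted sine bound $\sim(\log|t|)^2/|t|$, we compute the leading coefficient $c\,\tau(\phi_1)\,\|\phi_2\|$-type and show that it is nonzero by choosing $f,g$ localized at large $|x|,|y|$.

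In case (ii), either $\tau\equiv0$ on $\mathcal E$ or $\ker\mathfrak M_2=\{0\}$, so one channel vanishes. Then at most a term $\lambda\,(\log\lambda)^{-1}$ or pure $\lambda$ survives. The pure-$\lambda$ terms give $|t|^{-1}$, and the $(\log\lambda)^{-1}$ terms yield $(|t|\log|t|)^{-1}$ after the weights absorb the $\log|x|,\log|y|$ growth of the differentiated kernels. This reproduces the free rates, and in case (i) the weighted cosine gets only one $\log$ because the cosine kills a $\lambda^0$ boundary term.

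The decay $\langle x\rangle^{-14-}$ is used only to make the expansions and the third-order Taylor remainders valid.
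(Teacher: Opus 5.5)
Your architecture (Stone's formula, the high/low split via Theorem~\ref{main_theorem_high}, the symmetric resolvent identity, the $\mathbf k=2$ expansion of $(M^\pm(\lambda))^{-1}$, moment cancellations of the projections against $R_0^\pm(\lambda^4)$, and oscillatory-integral bounds) is the paper's, and the upper bounds can be organized that way. The genuine gap is the lower bound $\bigl\|\frac{\sin(t\sqrt H)}{t\sqrt H}P_{\mathrm{ac}}(H)\bigr\|_{L^1_\omega\to L^\infty_{-\omega}}\gtrsim|t|^{-1}$ in (ii). You attribute it to the free term ``once the perturbative part is shown to be smaller''. But the perturbative part is of the same order $|t|^{-1}$ and interferes with the free term. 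In the regular case, and again in the subcase $Q_4^0=0$ of (ii), the piece $(a_0^\pm)^{-1}\|V\|_{L^1}^{-1}Q_0$ of $\mathcal M_{0,0}^\pm(\lambda)$ contributes exactly $-\frac{1}{8|t|}$ (Proposition~\ref{prop_K_weight_2}). This annihilates the free leading term $\frac{1}{8|t|}$ of Proposition~\ref{prop_free_sin}, and it is precisely the source of the gain in \eqref{regular_first_weight}. If your reasoning were valid it would also give $\sim|t|^{-1}$ in the regular case, contradicting Theorem~\ref{main_theorem-1}. So the lower bound in (ii) concerns the full $|t|^{-1}$-coefficient of the perturbed evolution. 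That coefficient has to be computed exactly and shown to be a nonzero operator.

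This is the missing step, and it is not routine. For $Q_4^0=0$ you need the exact blocks $\mathcal D_{\alpha,\beta}^\pm$, $\alpha,\beta\in\{0,4\}$, built from the Schur complement $\boldsymbol d^\pm$ on $Q_4^1L^2$. After the $Q_0$-cancellation the coefficient is $\widetilde{\mathcal A}=\mathcal Y\bigl((\boldsymbol d^+)^{-1}-(\boldsymbol d^-)^{-1}\bigr)\mathcal Y^*$ with $\mathcal Y=G_0vQ_0\mathcal W-b_0G_2^0vQ_4^1$. It is nonzero because $i\bigl((\boldsymbol d^+)^{-1}-(\boldsymbol d^-)^{-1}\bigr)$ is positive definite on $Q_4^1L^2$ and $\mathcal Y$ is injective there; for injectivity, apply $\Delta^2$, use $\Delta^2G_2^0=8\pi\delta_0$, and use $\psi=Uv\phi$ from Proposition~\ref{characterizations-1}. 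For $Q_4^0\neq0$, $Q_{42}^1=0$ the mechanism changes. Here $\mathcal M_{0,0}^\pm=(\log\lambda)^{-2}Q_0\Lambda^\pm(\lambda)Q_0$ no longer produces a $|t|^{-1}$ term. Instead $\mathcal M_{0,4}^\pm,\mathcal M_{4,0}^\pm$ (through $\mathfrak D$, paired with the $\log\lambda$ in $Q_4^0vR_0^\pm$) and $\mathcal M_{4,4}^\pm$ contribute further multiples of $G_0$, and one must show $\mathfrak A=-\tfrac{63}{512}G_0+\tfrac{8}{\pi^2}\mathcal XD_0\mathcal X^*\neq0$.

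There are two smaller points. First, the sharpness in (i) likewise needs the refined expansion \eqref{huaM44}, not just size bounds, to identify the coefficient. That coefficient is $\mathcal A^+-\mathcal A^-=c\,(A\mathcal W_1-\mathcal B)\mathcal S^{-1}(A\mathcal W_1-\mathcal B)^*$ with $c\neq0$, not a product like $\tau(\phi_1)\|\phi_2\|$, and its non-vanishing rests on positivity of $\mathcal S$, which uses $S_4=0$. Second, a single integration by parts in $\lambda^2$ beyond $\lambda=|t|^{-1/2}$ does not give $|t|^{-1}(\log|t|)^2$ uniformly in $x,y$. It loses a logarithm, and because the projected kernels carry $e^{\pm i\lambda(|x|+|y|)}$ it is not uniform in $|x|+|y|$. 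You need the $\langle\lambda r\rangle^{-1/2}$ decay, van der Corput near $\lambda=r/(2t)$, and two dyadic integrations by parts, i.e.\ Lemma~\ref{oscillatory}(ii) with $(\sigma,\nu)=(0,-2)$.
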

\begin{remark}
	{\rm In Theorem \ref{main_theorem-2}, the logarithmic loss is instead a mixed-channel effect.  The loss occurs only when the resonance space contains both a 
	nonzero trace-moment state $\phi_1$ and a fully vanishing quadratic-moment nonzero state $\phi_2$.
	In particular, this  happens only when
	$
	\dim\mathcal{E}\geq2.
	$}
\end{remark}



Finally, it remains to analyze the scenario where zero is \textbf{a third-kind resonance or an eigenvalue} of $H$.
By Definition \ref{definition1}, this is equivalent to   
$$ \mathcal{E}_1 := \big\{ \phi \in W_{0}(\mathbb{R}^2) \ \big| \ H\phi = 0 \big\} \neq \{0\},$$   
which means there exists at least one nonzero solution $\phi\in W_{0}(\mathbb{R}^2)$ to $H\phi=0.$   
Below we split our results into Theorems \ref{main_theorem-31} and \ref{main_theorem-32} by the spatial decay of the zero-energy states of $\mathcal{E}_1$:  
\begin{itemize}  
    \item \textbf{Presence of a $d$-wave resonance:}  $\mathcal{E}_1$ is not fully contained in $L^2$, meaning there exists a nonzero solution $\phi \in W_{0}(\mathbb{R}^2) \setminus L^2(\mathbb{R}^2)$ to $H\phi = 0$.  
    \vskip0.1cm
    \item \textbf{Absence of a $d$-wave resonance:} Zero is an eigenvalue of $H$ but exhibits no $d$-wave  (i.e., every solution $\phi \in \mathcal{E}_1$ strictly belongs to $L^2(\mathbb{R}^2)$).  
\end{itemize}  
  
\begin{theorem}\label{main_theorem-31}    
Let $H = \Delta^2 + V$ with $|V(x)| \lesssim \langle x \rangle^{-18-}$.   
Assume that $H$ has no positive embedded eigenvalues and that zero is either a third-kind resonance or an eigenvalue accompanied by a d-wave resonance. Then 
    \[ 
        \left\|\cos (t \sqrt{H}) P_{\mathrm{ac}}(H)\right\|_{L^1 \to L^{\infty}} +    
        \bigg\|\frac{\sin (t \sqrt{H})}{t\sqrt{H}} P_{\mathrm{ac}}(H)\bigg\|_{L^1 \to L^{\infty}} \sim \frac{1}{\log |t|}, \quad |t| \gg 1.    
    \]   
\end{theorem}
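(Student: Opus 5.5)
We follow the standard spectral-representation route used for Theorems \ref{main_theorem-1} and \ref{main_theorem-2}. By Stone's formula and the change of variables $z=\lambda^4$,
\[
\cos(t\sqrt H)P_{\mathrm{ac}}(H)=\frac{2}{\pi i}\int_0^\infty \cos(t\lambda^2)\lambda^3\bigl[R_V^+(\lambda^4)-R_V^-(\lambda^4)\bigr]\,d\lambda,
\]
and similarly for $\frac{\sin(t\sqrt H)}{t\sqrt H}$, where the factor $\frac{\sin(t\lambda^2)}{t\lambda^2}$ replaces $\cos(t\lambda^2)$. We split with $\widetilde\chi_1+\widetilde\chi_2=1$. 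The high-energy part is handled by Theorem~\ref{main_theorem_high}, which gives $|t|^{-1}$ under $\mu>4$, so only the low-energy part matters.

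\textbf{Low energy.} We use the symmetric resolvent identity
\[
R_V^\pm=R_0^\pm-R_0^\pm v\,(M^\pm(\lambda))^{-1}vR_0^\pm,
\]
with $v=|V|^{1/2}$, and insert the expansion of $(M^\pm(\lambda))^{-1}$ from Theorem~\ref{thm:M_inverse} for the case $\mathbf k=3$ or $4$ with a $d$-wave. The decisive term there is the most singular one, of the form
\[
\frac{1}{\lambda^4}\,\frac{S_3DS_3}{\log\lambda+c}\quad (\text{or } \lambda^{-4}(a\log\lambda+b)^{-1}\text{-type}),
\]
where $S_3$ projects onto the $d$-wave channel, meaning states in $W_0\setminus L^2$ whose leading behavior is like $|x|^{0}$ and is not square-integrable. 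The remaining terms are $\lambda^{-4}S_4DS_4$ (the eigenvalue part, which cancels in $R^+-R^-$ up to $O(1)$ smoother remainders), plus the less singular ones already treated in the second-kind analysis.

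\textbf{Upper bound.} For all terms except the $d$-wave one, we expect to reuse the oscillatory-integral lemmas from the earlier cases, which give $|t|^{-1}(\log|t|)^2$ or better. For the $d$-wave term, we combine the orthogonality relations $S_3vx^\alpha=0$ for $|\alpha|\le1$ (and partly $|\alpha|=2$) with Taylor expansions of $R_0^\pm(\lambda^4)(x,y)$ in $y$. This should reduce the kernel, after the $\pm$ difference, to
\[
\int_0^\infty \cos(t\lambda^2)\lambda^{-1}\chi_1(\lambda)\Bigl[\frac{1}{\log\lambda+c^+}-\frac{1}{\log\lambda+c^-}\Bigr]F(\lambda,x,y)\,d\lambda ,
\]
where $F$ is bounded together with $\lambda\partial_\lambda F$, uniformly in $x,y$. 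Since $c^+-c^-=i\pi$-type, the bracket is $\sim(\log\lambda)^{-2}$, and the integrand is $\lambda^{-1}(\log\lambda)^{-2}$, which is integrable at $0$. We split at $\lambda=|t|^{-1/2}$. On $(0,|t|^{-1/2})$ we bound directly by $\int_0^{|t|^{-1/2}}\lambda^{-1}|\log\lambda|^{-2}\,d\lambda\sim(\log|t|)^{-1}$. On the remainder we integrate by parts once in $\lambda^2$, which gives $|t|^{-1}$ times $\int_{|t|^{-1/2}}\lambda^{-3}|\log\lambda|^{-2}\,d\lambda\lesssim(\log|t|)^{-2}$. The same computation works for the sine propagator, because $\frac{\sin(t\lambda^2)}{t\lambda^2}$ is bounded and obeys the same symbol estimates on the region $\lambda\gtrsim|t|^{-1/2}$.

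\textbf{Lower bound.} This is the main obstacle, since it requires showing the $(\log|t|)^{-1}$ rate is attained. We would take $x,y$ in a fixed compact set, or test against $f,g$ localized near the support of $V$. In the leading term, the contribution of $(0,|t|^{-1/2})$ is $c_0(\log|t|)^{-1}\,\varphi(x)\overline{\varphi(y)}$ with $c_0\neq0$, where $\varphi$ is the $d$-wave resonance function; it comes from $D\ne0$ being invertible on $S_3L^2$. All other contributions are $o((\log|t|)^{-1})$ by the upper-bound analysis. The delicate points are checking that no cancellation occurs between $+$ and $-$ in the exact constant, that is, computing $\mathrm{Im}\,(\log\lambda+c^\pm)^{-1}$ explicitly, and that $\varphi\not\equiv0$ on a set of positive measure. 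The latter holds because the resonance is nontrivial.
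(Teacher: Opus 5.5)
Your outline has the same architecture as the paper: Stone's formula, the high/low split, the symmetric resolvent identity, the $O((\log\lambda)^{-2})$ sign difference of the $d$-wave block that makes $\lambda^{-1}(\log\lambda)^{-2}$ integrable, and the split at $\lambda=|t|^{-1/2}$. However, two steps fail as written. (Your $S_3$, $S_4$ are the paper's $Q_5$, $Q_6$.)

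\textbf{Upper bound: the $\pm$ difference is not just your bracket.} The outer factors $Q_5vR_0^\pm(\lambda^4)$ are sign-dependent too, so the difference does not reduce to the bracket times a sign-independent $F$. In the paper's notation the block is $\lambda^{-4}Q_5\mathcal{M}^\pm_{5,5}Q_5$, with $\mathcal{M}^\pm_{5,5}=\mathcal{A}_{5,5}(\lambda)+(\log\lambda)^{-2}\Gamma^\pm_{5,5}(\lambda)$. Its sign-independent part is $\mathcal{A}_{5,5}\approx(\log\lambda)^{-1}(b_1Q_5vG_6vQ_5)^{-1}$.
\begin{itemize}
\item The terms where the difference falls on an outer factor have a priori size $\lambda^3\cdot\lambda^{-4}|\log\lambda|^{-1}$. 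This is not integrable at $0$.
\item They are harmless only because $Q_5v[R_0^+-R_0^-](\lambda^4)=O(\lambda)$. Indeed $R_0^+-R_0^-=\frac{i}{4\lambda^2}J_0(\lambda|x-y|)$ is $\lambda^{-2}$ times an entire function of $\lambda^2|x-y|^2$. You need $Q_5$ to annihilate \emph{all} moments $x^\alpha v$ with $|\alpha|\le2$; killing only part of the quadratic moments gives no gain.
\item The eigenvalue block likewise has an $O(1)$ sign-independent part $\mathcal{D}_{6,6}$. It is controlled by $Q_6v(R_0^+-R_0^-)=O(\lambda^2)$. This is Case~1 of Proposition~\ref{third_merged_1}.
\item The mixed blocks $\mathcal{M}^\pm_{\alpha,\beta}$ with $\alpha\in\{5,6\}$, $\beta\le4$ are new and were not covered by the second-kind analysis.
\item The oscillating pieces of $Q_5vR_0^\pm$ carry $e^{\pm i\lambda|y|}$, so $\lambda\partial_\lambda F$ is not uniformly bounded in $y$. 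These need the stationary-phase argument of Lemma~\ref{oscillatory}, not one integration by parts.
\end{itemize}
The last two items are subleading but still have to be handled.

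\textbf{Lower bound: it breaks down when zero is an eigenvalue with a $d$-wave.} The $(\log\lambda)^{-2}$ sign-dependent parts are not confined to the $d$-wave block. Through the coupling $Q_5vG_6^0vQ_6$, the leading terms $\mathcal{L}^\pm_{5,6},\mathcal{L}^\pm_{6,5},\mathcal{L}^\pm_{6,6}$ of $\Gamma^\pm_{5,6},\Gamma^\pm_{6,5},\Gamma^\pm_{6,6}$ are all proportional to $a_3^\pm$. For example, $\mathcal{L}^\pm_{6,6}=a_3^\pm b_1\mathcal{D}_{6,6}(Q_6vG_6^0vQ_5)\mathcal{D}_{5,5}(Q_5vG_6^0vQ_6)\mathcal{D}_{6,6}$.
\begin{itemize}
\item So the eigenvalue part does not cancel in $R_V^+-R_V^-$. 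It generically contributes at exactly order $(\log|t|)^{-1}$, and your claim that everything except the $d$-wave term is $o((\log|t|)^{-1})$ is false.
\item An argument that tracks only the $(5,5)$ term cannot rule out cancellation against these coupled terms. Also, the leading kernel is not $\varphi(x)\overline{\varphi(y)}$; it is a finite-rank form in all the threshold states.
\item The paper avoids this by testing against $\varphi=vU\psi$ with $\psi\in Q_5L^2$. Since $Q_5T_0=Q_6T_0=0$, one gets $b_0Q_5vG_2^0\varphi=-\psi$ and $b_0Q_6vG_2^0\varphi=-Q_6\psi=0$.
\item Hence only the $(5,5)$ coefficient survives. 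It equals $-(a_3^+-a_3^-)b_1^{-2}\langle(Q_5vG_6vQ_5)^{-1}\psi,\psi\rangle$, which is nonzero because $Q_5vG_6vQ_5$ is strictly definite on $Q_5L^2$ and $a_3^+\neq a_3^-$.
\end{itemize}

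A minor slip: states in $W_0\setminus L^2$ decay like $|x|^{-1}$. The $|x|^0$ profiles are the $p$-wave ones.
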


We next assume that zero is an eigenvalue but that the $d$-wave
resonance is absent. In this case
\begin{align}\label{E--2}
\mathcal{E}_1
=
\mathcal{E}_2,
\qquad
\mathcal{E}_2
:=
\left\{
\phi\in L^2(\mathbb{R}^2)
\mid
H\phi=0
\right\},
\end{align}
namely, all threshold states in $W_0(\mathbb{R}^2)$ are 
$L^2$-eigenfunctions.

In the absence of a $d$-wave resonance, a possible $p$-wave  is the remaining source of logarithmic loss in the unweighted estimates. If no $p$-wave  occurs, the decay  is determined by the fourth-order potential moments of the zero eigenfunctions. 
To describe this case, define
\begin{equation}\label{eq:E3}
	\mathcal{E}_3
	:=
	\left\{
	\phi\in\mathcal{E}_2
	\ \middle|\
	\begin{aligned}
		&
		\langle x_1^3x_2V,\phi\rangle
		=
		\langle x_1x_2^3V,\phi\rangle
		=
		0,
		\\
		&
		\langle x_1^4V,\phi\rangle
		=
		\langle x_2^4V,\phi\rangle
		=
		3\langle x_1^2x_2^2V,\phi\rangle
	\end{aligned}
	\right\}.
\end{equation}
Equivalently, $\mathcal{E}_3$ consists of those zero eigenfunctions
whose fourth-order potential moment tensor
$
T_{ijkl}(\phi)
:=
\langle x_ix_jx_kx_lV,\phi\rangle
$
is isotropic, i.e. for $\phi\in\mathcal{E}_3$ there
exists a scalar $\gamma_\phi\in\mathbb C$ such that
\[
T_{ijkl}(\phi)
=
\gamma_\phi
\left(
\delta_{ij}\delta_{kl}
+
\delta_{ik}\delta_{jl}
+
\delta_{il}\delta_{jk}
\right),
\qquad
 i,j,k,l=1,2.
\]
Indeed,  this tensorial condition is equivalent to
$
T_{1112}=T_{1222}=0,$ and
$
T_{1111}=T_{2222}=3T_{1122},
$
which is exactly the set of moment conditions in
\eqref{eq:E3}. Thus
$
\mathcal{E}_2=\mathcal{E}_3
$
means that every zero eigenfunction has isotropic fourth-order moments tensor form.

\begin{theorem}\label{main_theorem-32}

Let $H = \Delta^2 + V$ with $|V(x)| \lesssim \langle x \rangle^{-22-}$.   
	Assume that $H$ has no positive embedded eigenvalues and that zero is an eigenvalue of $H$ but exhibits no d-wave resonance $(\text{i.e.,}\  \mathcal{E}_1 = \mathcal{E}_2 )$.  
	\begin{enumerate}   
		\item If $H$ admits a p-wave resonance $($i.e., there exists a nonzero $\phi \in W_{-1}(\mathbb{R}^2) \setminus W_0(\mathbb{R}^2)$ such that $H\phi = 0 )$, then the $L^1 \to L^{\infty}$ estimate \eqref{log-cos-sin}  holds. 
		\vskip0.2cm
		\item If $H$ lacks a p-wave resonance, then  the $L^1 \to L^{\infty}$ estimate \eqref{regular_first} holds.  
		Moreover, we can  further distinguish the following two subcases:
		\begin{itemize}
			\item If $\mathcal{E}_2\neq \mathcal{E}_3$, then we obtain  the same weighted estimate \eqref{2-best}  as in the free case. 
			\vskip0.1cm
			\item If $\mathcal{E}_2=\mathcal{E}_3,$ then the  enhanced weighted estimate \eqref{regular_first_weight} holds as in the regular case.
		\end{itemize}  
	\end{enumerate}    
\end{theorem}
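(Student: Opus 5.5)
Our plan follows the scheme used for Theorems \ref{main_theorem-1}--\ref{main_theorem-31}. We split each propagator with the cutoffs $\widetilde\chi_1,\widetilde\chi_2$ into low and high energy pieces. The high energy part is handled by Theorem~\ref{main_theorem_high}, which only needs $\mu>4$ and gives $|t|^{-1}$ in $L^1\to L^\infty$; after integrating by parts once more with weights, it also gives the weighted improvement. It therefore remains to treat the low energy part. Using Stone's formula with $\lambda=z^{1/4}$, this part is
\[
\int_0^\infty \cos(t\lambda^2)\,\widetilde\chi_1(\lambda)\,\lambda^3\,[R_V^+-R_V^-](\lambda^4)(x,y)\,d\lambda ,
\]
and the analogous integral with $\sin(t\lambda^2)/(t\lambda^2)$. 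We expand $R_V^\pm = R_0^\pm - R_0^\pm v\,(M^\pm(\lambda))^{-1}v R_0^\pm$ and insert the low energy expansion of $(M^\pm(\lambda))^{-1}$ from Theorem~\ref{thm:M_inverse} in the case where zero is an eigenvalue without $d$-wave. That expansion is written through the Riesz projections $S_j$ onto the successive resonance subspaces characterized in Propositions~\ref{characterizations} and~\ref{characterizations Q}. The decay $\langle x\rangle^{-22-}$ guarantees enough terms with $O_2$ remainders.

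The absence of a $d$-wave means the $S_j$ corresponding to $W_0\setminus L^2$ vanishes. This removes the $(\log\lambda)^{-1}$-type terms that produced the $(\log|t|)^{-1}$ rate in Theorem~\ref{main_theorem-31}. The remaining singular terms are of size $\lambda^{-4}$ and $\lambda^{-2}$, sandwiched by the eigen-projection $S_{\rm top}$ and the $p$-wave projection.

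For the eigenvalue block, we use the orthogonality $\langle x^\alpha V,\phi\rangle=0$ for $|\alpha|\le 3$ (valid for $\phi\in\mathcal E_2$). We Taylor expand $R_0^\pm(\lambda^4)(x,\cdot)$ around $y=0$ inside $v S v$, so that the $\lambda^{-4}$ is cancelled by the factor $\lambda^{4}$ coming from the fourth-order term. The leading term then contains the moment tensor $T_{ijkl}(\phi)$ contracted with $\partial^4$ of the free kernel. Its difference $+$ minus $-$ has a $\log\lambda$-free constant part precisely when the contraction reduces to $\Delta^2$, that is, when $T$ is isotropic.

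In case (1), the $p$-wave block contributes a term $\lambda^{-2}(\log\lambda)^{-1}$-type paired with the second-kind structure, exactly as in Theorem~\ref{main_theorem-2}(i). We reproduce the oscillatory integral lemma from that proof to get $(\log|t|)^2/|t|$. In case (2), all low energy kernels are of the form $\lambda^3 \cdot O_2(1)$ with at most $\log\lambda$ growth in $|x|,|y|$. Stationary phase in $\lambda^2$ (the van der Corput type lemma used for \eqref{regular_first}) then gives $|t|^{-1}$ uniformly.

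For the weighted estimates we look for the $\log$ gain. When $\mathcal E_2=\mathcal E_3$, the leading eigen-term behaves like the regular case: each term is either $\lambda^3 (\log\lambda)^{-1}$-modulated or vanishes at $\lambda=0$ in the $+$ minus $-$ difference. The lemma giving $(|t|\log|t|)^{-1}$ for $\int e^{it\lambda^2}\lambda\,(\log\lambda)^{-k}\chi\,d\lambda$ then yields \eqref{regular_first_weight}, with the spatial logs absorbed by $\omega$. When $\mathcal E_2\neq\mathcal E_3$, a non-isotropic part of $T$ produces a term like the free sine term $\lambda^{-1}$-profile with no log denominator. This gives the free rates \eqref{2-best}, and the lower bound $\sim|t|^{-1}$ is shown by exhibiting this term against suitable $f,g$.

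The main obstacle is the precise bookkeeping of the fourth-order Taylor cancellation. We must show that, after using the moment vanishing of $\mathcal E_2$, the residual non-log constant coefficient is exactly $T_{ijkl}\partial_i\partial_j\partial_k\partial_l$ acting on the $\lambda$-independent part of the free kernel expansion. We also must check that this vanishes, modulo terms that give the log gain, iff $T$ is isotropic, and that cross terms between the eigen-block and the regular blocks in Theorem~\ref{thm:M_inverse} do not reintroduce a log-free $\lambda^{-1}$ piece.
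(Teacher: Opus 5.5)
Your overall architecture (high/low energy split, symmetric resolvent identity, Birman--Schwinger inversion, the oscillatory lemma) matches the paper's. However, the two steps that actually carry this theorem do not work as you describe them.

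\textbf{The eigen-block bookkeeping.} The $Q_6=S_5$ block of $(M^\pm(\lambda))^{-1}$ enters with $\lambda^{-4}$. The vanishing of moments up to order three does \emph{not} produce a factor $\lambda^2$ on each side. The reason is that $R_0^\pm(\lambda^4)$ contains the $\lambda$-independent, non-smooth piece $b_0|x-y|^2\log|x-y|$, so one has $Q_6vR_0^\pm(\lambda^4)(\cdot,y)=b_0Q_6vG_2^0(\cdot,y)+\lambda^2(\cdots)$, and $b_0G_2^0v\psi=-\phi$ is the eigenfunction itself.

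So the $\lambda^{-4}$ survives: it is the eigenvalue pole $-\lambda^{-4}P_0$. If your Taylor cancellation held, the eigenvalue would be invisible in the resolvent. The pole disappears only because its coefficient $\mathcal D_{6,6}=(b_1Q_6vG_6^0vQ_6)^{-1}$ is independent of $\pm$ and cancels in $R_V^+-R_V^-$.

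After that cancellation everything hinges on the next order, and Theorem~\ref{thm:M_inverse}(III) is too weak there. With $Q_5=0$ it still only gives $\mathcal M_{6,6}^\pm=\mathcal D_{6,6}+O((\log\lambda)^{-5/2})$ and $\mathcal M_{6,\beta}^\pm=O(1)$, which yield only $(\log|t|)^{-3/2}$- and $|t|^{-1/2}$-type bounds. ``Removing the $d$-wave terms'' is not enough. You must redo the inversion to show that, once $Q_5=0$, the couplings of the $Q_6$ block are polynomially small:
$\mathcal M_{6,6}^\pm=\mathcal D_{6,6}+\lambda^2(\log\lambda)^2\Lambda^\pm$, $\mathcal M_{6,0}^\pm=\lambda\log\lambda\,\Lambda^\pm$, $\mathcal M_{6,4}^\pm=\lambda(\log\lambda)^2\Lambda^\pm+\cdots$ (Lemma~\ref{M_inverse_optimal}), with explicit log-free leading corrections when $Q_4=0$.

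Relatedly, in case (1) the $(\log|t|)^2$ loss comes from $(\log\lambda)^{2}$ \emph{growth}. This growth appears in $\mathcal M_{4,4}^\pm$, $\mathcal M_{4,6}^\pm$, $\mathcal M_{6,4}^\pm$ and the $\lambda^2(\log\lambda)^2$ remainder of $\mathcal M_{6,6}^\pm$, and is handled by Lemma~\ref{oscillatory}(ii) with $(\sigma,\nu)=(0,-2)$. A $(\log\lambda)^{-1}$ factor would improve decay, not worsen it. Also, the eigen/$p$-wave cross blocks are new and are not covered by Theorem~\ref{main_theorem-2}.

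\textbf{The isotropy criterion.} This is the more serious problem: it is not obtained by contracting $T_{ijkl}$ with $\partial^4$ of the free kernel. The $|t|^{-1}$ term of the weighted sine is $-\frac{2}{\pi i}\widetilde{\mathcal B}I_s(t)$, where $\widetilde{\mathcal B}$ collects two $\lambda^2$-corrections in the inverse:
$-a_4^\pm\lambda^2\mathcal D_{6,6}Q_6vG_8vQ_6\mathcal D_{6,6}$, from the $\lambda^6|x-y|^8$ term of $R_0^\pm$; and
$(a_2^\pm)^2(a_0^\pm)^{-1}\|V\|_{L^1}^{-1}\lambda^2\mathcal D_{6,6}Q_6vG_4vQ_0vG_4vQ_6\mathcal D_{6,6}$, from the coupling through the $Q_0$ channel.
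The linear $a_2^\pm$ terms from $\mathcal M_{6,0}^\pm$ and $\mathcal M_{0,6}^\pm$ cancel those of $a_2^\pm\lambda^2Q_6vG_4$.

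The result is $\widetilde{\mathcal B}=c\,G_2^0vQ_6\mathcal D_{6,6}\mathbf K\mathcal D_{6,6}Q_6vG_2^0$ with $\mathbf K=\frac1{36}Q_6vG_8vQ_6-\|V\|_{L^1}^{-1}Q_6vG_4vQ_0vG_4vQ_6$. Its $x,y$-dependence is through the eigenfunctions, and the question is whether the finite matrix $\mathbf K$ vanishes. Injectivity of $G_2^0vQ_6$ on $Q_6L^2$ (via $\Delta^2G_2^0=8\pi\delta_0$ and $\psi=Uv\phi$) and invertibility of $\mathcal D_{6,6}$ reduce $\widetilde{\mathcal B}=0$ to $\mathbf K=0$. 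The sum of squares $\langle\mathbf Kf,f\rangle=\frac89|A-B|^2+\frac1{18}|A+B-6C|^2+\frac{32}9|D+E|^2+\frac89|D-E|^2$ then gives $\ker\mathbf K=\widetilde S_5L^2$.

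The $G_8$ part alone is positive definite on the fourth moments. The isotropic direction becomes degenerate only through the $-|\langle|x|^4v,f\rangle|^2$ contribution of the $Q_0$ channel, with relative weight $\frac1{36}$ fixed by $a_4^\pm$ versus $(a_2^\pm)^2/a_0^\pm$. So the $Q_0$ cross terms are not something to be shown negligible; they are essential.

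Your heuristic cannot see this. Isotropy kills $T_{ijkl}\partial_{ijkl}(|x|^2\log|x|)$ away from the origin, but that only removes the $|x|^{-2}$ far-field term of $\phi$. It does not kill the $\lambda$-dependent terms: for example, $T_{ijkl}\partial_{ijkl}(a_2^\pm\lambda^2|x|^4)=24a_2^\pm\lambda^2\langle|x|^4v,\psi\rangle\neq0$ for every nonzero isotropic $T$.

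Your text is also internally inconsistent. You first say the log-free constant part appears precisely when $T$ is isotropic, and later attribute it to the non-isotropic part.
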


For the reader's convenience, we summarize the main dispersive estimates obtained above in Table~\ref{tab:main-results}.
\vspace{-5mm}
\newcommand{\Tcell}[1]{\makecell[c]{\rule[-1.9ex]{0pt}{5.5ex}#1}}
\begin{table}[H]
\centering
\caption{Summary of the long-time decay estimates}
\label{tab:main-results}
\vspace{1mm}
\renewcommand{\arraystretch}{1.35}
\setlength{\tabcolsep}{4pt}
\footnotesize
\begin{tabular}{|
>{\centering\arraybackslash}m{0.18\textwidth}|
>{\centering\arraybackslash}m{0.23\textwidth}|
>{\centering\arraybackslash}m{0.20\textwidth}|
>{\centering\arraybackslash}m{0.15\textwidth}|
>{\centering\arraybackslash}m{0.15\textwidth}|}
\hline
\multirow{2}{*}{\makecell[c]{\textbf{Zero-energy type}}}
&
\multirow{2}{*}{\makecell[c]{\textbf{Sub-condition}}}
&
\multirow{2}{*}{\makecell[c]{\textbf{Unweighted}
\(\boldsymbol{L^1\to L^\infty}\)\\$\boldsymbol{U_{\cos}(t)}$ \ \bf{\&} \ $\boldsymbol{U_{\sin}(t)}$ }}
&
\multicolumn{2}{c|}{\textbf{Weighted }
\(\boldsymbol{L^1_\omega\to L^\infty_{-\omega}}\)}
\\
\cline{4-5}
&
&
&
\(\boldsymbol{U_{\cos}(t)}\)
&
\(\boldsymbol{U_{\sin}(t)}\)
\\
\hline

\Tcell{Regular / 1st-kind \\ \((\mathbf k=0,1)\)}
&
\Tcell{None}
&
\Tcell{\( |t|^{-1}\)}
&
\Tcell{\((|t|\log|t|)^{-1}\)}
&
\Tcell{\( (|t|\log|t|)^{-1}\)}
\\
\hline

\multirow{2}{=}{\Tcell{2nd-kind Resonance\\ \((\mathbf k=2)\)}}
&
\Tcell{
\(\tau\not\equiv0\)\ \text{and}\ 
\(\ker\mathfrak M_2\neq\{0\}\)}
&
\Tcell{\( |t|^{-1}(\log|t|)^2\)}
&
\Tcell{\( |t|^{-1}(\log|t|)\)}
&
\Tcell{\(\sim |t|^{-1}(\log|t|)^2\)}
\\
\cline{2-5}

&
\Tcell{
\(\tau\equiv0\) \ or \ 
\(\ker\mathfrak M_2=\{0\}\)}
&
\Tcell{\( |t|^{-1}\)}
&
\Tcell{\( (|t|\log|t|)^{-1}\)}
&
\Tcell{\(\sim |t|^{-1}\)}
\\
\hline

\Tcell{3rd-kind or\\ Eigenvalue+\(d\)-wave\\
\((\mathbf k=3\) or \(4)\)}
&
\Tcell{\(d\)-wave present}
&
\Tcell{\(\sim(\log|t|)^{-1}\)}
&
\Tcell{--}
&
\Tcell{--}
\\
\hline

\multirow{3}{=}{\Tcell{Eigenvalue+no \(d\)-wave\\ \((\mathbf k=4)\)}}
&
\Tcell{\(p\)-wave present}
&
\Tcell{\( |t|^{-1}(\log|t|)^2\)}
&
\Tcell{--}
&
\Tcell{--}
\\[2mm]
\cline{2-5}

&
\Tcell{No \(p\)-wave and
\(\mathcal E_2\neq\mathcal E_3\)}
&
\Tcell{\( |t|^{-1}\)}
&
\Tcell{\( (|t|\log|t|)^{-1}\)}
&
\Tcell{\(\sim |t|^{-1}\)}
\\[2mm]
\cline{2-5}

&
\Tcell{No \(p\)-wave and
\(\mathcal E_2=\mathcal E_3\)}
&
\Tcell{\( |t|^{-1}\)}
&
\Tcell{\( (|t|\log|t|)^{-1}\)}
&
\Tcell{\((|t|\log|t|)^{-1}\)}
\\[2mm]
\hline
\end{tabular}
\vskip0.2cm
\parbox{0.95\textwidth}{\footnotesize
Here
\(U_{\cos}(t)=\cos(t\sqrt H)\) and
\(U_{\sin}(t)=\sin(t\sqrt H)/(t\sqrt H)\).
The notations
\(\mathfrak M_2\), \(\tau\), \(\mathcal E_2\), and \(\mathcal E_3\)
are defined in \eqref{eq:second-moment-map},
\eqref{eq:radial-second-moment}, \eqref{E--2}, and \eqref{eq:E3},
respectively.}
\end{table}

\subsection{Further background}
		
\subsubsection{Embedded positive eigenvalue}\label{eigenvalue}
According to Kato's theorem \cite{Kat59}, the Schr\"odinger operator $-\Delta+V$ has no positive eigenvalues if $V$ is bounded and $V=o(|x|^{-1})$ as $|x|\to\infty$ (see also \cite{FHHH82, IJ03, KT06, Simon69}). This criterion, however, fails for the fourth-order operator $H=\Delta^2+V$. For any $n\ge1$, one can construct $V\in C_0^\infty(\R^n)$ such that $H$ possesses positive eigenvalues (see Feng et al. \cite[Section 7.1]{FSWY20}). These results show that the absence of positive eigenvalues for fourth-order Schr\"odinger operators is more subtle and less stable than in the second-order case under potential perturbations.
Nevertheless, if $V\in C^1(\R^2)\cap L^\infty(\R^2)$ is repulsive, i.e., $(x\cdot\nabla)V\le0$, then $H$ has no eigenvalues (see \cite[Theorem 1.11]{FSWY20}). This criterion also applies to general higher-order elliptic operators $P(D)+V$ in any dimension $n\ge1$.

\subsubsection{Wave equations }

For the classical wave equation with a real-valued decaying potential,
\[
\left\{\begin{array}{l}
	\partial_t^2 u + (-\Delta + V) u = 0, \quad (t,x) \in \mathbb{R} \times \mathbb{R}^n, \\
	u(0,x) = f(x), \quad \partial_t u(0,x) = g(x),
\end{array}\right.
\]
decay estimates have been extensively studied. When $V=0$, the solution operators $\cos(t\sqrt{-\Delta})$ and $\sin(t\sqrt{-\Delta})/\sqrt{-\Delta}$ satisfy  $W^{k+1,1}\to L^\infty$ and $W^{k,1}\to L^\infty$ estimates  with $O(|t|^{-1/2})$  in dimension two for $k>\frac12$. The endpoint
\(k=\frac{1}{2}\) is more delicate and typically requires a refined
formulation in Hardy-, Besov-, or BMO-type spaces; see \cite{MSW,Green14}.   By contrast, in odd spatial dimensions \(n\ge3\), the strong Huygens principle
provides a more localized representation of the free wave
propagators; see \cite{Str05}.

For \(V\neq0\) and \(n\ge3\), early systematic \(L^p\) and
\(L^\infty\)-decay estimates were obtained by Beals--Strauss
\cite{B-S} and Beals \cite{Beals}; see also \cite{GV03,DP}.
In dimension two, comparatively fewer results are available for
\(W^{k,1}\to L^\infty\) or regularized \(L^1\to L^\infty\)
estimates. Moulin \cite{Mou} proved high-frequency dispersive
estimates for the wave and Schr\"odinger groups. Assuming that zero
is regular, Kopylova \cite{Kop} obtained the decay rate
\(t^{-1}(\log t)^{-2}\) in polynomially weighted energy spaces.
Green \cite{Green14} established regularized \(L^1\to L^\infty\)
bounds with decay \(|t|^{-1/2}\), proved the weighted improvement
\(|t|^{-1}(\log|t|)^{-2}\) in the regular case, and analyzed
zero-energy obstructions. Beceanu \cite{Bec16} subsequently established pointwise, weighted,
and Strichartz estimates for almost scaling-critical potentials,
assuming that zero is regular.

In higher dimensions, Cardoso--Vodev \cite{CV} obtained optimal
dispersive estimates for \(4\le n\le7\), while
Erdo\u{g}an--Goldberg--Green \cite{EGG14} studied the influence of
zero-energy obstructions on the four-dimensional Schr\"odinger and
wave evolutions. Related Strichartz, smoothing, and maximal estimates
for rough, inverse-square, and critically decaying potentials can be
found in \cite{Bec-Gold,GV95,BPSZ03,BPSZ04}.

\subsubsection{Higher-order Schr\"odinger operators.}
Recent progress has been made on time decay estimates for $e^{-itH}$ with $H=\Delta^2+V$. Feng--Soffer--Yao \cite{FSY} 
established Jensen--Kato  estimates  $O(|t|^{-n/4})$ for \(n\ge5\) and  the  long-time \(L^1\to L^\infty\)
estimate  $O(|t|^{-1/2})$  for $n=3$ when zero is a regular point. Subsequently, Erdo\u{g}an--Green--Toprak \cite{EGT19} and Green--Toprak \cite{GT19} showed the $L^1\to L^\infty$ estimates  $O(|t|^{-n/4})$  for $n=3$ and $n=4$, when zero is regular or a first-kind resonance, and observed rate changes for other zero-energy resonances.

For $n=1$, Soffer--Wu--Yao \cite{SWY22} proved $O(|t|^{-1/4})$ estimates regardless of zero resonance types. For $n=2$, Li--Soffer--Yao \cite{LSY21} obtained optimal $O(|t|^{-1/2})$ decay in the regular and first-kind resonance cases. More generally, for \(H=(-\Delta)^m+V\), \(m>1\), and
\(2m<n<4m\), Erdo\u{g}an--Goldberg--Green
\cite{EGG_ArXiv} obtained the optimal
\(O(|t|^{-n/(2m)})\) \(L^1\to L^\infty\) bound for
scaling-critical potentials under suitable spectral assumptions.

Substantial advances have also been made on $L^p$-boundedness of wave operators for $H=(-\Delta)^m+V$ with $m>1$: see \cite{GG21b, MWY22, MWY23, MWY_ArXiv23_2} for $n<2m$ with $m=2$; \cite{Yajima_2024JST} for $n=2m$ with $m=2$; \cite{EG21, Erdogan-Green23, EGG23, EGL} for $n>2m$ with $m\in\mathbb{N}^+$. Recently, Cheng et al. \cite{CSWY25} established sharp boundedness of wave operators for all zero-resonance singularities in odd dimensions $1\le n\le 4m-1$ with $m\ge2$.
\subsection{The ideas of the proof}
In this subsection, we outline the main ideas underlying the proofs of our main theorems.
The starting point is the Stone formulas:
\begin{align*}
	\cos (t \sqrt{H}) P_{\mathrm{ac}}(H) & =\frac{2}{\pi i} \int_0^{\infty} \lambda^3 \cos (t \lambda^2)\left[R_V^{+}(\lambda^4)-R_V^{-}(\lambda^4)\right]  d \lambda, \\
	\frac{\sin (t \sqrt{H})}{t\sqrt{H}} P_{\mathrm{ac}}(H) & =\frac{2}{\pi i t} \int_0^{\infty} \lambda \sin (t \lambda^2)\left[R_V^{+}(\lambda^4)-R_V^{-}(\lambda^4)\right] d \lambda.
\end{align*}
To make these identities precise, we recall that $R_0(z)=(\Delta^2-z)^{-1}$ for $z\in \mathbb{C}\setminus[0,\infty)$ and $R_V(z)=(H-z)^{-1}$ for $z\in \mathbb{C}\setminus\sigma(H)$ denote the resolvents of the free operator $\Delta^2$ and the perturbed operator $H=\Delta^2+V$ respectively.
Their boundary values (limiting resolvents) on $(0,\infty)$ are defined as
\begin{align*}
	R^\pm_0(\lambda)=\lim_{\epsilon \searrow 0}R_0(\lambda\pm i\epsilon ),\qquad  R_V^\pm(\lambda)=\lim_{\epsilon  \searrow 0}R_V(\lambda\pm i\epsilon ),\quad \lambda>0.
\end{align*}
The existence of $R^\pm_0(\lambda)$ as bounded operators from $L^{2,s}(\mathbb{R}^2)$ to $L^{2,-s}(\mathbb{R}^2)$ for any $s>1/2$ follows from the limiting absorption principle for the resolvent $(-\Delta-z)^{-1}$ (see, e.g., Agmon \cite{Agmon}) and the splitting identity
$$
R_0(z)=\frac{1}{2\sqrt z}\left[(-\Delta-\sqrt z)^{-1}-(-\Delta+\sqrt z)^{-1}\right],\quad z\in \mathbb{C}\setminus[0,\infty), \ \Im\sqrt z>0.
$$
The existence of $R^\pm_V(\lambda)$ under suitable potential decay conditions is established in the existing literature (see, e.g., \cite{FSY}).

To evaluate the Stone formulas, we decompose the integrals into a low-energy part ($0 < \lambda \ll 1$) and a high-energy part ($\lambda \gtrsim 1$). The high-energy part is straightforward to treat since the free resolvent (see \eqref{reso-big} and \eqref{reso_small}) possesses no singularities for $\lambda \gtrsim 1$. The primary analytical difficulty lies in the low-energy part, which requires a delicate analysis of the singularities of $R_V^\pm(\lambda^4)$ near $\lambda=0$.

Setting $v(x) = \sqrt{|V(x)|}$ and $U(x) = \operatorname{sgn}(V(x))$, we define the Birman-Schwinger operator $M^\pm(\lambda) = U + vR_0^\pm(\lambda^4)v$. The symmetric resolvent identity,
\begin{equation}\label{resolvent identity}
	R_V^\pm(\lambda^4) = R_0^\pm(\lambda^4) - R_0^\pm(\lambda^4) v \big(M^\pm(\lambda)\big)^{-1} v R_0^\pm(\lambda^4),
\end{equation}
reduces the problem to analyzing the asymptotic expansion of $(M^{\pm}(\lambda))^{-1}$ as $\lambda \to 0^+$.

If zero is a resonance of the $\mathbf{k}$-th kind ($0\leq\mathbf{k}\leq 4$), Theorem~\ref{thm:M_inverse} guarantees that $(M^\pm(\lambda))^{-1}$ admits the following expansion:
\begin{equation}\label{asymptotic expansion}
	\bigl(M^{\pm}(\lambda)\bigr)^{-1}= \sum_{0\leq\alpha,\beta\leq\mathbf{k}+2}
	\lambda^{2-k_\alpha-k_\beta} Q_\alpha \mathcal{M}_{\alpha,\beta}^{\pm}(\lambda) Q_\beta,
\end{equation}
where the singularity exponents $k_\alpha $ are given by \eqref{tab:k_alpha}. The operators $\mathcal{M}_{\alpha,\beta}^{\pm}(\lambda)\in\mathbb{B}(L^2)$ exhibit additional decay as $\lambda \to 0^+$ for certain indices $\alpha, \beta$. The orthogonal projections $Q_\alpha$ satisfy the important cancellation properties (see Remark~\ref{cancellationQ}), which are applied to  establish Lemma~\ref{lemma_projection} to analyze the expansion for  $Q_\alpha v R_0^\pm(\lambda^4)$. Lemma~\ref{lemma_projection} demonstrates how the projections extract compensatory positive powers of $\lambda$, effectively annihilating the zero-energy singularities of the free resolvent.

Substituting the identities \eqref{resolvent identity} and \eqref{asymptotic expansion} into the low-energy part of the Stone formulas, we reduce the analysis to bounding a finite number of kernels
\begin{equation*}
	\big(\mathcal{K}_{B}^+ - \mathcal{K}_{B}^-\big)(t,x,y) \quad \text{and} \quad \big(\mathcal{N}_{B}^+ - \mathcal{N}_{B}^-\big)(t,x,y) ,
\end{equation*}
where the component kernels are given by
\begin{align*}
	\mathcal{K}_{B}^\pm(t,x,y) &= \frac{2}{\pi i}\int_0^{\infty} \cos(t \lambda^2)\lambda \widetilde{\chi}_1(\lambda) \left[\lambda^{4-k_\alpha-k_\beta} R_0^\pm(\lambda^{4})v Q_\alpha B^\pm(\lambda)Q_\beta vR_0^\pm(\lambda^{4})\right](x,y) \, d \lambda, \\
	\mathcal{N}_{B}^\pm(t,x,y) &= \frac{2}{\pi it}\int_0^{\infty} \sin(t\lambda^2)\lambda^{-1}\widetilde{\chi}_1(\lambda)
	\left[\lambda^{4-k_\alpha-k_\beta} R_0^\pm(\lambda^{4})v Q_\alpha B^\pm(\lambda) Q_\beta vR_0^\pm(\lambda^{4})\right](x,y) \, d \lambda,
\end{align*}
with $B^\pm(\lambda) \in \{ \mathcal{M}_{\alpha, \beta}^{\pm}(\lambda) \mid 0 \le \alpha,\beta \le \mathbf{k}+2 \}$. By applying the expansion of $(Q_\alpha v R_0^\pm(\lambda^4))(x,y)$ established in Lemma~\ref{lemma_projection}, the intricate integrals $\mathcal{K}_{B}^\pm$ and $\mathcal{N}_{B}^\pm$ can be reduced to canonical oscillatory forms systematically studied in Lemma~\ref{oscillatory}.
 Consequently, the desired temporal decay estimates are ultimately derived by invoking the  bounds from Lemma~\ref{oscillatory}.

To illustrate our general strategy for controlling these oscillatory integrals, let us examine a representative term where $B^\pm(\lambda)=\mathcal{M}_{1,1}^{\pm}(\lambda)$.
Using the Taylor expansion for $F_\pm$ and the cancellation property $Q_1 v = 0$, we extract an additional positive power of $\lambda$ (see Lemma~\ref{lemma_projection}):
$$
\bigl(Q_1 v R_0^\pm(\lambda^4)\bigr)(x,y)
= -\lambda^{-1} Q_1\Bigl(|\cdot| v \int_0^1 F_\pm'(\lambda|\theta\cdot-y|)\cos\xi\,d\theta\Bigr)(x)
=: \lambda^{-1} \Omega_{1,\pm}(\lambda,x,y),
$$
where $R^\pm_0(\lambda^4)(x,y)=\lambda^{-2}F_\pm(\lambda|x-y|)$ (see \eqref{def:F_pm}). Hence, the term $Q_1 v R_0^\pm(\lambda^4)$ is less singular than the free resolvent. Furthermore, we derive the  bound
\[
\bigl\|\partial_\lambda^\ell\bigl(e^{\mp i\lambda|y|} \Omega_{1,\pm}(\lambda,\cdot,y)\bigr)\bigr\|_{L^2}
\lesssim \langle \lambda y\rangle^{-1/2} \lambda^{-\ell}, \qquad \ell=0,1,2.
\]
With these properties, the kernels can be recast into the canonical forms appearing in Lemma~\ref{oscillatory}:
\begin{align*}
	\mathcal{K}^\pm(t,x,y) &= \frac{2}{\pi i}\int_0^{\infty} \cos (t \lambda^2)e^{\pm i\lambda r} \lambda \mathcal{E}^\pm(\lambda, x,y) \, d\lambda, \\
	\mathcal{N}^\pm(t,x,y) &= \frac{2}{\pi i t}\int_0^{\infty} \sin(t\lambda^2) e^{\pm i\lambda r} \lambda^{-1} \mathcal{E}^\pm(\lambda, x,y) \, d\lambda,
\end{align*}
with  $r(x,y) = |x| + |y|$ and the effective amplitude factor $$\mathcal{E}^\pm(\lambda,x,y) = e^{\mp i\lambda r} \widetilde{\chi}_1(\lambda) \langle \mathcal{M}_{1,1}^{\pm}(\lambda) \Omega_{1,\pm}(\cdot, y), \Omega_{1,\mp}(\cdot, x) \rangle.$$ Applying Leibniz's rule along with the bound $\|\partial_\lambda^{\ell} \mathcal{M}_{1,1}^{\pm}(\lambda)\|_{\mathbb{B}(L^2)}\lesssim\lambda^{-\ell}$ yields
\[
\bigl|\partial_\lambda^{\ell} \mathcal{E}^\pm(\lambda,x,y) \bigr| \lesssim \bigl\langle \lambda (|x|+|y|)\bigr\rangle^{-1/2} \lambda^{-\ell} \widetilde{\chi}_1(\lambda/2),  \qquad  \ell=0,1,2.
\]
Then $\mathcal{E}^\pm$ satisfies condition \eqref{condit 2} of Lemma~\ref{oscillatory} (ii) with parameters $(\sigma, \nu)=(0,0)$ and $ h(x,y)=1$, yielding  the bound $|\mathcal{K}^\pm|+|\mathcal{N}^\pm |\lesssim \langle t \rangle^{-1}$ uniformly in $x, y$.

To establish the spatially weighted estimates for $|t| \ge 2$, we utilize the following expansion:
\begin{align*}
	R^\pm_0(\lambda^4)(x,y) &= \frac{a_0^\pm}{\lambda^2} + \bigl(a_1^\pm+b_0\log\lambda\bigr)|x-y|^2 + b_0|x-y|^2\log|x-y| + {O}_2\big(\lambda^2|x-y|^4\big) \nonumber \\
	&=: \frac{a_0^\pm}{\lambda^2} + E_1^\pm(\lambda, x, y).
\end{align*}
The cancellation property $Q_1 v = 0$ annihilates the highly singular leading term $\lambda^{-2}$, meaning $\bigl( Q_1 v R_0^{\pm}(\lambda^4) \bigr) = \bigl( Q_1 v E_1^{\pm} \bigr)$. This cancellation mitigates the low-energy singularity by a factor of $\lambda^{2-}$ relative to $R_0^\pm$ but introduces  spatial growth.  Provided that the potential $V$ decays sufficiently fast, we deduce the  bound:
\[
\|\partial_\lambda^\ell  \bigl(Q_1 v R_0^{\pm}(\lambda^4)\bigr)(\cdot,y)\|_{L^2}
\lesssim \|v \partial_\lambda^\ell E_1^\pm(\lambda,\cdot,y)\|_{L^2}
\lesssim \langle y\rangle^4\lambda^{-\ell-}, \quad \ell = 0, 1, 2.
\]
In this weighted scenario, we align the phase to $r(x,y)=0$ and define the amplitude factor as
$$
\mathcal{E}^\pm(\lambda,x,y)
= \widetilde{\chi}_1(\lambda) \lambda^{2}
\bigl\langle \mathcal{M}_{1,1}^\pm(\lambda) \bigl( Q_1 v E_1^{\pm} \bigr)(\cdot,y),
\bigl( Q_1 v E_1^{\mp} \bigr)(\cdot,x) \bigr\rangle.
$$
Consequently, we derive the weighted bound
\[
\bigl| \partial_\lambda^\ell \mathcal{E}^\pm(\lambda,x,y) \bigr|
\lesssim \lambda^{1/2-\ell}\widetilde{\chi}_1(\lambda/2) \langle x\rangle^{4} \langle y\rangle^{4}, \quad   \ell=0,1,2.
\]
An application of Lemma~\ref{oscillatory}(ii) with parameters $(\sigma, \nu)=(-1/2,0)$ and  $h(x,y)=\langle x\rangle^{4} \langle y\rangle^{4}$ yields the improved temporal decay $|\mathcal{K}^\pm|+|\mathcal{N}^\pm |\lesssim |t|^{-5/4}\langle x \rangle^4 \langle y \rangle^4$. Combining this with the uniform $O(|t|^{-1})$ bound, we  obtain the logarithmically weighted bound:
$$
\min\left(\frac{1}{|t|}, \frac{\langle x\rangle^4\langle y\rangle^4}{|t|^{5/4}}\right) = \frac{1}{|t|} \min\left(1, \frac{\langle x\rangle^4\langle y\rangle^4}{|t|^{1/4}}\right) \lesssim \frac{\omega(x)\omega(y)}{|t|\log|t|},
$$
which gives the desired result $\omega(x)\omega(y)(|t|\log|t|)^{-1}$.

For the remaining operators $B^\pm(\lambda) \in \{\mathcal{M}_{\alpha,\beta}^{\pm}(\lambda) \mid 0 \le \alpha,\beta \le \mathbf{k}+2\}$, the essential strategy remains unchanged. By exploiting the structural bounds of $Q_\alpha v R_0^{\pm}(\lambda^4)$ in conjunction with the operator norm bounds for $\mathcal{M}_{\alpha, \beta}^{\pm}(\lambda)$, the analysis is reduced to estimating canonical oscillatory integrals in Lemma~\ref{oscillatory}.

However, the analysis becomes more intricate and subtle when higher-order resonances occur ($\mathbf{k} \ge 2$).   To achieve the sharp   estimates for $\mathbf{k} \ge 2$,  one must conduct a  deeper investigation into the expansion structures of $\mathcal{M}_{\alpha,\beta}^{\pm}(\lambda)$ and  show that certain intricate  operators do not  vanish (see Propositions \ref{sine-sharp}, \ref{sine-sharp-2}, \ref{2-sine-sharp-2}, \ref{sharp-3-4} and Lemma \ref{lemma:non-zero_1}). Despite these  technical challenges, our unified framework  resolves all threshold singularities, thereby providing a complete picture of the time-decay behavior for the two-dimensional plate equation across all resonance scenarios.

\section{Resolvent expansions}\label{sec:reso-expan}
\subsection{Free resolvent  }\label{subsec:free_resolvent}
In this subsection, we derive the asymptotic expansions of the free resolvent $R_0^\pm(\lambda^4)$ for the bi-Laplace operator $\Delta^2$.

Recall the expression of the free resolvent for the Laplacian (see, e.g., \cite{Erdogan-Green}):
\begin{equation}\label{resolvent-R}
	R^\pm (-\Delta;\lambda^2)(x,y) = \pm\frac{i}{4}H_0^\pm(\lambda|x-y|),
\end{equation}
where $H_0^\pm (z)= J_0(z)\pm i Y_0(z) $ are Hankel functions of order zero. From the series expansions for Bessel functions
(see, e.g., \cite{AS64}), we have
\begin{gather*}
    J_0(z)=1-\frac14z^2+\frac1{64}z^4-\frac1{2304}z^6+ \frac{1}{147456}z^8+O(z^{10}),\\
    Y_0(z)=\frac{2}{\pi}\bigl(\log\frac{z}{2}+\gamma\bigr)J_0(z)
          +\frac{2}{\pi}\Bigl(\frac14z^2-\frac3{128}z^4+\frac{11}{13824}z^6-\frac{25}{1769472}z^8+O(z^{10})\Bigr),
\end{gather*}
where $\gamma $ is Euler's constant. In addition, for $|z| \gtrsim 1$,
\begin{equation}\label{id-H0big}
H^\pm_0(z)=e^{\pm iz}w_\pm(z),\qquad |w_\pm^{(\ell)}(z)|\lesssim(1+|z|)^{-1/2-\ell}, \
\ell= 0,1,2,\cdots.
\end{equation}

Note that
\begin{equation*}
    R^\pm_0(\lambda^4) = \frac{1}{2\lambda^2} \left( R^\pm(-\Delta; \lambda^2) - R(-\Delta; -\lambda^2) \right), \quad \lambda > 0.
\end{equation*}
Then by \eqref{resolvent-R} and \eqref{id-H0big}, it follows that
\begin{equation}\label{reso-big}
    R_0^\pm(\lambda^4)(x,y) = \frac{i}{8\lambda^2} \left[ \pm e^{\pm i\lambda |x - y|} w_\pm(\lambda|x - y|) - e^{-\lambda |x - y|} w_+(i\lambda|x - y|) \right],\ \ \  \lambda|x - y| \gtrsim 1.
\end{equation}
 For $\lambda|x-y| \ll 1 $,  the series expansions of Bessel functions give
   \begin{equation}\label{reso_small}
   \begin{split}
    R^\pm_0(\lambda^4)(x,y)=&\frac{a_0^\pm}{\lambda^2}G_0(x,y)+g_0^\pm(\lambda)G_2(x,y)+b_0G_2^0(x,y)+a_2^\pm\lambda^2G_4(x,y)\\
&+g_1^\pm(\lambda)\lambda^4G_6(x,y)+b_1\lambda^4G_6^0(x,y)+a_4^\pm\lambda^6G_8(x,y)
    +O_2\bigl(\lambda^{8-\varepsilon}|x-y|^{10-\varepsilon}\bigr),
    \end{split}
    \end{equation}
where
$ b_0 = \frac{1}{8\pi}, \ b_1 = \frac{1}{4608\pi},\
a_0^\pm = \pm \frac{i}{8}, \  
a_2^\pm = \pm \frac{i}{512}, \      
a_4^\pm=\pm \frac{i}{1179648},$ and
$$
g_0^\pm(\lambda)=b_0\log\lambda+a_1^\pm, \ g_1^\pm(\lambda)=b_1\log\lambda+a_3^\pm,\ \ a_1^\pm, a_3^\pm\in\mathbb{C}\setminus\mathbb{R}, $$
\begin{equation}\label{def:G_k}
   G_{k}(x,y)= |x-y|^k, \,
   G_k^0(x,y)= |x-y|^k\log|x-y|.
\end{equation}
Combining \eqref{reso-big} and \eqref{reso_small}, we obtain the following expansion of $R^\pm_0(\lambda^4)(x,y)$ for any $\lambda > 0$:
\begin{lemma}[{\cite[Lemma 2.2]{LSY21}}]\label{lem-reso}
For any $\lambda> 0 $, we have the following expansions:
\begin{equation}\label{free-R0pmlambda4}
   \begin{split}
	  R^\pm_0(\lambda^4)(x,y)=&\frac{a_0^\pm}{\lambda^2}G_0(x,y)+g_0^\pm(\lambda)G_2(x,y)+b_0G_2^0(x,y)+a_2^\pm\lambda^2G_4(x,y)\\
&+g_1^\pm(\lambda)\lambda^4G_6(x,y)+b_1\lambda^4G_6^0(x,y)+O_2\bigl(\lambda^{6}|x-y|^{8}\bigr).\nonumber
		\end{split}
	\end{equation}
\end{lemma}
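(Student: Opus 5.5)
The plan is to prove the expansion separately in the two regimes $\lambda|x-y|\ll1$ and $\lambda|x-y|\gtrsim1$. In each regime we show that the difference between $R^\pm_0(\lambda^4)(x,y)$ and the six explicit terms is $O_2(\lambda^6|x-y|^8)$. Throughout, write $r=|x-y|$ and $s=\lambda r$. We read $O_2(\lambda^6 r^8)$ as requiring, for $\ell=0,1,2$, the $\ell$-th $\lambda$-derivative of the remainder to be bounded by $\lambda^{6-\ell}r^8$, which equals $\lambda^{-2-\ell}s^8$.

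\emph{Small regime $s\ll1$.} We start from \eqref{reso_small}. The terms listed in the lemma are exactly the terms of \eqref{reso_small} apart from $a_4^\pm\lambda^6G_8$ and the error $O_2(\lambda^{8-\varepsilon}r^{10-\varepsilon})$. The first of these is trivially $O_2(\lambda^6r^8)$. For the second, $\lambda^{8-\varepsilon-\ell}r^{10-\varepsilon}=\lambda^{6-\ell}r^8\,s^{2-\varepsilon}\le \lambda^{6-\ell}r^8$ since $s\ll1$. By continuity of all terms, the same bound holds on any compact range $s\le C$. This is because the series for $J_0,Y_0$ converge for all $z$, so the Taylor remainder keeps the same form with constants depending on $C$.

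\emph{Large regime $s\gtrsim1$.} We bound $R_0^\pm$ and each explicit term separately by $\lambda^{-2-\ell}s^8$ after $\ell$ derivatives.
\begin{itemize}
\item By \eqref{reso-big} and \eqref{id-H0big}, $R_0^\pm(\lambda^4)(x,y)=\lambda^{-2}\Phi_\pm(s)$, where $\Phi_\pm$ is built from $e^{\pm is}w_\pm(s)$ and $e^{-s}w_+(is)$. Each $s$-derivative of $\Phi_\pm$ costs at most $O(1)$ times $s^{-1/2}$, and $\partial_\lambda=r\partial_s$. Hence $|\partial_\lambda^\ell R_0^\pm|\lesssim \lambda^{-2-\ell}s^{\ell-1/2}\le \lambda^{-2-\ell}s^{8}$.
\item The term $a_0^\pm\lambda^{-2}$ is $\lambda^{-2}\cdot1$, which is clearly admissible.
\item The logarithmic terms need care, because $\log\lambda$ alone is not controlled by powers of $s$. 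We therefore recombine them using $\log\lambda=\log s-\log r$:
\[
g_0^\pm(\lambda)G_2+b_0G_2^0=r^2\bigl(b_0\log s+a_1^\pm\bigr)=\lambda^{-2}s^2\bigl(b_0\log s+a_1^\pm\bigr),
\]
\[
g_1^\pm(\lambda)\lambda^4G_6+b_1\lambda^4G_6^0=\lambda^{-2}s^6\bigl(b_1\log s+a_3^\pm\bigr).
\]
\item The term $a_2^\pm\lambda^2G_4$ equals $\lambda^{-2}s^4$.
\end{itemize}
Every term is thus of the form $\lambda^{-2}p(s)$ with $|p^{(j)}(s)|\lesssim s^{8-j}$ for $s\ge1$. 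Since $\partial_\lambda^\ell\bigl(\lambda^{-2}p(\lambda r)\bigr)$ is a combination of $\lambda^{-2-\ell+j}r^jp^{(j)}(s)=\lambda^{-2-\ell}s^jp^{(j)}(s)$, it is bounded by $\lambda^{-2-\ell}s^8$. Summing these bounds gives the claim in the large regime.

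\emph{Gluing and main obstacle.} Covering $s\le C$ by the first step and $s\ge c$ by the second, with overlapping ranges, gives the lemma for all $\lambda>0$ uniformly in $x,y$. No cutoff is needed, since the remainder is defined pointwise. The only delicate point is the recombination of the $\log\lambda$ and $\log r$ terms into $\log(\lambda r)$ in the large regime. Without it, the bound would fail for $\lambda\to0$ with $r\to\infty$.
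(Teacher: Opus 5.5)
Your proposal is correct and follows the same route as the paper, which obtains the lemma simply by combining the small-argument expansion \eqref{reso_small} with the large-argument representation \eqref{reso-big}. You also make explicit a step the paper leaves implicit: recombining $\log\lambda$ and $\log|x-y|$ into $\log(\lambda|x-y|)$ in the regime $\lambda|x-y|\gtrsim1$, which is indeed what makes every term of the form $\lambda^{-2}p(\lambda|x-y|)$ and gives the uniform $O_2(\lambda^6|x-y|^8)$ bound.
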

For convenience in the subsequent analysis, we define
\begin{equation}\label{def:F_pm}
R^\pm_0(\lambda^4)(x,y):=\lambda^{-2}F_\pm(\lambda|x-y|),\quad
\widetilde{R}^\pm(\lambda|x-y|):=\lambda^{2}e^{\mp i\lambda|x-y|}R^\pm_0(\lambda^4)(x,y).
\end{equation}
Then $F_\pm(p)=e^{\pm ip}\widetilde{R}^\pm(p)$. From \eqref{reso-big} and \eqref{reso_small},    we have
\begin{equation}\label{free-widetilde_R}
\begin{aligned}
\widetilde{R}^\pm(p)=&\,e^{\mp ip}\Bigl[a_0^\pm+b_0p^2\log p+a_1^\pm p^2
                     +a_2^\pm p^4+b_1p^6\log p+a_3^\pm p^6+O_2\bigl(p^{8}\bigr)\Bigr]\chi_1(p)\\
                     &+\frac{i}{8}\Bigl(\pm w_\pm(p)-e^{\mp ip}e^{-p}w_+(ip)\Bigr)\chi_2(p).
\end{aligned}
\end{equation}


In the sequel,  we denote by $G_k^0$ (resp. $G_k$) the integral operator with kernel $G_k^0(x,y)$ (resp. $G_k(x,y)$) defined in \eqref{def:G_k}.
In particular, $b_0 G_2^0$ is a fundamental solution of $\Delta^2$, i.e., $\Delta^2 (b_0 G_2^0) = \delta$ in the sense of distributions. Consequently, on the orthogonal complement of $\ker(\Delta^2)$, we may identify $b_0 G_2^0$ with $(\Delta^2)^{-1}$.

\subsection{Asymptotic expansion of $R_V^\pm(\lambda^4)$ near zero}\label{subsec:Q}  
This subsection is devoted to the asymptotic expansion of the resolvent $R_V^\pm(\lambda^4)$ as $\lambda\to 0^+$. We begin by introducing several key projection subspaces of $L^2(\mathbb{R}^2)$.  

\begin{definition}\label{defS}
	Let $x = (x_1, x_2) \in \mathbb{R}^2$, $v=\sqrt{|V|}$.  Define $P = \|V\|_{L^1(\mathbb{R}^2)}^{-1} \langle \cdot, v\rangle v$ and $T_0 : = U + b_0 v G_2^0 v $. Set  $S_{-1} = \Id$, $S_6 = 0$, and for $j=0,\dots,5$, let $S_j$ be the orthogonal projection onto the subspace $S_jL^2$ defined as follows:
	\begin{itemize}
			\item $S_0L^2 = \{ f \in L^2 \mid \langle  v, f \rangle = 0\}$.
		\item $S_1L^2 = \{ f \in S_0L^2 \mid \langle x_j v, f \rangle = 0,\ j = 1,2 \}$.
		\vskip0.1cm
		\item $S_2L^2 = \{ f \in S_1L^2 \mid S_1T_0f = 0 \}$.
		\vskip0.1cm
		\item $S_3L^2 = \{ f \in S_2L^2 \mid S_0T_0f = 0 \}$.
		\vskip0.1cm
		\item $S_4^0L^2 = \{ f \in S_3L^2 \mid \langle |x|^2 v, f \rangle = 0 \}$.
        \vskip0.1cm
		\item $S_4^1L^2 = \{ f \in S_3L^2 \mid \langle x_ix_j v, f \rangle = 0,\ i,j = 1,2\}$.
		\vskip0.1cm
		\item $S_4L^2 = \{ f \in S_3L^2 \mid \langle x_ix_j v, f \rangle = 0,\ i,j = 1,2,\ \text{and}\ PT_0f = 0 \}$.
		\vskip0.1cm
		\item $S_5L^2 = \{ f \in S_4L^2 \mid \langle x_ix_jx_k v, f \rangle = 0,\ i,j,k = 1,2 \}$.
	\end{itemize}
\end{definition}

Note that these projections $S_j$ are well-defined due to the decay condition \eqref{condition} on $V$.  Since  $vG_2^0 v$ is a Hilbert-Schmidt operator and
$$
 T_0  = U + b_0 v G_2^0 v
$$
is a compact perturbation of $U$,  Fredholm alternative then ensures that $S_2$ is a finite-rank projection. Moreover, observe that $P+S_0=\Id$, then it follows that $S_jT_0=T_0S_j=0$ for $j=4,5.$

From the definitions above, we have the chain of inclusions
\[
S_0 L^2 \supseteq S_1 L^2 \supseteq S_2 L^2 \supseteq S_3 L^2 \supseteq S_4^0 L^2  \supseteq S_4^1 L^2 \supseteq S_4 L^2 \supseteq S_5 L^2,
\]
and the projections $S_j$ (for $j=2,\cdots,5$) and $S_4^0, S_4^1$ are finite-rank operators.

We now introduce the following complementary family of orthogonal projections.
\begin{definition}\label{defQ}
Define
$$
	Q_j := S_{j-1} - S_j \quad \text{for } j = 0,1,\dots,6, \quad \text{and} \quad Q_4^0 = S_3 - S_4^0, \quad Q_4^1 = S_4^0 - S_4.
$$
and further decompose $Q_4^1$ into the sum of the following projectors:
$$
Q_{41}^1 := S_4^0 - S_4^1, \qquad Q_{42}^1 := S_4^1 - S_4. 
$$
\end{definition}
In particular, we observe that $Q_0=P.$
According to Definitions \ref{defS} and \ref{defQ}, we obtain the following cancellation properties:
\begin{remark}\label{cancellationQ}
	The orthogonal projections $Q_4^0$, $Q_4^1$, and $Q_\alpha$ (for $\alpha \in \mathbb{N}_0$ with $1 \le \alpha \le 6$) satisfy:
	\begin{align*}
		& Q_{1}v = 0 \quad (\text{i.e., } \langle v, Q_{1}f \rangle = 0,\ \forall f \in L^2), \\
		& Q_{\alpha}v = Q_{\alpha}(x_jv) =Q_{4}^{0}v = Q_{4}^{0}(x_jv) =  0 \quad (\alpha = 2,3,4), \\
		& Q_{4}^{1}v = Q_{4}^{1}(x_jv )= Q_{4}^{1}(|x|^2v )= Q_{41}^{1}v = Q_{41}^{1}(x_jv )= Q_{41}^{1}(|x|^2v )= 0,   \\
        & Q_{42}^{1}v = Q_{42}^{1}(x_jv )= Q_{42}^{1}(x_ix_jv )=Q_{5}v = Q_{5}(x_jv) = Q_{5}(x_ix_jv) = 0,\quad Q_5T_0=T_0Q_5=0,\\
		& Q_{6}v = Q_{6}(x_jv) = Q_{6}(x_ix_jv) = Q_{6}(x_ix_jx_kv) = 0,\ Q_6T_0=T_0Q_6=0.
	\end{align*}
\end{remark}

 Besides Definition \ref{definition1}, zero-energy resonances of $H$ can also be characterized by the orthogonal projections $S_j$; the proof is given in Section \ref{sec:proof_characterization}. Recall that $\mathbf{k} \in \{0,1,2,3,4\}$  denotes  the zero-energy spectral types: $\mathbf{k}=0$ for a regular point, $\mathbf{k} \in \{1,2,3\}$ for zero resonances, and $\mathbf{k}=4$ for a zero eigenvalue. 
Throughout this paper, the decay index $\mu$ of the potential $V$ is required to satisfy 
\begin{equation}\label{condition}
	\mu> \begin{cases}11, & \mathbf{k}=0,1, \\
14, &  \mathbf{k}=2,\\
18, & \mathbf{k}=3,4. \end{cases}
		\end{equation}
\begin{proposition}\label{characterizations}
Assume that $\left|V(x)\right| \lesssim \left \langle  x\right \rangle^{-\mu}$ for some $\mu>0$
satisfying the condition \eqref{condition}.
Then 
\begin{itemize}
\item[(i)] Zero  is a regular point of $H$  $( \text{i.e.,}\  \mathbf{k}=0 )$  if and only if $S_{2}=0$;
\item[(ii)] Zero is a $\mathbf{k}$-th kind resonance with $1\leq\mathbf{k}\le 4$ if and only if  $S_{\mathbf{k}+1} \neq 0$ and $S_{\mathbf{k}+2}=0$.
\end{itemize}
\end{proposition}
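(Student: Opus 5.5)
The plan is to set up a dictionary between kernel elements of the successive projections $S_j$ and distributional solutions of $H\phi=0$ in the spaces $W_{-\sigma}$. The natural maps are
\[
f\mapsto \phi=-b_0G_2^0vf+c_0+c_1x_1+c_2x_2+c_3|x|^2,
\qquad
\phi\mapsto f=Uv\phi ,
\]
where the polynomial is chosen so that $\phi$ has no unwanted growth. The constants $c_0,c_1,c_2,c_3$ come from $P T_0 f$ and the moments $\langle x_jv,\cdot\rangle$, much as in the Erdo\u{g}an--Green and Li--Soffer--Yao analyses.

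First, take $f\in S_1L^2$ with $S_1T_0f=0$, i.e. $f\in S_2L^2$. Write $T_0f=Uf+b_0vG_2^0vf$. Then $S_1T_0f=0$ means $Uf+b_0vG_2^0vf=v(c_0+c_1x_1+c_2x_2)$ for some constants. Dividing by $v$ suggests setting $\phi:=-b_0G_2^0vf+c_0+c_1x_1+c_2x_2$, so that $Uf=v\phi$, i.e. $f=Uv\phi$. Since $\Delta^2(b_0G_2^0)=\delta$ and $\Delta^2$ kills affine functions, we get $\Delta^2\phi=-vf=-V\phi$, so $H\phi=0$.

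Next comes the growth analysis. I will Taylor expand $|x-y|^2\log|x-y|$ for $|x|\gg|y|$, using the decay condition \eqref{condition} to control remainders. The expansion has the form
\[
|x|^2\log|x|\,\langle v,f\rangle
+(\text{terms } x\log|x|\cdot\langle yv,f\rangle)
+\ \text{terms of order } \log|x|
+\cdots
\]
Because $f\in S_1$, both $\langle v,f\rangle$ and $\langle y_jv,f\rangle$ vanish. Hence $G_2^0vf$ grows at most like $\log|x|$ times quadratic moments, so $\phi\in W_{-2}$ for every $f\in S_2L^2$. Conversely, if $\phi\in W_{-2}$ solves $H\phi=0$, set $f=Uv\phi$. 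I will show $\phi+b_0G_2^0vf$ is biharmonic with growth $O(|x|^{2+})$. By Liouville it is then a polynomial of degree at most $2$; the quadratic part is forced to be harmonic-compatible and is absorbed. Testing against $v$ and $x_jv$ then gives $f\in S_1$ and $S_1T_0f=0$. This yields the equivalence ``$S_2\ne0\iff$ nontrivial solution in $W_{-2}$'', which proves (i) once we check that first-kind resonances are exactly the $W_{-2}$ solutions.

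Finally, refine through the chain level by level. $S_3$ (the condition $S_0T_0f=0$) kills the linear term $c_1x_1+c_2x_2$, giving $\phi\in W_{-1}$. The conditions $\langle x_ix_jv,f\rangle=0$ kill the $\log|x|$ and constant-angle terms, giving $\phi\in W_0$. $PT_0f=0$ removes the constant $c_0$, and $S_5$ (third moments) removes the $|x|^{-1}$ tail, giving $\phi\in L^2$. At each level, run the converse via Liouville and the moment identities.

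The main obstacle is the bookkeeping in the middle levels, where the $\log|x|$ coefficients from the quadratic moments and the $|x|^2$ polynomial freedom interact. I will handle it by exact expansion of $\log|x-y|$ in spherical harmonics up to the needed order.
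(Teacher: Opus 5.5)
There is a genuine gap at the first level, and it breaks part (i) and the case $\mathbf k=1$ of part (ii). The equivalence you aim for, ``$S_2\neq0$ iff $H\phi=0$ has a nontrivial solution in $W_{-2}$'', is false. The step ``testing against $v$ and $x_jv$ gives $f\in S_1$'' cannot be carried out from $\phi\in W_{-2}$ alone. In the expansion of $G_2^0vf$, the condition $\phi\in W_{-2}$ kills only the coefficient $\langle v,f\rangle$ of $|x|^2\log|x|$. The first-moment terms $(2\log|x|+1)x_j\langle x_jv,f\rangle$ already lie in $W_{-2}$, because $\int^\infty r^{3-2s}\log^2 r\,dr<\infty$ for $s>2$. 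So nothing forces $\langle x_jv,f\rangle=0$. Such solutions genuinely occur. Take $V=\epsilon\langle x\rangle^{-12}$ with $\epsilon>0$ small. Then $T_0=1+O(\epsilon)$ is invertible. Setting $f=T_0^{-1}(vp)$, with $p$ affine and $\langle v,T_0^{-1}(vp)\rangle=0$, gives an (at least) two-dimensional family of nonzero solutions $\phi=-b_0G_2^0vf+p\in W_{-2}$ of $H\phi=0$. On the other hand, $S_1T_0S_1=S_1+O(\epsilon)$ is invertible on $S_1L^2$, so $S_2=0$ and zero is regular. Hence first-kind resonances are not ``exactly the $W_{-2}$ solutions''. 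This is precisely why Definition~\ref{definition1}(i) includes the structural condition $\phi-\sum_{|\alpha|=1}C_\alpha x^\alpha\in W_{-1}$.

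The missing step is to use this structural condition in the converse direction: it excludes the $x_j\log|x|$ profile and yields $\langle x_jv,f\rangle=0$. The paper does this by writing $\int x^\alpha vf\,\eta(\rho x)\,dx=-\int x^\alpha\eta(\rho x)\Delta^2\phi_0\,dx$, which is allowed because $\Delta^2$ annihilates the linear part. It then integrates by parts and bounds the result by $\rho^{2-|\alpha|-\varepsilon}\|\langle x\rangle^{-1-\varepsilon}\phi_0\|_{L^2}\to0$ for $|\alpha|\le1$. In your expansion framework the same conclusion follows because $x_j\log|x|$ is not of the form (affine) $+\,W_{-1}$.

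There are two smaller corrections. First, after the Liouville step the quadratic part of the polynomial must vanish, not be ``absorbed''. Otherwise $\phi\notin W_{-2}$, and $T_0f=v(\phi+b_0G_2^0vf)$ would not lie in $\operatorname{span}\{v,x_1v,x_2v\}$. Second, the $c_3|x|^2$ term in your ansatz never arises.

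With these fixes the higher levels go through as you sketch, with one proviso at the $L^2$ level. There the third-moment cancellations must come from the asymptotic expansion, as in Lemma~\ref{S_5canc}. The cutoff bound from $\phi\in L^2$ alone only reaches moments of order two.
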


The relationship between the orthogonal projections $Q_j$ and $S_j$, as given in Definition \ref{defQ}, yields the following equivalent characterizations in terms of these $\{Q_j\}$:

\begin{proposition}\label{characterizations Q}
	Assume that $|V(x)| \lesssim \langle x \rangle^{-\mu}$ for some $\mu > 0$ satisfying the condition \eqref{condition}. Then
	\begin{itemize}
		\item[(i)] Zero is a regular point of $H$ $(i.e., \mathbf{k} = 0) $ if and only if
		\[
		Q_0 + Q_1 + Q_2 = \Id.
		\]
		\item[(ii)]  Zero is a $\mathbf{k}$-th kind resonance of $H$ with $1 \le \mathbf{k} \le 4$, if and only if
		\[
		\sum_{j=0}^{\mathbf{k}+1} Q_j \neq \Id \quad \text{and} \quad \sum_{j=0}^{\mathbf{k}+2} Q_j = \Id.
		\]
	\end{itemize}
\end{proposition}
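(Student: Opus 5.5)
The plan is to reduce Proposition~\ref{characterizations Q} directly to Proposition~\ref{characterizations} by a telescoping identity, so that no new spectral analysis is needed. By Definition~\ref{defQ}, $Q_j = S_{j-1} - S_j$ for $j = 0,1,\dots,6$, with the conventions $S_{-1} = \Id$ and $S_6 = 0$ from Definition~\ref{defS}. Summing over $j$ gives, for every $0 \le m \le 6$,
\[
\sum_{j=0}^{m} Q_j = \sum_{j=0}^{m} \bigl(S_{j-1} - S_j\bigr) = S_{-1} - S_m = \Id - S_m .
\]
Here $Q_4$ is understood as $S_3 - S_4$, as in Definition~\ref{defQ}. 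The finer splitting $Q_4 = Q_4^0 + Q_{41}^1 + Q_{42}^1$ plays no role in this step. It is still consistent with the telescoping, since
\[
Q_4^0 + Q_{41}^1 + Q_{42}^1 = (S_3 - S_4^0) + (S_4^0 - S_4^1) + (S_4^1 - S_4) = S_3 - S_4 .
\]

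For part (i), take $m = 2$. The identity shows that $Q_0 + Q_1 + Q_2 = \Id$ if and only if $S_2 = 0$. By Proposition~\ref{characterizations}(i), $S_2 = 0$ is exactly the statement that zero is a regular point of $H$.

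For part (ii), fix $1 \le \mathbf{k} \le 4$ and apply the identity with $m = \mathbf{k}+1$ and $m = \mathbf{k}+2$; both lie in $\{2,\dots,6\}$, so the identity is available. This gives two equivalences:
\begin{itemize}
\item $\sum_{j=0}^{\mathbf{k}+1} Q_j \ne \Id$ if and only if $S_{\mathbf{k}+1} \ne 0$;
\item $\sum_{j=0}^{\mathbf{k}+2} Q_j = \Id$ if and only if $S_{\mathbf{k}+2} = 0$.
\end{itemize}
Proposition~\ref{characterizations}(ii) says that zero is a $\mathbf{k}$-th kind resonance exactly when $S_{\mathbf{k}+1} \ne 0$ and $S_{\mathbf{k}+2} = 0$, which gives the claim. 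In the eigenvalue case $\mathbf{k} = 4$, the second condition is automatic because $S_6 = 0$. This matches the fact that Proposition~\ref{characterizations} then reduces to $S_5 \ne 0$.

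There is no genuine obstacle here; all the substance sits in Proposition~\ref{characterizations}, which is taken as given. The only points needing care are bookkeeping. First, the boundary conventions $S_{-1} = \Id$ and $S_6 = 0$ must be used consistently. Second, $Q_0 = S_{-1} - S_0 = \Id - S_0 = P$, in agreement with $P + S_0 = \Id$. Third, the sums must be read as sums of the projections $Q_j$ indexed by integers, with $Q_4$ taken as a single block.
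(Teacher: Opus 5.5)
Your proposal is correct and follows essentially the same route as the paper. The paper presents this proposition as an immediate consequence of Proposition~\ref{characterizations} through the relation $Q_j=S_{j-1}-S_j$, which telescopes to $\sum_{j=0}^{m}Q_j=\Id-S_m$ exactly as in your argument, and your handling of the conventions $S_{-1}=\Id$, $S_6=0$ and of $Q_4$ as the single block $S_3-S_4$ is sound.
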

Next we turn to study the asymptotic expansions of $R^\pm_V(\lambda^4)$ near $\lambda=0.$
Let $U(x)=\operatorname{sgn}(V(x))$ and $v(x)=|V(x)|^{1/2}$, then we have $ V=Uv^2$ and
the following symmetric resolvent identity:
 \begin{equation}\label{id-RV}
R^\pm_V(\lambda^4) = R_0^\pm(\lambda^4) -R^\pm_0(\lambda^4)v(M^\pm(\lambda))^{-1}vR^\pm_0(\lambda^4),
\end{equation}
where $M^{\pm}(\lambda)=U+ vR^\pm_0(\lambda^4)v$. Hence, we need to establish the expansions of
$(M^\pm(\lambda))^{-1}$.

Recall that  $T_0=U+b_0 v G_2^0 v$.
Using the expansions of $R^\pm_0(\lambda^4)(x,y)$ from Lemma \ref{lem-reso}, we obtain the following asymptotic expansions of $M^\pm(\lambda)$:
\begin{itemize}
\item If $\mu > 11$, then
\begin{equation}\label{Mpm-1}
\begin{split}
M^\pm(\lambda) = \frac{a_0^\pm}{\lambda^2}v G_{0} v + g_0^\pm(\lambda)v G_{2} v + T_0 + \Gamma_2^\pm (\lambda);
\end{split}
\end{equation}

\item If $\mu > 14$, then
\begin{equation}\label{Mpm-2}
\begin{split}
M^\pm(\lambda) = \frac{a_0^\pm}{\lambda^2}v G_{0} v + g_0^\pm(\lambda)v G_{2} v + T_0 + a_2^\pm\lambda^2 vG_4 v + \Gamma_{4-\epsilon}^\pm (\lambda);
\end{split}
\end{equation}

\item
If $\mu > 18$, then
\begin{equation}\label{Mpm-3}
\begin{split}
M^\pm(\lambda) = \frac{a_0^\pm}{\lambda^2}v G_{0} v + g_0^\pm(\lambda)v G_{2} v + T_0 + a_2^\pm\lambda^2 vG_4 v + g_1^\pm(\lambda)\lambda^4vG_6v + b_1\lambda^4 vG_6^0v  + \Gamma_6^\pm (\lambda).
\end{split}
\end{equation}
\end{itemize}
Here, $\Gamma_s^\pm (\lambda)$ denote $\lambda$-dependent Hilbert-Schmidt operators satisfying
\[
\big\| \Gamma_s^\pm (\lambda) \big\|_{\mathbb{B}(L^2)} +\lambda \big\| \partial_\lambda \Gamma_s^\pm (\lambda) \big\|_{\mathbb{B}(L^2)}+\lambda^2 \big\| \partial_\lambda^2 \Gamma_s^\pm (\lambda) \big\|_{\mathbb{B}(L^2)}
\lesssim \lambda^{s}.
\]

We now provide the asymptotic expansion of $(M^\pm(\lambda))^{-1}$ as $\lambda \to 0^+$; the proof can be seen in Section \ref{subsec:proof_inverse}. 

\begin{theorem}\label{thm:M_inverse}
	Assume that zero is a $\mathbf{k}$-th kind resonance with $0 \leq \mathbf{k} \leq 4$, and $|V(x)| \lesssim \left \langle  x\right \rangle^{-\mu}$ for some $\mu>0$ satisfying the condition \eqref{condition}.
	Then there exists $0 < \lambda_0 \ll 1$ such that for all $0 < \lambda \le \lambda_0$, the operator $\left(M^{\pm}(\lambda)\right)^{-1}$ on $L^2(\mathbb{R}^2)$ admits the asymptotic expansion:
	\begin{equation}\label{eq:M_inverse}  
		\left(M^{\pm}(\lambda)\right)^{-1} = \sum_{0 \leq \alpha, \beta \leq \mathbf{k}+2} \lambda^{2-k_\alpha-k_\beta} Q_\alpha \mathcal{M}_{\alpha, \beta}^{\pm}(\lambda) Q_\beta ,  
	\end{equation}  
	where the values of the parameter $k_\alpha$ are assigned as follows:  
	\begin{equation}\label{tab:k_alpha}  
		\begin{array}{c|ccccccc}  
			\alpha & 0 & 1 & 2 & 3 & 4 & 5 & 6 \\  
			\hline  
			k_\alpha & 0 & 1 & 1 & 1 & 2 & 3 & 3  
		\end{array}  
	\end{equation}  
	Moreover, the  operators $\mathcal{M}_{\alpha, \beta}^{\pm}(\lambda)$ possess the following properties:  
	
	\vskip0.3cm  
	\textbf{{\rm (I)}\  If zero is a regular point or a first-kind resonance of $H$} $(\text{i.e.,}\  \mathbf{k}=0,1 )$, then 
	\begin{align}\label{M0-1}
	\Big\| \partial_\lambda^\ell \mathcal{M}_{\alpha,\beta}^\pm(\lambda) \Big\|_{\mathbb{B}(L^2)} \lesssim
	\begin{cases}
		\lambda^{-\ell}, & \text{for } ( \alpha,\beta) \neq (3,3), \\
		|\log \lambda| \lambda^{-\ell}, &  \text{for } (\alpha,\beta) = (3,3),
	\end{cases}
	\end{align}
for  all $0 \le \alpha, \beta \le 3$ and $\ell=0,1,2$.	In particular, for  $(\alpha, \beta) = (0,0)$,
	\[
		\mathcal{M}_{0,0}^\pm(\lambda) = (a_0^\pm)^{-1} \|V\|_{L^1}^{-1}Q_0  + \lambda (-\log \lambda)^{3/2}\Lambda^\pm(\lambda).
	\]	  
Here and hereafter,  $\Lambda^\pm(\lambda)$ denotes a generic operator in $\mathbb{B}(L^2)$ (possibly varying at each occurrence) satisfying
$$
\left\|\partial_\lambda^\ell \Lambda^\pm(\lambda)\right\|_{\mathbb{B}(L^2)} \lesssim \lambda^{-\ell},\quad \ell = 0, 1, 2.$$
	\vskip0.1cm
	 \textbf{{\rm (II)}\ If  zero is a  second-kind resonance of $H$} $(\text{i.e.,}\  \mathbf{k}=2 )$,  then
	for all $0 \le \alpha, \beta \le 3$ and $\ell=0,1,2$, the estimates \eqref{M0-1} remain valid. Moreover,
for $\alpha \in \{1,2,3\}$,   
	\[  
	\big\|\partial_\lambda^\ell \mathcal{M}_{4,\alpha}^\pm(\lambda)\big\|_{\mathbb{B}(L^2)} + \big\|\partial_\lambda^\ell \mathcal{M}_{\alpha,4}^\pm(\lambda)\big\|_{\mathbb{B}(L^2)} \lesssim \lambda^{1-\ell}|\log \lambda|^{4}, \quad \ell = 0,1,2.  
	\]  
	It remains to specify the case
	$
	(\alpha,\beta)
	\in
	\{(0,4),(4,0),(4,4)\}
,$
	which are given as follows:
	\vskip0.3cm
	\textbf{Case 1: $Q_4^0 = 0$.}   For all $(\alpha,\beta)
	\in
	\{(0,4),(4,0),(4,4)\}$, we have
	\[
	\mathcal{M}_{\alpha,\beta}^\pm(\lambda) = Q_\alpha \Lambda^\pm(\lambda)Q_\beta.
	\]
    
	\textbf{Case 2: $Q_4^0 \neq 0$.}   The case for $(\alpha, \beta) \in \{ (0,4), (4,0), (4,4)\}$  further splits based on $Q_{42}^1$:  
	\begin{itemize}  
		\item \textit{ When $Q_{42}^1 = 0$:}  
		$$
		\begin{aligned}
			&\mathcal{M}_{4,0}^\pm(\lambda)=(\log \lambda)^{-1}  Q_4 \Lambda^\pm(\lambda) Q_0,  \quad
			\mathcal{M}_{0,4}^\pm(\lambda)=(\log \lambda)^{-1}  Q_0 \Lambda^\pm(\lambda) Q_4,  \\
			&\mathcal{M}_{4,4}^\pm(\lambda) = \sum_{h,l\in\{0,1\}}(\log \lambda)^{-(2-h-l)} Q_4^h \Lambda^\pm(\lambda) Q_4^l;
		\end{aligned}
		$$	
		\item \ \textit{When $Q_{42}^1 \neq 0$:}  
		\begin{equation*}  
			\begin{aligned}  
				\mathcal{M}_{0,4}^{\pm}(\lambda) &= (\log \lambda)^{-1} Q_0 \Lambda^\pm(\lambda) Q_{4}^0 + Q_0 \Lambda^\pm(\lambda) Q_{4}^1, \\  
				\mathcal{M}_{4,0}^{\pm}(\lambda) &= (\log \lambda)^{-1} Q_{4}^0 \Lambda^\pm(\lambda) Q_0 + Q_{4}^1 \Lambda^\pm(\lambda) Q_0, \\  
				\mathcal{M}_{4,4}^{\pm}(\lambda) &= Q_{4} \Lambda^\pm(\lambda) Q_{4} + (\log\lambda)\bigl(Q_{4} \Lambda^\pm(\lambda) Q_{4}^1+Q_{4}^1 \Lambda^\pm(\lambda) Q_{4}\bigr) + (\log\lambda)^2 Q_{4}^1 \Lambda^\pm(\lambda) Q_{4}^1.
			\end{aligned}  
		\end{equation*}  
	\end{itemize}

	\vskip0.1cm
	 \textbf{{\rm (III)} If  zero  is a  third-kind resonance or an eigenvalue of $H$} $(\text{i.e.,}\  \mathbf{k}=3,4 )$,  then
	for all $0 \le \alpha, \beta \le 4$ and $\ell=0,1,2$, all the conclusions established in the second resonance case   remain valid. Moreover,
	for $\alpha \in \{5,6\}$ and $0 \le \beta \le 4$,
	\[  
	\big\|\partial_\lambda^{\ell} \mathcal{M}_{\alpha,\beta}^\pm(\lambda)\big\|_{\mathbb{B}\left(L^2\right)}  
	+ \big\|\partial_\lambda^{\ell} \mathcal{M}_{\beta,\alpha}^\pm(\lambda)\big\|_{\mathbb{B}\left(L^2\right)}  
	\lesssim\lambda^{-\ell},\quad \ell=0,1,2.
	\]  
	For the remaining principal pair $(\alpha,\beta) \in \{(5,5), (5,6), (6,5), (6,6)\},$  
	\[  
	\mathcal{M}_{\alpha,\beta}^\pm(\lambda) = 
	 \mathcal{A}_{\alpha,\beta}(\lambda) + (\log \lambda)^{-2}\Gamma_{\alpha,\beta}^{\pm}(\lambda).  
	\]  
	Here, the principal terms $\mathcal{A}_{\alpha,\beta}(\lambda)$ are independent of the sign $\pm$ and 
	all $\mathcal{A}_{\alpha,\beta}(\lambda), \Gamma_{\alpha,\beta}^{\pm}(\lambda)\in \mathbb{B}(L^2)$  satisfy
	\[
	\big\|\partial_\lambda^\ell \mathcal{A}_{\alpha,\beta}(\lambda)\big\|_{\mathbb{B}(L^2)} + \big\|\partial_\lambda^\ell \Gamma_{\alpha,\beta}^{\pm}(\lambda)\big\| _{\mathbb{B}(L^2)} \lesssim \lambda^{-\ell}, \quad \ell = 0,1,2.
	\]
\end{theorem}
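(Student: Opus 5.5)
The plan is to prove Theorem~\ref{thm:M_inverse} by an iterated Feshbach (Schur complement / Jensen--Nenciu) reduction applied to the expansions \eqref{Mpm-1}--\eqref{Mpm-3} of $M^\pm(\lambda)$. Throughout we use the decomposition $L^2=\bigoplus_\alpha Q_\alpha L^2$ from Definition~\ref{defQ} and the cancellation properties of Remark~\ref{cancellationQ}.

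The first step is to record the block structure of $M^\pm(\lambda)$ relative to the $Q_\alpha$. Since $vG_0v=\|V\|_{L^1}P$, the top block $Q_0M^\pm Q_0$ has size $\lambda^{-2}$ and is invertible for small $\lambda$. We then expand $G_2(x,y)=|x|^2-2x\cdot y+|y|^2$. Because $S_0v=0$, we get $S_0vG_2vS_0=-2\sum_j S_0(x_jv)\langle\cdot,x_jv\rangle S_0$, which is a nonnegative finite-rank operator of order $\log\lambda$ acting on the $Q_1$ block. Similarly $G_4$ and $G_6$ reduce, on $S_3$ and $S_5$, to moment pairings in $x_ix_j v$ and $x_ix_jx_kv$, whose size depends on which moments survive. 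To make this rigorous, we rescale $M^\pm(\lambda)$ by the diagonal matrix $D(\lambda)=\sum_\alpha\lambda^{k_\alpha-1}Q_\alpha$. We then show that $\widetilde M(\lambda)=D M^\pm D$ is a bounded operator whose leading part is block triangular, so that one can invert it.

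The second step is the inversion, carried out in stages. First, Feshbach on $Q_0$ versus $S_0$ removes the $\lambda^{-2}$ block and yields $\mathcal M_{0,0}=(a_0^\pm)^{-1}\|V\|^{-1}Q_0+\dots$. The error $\lambda(-\log\lambda)^{3/2}$ comes from the off-diagonal $\log\lambda$ couplings through $Q_1$, where we use $\lambda^{-1}$ scaling against $|\log\lambda|^{1/2}$ factors. Second, on $S_0$ we split off $Q_1$, on which the $g_0^\pm(\lambda)$ term gives an invertible matrix of size $\log\lambda$ (by linear independence of $x_1v,x_2v$ modulo $v$). Third, on $S_1$ the operator $S_1T_0S_1$ is Fredholm; its Riesz inverse on $Q_2$ gives the bounded $\mathcal M$-entries. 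Fourth, on $S_2$ we pick up $S_0T_0$ and $PT_0$ terms, and the inverse on $Q_3$ carries a $|\log\lambda|$ factor, which explains the $(3,3)$ entry in \eqref{M0-1}. Regularity, $\mathbf k=0,1$, corresponds by Proposition~\ref{characterizations Q} to termination at $Q_2$ or $Q_3$, which gives part~(I).

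For $\mathbf k=2$ we continue with the $\lambda^2 vG_4v$ term on $S_3$. Expanding $|x-y|^4$ and using $S_3(x_ix_jv)$ moments shows that the effective matrix on $Q_4$ is
\[
a_2^\pm\lambda^2 S_3 vG_4vS_3+\lambda^{2}(\log\lambda)\,(\text{rank-one } |x|^2v\text{ term arising from } Q_0\text{ coupling}).
\]
The split between $Q_4^0$, $Q_{41}^1$ and $Q_{42}^1$ dictates whether $\log\lambda$ or $(\log\lambda)^{-1}$ factors appear. We do a further two-level Feshbach within $Q_4$, separating $Q_4^0$ (invertible at order $\log\lambda$) from $Q_4^1$, and this reproduces Cases~1 and~2. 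Case~(III) repeats the procedure with $\lambda^4(g_1^\pm vG_6v+b_1vG_6^0v)$ on $S_5$, where the leading matrix is $\sign$-independent because $\Im$ parts multiply vanishing moments. This gives $\mathcal A_{\alpha\beta}+(\log\lambda)^{-2}\Gamma$.

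Derivative bounds follow throughout from the symbol-type estimates on $\Gamma_s^\pm$ and the Neumann series. I expect the main obstacle to be the $\mathbf k=2$ sub-block bookkeeping: tracking exactly which $\log\lambda$ powers survive in $\mathcal M_{0,4}$ and $\mathcal M_{4,4}$ as $Q_{42}^1$ varies, and establishing invertibility of the relevant moment matrices. For that invertibility I would first try positivity arguments, namely that $vG_4v$ restricted to $S_1$ equals a Gram matrix of the quadratic moments.
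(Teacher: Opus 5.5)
Your strategy (rescale by powers of $\lambda$, then invert by successive Schur complements) is of the same type as the paper's. It breaks, however, at the step you yourself flag as the main obstacle, and the tool you propose there cannot close it.

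On $Q_{42}^1=S_4^1-S_4$ every moment $\langle x^\gamma v,f\rangle$ with $|\gamma|\le 2$ vanishes by definition. Hence $Q_4vG_4vQ_{42}^1=0$, and your Gram form $4\sum_{j,l}|\langle x_jx_lv,f\rangle|^2+2|\langle |x|^2v,f\rangle|^2$ is identically zero there. Your effective matrix on $Q_4$ (namely $a_2^\pm\lambda^2S_3vG_4vS_3$ plus a rank-one term along $S_3(|x|^2v)$) therefore annihilates $Q_{42}^1$. It is not invertible whenever $Q_{42}^1\neq0$, which can happen both for $Q_4^0=0$ and for $Q_4^0\neq0$.

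Nondegeneracy on $Q_{42}^1$ comes only from the $T_0$-coupling to $Q_0$, i.e.\ from the clause $PT_0f=0$ in the definition of $S_4$. Since $S_4\cap Q_{42}^1=\{0\}$, one has $Q_0T_0f\neq0$ for $0\neq f\in Q_{42}^1$.

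For $Q_4^0=0$ this enters through $\boldsymbol d^\pm=a_2^\pm Q_4^1vG_4vQ_4^1-(a_0^\pm)^{-1}\|V\|_{L^1}^{-1}Q_4^1T_0Q_0T_0Q_4^1$. Its quadratic form is definite only because $a_2^\pm$ and $-(a_0^\pm)^{-1}$ are positive multiples of the same $\pm i$.

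For $Q_4^0\neq0$ it enters at second order. Eliminating $Q_0$ produces a rank-one term of size $\lambda^2(\log\lambda)^2$ along $S_3(|x|^2v)$; it carries $(g_0^\pm)^2$, not a single $\log\lambda$. After eliminating that direction, the order-$\lambda^2$ rank-one contributions on $Q_4^1$ cancel at leading order. What survives on $Q_{42}^1$, after a further Schur step against $Q_{41}^1$, is smaller by a factor $(\log\lambda)^{-2}$, with leading coefficient $a_2^\pm\mathcal S$, where $\mathcal S=\mathcal W_1^*\boldsymbol d_2\mathcal W_1$ (this is $\boldsymbol d_1^\pm$ in the paper). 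Showing $\mathcal S>0$ requires two things:
\begin{itemize}
\item Schur-complement positivity of $\boldsymbol d_2$ on $Q_4^0$ (Lemma~\ref{lemma_2}(ii) applied to $vG_4v$ on $Q_4^0\oplus Q_{41}^1$);
\item injectivity of $\mathcal W_1$, i.e.\ of $Q_0T_0$ on $Q_{42}^1$.
\end{itemize}
This second-order mechanism is exactly what produces the $(\log\lambda)^2Q_4^1\Lambda^\pm(\lambda)Q_4^1$ term in Case 2. Without it you get neither invertibility nor the stated logarithmic structure.

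The same bookkeeping shows that your first step does not yield $\mathcal M_{0,0}=(a_0^\pm)^{-1}\|V\|_{L^1}^{-1}Q_0+\cdots$ outside part (I). If $Q_4^0\neq0$, the back-substituted $Q_4^0$ contribution cancels this term to leading order (the paper obtains $(\log\lambda)^{-2}Q_0\Lambda^\pm(\lambda)Q_0$). If $Q_4^0=0\neq Q_4$, it adds $\widetilde{\mathcal D}_{0,0}^\pm$.

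The same objection applies in part (III) on $Q_6=S_5$. All moments of order at most three vanish there, so $Q_6vG_6vQ_6=0$ and the leading block is $b_1Q_6vG_6^0vQ_6$, whose invertibility is not a moment-Gram statement. The paper uses $b_1\langle vG_6^0vf,f\rangle=\|b_0G_2^0vf\|_{L^2}^2$ together with Proposition~\ref{characterizations-1}(iv), which gives $f=-b_0UvG_2^0vf$; so $G_2^0vf=0$ forces $f=0$.

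Your explanation of the $\pm$-independence of $\mathcal A_{\alpha,\beta}$ is also off. On $Q_5$ the term $a_3^\pm Q_5vG_6vQ_5$ does not vanish, since cubic moments survive there. The principal part is $\pm$-independent because the coefficient $b_1$ of $\log\lambda$ in $g_1^\pm$ is real and dominates. This pushes $a_3^\pm$ to relative order $(\log\lambda)^{-1}$, hence into $(\log\lambda)^{-2}\Gamma^\pm_{\alpha,\beta}(\lambda)$.

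Finally, there are two slips in the set-up:
\begin{itemize}
\item The rescaling must be $\lambda^{1-k_\alpha}$, not $\lambda^{k_\alpha-1}$.
\item Even then, the rescaled operator is not bounded: it has $\log\lambda$ entries at $(1,1)$, $(5,5)$ and, if $Q_4^0\neq0$, at $(0,4)$. Nor is its leading part block triangular: the couplings $Q_1T_0Q_3$ and $Q_0$--$Q_4$ appear symmetrically.
\end{itemize}
The paper handles this with the extra factors $\sigma_1=\sigma_5=(-\log\lambda)^{-1/2}$ and $\sigma_3=(-\log\lambda)^{1/2}$. It also keeps $\log\lambda$ inside the leading block $\mathbf A^\pm$ on $Q_0\oplus Q_4$, and then runs a single Neumann series in $(-\log\lambda)^{-1/2}$.
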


To clarify the singular hierarchy in Theorem~\ref{thm:M_inverse},
Table~\ref{tab:Q-leading-order} records the zero-threshold types,
their \(S\)-projection characterizations, and the corresponding
worst leading orders of \((M^\pm(\lambda))^{-1}\) as
\(\lambda\to0^+\).

\vskip-0.3cm
\begin{table}[H]
	\centering
	\caption{Zero-threshold types and leading orders of
		\((M^\pm(\lambda))^{-1}\).}
	\label{tab:Q-leading-order}
	\vskip0.1cm
	\renewcommand{\arraystretch}{1.45}
	\setlength{\tabcolsep}{4pt}
	\footnotesize
	
	\newcommand{\Qcell}[1]{\makecell[c]{\rule[-1.8ex]{0pt}{6.0ex}#1}}
	
	\begin{tabular}{|
			>{\centering\arraybackslash}m{0.20\textwidth}|
			>{\centering\arraybackslash}m{0.34\textwidth}|
			>{\centering\arraybackslash}m{0.36\textwidth}|}
		\hline
		\textbf{Zero-threshold type}
		&
		\textbf{\(S\)-characterization}
		&
		\textbf{The leading order of \((M^\pm(\lambda))^{-1}\)}
		\\[1mm]
		\hline
		
		\Qcell{Regular point\\ \((\mathbf k=0)\)}
		&
		\Qcell{\(S_2=0\)}
		&
		\Qcell{\(O(1)\)}
		\\
		\hline
		
		\Qcell{1st-kind resonance\\ \((\mathbf k=1)\)}
		&
		\Qcell{
			\(\displaystyle S_2\neq 0\),
			\ \ \(\displaystyle S_3=0\)}
		&
		\Qcell{\(O(|\log\lambda|)\)}
		\\
		\hline
		
		\multirow{2}{=}{\Qcell{2nd-kind \\ \((\mathbf k=2)\)}}
		&
		\multirow{2}{=}{\Qcell{
				\(\displaystyle S_3\neq 0\),
				\ \ \(\displaystyle S_4=0\)}}
		&
		\Qcell{
			\(O\!\left(\lambda^{-2}|\log\lambda|^2\right)\)\ \
			if\ \  \(Q_4^0\neq0\) and \(Q_{42}^1\neq0\)}
		\\
		\cline{3-3}
		
		&
		&
		\Qcell{
		\ \   \ \   	\(O(\lambda^{-2})\)
		  \  \  \ \ \ \   	if \ \ \(Q_4^0=0\) or \(Q_{42}^1=0\)}
		\\
		\hline
		
		\Qcell{3rd-kind \\ \((\mathbf k=3)\)}
		&
		\Qcell{
			\(\displaystyle S_4\neq 0\),
			\ \ \(\displaystyle S_5=0\)}
		&
		\Qcell{\(O\!\left(\lambda^{-4}|\log\lambda|^{-1}\right)\)}
		\\
		\hline
		
		\Qcell{Zero eigenvalue\\ \((\mathbf k=4)\)}
		&
		\Qcell{
			\(\displaystyle S_5\neq0\)}
		&
		\Qcell{\(O(\lambda^{-4})\)}
		\\
		\hline
	\end{tabular}
	
	\vskip0.2cm
	\parbox{0.95\textwidth}{\footnotesize
The projections \(S_j\) are defined in Definition \ref{defS}. The
\(Q\)-projections are the successive orthogonal differences of 
\(S\):
\[
Q_j=S_{j-1}-S_j,\qquad 0\le j\le6.
\]
 In the second-kind case, 
\(Q_4\) is further decomposed into $Q_4^0+Q_4^1$ where
\[
Q_4^0=S_3-S_4^0,\quad
Q_4^1=S_4^0-S_4,\quad
Q_4^1=Q_{41}^1+Q_{42}^1,\quad
Q_{41}^1=S_4^0-S_4^1,\quad
Q_{42}^1=S_4^1-S_4.
\]
}
\end{table}

The cancellation properties of the projections \(Q_\alpha\) listed
in Remark \ref{cancellationQ}, eliminate the corresponding 
moment terms in the free  expansion of \(R_0^\pm(\lambda^4)\).
Consequently, the projected kernels \(Q_\alpha vR_0^\pm(\lambda^4)\) exhibit improved \(\lambda\)-decay than the unprojected free resolvent $R_0^\pm(\lambda^4)$. This improvement partially
compensates for the singularity of \((M^\pm(\lambda))^{-1}\). 
The following Lemma \ref{lemma_projection}  captures this compensation mechanism through structural decompositions and \(L^2\)-bounds for \(Q_\alpha vR_0^\pm(\lambda^4)\).

\subsection{Two fundamental lemmas}\label{subsec:osci_integral}
This subsection presents two key lemmas: the first analyzes the kernel of $Q_\alpha v R_0^\pm(\lambda^4)$, and the second establishes oscillatory integral estimates.

We begin with the definition of the operator $Q_\alpha v R_0^\pm(\lambda^4)$:
\[
\bigl(Q_\alpha v R_0^\pm(\lambda^4) f\bigr)(x) = Q_\alpha\!\Bigl( v(\cdot) \int_{\mathbb{R}^2} R_0^\pm(\lambda^4)(\cdot, y) f(y) \, dy \Bigr)(x).
\]
Its integral kernel is given explicitly by
\(
\bigl(Q_\alpha v R_0^\pm(\lambda^4)\bigr)(x, y) = Q_\alpha\!\left( v\, R_0^\pm(\lambda^4)(\cdot, y) \right)(x).
\)
The structural decompositions and $L^2$-estimates of this integral kernel are summarized in the following Lemma \ref{lemma_projection}.

\begin{lemma}\label{lemma_projection}
	Let $Q_\alpha$ $(\alpha\in\mathbb{N}_0, 0\le\alpha\le6)$ and $Q_4^0, Q_4^1, Q_{41}^1, Q_{42}^1$ be defined by \eqref{defQ}. Assume $\left|V(x)\right| \lesssim \left \langle  x\right \rangle^{-\mu}$ for some $\mu>0$ satisfying the condition \eqref{condition}. Then, for $0<\lambda\ll1$, the integral kernels admit the following decompositions:
	\begin{enumerate}
		\item[(i)] For $0\le\alpha\le6$,
		\[
		\bigl(Q_\alpha v R_0^\pm(\lambda^4)\bigr)(x,y)=\lambda^{-1-\delta_{\alpha0}}\,\Omega_{\alpha,\pm}(\lambda,x,y),
		\]
		where $\delta_{\alpha0}=1$ if $\alpha=0$ and $0$ otherwise.
		
		\item[(ii)] For $2\le\alpha\le6$, 
		\begin{align*}
		\bigl(Q_\alpha v R_0^\pm(\lambda^4)\bigr)(x,y)&=\mathcal{J}_{\alpha, \pm} (\lambda,x,y).
		\end{align*}
		\item[(iii)] For $2\le\alpha\le6$,
		setting $m_\alpha=0$ for $\alpha=2,3,4$, $m_5=1$, and $m_6=2$:
		$$
			\bigl(Q_\alpha v R_0^\pm(\lambda^4)\bigr)(x,y) =(b_0Q_\alpha v G_2^0)(x,y)+b_0(\log\lambda)\,Q_\alpha(v|\cdot|^2)(x)+ \lambda^{m_\alpha} (\mathcal{T}_{\alpha, \pm} + \mathcal{T}_\alpha)(\lambda, x, y),
        $$
		where the term $b_0 (\log\lambda) \, Q_\alpha(v|\cdot|^2)(x)$ vanishes
        for $\alpha=5,6$.
		In particular,
		\begin{align*}
\bigl(Q_4^0 v R_0^\pm(\lambda^4)\bigr)(x,y) &=(b_0Q_4^0 v G_2^0)(x,y)+b_0(\log\lambda)\,Q_4^0(v|\cdot|^2)(x)+(\mathcal{T}_{4,\pm}^0+\mathcal{T}_4^0)(\lambda,x,y),\\
\bigl(Q_4^1 v R_0^\pm(\lambda^4)\bigr)(x,y) &=(b_0Q_4^1 v G_2^0)(x,y)+(\mathcal{T}_{4,\pm}^1+\mathcal{T}_4^1)(\lambda,x,y),\\
\bigl(Q_{41}^1  v R_0^\pm(\lambda^4)\bigr)(x,y) &=(b_0Q_{41}^1  v G_2^0)(x,y)+(\mathcal{T}_{4,\pm}^{1,1}+\mathcal{T}_4^{1,1})(\lambda,x,y),\\
\bigl(Q_{42}^1  v R_0^\pm(\lambda^4)\bigr)(x,y) &=(b_0Q_{42}^1  v G_2^0)(x,y)+\lambda\bigl(\mathcal{T}_{4,\pm}^{1,2}+\mathcal{T}_4^{1,2})(\lambda,x,y).
		\end{align*}
	\end{enumerate}
	Furthermore, for $\ell=0,1,2$, these integral kernels satisfy the following bounds:
	\begin{align}\label{J}
		&\|\partial_\lambda^\ell\big(e^{\mp i\lambda|y|}\mathcal{J}_{\pm}(\lambda,\cdot,y)\big)\|_{L^2} \lesssim\lambda^{-\ell}|\log\lambda|\langle\lambda y\rangle^{-1/2},\  \text{for } \mathcal{J}_{\pm} \in \{\mathcal{J}_{\alpha,\pm}|_{2\le\alpha\le6}\},\\
&\bigl\|(b_0Q v G_2^0)(\cdot,y)\bigr\|_{L^2} \lesssim
		\begin{cases}
		\log(2+|y|), & \text{for } Q \in \{Q_\alpha|_{2 \le \alpha \le 4}, Q_4^0\}, \\[2pt]
		1, & \text{for } Q \in \{Q_4^1, Q_{41}^1, Q_{42}^1, Q_5, Q_6\},
		\end{cases} \label{lemma_projection_G} \\
&\|\partial_\lambda^\ell\mathcal{T}(\lambda,\cdot,y)\|_{L^2} \lesssim
		\begin{cases}
		\lambda^{-\ell}\log(2+|y|), & \text{for } \mathcal{T} \in \{\mathcal{T}_\alpha|_{2\le\alpha\le4}, \, \mathcal{T}_4^0\}, \\[2pt]
		\lambda^{-\ell}, & \text{for } \mathcal{T} \in \{\mathcal{T}_4^{1}, \mathcal{T}_4^{1,1}, \mathcal{T}_4^{1,2}, \mathcal{T}_5, \mathcal{T}_6\},
		\end{cases} \label{lemma_projection_T} \\   &\|\partial_\lambda^\ell\big(e^{\mp i\lambda|y|}K_{\pm}(\lambda,\cdot,y)\big)\|_{L^2} \lesssim\lambda^{-\ell}\langle\lambda y\rangle^{-1/2}, \label{lemma_projection_1}
	\end{align}
	where $K_{\pm}$ generically denotes any  term among $\Omega_{\alpha,\pm} \, (0\le\alpha\le6)$, $\mathcal{T}_{\alpha,\pm} \, (2\le\alpha\le6)$, $\mathcal{T}_{4,\pm}^j \, (j=0,1)$, or $\mathcal{T}_{4,\pm}^{1,j} \, (j=1,2)$.
\end{lemma}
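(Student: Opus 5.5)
The plan is to Taylor-expand $F_\pm$ in the kernel $R_0^\pm(\lambda^4)(x,y)=\lambda^{-2}F_\pm(\lambda|x-y|)$ about the point $y$. The projections $Q_\alpha$ annihilate the low-order polynomial moments of $v$ (Remark~\ref{cancellationQ}), and the plan is to use this to kill the corresponding Taylor terms. The remainders will then be written in integral form, so that the high-energy part ($\lambda|x-y|\gtrsim1$) and the low-energy part ($\lambda|x-y|\ll1$) are handled by one formula.

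For (i), write $|x-y|$ along the segment from $y$ to $x$ as $|\theta x-y|$, $\theta\in[0,1]$. For $\alpha\ge1$ we have $Q_\alpha v=0$. Hence
\[
Q_\alpha vR_0^\pm(\lambda^4)(\cdot,y)=\lambda^{-2}Q_\alpha\bigl(v[F_\pm(\lambda|\cdot-y|)-F_\pm(\lambda|y|)]\bigr)=-\lambda^{-1}Q_\alpha\Bigl(|\cdot|v\int_0^1F'_\pm(\lambda|\theta\cdot-y|)\cos\xi\,d\theta\Bigr),
\]
where $\xi$ is the angle between $x$ and $\theta x-y$. This defines $\Omega_{\alpha,\pm}$. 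For $\alpha=0$ we simply take $\Omega_{0,\pm}=Q_0(vF_\pm(\lambda|\cdot-y|))$.

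To get the bound \eqref{lemma_projection_1}, I write $F_\pm(p)=e^{\pm ip}\widetilde R^\pm(p)$ and use \eqref{free-widetilde_R}, which gives $|\partial_p^k\widetilde R^\pm(p)|\lesssim\langle p\rangle^{-1/2}p^{-k}$ and the same type of bound for $F'_\pm e^{\mp ip}$. The phase $e^{\pm i\lambda|\theta x-y|}$ is then compared with $e^{\pm i\lambda|y|}$. Their ratio $e^{\pm i\lambda(|\theta x-y|-|y|)}$ has $\lambda$-derivatives bounded by $|x|^\ell$. These powers of $|x|$ are absorbed by the decay of $v$, and $\langle\lambda|\theta x-y|\rangle^{-1/2}\lesssim\langle x\rangle^{1/2}\langle\lambda y\rangle^{-1/2}$ supplies the factor $\langle\lambda y\rangle^{-1/2}$.

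For (ii) and (iii), with $2\le\alpha\le6$, I also use $Q_\alpha(x_jv)=0$ and expand to second order, i.e.\ use the Taylor remainder $\int_0^1(1-\theta)\partial_\theta^2[\cdots]d\theta$. This gives $\mathcal J_{\alpha,\pm}$ of size $O(1)$ up to the $|\log\lambda|$ coming from $F''_\pm(p)\sim\log p$ near $p=0$, which is \eqref{J}. For (iii) I split $|x-y|$ at the scale $\lambda|x-y|\sim1$ using $\chi_1,\chi_2$.

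\emph{Near region.} Insert the expansion of Lemma~\ref{lem-reso}. The term $a_0^\pm\lambda^{-2}$ drops out since $Q_\alpha v=0$. The term $g_0^\pm(\lambda)|x-y|^2=g_0^\pm(\lambda)(|x|^2-2x\cdot y+|y|^2)$ reduces, after applying $Q_\alpha$, to $b_0\log\lambda\,Q_\alpha(v|\cdot|^2)+a_1^\pm Q_\alpha(v|\cdot|^2)$. The term $b_0G_2^0$ gives the main term $b_0Q_\alpha vG_2^0$. The higher terms $\lambda^2G_4,\dots$ go into $\mathcal T$.

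For $\alpha=5,6$ the quadratic moments vanish, so the $\log\lambda$ term disappears and the $a_1^\pm$ term vanishes. Moreover, cubic (for $\alpha=6$) and $T_0$-orthogonality conditions allow one more order of cancellation in the $\lambda^2|x-y|^4$ term. Expanding $|x-y|^4$ as a polynomial in $x$ and using that $Q_5,Q_6$ kill moments up to order two (resp.\ three) extracts the factors $\lambda^{m_\alpha}$. For $Q_{42}^1$, the vanishing of all second moments kills both $\log\lambda$ and the constant, leaving the factor $\lambda$. For $Q_4^1,Q_{41}^1$ only the trace moment vanishes, so $\log\lambda$ disappears but the constant $a_1^\pm$ part survives. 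The part of $\mathcal T$ with no $\pm$ phase is $\mathcal T_\alpha$, and the oscillatory part is $\mathcal T_{\alpha,\pm}$.

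\emph{Far region.} Here $\lambda|x-y|\gtrsim1$ with $|x|\lesssim$ the support scale of $v$. I rewrite the explicit polynomial and log terms and the resolvent itself in the form $e^{\pm i\lambda|y|}\times$amplitude, and place them into $\mathcal T_{\alpha,\pm}$, which satisfies \eqref{lemma_projection_1}.

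The bounds \eqref{lemma_projection_G} and \eqref{lemma_projection_T} then follow from $\bigl||x-y|^2\log|x-y|-(\text{moment terms})\bigr|\lesssim\langle x\rangle^{N}\log(2+|y|)$, together with the requirement $\mu>2N+2$, which is exactly the role of \eqref{condition}. When the quadratic moments vanish, the next Taylor term of $|x-y|^2\log|x-y|$ in $x$ is bounded uniformly in $y$ (it behaves like $|x|^3/|y|$ for large $y$), so the $\log(2+|y|)$ is removed.

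The main obstacle is the bookkeeping for (iii) with $\alpha=5,6$ and $Q_{42}^1$. One must check that every Taylor term which would carry a power of $\lambda$ below $\lambda^{m_\alpha}$, or a factor $\log\lambda$, is annihilated exactly by the listed cancellations. It must also be checked that the gluing between the near and far regions does not introduce a $\lambda$-derivative loss worse than $\lambda^{-\ell}$. I would first verify the worst case $Q_6$ by writing $|x-y|^{2k}$ and $|x-y|^{2}\log|x-y|$ in polynomial-in-$x$ form, and then handle the far-region amplitudes via \eqref{reso-big}.
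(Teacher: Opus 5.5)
Your parts (i) and (ii) follow the paper's argument: Taylor-expand the global profile $F_\pm$ along $\theta\mapsto y-\theta x$, use the moment cancellations, and apply the uniform bound on $\partial_\lambda^\ell\bigl(e^{\mp i\lambda|y|}\mathcal G(\lambda|\theta x-y|)\bigr)$. In (ii) two points are still missing:
\begin{itemize}
\item a justification that Taylor's formula holds even though $F_\pm\notin C^2$ at $p=0$ (the paper's Remark~\ref{remark:taylor-log});
\item control of the $\log|y-\theta x|$ part of $\log(\lambda|y-\theta x|)$, which is singular at $\theta x=y$ and is not bounded by $|\log\lambda|$.
\end{itemize}

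The genuine gap is in (iii), where you make the near/far split in the $x$-domain. There are two problems.

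\emph{Near region.} The cancellations you use (``$a_0^\pm\lambda^{-2}$ drops out since $Q_\alpha v=0$'', and the reduction of $g_0^\pm|x-y|^2$) hold only for functions that are polynomial in $x$. Once the factor $\chi_1(\lambda|x-y|)$ is present, $Q_\alpha\bigl(v\,\chi_1(\lambda|\cdot-y|)\bigr)\neq0$. Since $v$ only decays, there is no ``support scale of $v$''. For $|y|\sim\lambda^{-1}$ the cutoff varies across the bulk of $v$, so a naive bound on this error is of order $\lambda^{-2}$. It can be controlled only by Taylor-expanding the cutoff profile itself.

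\emph{Far region.} You put the subtracted explicit terms into $\mathcal T_{\alpha,\pm}$ with the bound \eqref{lemma_projection_1}. After removing the global main terms, $R_0^\pm-b_0G_2^0-b_0(\log\lambda)G_2$ contains the non-oscillatory piece $-b_0\chi_2(\lambda|x-y|)|x-y|^2\log(\lambda|x-y|)$. Its projection does not decay in $\lambda|y|$; for $Q_2,Q_3,Q_4,Q_4^0$ it grows like $\log(\lambda|y|)$. Multiplying it by $e^{\mp i\lambda|y|}$ also makes each $\lambda$-derivative cost a factor $|y|\gg\lambda^{-1}$. So it cannot satisfy \eqref{lemma_projection_1}. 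It is exactly the phase-free $\mathcal T_\alpha$, which is why \eqref{lemma_projection_T} allows $\log(2+|y|)$ and no decay. This also contradicts your own rule that the $\pm$-independent part is $\mathcal T_\alpha$. The same objection applies to keeping $a_1^\pm Q_\alpha(v|\cdot|^2)$ as a global term.

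The missing idea is to put the cutoffs into the profiles, as functions of $p=\lambda|x-y|$, rather than into the $x$-domain:
\[
R_0^\pm(\lambda^4)(x,y)=b_0G_2^0(x,y)+b_0(\log\lambda)G_2(x,y)-\lambda^{-2}\widetilde G_2(\lambda|x-y|)+\lambda^{-2}F_1^\pm(\lambda|x-y|),
\]
where $\widetilde G_j(p)=b_0\chi_j(p)p^2\log p$ and $F_1^\pm=F_\pm-\widetilde G_1$. Then apply Lemma~\ref{taylor} to each profile globally. The Taylor polynomial in $x$, whose coefficients depend on $y$, is then killed exactly by the moment conditions. For $Q_{42}^1,Q_5,Q_6$ one first removes $a_1^\pm p^2$ from $F_1^\pm$, so that Lemma~\ref{taylor}(iii)--(iv) applies and gives the gain $\lambda^{m_\alpha}$. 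This yields
\[
\mathcal T_\alpha=-\lambda^{-2-m_\alpha}Q_\alpha\bigl(v\widetilde G_2(\lambda|\cdot-y|)\bigr),\qquad \mathcal T_{\alpha,\pm}=\lambda^{-2-m_\alpha}Q_\alpha\bigl(vF_1^\pm(\lambda|\cdot-y|)\bigr).
\]

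Some smaller corrections:
\begin{itemize}
\item For $Q_4^1,Q_{41}^1$ only the trace moment $|x|^2v$ is annihilated, so their uniform bounds do not come from ``quadratic moments vanish''. They come from $\widetilde G_2''(r)-\widetilde G_2'(r)/r=O(1)$: the log-growing part of the second Taylor coefficient of $r^2\log r$ is a multiple of $|x|^2$.
\item $a_1^\pm Q_4^1(v|\cdot|^2)=0$, so this term does not survive.
\item The $T_0$-conditions play no role in this lemma.
\end{itemize}
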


The proof of Lemma \ref{lemma_projection} relies heavily on the cancellation properties of the projections $Q_{\alpha}$. We defer it to Section \ref{section 8}.
Next, we state Lemma \ref{oscillatory} which  serves as a fundamental tool for the oscillatory integrals encountered in the subsequent decay analysis. Its proof is also given in Section \ref{section 8}.

\begin{lemma}\label{oscillatory}    Let \(h(x,y)\) and \(r:=r(x,y)\) be real-valued functions on \(\mathbb{R}^2 \times \mathbb{R}^2\) with \(h(x,y)>0\). Define the oscillatory integrals    
	\[
	\begin{aligned}
		\mathcal{K}^\pm(t,x,y) &= \int_0^{\infty} \cos (t \lambda^2)e^{\pm i\lambda r} \lambda \mathcal{E}^\pm(\lambda, x,y) \, d\lambda, \\
		\mathcal{N}^\pm(t,x,y) &= t^{-1}\int_0^{\infty} \sin(t\lambda^2) e^{\pm i\lambda r} \lambda^{-1} \mathcal{E}^\pm(\lambda, x,y) \, d\lambda.
	\end{aligned}
    \]  
	\begin{itemize}    
		\item[(i)] Suppose that for $\ell = 0, 1, 2$, the amplitude factor $\mathcal{E}^\pm$ satisfies    
		\begin{align}\label{condit 1}    
		\left| \partial_\lambda^{\ell} \mathcal{E}^\pm(\lambda, x,y) \right| \lesssim h(x, y) \langle \lambda r \rangle^{-\frac{1}{2}} \lambda^{-\ell}.    
		\end{align}    
		Then    
		\[    
		\left| \mathcal{K}^\pm(t,x,y) \right| + \left| \mathcal{N}^\pm(t,x,y) \right| \lesssim h(x, y) |t|^{-1}.    
		\]    
		
		\item[(ii)] Suppose that for $\ell = 0, 1, 2$, $\mathcal{E}^\pm$ exhibits a  low-frequency behavior localized by the smooth cutoff $\widetilde{\chi}_1(\lambda/2)$ supported in $[0, 2(2\lambda_0)^{1/4}]$ and    
		\begin{align}\label{condit 2}    
		\left| \partial_\lambda^{\ell} \mathcal{E}^\pm(\lambda, x,y) \right| \lesssim h(x, y) \langle \lambda r \rangle^{-\frac{1}{2}} |\log \lambda|^{-\nu} \lambda^{-\sigma - \ell} \widetilde{\chi}_1 (\lambda/2).    
		\end{align}    
		Then the following estimates hold:    
		
		$\bullet$ For $0 < \sigma < 2$ with $\nu \in \mathbb{R}$, or $\sigma = 0$ with $\nu \le 0$:    
		\[    
		\left| \mathcal{K}^\pm(t,x,y) \right| + \left| \mathcal{N}^\pm(t,x,y) \right| \lesssim \frac{h(x, y)}{\langle t \rangle^{1-\frac{\sigma}{2}} (\log(2+|t|))^{\nu}}.    
		\]    
Moreover, if $r=0,$  this estimate  remains
   valid for the larger parameter range
   $
   -2<\sigma<2$ and 
   $\nu\in\mathbb R.
   $		
		
        $\bullet$ For $\sigma = 2$ and $\nu > 1$:    
		\[    
		\left| \mathcal{K}^\pm(t,x,y) \right| + \left| \mathcal{N}^\pm(t,x,y) \right| \lesssim  \frac{h(x, y)}{\left(\log(2+|t|)\right)^{\nu-1}}.    
		\]    
	\end{itemize} 
\end{lemma}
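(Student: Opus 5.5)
The statement just before this point is Lemma \ref{oscillatory}, so the plan is to prove it directly by van der Corput–type analysis of the phase $t\lambda^2\pm\lambda r$, with integration by parts adapted to the symbol bounds. Writing $\cos(t\lambda^2)$ and $\sin(t\lambda^2)$ as sums of $e^{\pm it\lambda^2}$, it suffices to bound
\[
I(t)=\int_0^\infty e^{i(s t\lambda^2\pm\lambda r)}\,a(\lambda)\,d\lambda ,\qquad s=\pm1 ,
\]
with $a(\lambda)=\lambda\mathcal E^\pm$ for $\mathcal K^\pm$ and $a(\lambda)=t^{-1}\lambda^{-1}\mathcal E^\pm$ for $\mathcal N^\pm$. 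By homogeneity we may assume $h\equiv 1$ and $t>0$ large, since the small-$t$ bounds follow directly from the integrability of the amplitudes.

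\emph{Part (i), the $\mathcal K$ term.} Split the integral at $\lambda\sim t^{-1/2}$ with a smooth cutoff. On $\lambda\lesssim t^{-1/2}$ the trivial bound gives $\int_0^{t^{-1/2}}\lambda\,d\lambda\sim t^{-1}$. On $\lambda\gtrsim t^{-1/2}$ there are two cases.
\begin{itemize}
\item If the phase is non-stationary (same signs, or $r\lesssim t^{1/2}$), integrate by parts twice using $\partial_\lambda\phi= 2st\lambda\pm r$, with $|\phi'|\gtrsim t\lambda$ away from the critical point. Each step gains $(t\lambda^2)^{-1}$ because $|\partial^\ell_\lambda a|\lesssim \lambda^{1-\ell}$, and the resulting integral is bounded by $\int_{t^{-1/2}}^\infty t^{-2}\lambda^{-3}\,d\lambda\sim t^{-1}$.
\item Near the stationary point $\lambda_0=r/(2t)$, with $r\gtrsim t^{1/2}$, the decay $\langle\lambda r\rangle^{-1/2}$ compensates. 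The stationary phase contribution is $|a(\lambda_0)|\,t^{-1/2}\lesssim \lambda_0(\lambda_0r)^{-1/2}t^{-1/2}=(2t)^{-1/2}\,t^{-1/2}\sim t^{-1}$. I would implement this through a dyadic decomposition in $\lambda$ around $\lambda_0$, using van der Corput with the second derivative $2t$ on the window $|\lambda-\lambda_0|\lesssim\lambda_0$ and integration by parts outside it.
\end{itemize}

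\emph{Part (i), the $\mathcal N$ term.} The amplitude $t^{-1}\lambda^{-1}\mathcal E$ is handled by a different small-$\lambda$ device: write $\sin(t\lambda^2)/\lambda$ and use $|\sin(t\lambda^2)|\le t\lambda^2$ on $\lambda<t^{-1/2}$, which gives $t^{-1}\int_0^{t^{-1/2}} t\lambda\,d\lambda\sim t^{-1}$. The region $\lambda>t^{-1/2}$ is treated as above with an extra factor $(t\lambda^2)^{-1}$, which only helps.

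\emph{Part (ii).} The same splits are used with the weight $\lambda^{-\sigma}|\log\lambda|^{-\nu}$, restricted by the cutoff to $\lambda\lesssim 1$.
\begin{itemize}
\item The small-$\lambda$ piece gives $\int_0^{t^{-1/2}}\lambda^{1-\sigma}|\log\lambda|^{-\nu}\,d\lambda\sim t^{-1+\sigma/2}(\log t)^{-\nu}$, which is valid for $\sigma<2$.
\item The integration-by-parts piece gives $t^{-2}\int_{t^{-1/2}}^{1}\lambda^{-3-\sigma}|\log\lambda|^{-\nu}\,d\lambda$, which is of the same size provided $\sigma>-2$.
\item The stationary window contributes $\lambda_0^{-\sigma}|\log\lambda_0|^{-\nu}\,t^{-1}$ with $\lambda_0\gtrsim t^{-1/2}$. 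This is $\lesssim t^{-1+\sigma/2}(\log t)^{-\nu}$ when $\sigma>0$. When $\sigma=0$ it is $\lesssim t^{-1}|\log\lambda_0|^{-\nu}$, which is bounded by $t^{-1}(\log t)^{-\nu}$ only if $\nu\le0$, since $|\log\lambda_0|$ may be of size $1$. This explains the stated restriction.
\item When $r=0$ there is no stationary point other than $\lambda=0$. The only constraint is then convergence of the two pieces above, which holds for $-2<\sigma<2$ and all $\nu$. The cutoff-derivative boundary terms at $\lambda\sim1$ are $O(t^{-N})$.
\item For $\sigma=2$ and $\nu>1$, the small piece $\int_0^{t^{-1/2}}\lambda^{-1}|\log\lambda|^{-\nu}\,d\lambda\sim(\log t)^{1-\nu}$ dominates. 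The remaining regions give $t^{-2}\int_{t^{-1/2}}\lambda^{-5}|\log\lambda|^{-\nu}\,d\lambda\sim(\log t)^{-\nu}$, and the stationary window is bounded similarly.
\end{itemize}

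\emph{Main obstacle.} I expect the delicate step to be the stationary-phase region in part (i) when $r$ and $t$ are both large. There the gain must come exactly from $\langle\lambda r\rangle^{-1/2}$, uniformly in $r$, without losing logarithms from the dyadic summation. The boundary terms also need care for the derivative bounds, which are only assumed for $\ell\le2$; this is precisely why two integrations by parts are used, and why I would organize the argument as dyadic pieces $\lambda\sim 2^j$ and sum geometric series.
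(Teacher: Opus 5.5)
Your proposal is correct and follows essentially the same route as the paper: the trivial bound on $t\lambda^2\lesssim 1$, van der Corput on a window $\lambda\sim r/(2t)$ around the stationary point, and dyadic pieces with two integrations by parts elsewhere. You also treat $\mathcal N^\pm$ the same way the paper does (via $|\sin(t\lambda^2)|\le t\lambda^2$ and $t^{-1}\lambda^{-2}\lesssim 1$), and you give the same explanations for the $\sigma=0\Rightarrow\nu\le 0$ restriction, the $r=0$ extension, and the $\sigma=2$ case. One small slip: the contributions near the top of the cutoff's support are not $O(t^{-N})$, since only two derivatives are controlled. They are, however, already covered by the dyadic integration-by-parts bound $O(t^{-2})$, which suffices because $\sigma>-2$.
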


\section{The decay estimates for the free case}\label{sec:free_case}
In this section, we establish the $L^1 \to L^{\infty}$ and weighted $L^1_\omega \to L^{\infty}_{-\omega}$ bounds for the free case $V=0$. The corresponding unperturbed operator is $\sqrt{H}=-\Delta$,  which corresponds to the evolution propagators $\cos(t\Delta)$ and $\sin(t\Delta)/(t\Delta)$. 
  By Stone's formula, their kernels can be expressed as
 \begin{align}\label{free-stone_cos}
    \cos (t\Delta)(x,y) & =\frac{2}{\pi i} \int_0^{\infty} \lambda^3 \cos (t \lambda^2)\left[R_0^{+}(\lambda^4)-R_0^{-}(\lambda^4)\right](x,y)  d \lambda,\\
    \label{free-stone_sin}
     \frac{\sin (t \Delta)}{t\Delta}(x,y) & =\frac{2}{\pi i t} \int_0^{\infty} \lambda \sin (t \lambda^2)\left[R_0^{+}(\lambda^4)-R_0^{-}(\lambda^4)\right](x,y) d \lambda.
     \end{align}
On the other side, the kernels of $\cos(t\Delta)$ and $\sin(t\Delta)/(t\Delta)$ can be written explicitly as
\begin{align}\label{eq:free_kernels}
K_t^{\cos,0}(x,y) = \frac{1}{4\pi t} \sin \frac{|x-y|^2}{4t},\quad
K_t^{\sin,0}(x,y) =\frac{1}{4\pi |t|}\int_{\frac{|x-y|^2}{4|t|}}^{\infty}\frac{\sin u}{u}\,du .
\end{align}
Indeed, the well-known  kernel of the free Schr\"{o}dinger group \(e^{it\Delta}\): $$e^{it\Delta}(x,y) = \frac{1}{4\pi i t} \exp\bigl(i\frac{|x-y|^2}{4t}\bigr),$$ and the identity $\cos(t\Delta) = (e^{it\Delta}+e^{-it\Delta})/2$ yield $K_t^{\cos,0}(x,y)$. The sine kernel $K_t^{\sin,0}(x,y)$ then follows  from the  relation $\frac{\sin(t\Delta)}{t\Delta} = \frac{1}{t} \int_0^t \cos(s\Delta)\,ds$ via the substitution $u = \frac{|x-y|^2}{4s}$.

Based on these representations, we obtain the following sharp decay estimates.

\begin{proposition}\label{free_case}
The free  operators satisfy the following sharp bounds:
\vskip 0.1cm
\begin{enumerate}
\item[(i)] In the unweighted space $L^1 \to L^{\infty}$:
\[
\|\cos (t \Delta) \|_{L^1 \to L^{\infty}} \sim |t|^{-1}, \quad
\Bigl\|\frac{\sin (t \Delta)}{t\Delta}\Bigr\|_{L^1 \to L^{\infty}} \sim |t|^{-1}.
\]
\vskip 0.2cm
\item[(ii)] In the weighted space $L^1_\omega \to L^\infty_{-\omega}$:
\[
\|\cos (t \Delta) \|_{L^1_\omega \to L^{\infty}_{-\omega}} \sim |t|^{-1}(\log (2+|t|))^{-1}, \quad
\Bigl\|\frac{\sin (t \Delta)}{t\Delta} \Bigr\|_{L^1_\omega \to L^{\infty}_{-\omega}} \sim |t|^{-1}.
\]
\end{enumerate}
\end{proposition}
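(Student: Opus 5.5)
I will work directly with the explicit kernels \eqref{eq:free_kernels} rather than the Stone formula. Write $s:=|x-y|^2/(4|t|)\ge 0$. By symmetry in $t$, it suffices to take $t>0$.

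\emph{Unweighted bounds.} Since $|\sin s|\le 1$, we have $|K_t^{\cos,0}|\le (4\pi t)^{-1}$. Since $\sup_{a\ge0}\bigl|\int_a^\infty \frac{\sin u}{u}\,du\bigr|\le \pi/2$, we also have $|K_t^{\sin,0}|\lesssim t^{-1}$. For the lower bounds, test against approximate identities $f_\epsilon\to\delta_{y_0}$ and evaluate at $x$:
\begin{itemize}
\item for the cosine, choose $|x-y_0|^2=2\pi t$, so that $\sin s=1$;
\item for the sine, take $x=y_0$, so that the kernel equals $\frac{1}{4\pi t}\cdot\frac{\pi}{2}$.
\end{itemize}
Since the kernels are continuous, this gives $\|\cdot\|_{L^1\to L^\infty}\gtrsim t^{-1}$ in both cases.

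\emph{Weighted bounds.} The $L^1_\omega\to L^\infty_{-\omega}$ norm equals $\sup_{x,y}|K(x,y)|/(\omega(x)\omega(y))$. For the sine kernel, the upper bound $t^{-1}$ follows from the unweighted one, since $\omega\ge\log 2$. The lower bound follows by taking $x=y=0$: the kernel is $(8t)^{-1}$ and the weights equal $(\log 2)^2$.

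For the cosine kernel, use $|\sin s|\le\min(1,s)$ together with $|x-y|\le\langle x\rangle\langle y\rangle$. Consider two cases.
\begin{itemize}
\item If $|x-y|^2\le t$, then $|K_t^{\cos,0}|\lesssim t^{-1}\min(1,|x-y|^2/t)$.
\item Otherwise, $\max(|x|,|y|)\ge \sqrt t/2$, so $\omega(x)\omega(y)\gtrsim \log t\cdot\log 2$, and hence $|K_t^{\cos,0}|\le (4\pi t)^{-1}\lesssim \omega(x)\omega(y)/(t\log t)$.
\end{itemize}
In the first case, write $\min(1,|x-y|^2/t)\le \min\bigl(1,\langle x\rangle^2\langle y\rangle^2/t\bigr)$. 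Then apply the same elementary inequality used in the introduction, namely $\min(1,A/t)\lesssim \log(2+A)/\log t$, and note that $\log(2+\langle x\rangle^2\langle y\rangle^2)\lesssim \omega(x)\omega(y)$. For $t\le 2$, the bound with $\log(2+t)$ in place of $\log t$ follows from the unweighted estimate.

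For the matching lower bound, take $y=0$ and $|x|^2=2\pi t$. Then the kernel equals $(4\pi t)^{-1}$, while $\omega(x)\omega(0)\sim \log t$, which gives $\gtrsim (t\log t)^{-1}$.

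The only delicate point is the inequality $\min(1,A/t)\lesssim \log(2+A)/\log(2+t)$. I would check it by splitting into $A\ge\sqrt t$, where the right side is $\gtrsim 1/2$, and $A<\sqrt t$, where $A/t\le t^{-1/2}\lesssim 1/\log t$.
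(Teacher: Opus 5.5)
Your proposal is correct and follows essentially the same route as the paper. Both arguments read the bounds off the explicit kernels, test at $x=y$ (sine) and at $|x-y|=\sqrt{2\pi|t|}$ (cosine) for the lower bounds, and deduce the weighted sine bound from the unweighted one.

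The only difference is in the weighted cosine upper bound. The paper reduces to a one-variable supremum via $\omega(x)\omega(y)\ge\omega(0)\,\omega(|x-y|/2)$. You instead use $|x-y|\le\langle x\rangle\langle y\rangle$ together with $\min(1,A/t)\lesssim\log(2+A)/\log(2+t)$. Since that chain holds for all $x,y$, your case split on $|x-y|^2\le t$ is in fact unnecessary.
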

\begin{proof}
	(i) The sharp uniform bounds $\|\cos(t\Delta)\|_{L^1\to L^\infty}\sim|t|^{-1}$ and $\|\sin(t\Delta)/(t\Delta) \|_{L^1 \to L^{\infty}} \sim |t|^{-1}$ follow trivially by taking the supremum over the explicit kernels in \eqref{eq:free_kernels}. However, to set the stage for subsequent perturbation arguments, we provide an alternative proof for the upper bound via oscillatory integrals.
	
	In view of \eqref{free-stone_cos} and \eqref{free-stone_sin}, substituting $R^\pm_0(\lambda^4)(x,y) = \lambda^{-2}e^{\pm i\lambda|x-y|}\widetilde{R}^\pm(\lambda|x-y|)$ (see \eqref{def:F_pm} and \eqref{free-widetilde_R}) reduces the problem to estimating
	\begin{align*}
	\mathcal{K}^{ \pm}(t, x, y) &:= \frac{2}{\pi i}\int_0^{\infty} \cos(t \lambda^2) \lambda e^{\pm i\lambda|x-y|}\widetilde{R}^\pm(\lambda|x-y|) \, d\lambda, \\
	\mathcal{N}^{ \pm}(t, x, y) &:= \frac{2}{\pi i t}\int_0^{\infty} \sin (t \lambda^2) \lambda^{-1} e^{\pm i\lambda|x-y|}\widetilde{R}^\pm(\lambda|x-y|) \, d\lambda.
	\end{align*}
	By virtue of \eqref{free-widetilde_R}, it is straightforward to verify that for $\ell=0,1,2$,
	\[
	\left| \partial_\lambda^{\ell} \widetilde{R}^\pm(\lambda |x-y|) \right| \lesssim \langle \lambda |x-y| \rangle^{-1/2} \lambda^{-\ell}.
	\]
	Applying Lemma \ref{oscillatory}(i) with $r(x,y)=|x-y|$ and $\mathcal{E}^\pm(\lambda, x,y)=\widetilde{R}^\pm(\lambda|x-y|)$, we deduce that both $|\mathcal{K}^{\pm}|$ and $|\mathcal{N}^{\pm}|$ are uniformly bounded by ${O}(|t|^{-1})$, concluding the proof of (i).
	
	(ii) We evaluate the weighted operator norm for $\cos(t\Delta)$ by taking the spatial supremum:
	\begin{equation*}
	\|\cos(t\Delta)\|_{L^1_\omega\to L^\infty_{-\omega}} = \sup_{x,y\in\mathbb{R}^2} \frac{|K_t^{\cos,0}(x,y)|}{\omega(x)\omega(y)}.
	\end{equation*}
	Setting $\rho = |x-y|$ and utilizing the basic weight inequality $\omega(x)\omega(y) \ge \omega(0)\omega(\rho/2)$, we have
	\begin{equation}\label{free-weight}
	\|\cos(t\Delta)\|_{L^1_\omega\to L^\infty_{-\omega}}
	\lesssim \sup_{\rho \ge 0} \frac{|\sin(\rho^2/(4t))|}{|t|\,\omega(\rho/2)}
	= \sup_{u\ge 0} \frac{|\sin u|}{|t|\log\bigl(2+\sqrt{|t| u}\bigr)}.
	\end{equation}
	For $u>1$, we use $|\sin u|\le1$ and $\log(2+\sqrt{|t|u}) \ge \log(2 + \sqrt{|t|}) \sim \log (2 + |t|)$. For $0 \le u \le 1$, using $|\sin u|\le u$ and the substitution $v=\sqrt{|t|u}$ yields
	\[
	\sup_{0\le u\le 1} \frac{|\sin u|}{\log(2+\sqrt{|t|u})}
	\le \sup_{0\le v\le \sqrt{|t|}} \frac{v^2/|t|}{\log(2+v)} = \frac{1}{\log(2+\sqrt{|t|})} \sim \frac{1}{\log (2 + |t|)}.
	\]
	Inserting these into \eqref{free-weight} establishes the upper bound $\|\cos(t\Delta)\|_{L^1_\omega\to L^\infty_{-\omega}} \lesssim |t|^{-1}\bigl(\log (2 + |t|)\bigr)^{-1}$. The matching lower bound follows by testing the kernel at $y=0$ and $|x| = \sqrt{2\pi|t|}$.
	
	For $\sin(t\Delta)/(t\Delta)$, the unweighted  bound from part (i) implies the weighted upper bound. Evaluating $K_t^{\sin,0}(0, 0) = 1/(8|t|)$ provides the corresponding lower bound, completing the proof.
\end{proof}

\begin{remark}\label{free_cos_higher_dim}{\rm 
For a general dimension $n$, the kernel of $\cos(t\Delta)$ admits the explicit form
\[
\cos(t\Delta)(x,y) = \frac{1}{(4\pi t)^{n/2}} \cos\!\left(\frac{|x-y|^2}{4t} - \frac{n\pi}{4}\right),
\]
which trivially yields the sharp decay estimate $\|\cos(t\Delta)\|_{L^1\to L^\infty} \lesssim |t|^{-n/2}$. 
In the weighted setting, the enhanced decay bound
\[
\|\cos(t\Delta)\|_{L^1_\omega \to L^\infty_{-\omega}} \sim |t|^{-n/2} \bigl(\log(2+|t|)\bigr)^{-1}
\]
holds if and only if $n \equiv 2 \pmod{4}$ (e.g., $n = 2, 6, 10, \dots$). For all other dimensions, the logarithmic gain is absent, yielding purely polynomial decay. This dimension-dependent behavior arises because the dimensional phase shift $n\pi/4$ determines whether the oscillatory kernel  vanishes along the diagonal $x=y$.
}\end{remark}
\begin{remark}\label{rmk:free_case}{\rm
(i) The same analysis as in Proposition \ref{free_case} for $\mathcal{K}^{\pm}(t,x,y)$ and $\mathcal{N}^{\pm}(t,x,y)$ yields
\[
\big\|\cos(t\Delta)\chi_j(\Delta)\big\|_{L^1\to L^{\infty}}
+ \Big\|\frac{\sin(t\Delta)}{t\Delta}\chi_j(\Delta)\Big\|_{L^1\to L^{\infty}} \lesssim |t|^{-1},
\qquad j=1,2.
\]
For the high-frequency part, the sine propagator actually satisfies
the stronger bound
\[
\Big\|\frac{\sin(t\Delta)}{t\Delta}\,\chi_2(\Delta)\Big\|_{L^1\to L^{\infty}} \lesssim |t|^{-2}.
\]
Indeed, when $\lambda\gtrsim1$, Remark \ref{remark:osci} (i) gives
\[
\Bigl|\int_0^{\infty} e^{-it\lambda^2}\,\lambda\,\widetilde{\chi}_2(\lambda)R_0^{\pm}(\lambda^4)(x,y)\,d\lambda\Bigr| \lesssim |t|^{-1},
\]
which after multiplication by the factor $t^{-1}$ present in $\bigl(\sin(t\Delta)/(t\Delta)\bigr)\chi_2(\Delta)$ produces the $|t|^{-2}$ decay. Consequently,
the same bound $|t|^{-2}$ also holds in the weighted space $L^1_\omega \rightarrow L^{\infty}_{-\omega}$.
\vskip0.2cm

(ii)
$\cos(t\Delta)\chi_j(\Delta)$ satisfies the same weighted decay as $\cos(t\Delta)$:
\[
\|\cos(t\Delta)\chi_j(\Delta)\|_{L^1_\omega\to L^{\infty}_{-\omega}} \lesssim (|t|\log(2+|t|))^{-1}, \qquad j=1,2.
\]
}
\end{remark}

Finally, we  establish the asymptotic expansion for the low-frequency free sine kernel,
which is essential for the subsequent log-weighted estimates in the regular and the first kind resonance cases.
\begin{proposition}\label{prop_free_sin}
For $|t| \ge 2,$ the low-frequency free sine kernel admits the asymptotic expansion:
$$
\begin{aligned}
\frac{\sin(t\Delta)}{t\Delta}\chi_1(\Delta)(x,y)
=\frac{1}{8 |t|}+O\left(\frac{\omega(x)\omega(y)}{|t|\log|t|}\right).
\end{aligned}
$$
\end{proposition}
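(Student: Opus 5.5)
We work directly from the Stone formula \eqref{free-stone_sin} with the cutoff inserted, i.e.
\[
\frac{\sin(t\Delta)}{t\Delta}\chi_1(\Delta)(x,y)=\frac{2}{\pi i t}\int_0^\infty \sin(t\lambda^2)\lambda\,\widetilde\chi_1(\lambda)\bigl[R_0^+-R_0^-\bigr](\lambda^4)(x,y)\,d\lambda ,
\]
and split $R_0^\pm(\lambda^4)(x,y)=a_0^\pm\lambda^{-2}+E_1^\pm(\lambda,x,y)$ as in the outline of the introduction. Here $E_1^\pm=(a_1^\pm+b_0\log\lambda)|x-y|^2+b_0|x-y|^2\log|x-y|+O_2(\lambda^2|x-y|^4)$ is valid for $\lambda|x-y|\lesssim1$.

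\textbf{Leading term.} Since $a_0^+-a_0^-=i/4$, the leading contribution is
\[
\frac{1}{2\pi t}\int_0^\infty \sin(t\lambda^2)\lambda^{-1}\widetilde\chi_1(\lambda)\,d\lambda
=\frac{1}{4\pi t}\int_0^\infty \frac{\sin(ts)}{s}\chi_1(s^2)\,ds .
\]
For $|t|\ge2$ this equals $\frac{1}{4\pi t}\bigl(\operatorname{sgn}(t)\frac{\pi}{2}+O(|t|^{-1})\bigr)=\frac{1}{8|t|}+O(|t|^{-2})$. The error comes from writing $\chi_1(s^2)=1-\chi_2(s^2)$ and integrating by parts once on the region where $\chi_2(s^2)\neq 0$, which lies away from $s=0$. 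This gives the constant $1/(8|t|)$.

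\textbf{Remainder.} We must show that
\[
\frac{1}{t}\int_0^\infty \sin(t\lambda^2)\lambda\,\widetilde\chi_1(\lambda)\bigl(\lambda^2 R_0^+-\lambda^2R_0^- - i/4\bigr)(\lambda^4)(x,y)\,\lambda^{-2}\,d\lambda
\]
is $O(\omega(x)\omega(y)/(|t|\log|t|))$. Two bounds are combined, as in the illustration following \eqref{asymptotic expansion}.

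\emph{Uniform bound.} Write the remainder amplitude as $\mathcal E^\pm=\widetilde\chi_1(\lambda)\bigl(\widetilde R^\pm(\lambda|x-y|)-a_0^\pm e^{\mp i\lambda|x-y|}\bigr)$ with phase $r=|x-y|$. It satisfies \eqref{condit 2} with $\sigma=\nu=0$ and $h=1$, so Lemma~\ref{oscillatory}(ii) gives the bound $|t|^{-1}$.

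\emph{Vanishing bound.} Set $r=0$ and use the small-$\lambda\rho$ expansion, where $\rho=|x-y|$. On $\lambda\rho\lesssim1$ we have $|\partial_\lambda^\ell(\lambda^2E_1^\pm)|\lesssim \lambda^{2-\ell}|\log\lambda|\,\langle\rho\rangle^{2}\log(2+\rho)\lesssim \lambda^{3/2-\ell}\langle\rho\rangle^4$. On $\lambda\rho\gtrsim1$ we use $1\lesssim(\lambda\rho)^{2}$ together with the bounded-amplitude bound \eqref{reso-big}. Altogether the amplitude satisfies \eqref{condit 2} with $\sigma=-3/2$ and $h=\langle\rho\rangle^4$. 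Lemma~\ref{oscillatory}(ii), in the extended range allowed when $r=0$, then yields $|t|^{-7/4}\langle x\rangle^4\langle y\rangle^4$. Alternatively one can work directly: $\lambda^{-1}\cdot\lambda^{2}|\log\lambda|$ is integrable, and one integration by parts gains $|t|^{-1}$.

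Interpolating the two bounds exactly as in the introduction gives
\[
\frac1{|t|}\min\Bigl(1,\frac{\langle x\rangle^4\langle y\rangle^4}{|t|^{3/4}}\Bigr)\lesssim\frac{\omega(x)\omega(y)}{|t|\log|t|}.
\]
This holds because if $\langle x\rangle^4\langle y\rangle^4\ge|t|^{3/8}$, then $\omega(x)+\omega(y)\gtrsim\log|t|$, while $\omega(x),\omega(y)\ge\log2$.

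\textbf{Main obstacle.} The hard step is checking the vanishing-amplitude bound uniformly across the transition region $\lambda\rho\sim1$. There the cutoff $\chi_1(\lambda\rho)$ in \eqref{free-widetilde_R} differentiates into factors of $\rho$. I would handle this by noting that $\partial_\lambda[\chi(\lambda\rho)]=\lambda^{-1}(\lambda\rho)\chi'(\lambda\rho)\lesssim\lambda^{-1}$, so the derivative bounds still scale like $\lambda^{-\ell}$. The weight $\langle\rho\rangle^4$ then absorbs the loss incurred in replacing $1$ by $(\lambda\rho)^{3/2}$ in this region.
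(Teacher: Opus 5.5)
Your proof is correct in outline, apart from one misjustified step (the uniform bound, discussed at the end). It follows a genuinely different route from the paper's.

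\textbf{The paper's route.} The paper never uses the Stone formula here. It starts from the closed form $K_t^{\sin,0}(x,y)=\frac{1}{4\pi|t|}\int_{|x-y|^2/(4|t|)}^\infty\frac{\sin u}{u}\,du$ and subtracts the Dirichlet integral to get $\frac{1}{8|t|}$ exactly. It bounds the leftover by $|x-y|^2/(16\pi t^2)$ and discards the high-frequency piece via the $O(|t|^{-2})$ bound of Remark \ref{rmk:free_case}(i). It then interpolates $\min(|t|^{-1},\langle x\rangle^2\langle y\rangle^2|t|^{-2})$.

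\textbf{Your route.} You run the $V=0$ analogue of Proposition \ref{prop_K_weight_2}. You extract $a_0^\pm\lambda^{-2}$ and evaluate it as in \eqref{eq:integral_sin}. You control the rest by Lemma \ref{oscillatory} with $r=0$ and $\sigma=-3/2$.

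\textbf{Comparison.} The paper's argument is shorter and gains a full power of $|t|$ with a smaller weight, but it depends on the explicit kernel. Yours needs no separate high-frequency estimate, since the cutoff is already in the Stone formula, and it is the template that carries over to the perturbed terms.

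\textbf{Your vanishing bound.} This step is fine. On $\lambda\rho\gtrsim1$, the $\rho^\ell$ factors come from the oscillation $e^{\pm i\lambda\rho}$ inside $F_\pm$, not only from the cutoff. They are harmless, since $\rho^\ell\le\lambda^{-\ell}(\lambda\rho)^{\ell+3/2}\le\lambda^{3/2-\ell}\langle\rho\rangle^4$ there for $\ell\le2$ and $\lambda\le1$. You can also simplify: $F_+(p)-F_-(p)=\frac{i}{4}J_0(p)$. So the remainder amplitude is just $\frac{i}{4}\widetilde\chi_1(\lambda)\bigl(J_0(\lambda\rho)-1\bigr)$; the $\log\lambda$ terms you track cancel in the difference.

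\textbf{The step that fails as written.} This is your uniform bound. The amplitude $\widetilde\chi_1(\lambda)\bigl(\widetilde R^\pm(\lambda\rho)-a_0^\pm e^{\mp i\lambda\rho}\bigr)$ with phase $r=\rho$ does not satisfy \eqref{condit 2}. For $\lambda\rho\gg1$ its modulus is about $1/8$ rather than $\langle\lambda\rho\rangle^{-1/2}$. Moreover, $\partial_\lambda\bigl(a_0^\pm e^{\mp i\lambda\rho}\bigr)$ has size $\rho\gg\lambda^{-1}$. Either of two fixes works:
\begin{itemize}
\item Treat $\widetilde R^\pm$ with phase $\rho$ and the constant $a_0^\pm\widetilde\chi_1(\lambda)$ with phase $0$ separately; each satisfies the hypothesis with $\sigma=\nu=0$.
\item Note that the remainder equals the full low-frequency kernel, which is $O(|t|^{-1})$ by Remark \ref{rmk:free_case}(i), minus the leading term, which is also $O(|t|^{-1})$.
\end{itemize}
With that repair, your interpolation $\frac{1}{|t|}\min\bigl(1,\langle x\rangle^4\langle y\rangle^4|t|^{-3/4}\bigr)\lesssim\frac{\omega(x)\omega(y)}{|t|\log|t|}$ closes the argument.
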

\begin{proof}
	Recall the kernel representation of the full free sine propagator from \eqref{eq:free_kernels}:
	\begin{equation}\label{free-whole}
		\frac{\sin(t\Delta)}{t\Delta}(x,y)
		= \frac{1}{4\pi |t|}\int_{\frac{|x-y|^2}{4|t|}}^{\infty}\frac{\sin u}{u}\,du
		= \frac{1}{8|t|} - \frac{1}{4\pi |t|}\int^{\frac{|x-y|^2}{4|t|}}_{0}\frac{\sin u}{u}\,du,
	\end{equation}
	where we used the standard Dirichlet integral $\int_0^\infty \frac{\sin u}{u} du = \frac{\pi}{2}$.
	Using the uniform bound $|\frac{\sin u}{u}| \le 1$ and the inequality $|x-y|^2 \le \langle x \rangle^2 \langle y \rangle^2$, we derive 
	\begin{equation}\label{free-est}
		 \frac{1}{4\pi |t|}\Big| \int_{0}^{\frac{|x-y|^2}{4|t|}} \frac{\sin u}{u} \, du \Big| \le \frac{|x-y|^2}{16\pi |t|^2} \lesssim \frac{\langle x \rangle^2 \langle y \rangle^2}{|t|^2}.
	\end{equation}
To isolate the low-frequency part, we decompose the operator as $\chi_1(\Delta) = \mathrm{Id} - \chi_2(\Delta)$.	According to Remark~\ref{rmk:free_case}\,(i), the high-frequency part satisfies
	\begin{equation}\label{free-high}
		\Big| \frac{\sin(t\Delta)}{t\Delta}\chi_2(\Delta)(x,y) \Big| \lesssim |t|^{-2}.
	\end{equation}
	Combining \eqref{free-whole}, \eqref{free-est}, and \eqref{free-high}, we deduce that
	\[
	\frac{\sin(t\Delta)}{t\Delta}\chi_1(\Delta)(x,y) -\frac{1}{8 |t|} ={O}\!\left(\frac{\langle x\rangle^2\langle y\rangle^2}{|t|^2}\right).
	\]
    
	Finally, minimizing with the uniform bound $|t|^{-1}$ via using $\min(1,a/b) \lesssim \log a/\log b$ for $a,b\ge 2$, we obtain
	\[
	\min\Bigl(\frac{1}{|t|}, \frac{\langle x\rangle^2\langle y\rangle^2}{|t|^2}\Bigr)
	= \frac{1}{|t|} \min\Bigl(1, \frac{\langle x\rangle^2\langle y\rangle^2}{|t|}\Bigr)
	\lesssim \frac{\omega(x)\omega(y)}{|t|\log|t|}, \qquad |t| \ge 2,
	\]
	which yields the desired result.
\end{proof}
\section{High energy decay estimates}\label{sec:high-energy}
In this section, we focus on the high-energy part of $\cos(t\sqrt{H})$ and $\frac{\sin(t\sqrt{H})}{t\sqrt{H}}$, establishing the bounds stated in Theorem~\ref{main_theorem_high} below.
In view of the identities 
\[
	\cos (t \sqrt{H})=\frac{e^{i t \sqrt{H}}+e^{-i t \sqrt{H}}}{2}\ \ \text{and}\ \ \frac{\sin (t \sqrt{H})}{\sqrt{H}}=\frac{e^{i t \sqrt{H}}-e^{-i t \sqrt{H}}}{2i\sqrt{H}},
	\]
   it suffices to analyze the operator $H^{\frac{\gamma}{2}} e^{-it\sqrt{H}}$ for $\gamma \in \{-1, 0\}$.
By using Stone's formula,
\begin{equation}\label{stone_high_1}
  H^{\frac{\gamma}{2}}e^{-i t \sqrt{H}}P_{\mathrm{ac}}(H)\chi_2(H)
= \frac{2}{\pi i} \int_0^{\infty} e^{-it\lambda^2}\lambda^{3+2\gamma} \widetilde{\chi}_2(\lambda) \left[R_V^{+}(\lambda^4)-R_V^{-}(\lambda^4)\right] d \lambda,
\end{equation}
where $\widetilde{\chi}_2(\lambda)=\chi_2(\lambda^4)(\lambda>0)$ so that $\operatorname{supp}\widetilde{\chi}_2 \subset [\lambda_0^{1/4}, \infty)$.
\begin{theorem}\label{main_theorem_high}
Let $|V(x)| \lesssim \langle x \rangle^{-4-}$. Assume that $H=\Delta^2+V$ has no positive embedded eigenvalue and $P_{\mathrm{ac}}(H)$ denotes the projection onto the absolutely continuous spectrum of $H$. Then
$$
\begin{aligned}
\Bigl\|\cos (t \sqrt{H}) P_{\mathrm{ac}}(H)\chi_2(H)\Bigr\|_{L^1\to L^\infty}
+ \Bigl\|\frac{\sin(t\sqrt{H})}{t\sqrt{H}}P_{\mathrm{ac}}(H)\chi_2(H)\Bigr\|_{L^1\to L^\infty}
\lesssim |t|^{-1}.
\end{aligned}
$$
Moreover, for $|t|\ge 2$,
$$
\begin{aligned}
 \Bigl\|\cos (t \sqrt{H}) P_{\mathrm{ac}}(H)\chi_2(H)\Bigr\|_{L^1_\omega\to L^\infty_{-\omega}}
\lesssim ({|t|\log |t|})^{-1}, \quad
\Bigl\|\frac{\sin(t\sqrt{H})}{t\sqrt{H}}P_{\mathrm{ac}}(H)\chi_2(H)\Bigr\|_{L^1\to L^\infty}
\lesssim |t|^{-2}.
\end{aligned}
$$
\end{theorem}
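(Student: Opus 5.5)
The plan is to work with the high-energy Stone formula \eqref{stone_high_1}, taking $\gamma\in\{0,-1\}$, and to expand $R_V^\pm(\lambda^4)$ by a finite Born series with a remainder:
\[
R_V^\pm=\sum_{k=0}^{2m-1}(-1)^kR_0^\pm(VR_0^\pm)^k+(R_0^\pm V)^mR_V^\pm(VR_0^\pm)^m .
\]
Each piece of this series is then estimated separately.

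For the finite Born terms, I use the high-energy form of the free kernel. For $\lambda\gtrsim1$, \eqref{reso-big} and \eqref{reso_small} give $R_0^\pm(\lambda^4)(x,y)=\lambda^{-2}e^{\pm i\lambda|x-y|}\widetilde R^\pm(\lambda|x-y|)$, with $|\partial_\lambda^\ell\widetilde R^\pm|\lesssim\lambda^{-\ell}\langle\lambda|x-y|\rangle^{-1/2}$. The difficulty at high energy is the growing factor $\lambda^{3+2\gamma}$. Each additional free resolvent contributes $\lambda^{-2}$, while the $\lambda^{-1/2}$ gains from $\langle\lambda|x-y|\rangle^{-1/2}$ are only available away from the local singularity. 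So the terms with $k\ge1$ come out as $\lambda^{3+2\gamma-2(k+1)}$ times amplitudes with phase $e^{\pm i\lambda(|x-x_1|+\cdots+|x_k-y|)}$. The $k=0$ term is exactly the free operator $\cos(t\Delta)\chi_2(\Delta)$ or $\frac{\sin(t\Delta)}{t\Delta}\chi_2(\Delta)$, which Remark~\ref{rmk:free_case} already controls: $|t|^{-1}$ unweighted, $(|t|\log|t|)^{-1}$ weighted for cosine, and $|t|^{-2}$ for sine.

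For the terms with $k\ge1$, I integrate out the intermediate variables. Since $|V|\lesssim\langle x\rangle^{-4-}$, the kernels are integrable against $V$, so each such term becomes an oscillatory integral of the form in Lemma~\ref{oscillatory} with $r=|x-x_1|+\cdots+|x_k-y|$. After the $\lambda$-derivatives are distributed, the amplitude carries the extra negative powers of $\lambda$. For $\gamma=0$ and $k\ge1$ the power is $\lambda^{3-2(k+1)}\le\lambda^{-1}$. Writing it as $\lambda\cdot\mathcal E$ with $|\partial^\ell\mathcal E|\lesssim\lambda^{-2-\ell}$ on $\lambda\gtrsim1$ places it within (i), giving $|t|^{-1}$. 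The cosine weighted bound requires more: a stationary phase or integration by parts argument in which the additional $\lambda^{-2}$ decay yields $|t|^{-1-}$ uniformly. That dominates $(|t|\log|t|)^{-1}$, so the Born terms with $k\ge1$ contribute even better than the free term.

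For the sine, the prefactor $t^{-1}$ combined with the $|t|^{-1}$ bound on $\int e^{-it\lambda^2}\lambda\,\widetilde\chi_2\cdots$ gives $|t|^{-2}$, as in Remark~\ref{rmk:free_case}(i).

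The remainder term is the main obstacle. I plan to handle it by the standard argument (as in \cite{FSY,LSY21}): write it as $\langle R_V^\pm(\lambda^4)VR_0^\pm\cdots(\cdot,y),\,VR_0^\mp\cdots(\cdot,x)\rangle$ and use the high-energy limiting absorption principle. This gives $\|\partial_\lambda^\ell R_V^\pm(\lambda^4)\|_{L^{2,s}\to L^{2,-s}}\lesssim\lambda^{-3}$ for $s>\ell+1/2$, which holds because there are no embedded eigenvalues. The free-resolvent tails, after extracting the phases $e^{\pm i\lambda(|x|+|y|)}$, supply the $L^{2,-s}$ bounds. Taking $m$ large enough makes the total power of $\lambda$ integrable, so Lemma~\ref{oscillatory}(i) applies with $r=|x|+|y|$. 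Two points in this step need careful checking: the bookkeeping of $\lambda$-derivatives, which determines the needed decay $\mu>4$, and the extraction of the phase $e^{\pm i\lambda|x|}$ uniformly in $x$. A final integration by parts then upgrades the bound to $|t|^{-1-}$, which yields both the weighted cosine bound and the $|t|^{-2}$ sine bound.
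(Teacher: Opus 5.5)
With $m=1$ your expansion is exactly the identity the paper uses, $R_V^\pm=R_0^\pm-R_0^\pm VR_0^\pm+R_0^\pm VR_V^\pm VR_0^\pm$. Your unweighted argument also matches the paper's. The Born term is handled for fixed $y_1$ by Lemma~\ref{oscillatory}(i) with $r=|x-y_1|+|y-y_1|$ and then integrated against $V(y_1)$. The remainder is handled via Lemma~\ref{RRR} after extracting $e^{\pm i\lambda(|x|+|y|)}$ using $\langle\lambda(x-z)\rangle^{-1/2}\lesssim\langle\lambda x\rangle^{-1/2}\lambda^{1/2}\langle z\rangle^{1/2}$. Larger $m$ buys nothing, because the $m=1$ remainder amplitude is already $O(\lambda^{-3+2\gamma})$. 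The sine bound $|t|^{-2}$ needs no upgrade: it is simply $t^{-1}$ times the uniform $|t|^{-1}$ bound. For $|t|\le1$, however, you should use the absolute convergence of the $\gamma=-1$ integrals instead, since $t^{-1}\cdot|t|^{-1}$ is too weak there.

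The gap is in the weighted cosine estimate. You assert that the extra $\lambda^{-2}$ decay of the Born terms, and a final integration by parts for the remainder, give $|t|^{-1-}$ uniformly in $x,y$. This is false.

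If $r\gtrsim|t|$, the stationary point $\lambda=r/(2|t|)$ of $-t\lambda^2\pm\lambda r$ lies in the support of $\widetilde\chi_2$. Take $y=0$, $V$ localized near the origin, and $|x|\approx4|t|$. The first Born term then has its stationary point at $\lambda\approx2$. There its amplitude $\lambda^{-1}\langle\lambda r\rangle^{-1/2}$ has size $|t|^{-1/2}$, and the Hessian $2|t|$ contributes another $|t|^{-1/2}$. So this term is generically of size $|t|^{-1}$ in that region, just like the free kernel $\frac{1}{4\pi t}\sin\frac{|x-y|^2}{4t}$. The factor $\lambda^{-2}$, which is $O(1)$ at $\lambda\sim1$, buys nothing. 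No integration by parts can beat a stationary point, and the same count applies to the remainder when $y$ is bounded and $|x|\sim|t|$.

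The logarithmic gain must therefore come from the weights: $|x|+|y|\gtrsim|t|$ forces $\omega(x)\omega(y)\gtrsim\log|t|$. The paper exploits this as follows. Keep $e^{\pm i\lambda r}$ inside the amplitude and integrate by parts twice against $e^{-it\lambda^2}$. There are no boundary terms thanks to $\widetilde\chi_2$. Each derivative falling on the phase costs $r\lesssim\langle x\rangle\langle y\rangle\langle y_1\rangle$, and the resulting $\lambda$-integrals still converge. This yields the non-uniform bound $|t|^{-2}\langle x\rangle^2\langle y\rangle^2$, after integrating against $\langle y_1\rangle^2|V(y_1)|$ in the Born term. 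Then
\[
\min\Bigl(\frac{1}{|t|},\frac{\langle x\rangle^2\langle y\rangle^2}{|t|^2}\Bigr)\lesssim\frac{\omega(x)\omega(y)}{|t|\log|t|},\qquad |t|\ge2,
\]
by $\min(1,a/b)\lesssim\log a/\log b$ for $a,b\ge2$.
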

In this section, we use the following resolvent identity:
$$
R_V^{\pm}(\lambda^4)=R_0^{\pm}(\lambda^4)-R_0^{ \pm}(\lambda^4)VR_0^{\pm}(\lambda^4)+R_0^{ \pm}(\lambda^4)VR_V^{\pm}(\lambda^4)VR_0^{ \pm}(\lambda^4).
$$
Substituting this expansion into \eqref{stone_high_1} and 
invoking the free estimates in Remark \ref{rmk:free_case},
we only need to analyze the following integral kernels for $\gamma \in \{-1,0\}$:
$$
\begin{aligned}
\nonumber
& L_{1}^{\pm}(\gamma,t,x,y):=\int_0^{\infty} e^{-i t \lambda^2} \lambda^{3+2\gamma} \widetilde{\chi}_2(\lambda)[R_0^{ \pm}(\lambda^4)VR_0^{ \pm}(\lambda^4)](x, y) d\lambda, \\
&L_{2}^{\pm}(\gamma, t,x,y):=\int_0^{\infty} e^{-i t \lambda^2} \lambda^{3+2\gamma} \widetilde{\chi}_2(\lambda)[R_0^{\pm}(\lambda^4)VR_V^{\pm}(\lambda^4)VR_0^{\pm}(\lambda^4)](x, y) d\lambda.
\end{aligned}
$$
More precisely, the high-energy part of $\cos(t\sqrt{H})$ is dictated by $L_{1}^{\pm}(0,t,x,y)$ and $L_{2}^{\pm}(0,t,x,y)$, while the high-energy part for ${\sin(t\sqrt{H})}/({t\sqrt{H}})$ corresponds to $ t^{-1} L_{1}^{\pm}(-1,t,x,y)$ and $ t^{-1} L_{2}^{\pm}(-1,t,x,y)$. The desired bounds for these kernels are established in Propositions \ref{high_1} and \ref{high_2}, respectively.

\begin{proposition}\label{high_1}
Assume that $|V(x)| \lesssim \left \langle  x\right \rangle^{-4-}$. Then for $\gamma\in \{-1,0\},$
$$
\sup _{x, y \in \mathbb{R}^2}\left|L_{1}^{\pm}(\gamma,t,x,y)\right| \lesssim |t|^{-1}, \ \ \ \
\left|L_{1}^{\pm}(\gamma,t,x,y)\right| \lesssim |t|^{-2} \langle x \rangle^2 \langle y \rangle^2.
$$
Moreover, $|L_1^\pm(-1,t,x,y)|\lesssim 1$  uniformly in $t,x,y.$
\end{proposition}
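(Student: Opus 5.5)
The plan is to write the kernel of $R_0^\pm(\lambda^4)VR_0^\pm(\lambda^4)$ as an integral over $z\in\mathbb{R}^2$ and use the high-energy representation \eqref{reso-big}. For $\lambda\ge\lambda_0^{1/4}$ we have $R_0^\pm(\lambda^4)(x,z)=\lambda^{-2}e^{\pm i\lambda|x-z|}\widetilde R^\pm(\lambda|x-z|)$ by \eqref{def:F_pm}. From \eqref{free-widetilde_R}, $|\partial_\lambda^\ell \widetilde R^\pm(\lambda|x-z|)|\lesssim \langle\lambda|x-z|\rangle^{-1/2}\lambda^{-\ell}$ for $\ell=0,1,2$. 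Hence
\[
L_1^\pm(\gamma,t,x,y)=\int_{\mathbb{R}^2}V(z)\int_0^\infty e^{-it\lambda^2}e^{\pm i\lambda r}\lambda^{-1+2\gamma}\widetilde\chi_2(\lambda)\,\widetilde R^\pm(\lambda|x-z|)\widetilde R^\pm(\lambda|z-y|)\,d\lambda\,dz,
\]
where $r=|x-z|+|z-y|$. The amplitude $\mathcal{E}(\lambda)=\lambda^{2\gamma-2}\widetilde\chi_2(\lambda)\widetilde R^\pm\widetilde R^\pm$ carries a negative power of $\lambda$ bounded on the support, and it decays like $\lambda^{-2}$ or faster at infinity. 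That decay makes the $\lambda$-integral absolutely convergent, since $\lambda^{-1+2\gamma}\lambda^{-1}\le\lambda^{-2}$. This immediately gives the uniform bound $|L_1^\pm(-1,t,x,y)|\lesssim\int|V|\lesssim1$, and indeed $L_1^\pm(\gamma)$ is bounded for both values of $\gamma$.

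For the $|t|^{-1}$ bound, I would integrate by parts once in the form of the standard high-energy version of Lemma~\ref{oscillatory}(i) (Remark~\ref{remark:osci}(i) as invoked in Remark~\ref{rmk:free_case}). Writing $e^{-it\lambda^2}=(-2it\lambda)^{-1}\partial_\lambda e^{-it\lambda^2}$ and integrating by parts produces terms of the following kinds:
\begin{itemize}
\item derivatives falling on the amplitude, giving $t^{-1}\int\lambda^{-2}\cdots$, which is integrable on $[\lambda_0^{1/4},\infty)$;
\item the derivative falling on $e^{\pm i\lambda r}$, giving $t^{-1}r\int\lambda^{-2+2\gamma}\langle\lambda|x-z|\rangle^{-1/2}\langle\lambda|z-y|\rangle^{-1/2}d\lambda$.
\end{itemize}
In the second kind of term, the product of the two $\langle\cdot\rangle^{-1/2}$ factors should absorb only part of the factor $r$. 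I would therefore handle this term as in the proof of Lemma~\ref{oscillatory}(i): split into $\lambda\lesssim|t|/r$ and $\lambda\gtrsim |t|/r$, where the phase $t\lambda^2\mp\lambda r$ is nonstationary away from $\lambda\approx r/(2t)$. Near the stationary point, van der Corput with second derivative $2t$ gives $|t|^{-1/2}$ times the amplitude size. The amplitude size there is $\lambda^{-1+2\gamma}\langle\lambda|x-z|\rangle^{-1/2}\langle\lambda|z-y|\rangle^{-1/2}\lesssim \lambda^{-1}(\lambda r)^{-1/2}$ once one of $|x-z|,|z-y|$ is comparable to $r$. With $\lambda\sim r/t$, this gives $\lambda^{-3/2}r^{-1/2}$, so the stationary contribution is about $t^{-1/2}(t/r)^{3/2}r^{-1/2}\cdot(\text{window }\sim t^{-1/2})$, which is $O(t^{1/2}\cdot t^{-1}\ldots)$. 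Because this heuristic is delicate, I would instead just cite Lemma~\ref{oscillatory}(i) (valid on the full half-line, hence for the $\widetilde\chi_2$-localized amplitude too), after checking that $\lambda^{2}\mathcal{E}$ satisfies \eqref{condit 1} with $h=1$. Integrating the resulting pointwise bound against $|V(z)|\in L^1$ then gives $\sup_{x,y}|L_1^\pm|\lesssim|t|^{-1}$. The main obstacle is this uniform control of the stationary-phase region: $r$ is large while the individual distances may be unbalanced, so the single factor $\langle\lambda r\rangle^{-1/2}$ required by \eqref{condit 1} must be extracted from $\langle\lambda|x-z|\rangle^{-1/2}\langle\lambda|z-y|\rangle^{-1/2}$. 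This works because $\max(|x-z|,|z-y|)\ge r/2$ and the other factor is at most $1$.

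For the $|t|^{-2}\langle x\rangle^2\langle y\rangle^2$ bound, I would integrate by parts twice against $e^{-it\lambda^2}$ without separating out the phase $e^{\pm i\lambda r}$. Each $\lambda$-derivative of the full kernel $R_0^\pm(\lambda^4)(x,z)$ costs at most a factor $\langle x-z\rangle$, by \eqref{reso-big} and $|\partial_\lambda^\ell(e^{\pm i\lambda s}w(\lambda s))|\lesssim s^\ell\langle\lambda s\rangle^{-1/2}$. Two integrations by parts therefore give $t^{-2}$ times
\[
\int|V(z)|\langle x-z\rangle^{2}\langle z-y\rangle^{2}\,dz\cdot\int\lambda^{-2}\,d\lambda,
\]
where the $\lambda^{-2}$ comes from the extra $\lambda^{-1}$ produced by each integration by parts together with the amplitude's negative power of $\lambda$. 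The boundary terms vanish thanks to $\widetilde\chi_2$. Using $\langle x-z\rangle^2\langle z-y\rangle^2\lesssim\langle x\rangle^2\langle y\rangle^2\langle z\rangle^4$ and $|V|\lesssim\langle z\rangle^{-4-}$, wait, this needs $\langle z\rangle^{4}|V|\in L^1$, i.e. decay $\langle z\rangle^{-6-}$. So instead I would distribute the derivatives more carefully: at most one derivative hits each resolvent, costing $\langle x-z\rangle\langle z-y\rangle\lesssim\langle x\rangle\langle y\rangle\langle z\rangle^2$. Terms where both derivatives hit the same resolvent cost $\langle x-z\rangle^2\lesssim\langle x\rangle^2\langle z\rangle^2$. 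In every case the $z$-weight is $\langle z\rangle^{2}$, which is integrable against $\langle z\rangle^{-4-}$.
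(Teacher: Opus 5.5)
Your final argument is essentially the paper's proof. The paper writes $L_1^\pm=\int K_1^\pm V(y_1)\,dy_1$ with amplitude $\lambda^{-2+2\gamma}\widetilde\chi_2(\lambda)\widetilde R^\pm(\lambda|x-y_1|)\widetilde R^\pm(\lambda|y-y_1|)$ and $r=|x-y_1|+|y-y_1|$. It merges the two factors $\langle\cdot\rangle^{-1/2}$ into $\langle\lambda r\rangle^{-1/2}$ exactly as you do, applies Remark~\ref{remark:osci}(i), and integrates against $|V|\in L^1$. It gets the $\gamma=-1$ bound from $|\lambda\mathcal{E}|\lesssim\lambda^{-3}$. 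It gets the pointwise bound from two integrations by parts, where each derivative costs $|x-y_1|$, $|y-y_1|$ or $\lambda^{-1}$, so only $\langle y_1\rangle^2$ falls on $V$.

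One statement in your proposal is wrongly justified. You claim the $\lambda$-integral is absolutely convergent for $\gamma=0$ because ``$\lambda^{-1+2\gamma}\lambda^{-1}\le\lambda^{-2}$''. For $\gamma=0$ the integrand is $\lambda^{-1}\widetilde R^\pm\widetilde R^\pm$, and $\widetilde R^\pm(\lambda s)\to a_0^\pm\neq0$ when $\lambda s\ll1$. So there is no uniform extra factor $\lambda^{-1}$, and the $\lambda$-integral is not uniformly absolutely convergent: it is of size $\log\bigl(1/\max(|x-z|,|z-y|)\bigr)$ when $x,y,z$ are close. The proposition does not assert this bound for $\gamma=0$, so just drop the claim.

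Two smaller points, both harmless:
\begin{itemize}
\item The amplitude to test in \eqref{condit 1} is $\mathcal{E}$ itself, since the integrand is $\lambda\mathcal{E}$, not $\lambda^2\mathcal{E}$.
\item In the last step, take the derivative bounds from the $\widetilde R^\pm$ estimates, which hold for all $\lambda|x-z|$, rather than from \eqref{reso-big}, which only covers $\lambda|x-z|\gtrsim1$.
\end{itemize}
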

\begin{proof}
Recalling that $R^\pm_0(\lambda^4)(x,y) = \lambda^{-2}e^{\pm i\lambda|x-y|}\widetilde{R}^\pm(\lambda|x-y|)$ (see \eqref{def:F_pm} and \eqref{free-widetilde_R}), we rewrite
\[
L_{1}^{\pm}(\gamma,t,x,y) = \int_{\mathbb{R}^2} K_1^\pm(t,x,y,y_1)\, V(y_1)\, dy_1,
\]
where
\begin{equation}\label{eq:high_1}
K_1^\pm(t,x,y,y_1):=\int_0^\infty e^{-it\lambda^2}\lambda\,e^{\pm i\lambda r}\, \mathcal{E}^\pm(\lambda, x, y,y_1)\, d\lambda,
\end{equation}
with $r=r(x,y,y_1)=|x-y_1|+|y-y_1|$ and
\begin{equation}\label{eq:def_E}
\mathcal{E}^\pm(\lambda,x,y,y_1) = \lambda^{-2+2\gamma}\widetilde{\chi}_2(\lambda)\,
\widetilde{R}^\pm(\lambda|x-y_1|)\,\widetilde{R}^\pm(\lambda|y-y_1|).
\end{equation}
By virtue of \eqref{free-widetilde_R}, it can be checked that for $\ell=0,1,2,$
$$
\left| \partial_\lambda^{\ell}  \widetilde{R}^\pm(\lambda|x-y|)  \right| \lesssim \langle \lambda |x-y| \rangle^{-\frac{1}{2}} \lambda^{-\ell}.
$$
Then for $\ell=0,1,2$,
\[
\left|\partial_\lambda^{\ell} \mathcal{E}^\pm(\lambda, x, y,y_1)\right|
\lesssim \lambda^{-2+2\gamma-\ell}\langle \lambda |x-y_1| \rangle^{-\frac{1}{2}}
\langle \lambda |y-y_1| \rangle^{-\frac{1}{2}}
\lesssim \lambda^{-2+2\gamma-\ell}\langle \lambda r\rangle^{-\frac{1}{2}}.
\]
Recalling Remark ~\ref{remark:osci} (i), we conclude that
$
|K_1^\pm(t,x,y,y_1)| \lesssim |t|^{-1},
$
uniformly in $x,y,y_1$. Combining this with $\int |V(y_1)| dy_1 < \infty$, 
we deduce that for $\gamma=-1,0,$
$$\sup_{x,y\in\mathbb{R}^2} |L_1^\pm(\gamma,t,x,y)| \lesssim |t|^{-1}.$$
Moreover,  when $\gamma=-1$, it follows directly from \eqref{eq:def_E} that \( |\lambda \mathcal{E}^{\pm}(\lambda,x,y,y_1) | \lesssim \lambda^{-3} \). Then by \eqref{eq:high_1}, \( |K_1^\pm(t,x,y,y_1)| \lesssim 1 \) when $\gamma=-1$. Hence $|L_1^\pm(-1,t,x,y)|\lesssim 1$  uniformly  in $t,x,y.$

 For the pointwise estimate for $L_{1}^{\pm}(\gamma,t,x,y)$, note that for $\ell=0,1,2$,
\[
\Bigl|\partial_\lambda^\ell \bigl( e^{\pm i\lambda r} \mathcal{E}^\pm(\lambda,x,y,y_1)\bigr)\Bigr|
\lesssim\sum_{\ell_1+\ell_2+\ell_3 \leq\ell}\lambda^{-2+2\gamma-\ell_1}|x-y_1|^{\ell_2}|y-y_1|^{\ell_3}
\lesssim\lambda^{-2+2\gamma}\langle x\rangle^\ell\langle y\rangle^\ell\langle y_1\rangle^\ell.
\]
Applying integration by parts twice to the integral \eqref{eq:high_1}(the boundary terms vanish) yields
\[
|K_1^\pm(t,x,y,y_1)| \lesssim |t|^{-2}\langle x\rangle^2\langle y\rangle^2\langle y_1\rangle^2
\int_{\lambda_0}^\infty\lambda^{-3+2\gamma}d\lambda
\lesssim |t|^{-2}\langle x\rangle^2\langle y\rangle^2\langle y_1\rangle^2.
\]
Hence by $\int \langle y_1\rangle^2|V(y_1)| dy_1 < \infty$, it follows that $\left|L_{1}^{\pm}(\gamma,t,x,y)\right| \lesssim |t|^{-2} \langle x \rangle^2 \langle y \rangle^2.$
\end{proof}

We next consider the integral kernels associated with $[R_0^{\pm}(\lambda^4)VR_V^{\pm}(\lambda^4)VR_0^{\pm}(\lambda^4)](x, y).$
To deal with these integral kernels, we utilize the following estimates.
\begin{lemma}[{\cite[Theorem 2.23]{FSY}}]\label{RRR}
Assume that $|V(x)| \lesssim \langle x \rangle^{-\mu}$ for some $\mu > k+1$ with $k \in \mathbb{N}_0$, and that $H = \Delta^2 + V$ has no positive eigenvalues. Then, for any $\sigma>k+\frac{1}{2}, R_V^{ \pm}(\lambda) \in \mathbb{B}\left(L^{2,\sigma}(\mathbb{R}^2), L^{2,-\sigma}(\mathbb{R}^2)\right)$ are $C^k$-continuous for all $\lambda>0$. Furthermore,
$$
\left\|\partial_\lambda^{k} R_V^{ \pm}(\lambda)\right\|_{L^{2,\sigma}(\mathbb{R}^2) \rightarrow L^{2,-\sigma}(\mathbb{R}^2)}=O\left(|\lambda|^{-\frac{3(k+1)}{4}}\right),~\lambda \rightarrow+\infty.
$$
\end{lemma}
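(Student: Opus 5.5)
The plan is to reduce everything to the limiting absorption principle for $-\Delta$ through the splitting identity, and then transfer the bounds to $R_V^\pm$ by a resolvent identity combined with a Neumann series for large $\lambda$. \textbf{Step 1} is the free bound. For $\sigma>k+\frac12$, Agmon's LAP with derivatives gives
$$\bigl\|\partial_z^jR^\pm(-\Delta;z)\bigr\|_{L^{2,\sigma}\to L^{2,-\sigma}}\lesssim z^{-(j+1)/2},\qquad z\to+\infty,\ 0\le j\le k,$$
while $R(-\Delta;-z)$ is smooth with even better decay. Write
$$R_0^\pm(\lambda)=\frac{1}{2\sqrt\lambda}\bigl[R^\pm(-\Delta;\sqrt\lambda)-R(-\Delta;-\sqrt\lambda)\bigr]$$
and use the chain rule $\partial_\lambda=\frac{1}{2\sqrt\lambda}\partial_z$ with $z=\sqrt\lambda$. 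Then each $\lambda$-derivative contributes a factor $\lambda^{-1/2}\cdot z^{-1/2}=\lambda^{-3/4}$, and the zeroth-order term is $\lambda^{-1/2}\lambda^{-1/4}=\lambda^{-3/4}$. This yields $\|\partial_\lambda^jR_0^\pm(\lambda)\|_{L^{2,\sigma}\to L^{2,-\sigma}}\lesssim \lambda^{-3(j+1)/4}$. Terms where derivatives land on the prefactor $\lambda^{-1/2}$ decay faster.

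\textbf{Step 2} handles the perturbation. I would use $R_V^\pm(\lambda)=R_0^\pm(\lambda)\bigl(I+VR_0^\pm(\lambda)\bigr)^{-1}$ on $L^{2,\sigma}$. Since $V$ maps $L^{2,-\sigma}\to L^{2,\sigma'}$ when $\sigma+\sigma'\le\mu$, Step 1 gives $\|VR_0^\pm(\lambda)\|_{L^{2,\sigma'}\to L^{2,\sigma'}}\lesssim\lambda^{-3/4}$. For $\lambda\ge\Lambda$ large the inverse therefore exists by a Neumann series with uniformly bounded norm. On compact subintervals of $(0,\infty)$, $VR_0^\pm(\lambda)$ is compact and continuous in $\lambda$. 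Invertibility there follows from the Fredholm alternative: a nontrivial kernel would produce, by the standard Agmon bootstrap, an $L^2$ eigenfunction of $H$ at energy $\lambda$. That is excluded by the hypothesis that $H$ has no positive eigenvalues. This yields the $C^k$-continuity; the asymptotic statement only needs large $\lambda$.

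\textbf{Step 3} is the derivative bound. Differentiate $R_0(I+VR_0)^{-1}$ $k$ times by the Leibniz rule and the formula $\partial(A^{-1})=-A^{-1}(\partial A)A^{-1}$. Each resulting term is a product of a factor $\partial^{j_0}R_0$ with factors $(I+VR_0)^{-1}$ and $V\partial^{j_i}R_0$, where $j_0+\sum j_i=k$. By Step 1 the term is $O(\lambda^{-3(j_0+1)/4}\prod_i\lambda^{-3(j_i+1)/4})$. Every such product is at least as good as $\lambda^{-3(k+1)/4}$, with equality only in the term where all derivatives fall on a single $R_0$.

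The main obstacle is the weight bookkeeping under the weak decay assumption $\mu>k+1$. The factor $\partial^jR_0$ requires weights $>j+\frac12$ on both sides, whereas $V$ can only bridge $L^{2,-s}\to L^{2,s'}$ with $s+s'<\mu$. I would therefore estimate each product in a chain of spaces, assigning to each intermediate $V$ the split $s=j_i+\frac12+$, $s'=j_{i+1}+\frac12+$. This requires only $\mu>j_i+j_{i+1}+1$, and it is here that the sharper one-sided LAP estimates of Jensen–Kato type must be used, with $\partial^jR^\pm(-\Delta)$ bounded from $L^{2,s_1}\to L^{2,-s_2}$ whenever $s_1,s_2>\frac12$ and $s_1+s_2>2j+1$. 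If that allocation fails, I would fall back on the second-order identity $R_V=R_0-R_0VR_0+R_0VR_VVR_0$, so that the outer free resolvents carry the derivatives.
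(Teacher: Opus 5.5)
The paper gives no proof of this lemma; it imports it verbatim from \cite[Theorem 2.23]{FSY}. Your route is the standard Jensen--Kato-type argument for such a statement, and it is sound: the splitting identity plus the LAP for $-\Delta$ with derivatives, Fredholm alternative plus absence of positive eigenvalues, a Neumann series at high energy, and Leibniz bookkeeping with $\mu>j_i+j_{i+1}+1\le k+1$. There is one correction and two small tidy-ups.

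\textbf{The one-sided estimate is false.} You say the sharper ``one-sided'' estimate must be used, i.e.\ that $\partial^jR^\pm(-\Delta)$ maps $L^{2,s_1}\to L^{2,-s_2}$ whenever $s_1,s_2>\frac12$ and $s_1+s_2>2j+1$. This is false for $j\ge1$. Take $f\in C_c^\infty(\mathbb{R}^2)$ with $\hat f\neq0$ on the circle $|\xi|=\sqrt z$. Then $(\partial_z^jR^\pm(-\Delta;z)f)(x)$ behaves like $|x|^{j-1/2}e^{\pm i\sqrt z|x|}\hat f(\pm\sqrt z\,x/|x|)$ as $|x|\to\infty$. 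This function lies in $L^{2,-s_2}(\mathbb{R}^2)$ only if $s_2>j+\frac12$, however large $s_1$ is.

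Fortunately you never need that estimate. In your chain, each factor $\partial^{j}R_0$ gets its input weight $j+\frac12+$ from the $V$ on its right and its output weight $j+\frac12+$ from the $V$ on its left. So only the ordinary two-sided bound of your Step 1 is used, with weight above $j+\frac12$ on both sides. The fallback via $R_V=R_0-R_0VR_0+R_0VR_VVR_0$ is therefore unnecessary. It would not by itself keep derivatives off the middle $R_V$ anyway.

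\textbf{Tidy-up 1: the space for $(I+VR_0^\pm)^{-1}$.} Do not run $(I+VR_0^\pm(\lambda))^{-1}$ on $L^{2,\sigma}$ itself. When $\sigma\ge\mu-\frac12$, $VR_0$ need not map that space into itself. Instead work on $L^{2,s}$ with $\frac12<s<\mu-\frac12$, after first embedding $L^{2,\sigma}\subset L^{2,s}$. This is possible because $\mu>k+1$ leaves room for some $s>k+\frac12$ with $s\le\sigma$.

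\textbf{Tidy-up 2: invertibility on every space in the chain.} Your Step 3 uses $(I+VR_0^\pm)^{-1}$ on several different weighted spaces. State that the Fredholm/Agmon argument gives invertibility on every such $L^{2,s}$ at once. A kernel element in any of these spaces lies in $L^{2,1/2+}$ and hence produces a positive eigenvalue of $H$.
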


\begin{proposition}\label{high_2}
Assume that $|V(x)| \lesssim \langle x \rangle^{-4-}$. Then for $\gamma=-1,0$,
$$
 \sup _{x, y \in \mathbb{R}^2}\left|L_{2}^{\pm}(\gamma,t,x,y)\right| \lesssim \langle t\rangle^{-1}, \ \ \ \
\left|L_{2}^{\pm}(\gamma,t,x,y)\right| \lesssim |t|^{-2} \langle x \rangle^2 \langle y \rangle^2.
$$
\end{proposition}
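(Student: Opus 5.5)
We follow the scheme of Proposition \ref{high_1}, with the perturbed resolvent sandwiched in the middle. Write
\[
L_2^\pm(\gamma,t,x,y)=\int_0^\infty e^{-it\lambda^2}\lambda^{3+2\gamma}\widetilde{\chi}_2(\lambda)\big\langle VR_V^\pm(\lambda^4)V\,R_0^\pm(\lambda^4)(\cdot,y),\,R_0^\mp(\lambda^4)(\cdot,x)\big\rangle\,d\lambda ,
\]
and factor out the oscillation of the two outer free resolvents. By \eqref{def:F_pm}, $R_0^\pm(\lambda^4)(z,y)=\lambda^{-2}e^{\pm i\lambda|y|}\,e^{\pm i\lambda(|z-y|-|y|)}\widetilde R^\pm(\lambda|z-y|)$. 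Since $\big||z-y|-|y|\big|\le|z|$, every $\lambda$-derivative of the extra phase costs only a factor $\langle z\rangle$, and this is absorbed by $V(z)$. The kernel therefore becomes
\[
\int_0^\infty e^{-it\lambda^2}\lambda\, e^{\pm i\lambda(|x|+|y|)}\mathcal{E}^\pm(\lambda,x,y)\,d\lambda,
\qquad
\mathcal{E}^\pm=\lambda^{-2+2\gamma}\widetilde{\chi}_2(\lambda)\big\langle R_V^\pm(\lambda^4)\,V\Phi_y^\pm,\;V\Phi_x^\mp\big\rangle ,
\]
where $\Phi_y^\pm(z)=e^{\pm i\lambda(|z-y|-|y|)}\widetilde R^\pm(\lambda|z-y|)$.

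We need $|\partial_\lambda^\ell\mathcal{E}^\pm|\lesssim\lambda^{-2+2\gamma-\ell+}\langle\lambda r\rangle^{-1/2}$ for $\ell=0,1,2$, with $r=|x|+|y|$. We use $\|V\partial_\lambda^{j}\Phi_y^\pm\|_{L^{2,\sigma}}\lesssim \langle\lambda|y|\rangle^{-1/2}$ up to harmless losses. Here the $\langle\lambda|z-y|\rangle^{-1/2}$ decay of $\widetilde R^\pm$ converts to $\langle\lambda|y|\rangle^{-1/2}$ because $\langle\lambda|z-y|\rangle^{-1/2}\lesssim\langle z\rangle^{1/2}\langle\lambda|y|\rangle^{-1/2}$ for $\lambda\gtrsim1$, and each derivative brings at most $\langle z\rangle$. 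We also use Lemma \ref{RRR} with $k\le2$ and $\sigma>k+\tfrac12$: the resolvent at spectral parameter $\lambda^4$ satisfies $\|\partial_\lambda^k[R_V^\pm(\lambda^4)]\|_{L^{2,\sigma}\to L^{2,-\sigma}}\lesssim\lambda^{-3}$ for $k\le 2$. Indeed, by the chain rule each derivative gives $\lambda^{3}\cdot\lambda^{-3}$ relative to the previous order, and the base is $\lambda^{-3}$. The weight requirement $\langle z\rangle^{\sigma+2}|V|\in L^2$ with $\sigma$ slightly above $5/2$, together with the phase derivatives, is where $\mu>4$ is needed; this bookkeeping of weights is the main obstacle, and we distribute the derivatives with the phase factors carefully so that at most two powers of $\langle z\rangle$ fall on each $V$.

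Combining these bounds gives $|\partial_\lambda^\ell\mathcal{E}^\pm|\lesssim\lambda^{-5+2\gamma}\langle\lambda|x|\rangle^{-1/2}\langle\lambda|y|\rangle^{-1/2}$ for $\ell\le2$. This is better than \eqref{condit 1} (with $h=1$) since $\lambda\ge\lambda_0^{1/4}$. Remark \ref{remark:osci}(i), as used in Proposition \ref{high_1}, then yields $\sup_{x,y}|L_2^\pm|\lesssim|t|^{-1}$. For small $|t|$, integrability of $\lambda^{-4+2\gamma}$ at infinity gives a uniform bound, hence $\langle t\rangle^{-1}$.

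For the weighted bound, we do not factor out the phase. We integrate by parts twice in $\lambda$ using $e^{-it\lambda^2}$; the boundary terms vanish thanks to $\widetilde\chi_2$. Derivatives hitting $e^{\pm i\lambda|z-y|}$ produce $\langle z\rangle\langle y\rangle$, which is again absorbed by $V$ through Lemma \ref{RRR}. This gives an integrand bounded by $\lambda^{-4+2\gamma}\langle x\rangle^2\langle y\rangle^2$, which is integrable on $[\lambda_0^{1/4},\infty)$, so $|L_2^\pm|\lesssim|t|^{-2}\langle x\rangle^2\langle y\rangle^2$.
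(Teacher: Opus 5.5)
This is essentially the paper's proof. You factor out $e^{\pm i\lambda(|x|+|y|)}$ using $\bigl||z-y|-|y|\bigr|\le|z|$, you apply Lemma~\ref{RRR} at spectral parameter $\lambda^4$ to get $\lambda^{-3}$ for $\ell\le2$, you invoke Remark~\ref{remark:osci}(i) for the $|t|^{-1}$ bound, and you integrate by parts twice for the weighted bound.

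Two bookkeeping points need correcting.

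First, the conversion inequality is $\langle\lambda(z-y)\rangle^{-1/2}\lesssim\lambda^{1/2}\langle z\rangle^{1/2}\langle\lambda y\rangle^{-1/2}$ for $\lambda\gtrsim1$; without the $\lambda^{1/2}$ it fails, e.g.\ at $z=y$ with $|y|$ and $\lambda$ large. The amplitude is therefore $O(\lambda^{-4+2\gamma})$ rather than $O(\lambda^{-5+2\gamma})$. This is harmless, since $\lambda\ge\lambda_0^{1/4}$ and $\gamma\le0$.

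Second, the weight count as you stated it does not close: $\langle z\rangle^{\sigma+2}V\in L^2$ with $\sigma>5/2$ would require $\mu>11/2$. The fix is to choose $\sigma$ term by term. Distribute the $\ell$ derivatives as $\ell_1+\ell_2+\ell_3=\ell$ over $\Phi_y$, $R_V^\pm(\lambda^4)$ and $\Phi_x$. Only $\sigma>\ell_2+\tfrac12$ is needed in the term where $R_V$ receives $\ell_2$ derivatives. Each $V$ then carries at most $\langle z\rangle^{\sigma+\ell_1+1/2}\le\langle z\rangle^{3+}$, and $\mu>4$ is exactly enough.
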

\begin{proof}
Recalling that $R^\pm_0(\lambda^4)(x,y) = \lambda^{-2}e^{\pm i\lambda|x-y|}\widetilde{R}^\pm(\lambda|x-y|)$, we express the inner product as
\begin{align*}
\big\langle R_V^{\pm}(\lambda^4) V(R_0^{\pm}(\lambda^4)(*, y))(\cdot), VR_0^{\mp}(\lambda^4)(x, \cdot)\big\rangle
= \lambda^{-4} e^{\pm i \lambda(|x|+|y|)} E_{1}^{\pm}(\lambda,x,y),
\end{align*}
where
\[
E_{1}^{\pm}(\lambda,x,y) = \big\langle  R_V^{\pm} V\big(e^{\pm i \lambda(|*-y|-|y|)} \widetilde{R}^{\pm}(\lambda|*-y|)\big)(\cdot), V\big(e^{\mp i \lambda(|x-\cdot|-|x|)} \widetilde{R}^{\mp}(\lambda|x-\cdot|)\big)\big\rangle.
\]

Then we rewrite
\begin{equation}\label{eq:high_2}
  L_{2}^{\pm}(\gamma,t,x,y):=\int_0^{\infty} e^{-it \lambda^2}  e^{\pm i\lambda r} \lambda\mathcal{E}^\pm(\lambda,x,y)  d \lambda, \\
\end{equation}
with $r=r(x,y)=|x|+|y|$ and
$$
\mathcal{E}^{\pm}(\lambda,x,y)=\lambda^{-2+2\gamma}\widetilde{\chi}_2(\lambda) E_{1}^{\pm}(\lambda,x,y).
$$

Note that for $\ell=0,1,2$,
\begin{equation}\label{eq:inequality_1}
 \left|\partial_\lambda^{\ell} e^{\pm i \lambda(|x-\cdot|-|x|)}\right| \lesssim   \langle \cdot \rangle^\ell, \ \ \ 
 \left|\partial_\lambda^{\ell} \widetilde{R}^{\pm}(\lambda |x-\cdot|)\right|
\lesssim \langle \lambda x \rangle^{-\frac{1}{2}} \langle \cdot \rangle^{\frac{1}{2}}\lambda^{\frac{1}{2}-\ell},
\end{equation}
where the second bound utilizes the inequality  $\langle \lambda(x-\cdot) \rangle^{-1/2} \lesssim \langle \lambda x \rangle^{-1/2} \langle  \cdot \rangle^{1/2} \lambda^{1/2}$ for $\lambda \gtrsim 1$.
By Lemma \ref{RRR}, for $\sigma>\ell+\frac{1}{2}$ with $\ell=0,1,2$:
\begin{equation}\label{eq:inequality_2}
\left\|\partial_\lambda^{\ell} R_V^{ \pm}(\lambda^4)\right\|_{L^{2,\sigma} \rightarrow L^{2,-\sigma}} \lesssim \lambda^{-3}.
\end{equation}
Given $|V(x)| \lesssim \langle x \rangle^{-4-}$, by \eqref{eq:inequality_1} and \eqref{eq:inequality_2}, H\"older's inequality yields for $\lambda \gtrsim 1$ and $\ell=0,1,2$,
$$
\begin{aligned}
\left|\partial_\lambda^{\ell} E_{1}^{\pm}(\lambda,x,y)\right|
& \lesssim \frac{1}{\left\langle \lambda x\right\rangle^{\frac{1}{2}}\left\langle \lambda y\right\rangle^{\frac{1}{2}}}\sum_{\ell_1+\ell_2 \le \ell}
\lambda \left\|V(\cdot)\langle\cdot\rangle^{\ell_1+\sigma+\frac{1}{2}}\right\|_{L^2}^2 \cdot\left\|\partial_\lambda^{\ell_2} R_V^{\pm}(\lambda^{4} )\right\|_{L^{2,\sigma} \rightarrow L^{2,-\sigma}}  \\
 & \lesssim \lambda^{-2}  {{\left\langle \lambda x\right\rangle}^{-\frac{1}{2}}{\left\langle \lambda y\right\rangle}^{-\frac{1}{2}}}
 \lesssim \left\langle \lambda (|x|+|y|)\right\rangle^{-\frac{1}{2}}\lambda^{-2},
 \end{aligned}
 $$
with $\sigma > \ell_2 + \frac{1}{2}$.
Hence for $\gamma=-1, 0,$ $\lambda \gtrsim 1$ and $\ell = 0,1,2$,
$$
\left|\partial_\lambda^{\ell} \mathcal{E}^\pm(\lambda,x,y)\right|
=\left|\partial_\lambda^{\ell} \left(\lambda^{-2+2\gamma}\widetilde{\chi}_2(\lambda)E_{1}^{\pm}(\lambda,x,y) \right)\right|
 \lesssim \left\langle \lambda (|x|+|y|)\right\rangle^{-\frac{1}{2}}\lambda^{-\ell}.
$$
Invoking Remark~\ref{remark:osci}(i), we immediately deduce that
 for $\gamma=-1, 0$,
$$\sup_{x,y\in\mathbb{R}^2}|L_2^{\pm}(\gamma,t,x,y)|\lesssim|t|^{-1}.$$
On the other hand, the bound $|E_{1}^{\pm}(\lambda,x,y)| \lesssim \lambda^{-2}$ implies that $|\lambda \mathcal{E}^{\pm}(\lambda,x,y)| \lesssim \lambda^{-3+2\gamma}$. Hence by \eqref{eq:high_2} and $\lambda \gtrsim 1$, we derive that for $\gamma=-1,0,$  $|L_2^\pm(\gamma,t,x,y)|\lesssim 1$  uniformly in $t,x,y.$

 For another point-wise estimate for $L_{2}^{\pm}(\gamma,t,x,y)$,
it can be checked that for $\ell=0,1,2,$
$$
\begin{aligned}
\left|\partial_\lambda^{\ell}\bigl( e^{\pm i\lambda r} \mathcal{E}^\pm(\lambda, x, y) \bigr)\right|
 & \lesssim \sum_{\ell_1+\ell_2+\ell_3 \leq \ell}\lambda^{-2+2\gamma-\ell_1}(|x|+|y|)^{\ell_2} \left|\partial_\lambda^{\ell_3} E_{1}^{\pm}(\lambda,x,y)\right|\lesssim \lambda^{-4+2\gamma}\langle x \rangle^{\ell} \langle y \rangle^{\ell}.
\end{aligned}
$$
Integrating by parts twice in the integral \eqref{eq:high_2}  again gives $\left|L_{2}^{\pm}(\gamma,t,x,y)\right| \lesssim |t|^{-2} \langle x \rangle^2 \langle y \rangle^2$ for $\gamma=-1, 0$.
\end{proof}

\noindent\textbf{Proof of Theorem \ref{main_theorem_high}:} With Propositions \ref{high_1} and \ref{high_2} at hand, we
immediately obtain the pointwise estimate $|t|^{-2} \langle x \rangle^2 \langle y \rangle^2
$
and the uniform  bound $|t|^{-1}$ of $L_j^{\pm}(\gamma,t,x,y)$ for $j=1,2,$ and $\gamma=-1,0.$  In particular, we have
$ |t|^{-1}|L_j^{\pm}(-1,t,x,y)|\lesssim |t|^{-1}\langle t\rangle^{-1}$  for  $j=1,2.$ 
Moreover, by the inequality  $\min(1,a/b) \lesssim \log a/\log b$ (valid for $a,b\ge 2$),
it follows that for $|t|\ge 2$,
\[|L_j^{\pm}(\gamma,t,x,y)|\lesssim
\min\!\Bigl(\frac1{|t|},\frac{\langle x\rangle^{2}\langle y\rangle^{2}}{|t|^{2}}\Bigr)
\lesssim\frac{\omega(x)\omega(y)}{|t|\log|t|}.
\]
Combine these with Remark \ref{rmk:free_case}, and recall that \(\cos(t\sqrt{H})P_{\mathrm{ac}}(H)\chi_2(H)\) corresponds to \(L_{j}^{\pm}(0,t,x,y)\), while ${\sin(t\sqrt{H})}/({t\sqrt{H}})P_{\mathrm{ac}}(H)\chi_2(H)$  corresponds to \( t^{-1} L_{j}^{\pm}(-1,t,x,y)\), then we complete the proof of Theorem \ref{main_theorem_high}.

\section{Low energy estimates in the regular and first kind resonance cases}\label{sec:regular_first}
With  the high-energy estimates in Theorem~\ref{main_theorem_high} established, the proofs of our main results (Theorems~\ref{main_theorem-1}, \ref{main_theorem-2}, \ref{main_theorem-31}, and \ref{main_theorem-32}) are reduced to the low-energy part. 
We begin with a
general strategy, valid for all threshold resonances, and then
specialize in this section to the cases where zero is either a
regular point or a first-kind resonance.

Recall Stone's formulas for the low-energy spectral multipliers:
\begin{align}
	\cos(t\sqrt{H})P_{\mathrm{ac}}(H)\chi_1(H)
	&= \frac{2}{\pi i} \int_0^{\infty} \cos(t\lambda^2)\,\lambda^{3}\,\widetilde{\chi}_1(\lambda)
	\bigl[R_V^{+}(\lambda^4)-R_V^{-}(\lambda^4)\bigr] \, d\lambda,   \label{stone_low_1} \\[4pt]
	\frac{\sin(t\sqrt{H})}{t\sqrt{H}}P_{\mathrm{ac}}(H)\chi_1(H)
	&= \frac{2}{\pi i t} \int_0^{\infty} \sin(t\lambda^2)\,\lambda\,\widetilde{\chi}_1(\lambda)
	\bigl[R_V^{+}(\lambda^4)-R_V^{-}(\lambda^4)\bigr] \, d\lambda, \label{stone_low_2}
\end{align}
where $\widetilde{\chi}_1(\lambda) := \chi_1(\lambda^4)(\lambda>0)$ is supported in $[0,(2\lambda_0)^{1/4}]$.
Substituting the expansion \eqref{eq:M_inverse} into the identity \eqref{id-RV} yields:
\begin{equation*}
	R_V^{\pm}(\lambda^4) = R_0^\pm(\lambda^{4}) - R_0^\pm(\lambda^{4})v \bigg( \sum_{0 \leq \alpha, \beta \leq \mathbf{k}+2} \lambda^{2-k_\alpha-k_\beta} Q_\alpha \mathcal{M}_{\alpha, \beta}^{\pm}(\lambda) Q_\beta \bigg) vR_0^\pm(\lambda^{4}).
\end{equation*}
Inserting this representation back into \eqref{stone_low_1} and \eqref{stone_low_2}, and recalling the free low-energy  estimates from Remark~\ref{rmk:free_case}, 
we reduce the low-energy analysis to bounding a finite sum of perturbed integral kernels.  Specifically, it suffices to estimate the kernel differences
\begin{equation*}
	\big(\mathcal{K}_{B}^+ - \mathcal{K}_{B}^-\big)(t,x,y) \quad \text{and} \quad \big(\mathcal{N}_{B}^+ - \mathcal{N}_{B}^-\big)(t,x,y) ,
\end{equation*}
where the component kernels are explicitly given by
\begin{align}
	\label{K_1}
	& \mathcal{K}_{B}^\pm(t,x,y)=\frac{2}{\pi i}\int_0^{\infty} \cos(t \lambda^2)\lambda \widetilde{\chi}_1(\lambda) \lambda^{4-k_\alpha-k_\beta}\bigl\langle  B^\pm(\lambda)Q_\beta vR_0^\pm(\lambda^{4})(\cdot,y), Q_\alpha vR_0^\mp(\lambda^{4})(\cdot,x)\bigr\rangle \, d \lambda, \\
	\label{Lambda_1}
	& \mathcal{N}_{B}^\pm(t,x,y)=\frac{2}{\pi it}\int_0^{\infty} \sin(t\lambda^2)\lambda^{-1}\widetilde{\chi}_1(\lambda)\lambda^{4-k_\alpha-k_\beta}
	\bigl\langle  B^\pm(\lambda)Q_\beta vR_0^\pm(\lambda^{4})(\cdot,y), Q_\alpha vR_0^\mp(\lambda^{4})(\cdot,x)\bigr\rangle \, d \lambda,
\end{align}
with $B^\pm(\lambda) \in \{ \mathcal{M}_{\alpha, \beta}^{\pm}(\lambda) \mid \alpha,\beta\in\mathbb{Z},\ 0 \le \alpha,\beta \le \mathbf{k}+2 \}.$
\vskip0.3cm
We now specialize to \textbf{the regular and first-kind resonance case},
i.e. $\mathbf k\in\{0,1\}$. The goal of this section is to prove the following  Theorem~\ref{main_theorem_low_1}.
\begin{theorem}\label{main_theorem_low_1}
	Let $H = \Delta^2 + V$ and $|V(x)| \lesssim \langle x \rangle^{-11-}$.
	Assume that $H$ has no positive embedded eigenvalues and that zero is a regular point or a first-kind resonance of $H$. Then
	\begin{equation}\label{eq:thm_unweighted}
		\left\|\cos (t \sqrt{H}) P_{\mathrm{ac}}(H)\chi_1(H)\right\|_{L^1 \rightarrow L^{\infty}}+
		\bigg \|\frac{\sin (t \sqrt{H})}{t\sqrt{H}} P_{\mathrm{ac}}(H)\chi_1(H)\bigg \|_{L^1 \rightarrow L^{\infty}} \lesssim \langle t \rangle^{-1}.
	\end{equation}
	Moreover, in the weighted space setting $L^1_\omega \to L^{\infty}_{-\omega}$, the decay rate improves:
	\begin{equation}\label{eq:thm_weighted}
		\left\|\cos (t \sqrt{H}) P_{\mathrm{ac}}(H)\chi_1(H)\right\|_{L^1_\omega \rightarrow L^{\infty}_{-\omega}}
		+\bigg\|\frac{\sin (t \sqrt{H})}{t\sqrt{H}} P_{\mathrm{ac}}(H)\chi_1(H)\bigg\|_{L^1_\omega \rightarrow L^{\infty}_{-\omega}} \lesssim \frac{1}{ |t| \log |t|}, \quad |t| \ge 2.
	\end{equation}
\end{theorem}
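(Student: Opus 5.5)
We substitute the expansion of Theorem~\ref{thm:M_inverse} (case (I), $0\le\alpha,\beta\le3$) into the symmetric resolvent identity \eqref{id-RV}. This reduces \eqref{stone_low_1}--\eqref{stone_low_2} to the free low-energy part plus finitely many kernels $\mathcal{K}_B^\pm,\mathcal{N}_B^\pm$ from \eqref{K_1}--\eqref{Lambda_1}. The free low-energy cosine part is controlled by Remark~\ref{rmk:free_case}. The free low-energy sine part is $\frac{1}{8|t|}+O(\omega(x)\omega(y)/(|t|\log|t|))$ by Proposition~\ref{prop_free_sin}, so the leading $\frac{1}{8|t|}$ must be cancelled by the perturbation. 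Each remaining pair $(\alpha,\beta)$ is handled by writing $Q_\beta vR_0^\pm(\lambda^4)(\cdot,y)=\lambda^{-1-\delta_{\beta0}}\Omega_{\beta,\pm}$ via Lemma~\ref{lemma_projection}(i). The prefactor $\lambda^{4-k_\alpha-k_\beta}$ then exactly compensates the singular powers, because $k_\alpha=1$ exactly when $\delta_{\alpha0}=0$. The amplitude is therefore
\[
\mathcal{E}^\pm=\widetilde\chi_1(\lambda)\,e^{\mp i\lambda(|x|+|y|)}\,\bigl\langle \mathcal{M}^\pm_{\alpha,\beta}\Omega_{\beta,\pm}(\cdot,y),\Omega_{\alpha,\mp}(\cdot,x)\bigr\rangle.
\]
By \eqref{lemma_projection_1} and \eqref{M0-1}, it satisfies \eqref{condit 2} with $r=|x|+|y|$ and $h=1$. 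We take $\sigma=0,\nu=0$ for $(\alpha,\beta)\ne(3,3)$. For $(3,3)$ we take $\nu=-1$ and use $\Omega_{3}=\mathcal{J}_3$; the bound \eqref{J} carries one $|\log\lambda|$ per side, which must be absorbed.

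The exception, the $\log$ in $\mathcal{M}_{3,3}$ together with those from \eqref{J}, I expect is handled by using Lemma~\ref{lemma_projection}(iii) instead. Its $\lambda^0$ terms give a nonoscillatory part with $r=0$, which permits the extended range of Lemma~\ref{oscillatory}(ii). More carefully, the $(3,3)$ block (present only for $\mathbf k=1$) should be treated by the paper's refined structure: $\lambda^{2-2}\mathcal{M}_{3,3}\sim(\log\lambda)$ times finite rank, paired with $Q_3vR_0$. I would first check that the $\log$ growth combines with the $\pm$ difference $\mathcal{K}^+-\mathcal{K}^-$ to give a harmless $|\log\lambda|^{-1}$ or similar; this is the step whose bookkeeping I am least sure of. Lemma~\ref{oscillatory}(ii) with $\sigma=0,\nu\le0$ then yields \eqref{eq:thm_unweighted}. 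For the remaining logarithmic factor, I would split $\lambda|y|\lessgtr1$ and use \eqref{J} only in the regime $\langle\lambda y\rangle^{-1/2}$, which absorbs the log.

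For the weighted bound \eqref{eq:thm_weighted}, we follow the model computation in the introduction. For $\alpha,\beta\ge1$, the cancellations $Q_\alpha v=0$ replace $R_0^\pm$ by $E_1^\pm$ and gain $\lambda^{2-}$ at polynomial cost $\langle y\rangle^4$. Lemma~\ref{oscillatory}(ii) with $r=0$ and $\sigma<0$ then gives $|t|^{-1-\epsilon}\langle x\rangle^4\langle y\rangle^4$. Interpolating with the uniform $|t|^{-1}$ via $\min(1,a/|t|^\epsilon)\lesssim\log a/\log|t|$ produces $\omega(x)\omega(y)(|t|\log|t|)^{-1}$. The terms involving $Q_0$ carry the full $\lambda^{-2}$ of $a_0^\pm$ and are the real obstacle. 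Since $Q_0=P$ and $\mathcal{M}_{0,0}^\pm=(a_0^\pm)^{-1}\|V\|_{L^1}^{-1}Q_0+\lambda(-\log\lambda)^{3/2}\Lambda^\pm$, the leading piece contributes $-a_0^\pm\lambda^{-2}$ to $R_V^\pm$. I expect it to combine with the leading $a_0^\pm\lambda^{-2}$ of the free resolvent, so that in the sine the constant $\frac1{8|t|}$ of Proposition~\ref{prop_free_sin} is exactly cancelled. The cosine has no such term because $\lambda\cos(t\lambda^2)$ integrates to $O(|t|^{-1-})$ on the diagonal. I would verify this cancellation explicitly: compute $\frac{2}{\pi i t}\int\sin(t\lambda^2)\lambda^{-1}\widetilde\chi_1(a_0^+-a_0^-)\,d\lambda$ and its perturbed counterpart, both of size $\frac{1}{8|t|}+O(|t|^{-2})$. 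The cross terms $(0,\beta)$, $(\alpha,0)$ and the correction $\lambda(\log\lambda)^{3/2}$ gain at least $\lambda^{1-}$ and fall under the $r=0$, $\sigma<0$ argument above. Together with Theorem~\ref{main_theorem_high}, this completes the proof.
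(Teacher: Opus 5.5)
Your overall architecture is the paper's: Stone's formula and the symmetric resolvent identity with Theorem~\ref{thm:M_inverse}, reduction via Lemma~\ref{lemma_projection} to Lemma~\ref{oscillatory}, cancellation of the free $\frac{1}{8|t|}$ by the leading part $(a_0^\pm)^{-1}\|V\|_{L^1}^{-1}Q_0$ of $\mathcal{M}_{0,0}^\pm$, and interpolation of an $r=0$, $\sigma<0$ bound against a uniform $|t|^{-1}$ bound. Your weighted argument also matches Propositions~\ref{prop_K_weight_1}--\ref{prop_K_weight_2}.

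\textbf{The gap is the $(3,3)$ block of the unweighted estimate.} With the representation of Lemma~\ref{lemma_projection}(i), the prefactor $\lambda^{4-k_3-k_3}=\lambda^2$ is entirely spent on $\lambda^{-1}\cdot\lambda^{-1}$. The amplitude then keeps the $|\log\lambda|$ of $\mathcal{M}_{3,3}^\pm$, i.e.\ $(\sigma,\nu)=(0,-1)$, and Lemma~\ref{oscillatory}(ii) gives only $\langle t\rangle^{-1}\log(2+|t|)$. Your claim that $\sigma=0$, $\nu\le0$ yields \eqref{eq:thm_unweighted} holds only for $\nu=0$. You also misidentify the objects: $\lambda^{-1}\Omega_{3,\pm}=\mathcal{J}_{3,\pm}$, so $\Omega_{3,\pm}=\lambda\mathcal{J}_{3,\pm}$, not $\mathcal{J}_{3,\pm}$.

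The missing observation is that Lemma~\ref{lemma_projection}(ii) removes the factor $\lambda^{-1}$ altogether. With $\mathcal{J}_{3,\pm}$ on both sides, the whole $\lambda^2$ survives. The amplitude $\lambda^2\widetilde\chi_1(\lambda)e^{\mp i\lambda r}\langle\mathcal{M}_{3,3}^\pm\mathcal{J}_{3,\pm}(\lambda,\cdot,y),\mathcal{J}_{3,\mp}(\lambda,\cdot,x)\rangle$, with $r=|x|+|y|$, satisfies $|\partial_\lambda^\ell\mathcal{E}^\pm|\lesssim\lambda^{2-\ell}|\log\lambda|^3\langle\lambda r\rangle^{-1/2}\lesssim\lambda^{-\ell}\langle\lambda r\rangle^{-1/2}$ by \eqref{J} and \eqref{M0-1}. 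All three logarithms are absorbed, so $(\sigma,\nu)=(0,0)$ applies. This is exactly Case~2 of Proposition~\ref{prop_K}.

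The fallbacks you list do not substitute for this:
\begin{itemize}
\item Lemma~\ref{lemma_projection}(iii) introduces $b_0Q_3vG_2^0$ and $\mathcal{T}_3$, whose $L^2$ norms grow like $\log(2+|y|)$ by \eqref{lemma_projection_G}--\eqref{lemma_projection_T}. It therefore cannot give a bound uniform in $x,y$.
\item Splitting $\lambda|y|\lessgtr1$ gains nothing where $\lambda|y|\lesssim1$, since there $\langle\lambda y\rangle^{-1/2}\sim1$ and the $|\log\lambda|$ is untouched.
\item The $\pm$-difference analysis is unnecessary.
\end{itemize}

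This also matters for your weighted estimate. The interpolation $\min\bigl(|t|^{-1},\langle x\rangle^4\langle y\rangle^4|t|^{-1-\epsilon}\bigr)\lesssim\omega(x)\omega(y)(|t|\log|t|)^{-1}$ needs the uniform $|t|^{-1}$ bound for every block, including $(3,3)$. With only $\langle t\rangle^{-1}\log(2+|t|)$ there, the same interpolation yields merely $\omega(x)\omega(y)|t|^{-1}$, and the logarithmic gain is lost.
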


The proof proceeds in two steps: we first establish the unweighted $L^1\to L^\infty$ estimates \eqref{eq:thm_unweighted}, and then  derive the weighted bounds \eqref{eq:thm_weighted}.
Based on the reductions made above, this requires bounding the kernels $(\mathcal{K}_{B}^+ - \mathcal{K}_{B}^-)$ and $(\mathcal{N}_{B}^+ - \mathcal{N}_{B}^-)$  for  $B^\pm(\lambda) = \mathcal{M}_{\alpha,\beta}^{\pm}(\lambda)$ with $0 \le \alpha,\beta \le 3$.

\subsection{The $L^1 \rightarrow L^{\infty}$ estimates}\label{subsec:regular_first_1}
Recall from Theorem~\ref{thm:M_inverse} (I) that for $0 \le \alpha, \beta \le 3$ and $\ell = 0,1,2$,
\[
\Big\| \partial_\lambda^\ell \mathcal{M}_{\alpha,\beta}^\pm(\lambda) \Big\|_{\mathbb{B}(L^2)} \lesssim
\begin{cases}
	\lambda^{-\ell}, & \text{for } ( \alpha,\beta) \neq (3,3), \\
	|\log \lambda| \lambda^{-\ell}, &  \text{for } (\alpha,\beta) = (3,3).
\end{cases}
\]

\begin{proposition}\label{prop_K}
	Let $B^\pm(\lambda) =\mathcal{M}_{\alpha,\beta}^\pm(\lambda)$ with $0 \le \alpha, \beta \le 3$. Then
	\[
	\sup_{x,y\in\mathbb{R}^2} \bigl| \mathcal{K}_{B}^\pm(t,x,y) \bigr| +\sup_{x,y\in\mathbb{R}^2} \bigl| \mathcal{N}_{B}^\pm(t,x,y) \bigr| \lesssim \langle t\rangle^{-1}.
	\]
\end{proposition}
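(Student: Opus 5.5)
The plan is to write each kernel in the canonical form of Lemma \ref{oscillatory} and then verify the amplitude hypothesis \eqref{condit 1} with $h\equiv1$ and $r=|x|+|y|$. By Lemma \ref{lemma_projection}(i) we have $(Q_\alpha vR_0^\pm(\lambda^4))(\cdot,y)=\lambda^{-1-\delta_{\alpha0}}\Omega_{\alpha,\pm}(\lambda,\cdot,y)$. Hence the integrand of \eqref{K_1} equals
\[
\lambda\,\widetilde{\chi}_1(\lambda)\,\lambda^{4-k_\alpha-k_\beta-2-\delta_{\alpha0}-\delta_{\beta0}}\bigl\langle B^\pm(\lambda)\Omega_{\beta,\pm}(\lambda,\cdot,y),\Omega_{\alpha,\mp}(\lambda,\cdot,x)\bigr\rangle .
\]
From the table \eqref{tab:k_alpha}, $k_0=0$ and $k_\alpha=1$ for $\alpha=1,2,3$. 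So $k_\alpha+\delta_{\alpha0}=1$ for every $0\le\alpha\le3$, and the power of $\lambda$ is exactly $\lambda^{0}$. This exact cancellation is the key bookkeeping point: each $Q_\alpha$ removes precisely the extra singularity that $\mathcal{M}_{\alpha,\beta}$ carries. The $\mathcal N_B$ kernels have the same structure, with $\lambda^{-1}$ in place of $\lambda$ and the prefactor $t^{-1}$.

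Next we extract the phases and define the amplitude
\[
\mathcal{E}^\pm(\lambda,x,y)=\widetilde{\chi}_1(\lambda)\bigl\langle B^\pm(\lambda)\,e^{\mp i\lambda|y|}\Omega_{\beta,\pm}(\lambda,\cdot,y),\;e^{\pm i\lambda|x|}\Omega_{\alpha,\mp}(\lambda,\cdot,x)\bigr\rangle .
\]
With this choice, $\mathcal K_B^\pm$ and $\mathcal N_B^\pm$ become exactly $\mathcal K^\pm$ and $\mathcal N^\pm$ of Lemma \ref{oscillatory} with $r=|x|+|y|$, since the conjugate linearity in the second slot turns $e^{\pm i\lambda|x|}$ into $e^{\mp i\lambda|x|}$. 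We then apply Leibniz's rule together with \eqref{lemma_projection_1} for the two $\Omega$ factors and Theorem \ref{thm:M_inverse}(I) for $B^\pm$. This gives, for $\ell=0,1,2$,
\[
|\partial_\lambda^\ell\mathcal{E}^\pm|\lesssim \lambda^{-\ell}\langle\lambda x\rangle^{-1/2}\langle\lambda y\rangle^{-1/2}\widetilde{\chi}_1(\lambda/2)\lesssim \lambda^{-\ell}\langle\lambda(|x|+|y|)\rangle^{-1/2}\widetilde{\chi}_1(\lambda/2)
\]
for all $(\alpha,\beta)\ne(3,3)$. Derivatives falling on $\widetilde\chi_1$ only cost bounded factors on its support, which is where $\lambda\sim1$. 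Lemma \ref{oscillatory}(ii) with $(\sigma,\nu)=(0,0)$, or equivalently (i), then yields $|\mathcal K_B^\pm|+|\mathcal N_B^\pm|\lesssim\langle t\rangle^{-1}$ uniformly in $x,y$. For $|t|\le1$ we use the trivial bound: the amplitude is bounded and compactly supported. For $\mathcal N_B$ we combine the factor $t^{-1}$ with $|\sin(t\lambda^2)|\le|t|\lambda^2$ to absorb the $\lambda^{-1}$.

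The main obstacle is the pair $(\alpha,\beta)=(3,3)$, where $\|\partial^\ell_\lambda\mathcal M_{3,3}^\pm\|\lesssim|\log\lambda|\lambda^{-\ell}$. This corresponds to $\nu=-1$, $\sigma=0$, which lies outside the admissible range of Lemma \ref{oscillatory}(ii) unless $r=0$. The remedy is to use the extra cancellation of $Q_3$: Lemma \ref{lemma_projection}(ii) and (iii) show that $Q_3vR_0^\pm$ is actually $O(|\log\lambda|)$ rather than $O(\lambda^{-1})$.

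Concretely, for $\alpha=3$ we would write $(Q_3vR_0^\pm)(\cdot,x)=\mathcal J_{3,\pm}$ and use \eqref{J}. The power count is then $\lambda^{4-2}\cdot\lambda^{-1}\cdot|\log\lambda|$ if one side is $\Omega_{3}$ and the other is $\mathcal J_3$. Together with $|\log\lambda|$ from $\mathcal M_{3,3}$, this gives an amplitude $\lesssim\lambda^{1}|\log\lambda|^2\lambda^{-\ell}\langle\lambda r\rangle^{-1/2}$, that is, $\sigma=-1$. We cannot simply quote the $r=0$ extension of the lemma for this, since here $r\neq0$. Instead we absorb the factor $\lambda|\log\lambda|^2$ into the bound $\lambda^{0}$: this recovers \eqref{condit 1} with $h\equiv1$, with room to spare. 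Lemma \ref{oscillatory}(i) then gives $\langle t\rangle^{-1}$ for this pair as well, which completes the proof.
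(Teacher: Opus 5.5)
Your proposal is correct and follows the paper's proof. It uses the same exact power cancellation $k_\alpha+\delta_{\alpha0}=1$ and the same amplitude with $r=|x|+|y|$, and for $(3,3)$ it uses the same device of replacing $\Omega_3$ by $\mathcal{J}_{3,\pm}$ from Lemma~\ref{lemma_projection}(ii) to gain a power of $\lambda$ that absorbs the logarithms. The paper does this on both sides, getting $\lambda^2|\log\lambda|^3\lesssim1$, and your one-sided version with $\lambda|\log\lambda|^2\lesssim1$ works equally well.

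One correction to your motivation: $(\sigma,\nu)=(0,-1)$ is in fact admissible in Lemma~\ref{oscillatory}(ii). It just gives the weaker bound $\langle t\rangle^{-1}\log(2+|t|)$, and that logarithmic loss is the real reason the extra $\lambda$ is needed.
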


\begin{proof}
	By Lemma~\ref{lemma_projection}, the integral kernel of
\(Q_jvR_0^\pm(\lambda^4)\) admits the representations
	\begin{align}\label{omega0123}
		\bigl(Q_j v R_0^\pm(\lambda^4)\bigr)(\cdot,z)& = \lambda^{-1-\delta_{j 0}}\Omega_{j,\pm}(\lambda,\cdot,y),\\
		\label{omega234}	\bigl(Q_3 v R_0^\pm(\lambda^4)\bigr)(x,y)&=\mathcal{J}_{3, \pm} (\lambda,x,y),
	\end{align}
	where $\delta_{j 0}=1$ if $j=0$ and $0$ otherwise. Moreover, by \eqref{J} and \eqref{lemma_projection_1}, we have for $\ell=0,1,2,$
	\begin{align}
	\bigl\|\partial_\lambda^\ell \bigl(e^{\mp i\lambda|z|}\Omega_{j,\pm}(\lambda,\cdot,z)\bigr)\bigr\|_{L^2} &\lesssim \lambda^{-\ell} \langle \lambda z \rangle^{-1/2}, \label{est:Omega-0-1}\\
	\bigl\|\partial_\lambda^\ell \bigl(e^{\mp i\lambda|z|}\mathcal{J}_{3,\pm}(\lambda,\cdot,z)\bigr)\bigr\|_{L^2} &\lesssim \lambda^{-\ell} |\log\lambda| \, \langle \lambda z \rangle^{-1/2}. \label{est:J-0-1}
\end{align}
We distinguish two cases: $(\alpha,\beta) \neq (3,3)$ and $(\alpha,\beta) = (3,3)$. 
For the first case we use the representation \eqref{omega0123}, and for the second case we use \eqref{omega234}.

\textbf{Case 1: $(\alpha, \beta) \neq (3,3)$.}
Using the relation $1 + \delta_{\alpha 0} = 2 - k_\alpha$ for $0 \le \alpha \le 3$ (see \eqref{tab:k_alpha}), we obtain
\[
\bigl\langle B^\pm(\lambda) Q_\beta v R_0^{\pm}(\lambda^4)(\cdot, y),\; Q_\alpha v R_0^{\mp}(\lambda^4)(\cdot, x) \bigr\rangle
= \lambda^{k_\alpha+k_\beta-4} \bigl\langle B^\pm(\lambda) \Omega_{\beta,\pm}(\lambda, \cdot, y),\; \Omega_{\alpha,\mp}(\lambda, \cdot, x) \bigr\rangle.
\]
Upon substituting this  
into \eqref{K_1} and \eqref{Lambda_1}, the factor $\lambda^{4-k_\alpha-k_\beta}$ cancels exactly, yielding the canonical oscillatory integrals
\begin{equation}\label{rf}
	\begin{aligned}
		\mathcal{K}_{B}^\pm(t,x,y) &= \frac{2}{\pi i}\int_0^{\infty} \cos(t \lambda^2)\,\lambda\, e^{\pm i\lambda r}\, \mathcal{E}^\pm(\lambda, x,y) \, d\lambda, \\[4pt]
		\mathcal{N}_{B}^\pm(t,x,y) &= \frac{2}{\pi i t}\int_0^{\infty} \sin(t\lambda^2)\,\lambda^{-1} e^{\pm i\lambda r}\, \mathcal{E}^\pm(\lambda, x,y) \, d\lambda,
	\end{aligned}
\end{equation}
where $r = |x| + |y|$, and the common amplitude factor is defined by
\[
\mathcal{E}^\pm(\lambda, x,y) = e^{\mp i\lambda r}\,\widetilde{\chi}_1(\lambda)\,
\bigl\langle B^\pm(\lambda) \Omega_{\beta,\pm}(\lambda,\cdot,y),\; \Omega_{\alpha,\mp}(\lambda,\cdot,x) \bigr\rangle.
\]

In this case the operator satisfies $\|\partial_\lambda^{\ell} B^\pm(\lambda)\|_{\mathbb{B}(L^2)} \lesssim \lambda^{-\ell}$ for $\ell = 0,1,2$. 
Applying Leibniz's rule together with H\"older's inequality gives, for $\ell = 0,1,2$,
\[
\begin{aligned}
	\bigl|\partial_\lambda^{\ell} \mathcal{E}^\pm(\lambda, x,y) \bigr|
	\lesssim \sum_{\ell_1+\ell_2+\ell_3 =\ell}
	&\bigl\|\partial_\lambda^{\ell_1}\bigl(\widetilde{\chi}_1(\lambda) B^\pm (\lambda)\bigr) \bigr\|_{\mathbb{B}(L^2)} \\
	&\times \bigl\|\partial_\lambda^{\ell_2} \bigl(e^{\mp i \lambda|y|}\Omega_{\beta,\pm}(\lambda,\cdot,y)\bigr) \bigr\|_{L^2}
	\bigl\|\partial_\lambda^{\ell_3} \bigl(e^{\pm i \lambda |x|}\Omega_{\alpha,\mp}(\lambda,\cdot,x)\bigr) \bigr\|_{L^2}.
\end{aligned}
\]
Combining this with the estimate \eqref{est:Omega-0-1} yields the amplitude factor bound
\[
\bigl|\partial_\lambda^{\ell} \mathcal{E}^\pm(\lambda, x,y) \bigr|
\lesssim \langle \lambda x\rangle^{-1/2} \langle \lambda y\rangle^{-1/2} \lambda^{-\ell} \widetilde{\chi}_1(\lambda/2)
\lesssim \langle \lambda r \rangle^{-1/2} \lambda^{-\ell} \widetilde{\chi}_1(\lambda/2).
\]
Finally, applying Lemma~\ref{oscillatory}(ii) with parameters $(\sigma, \nu) = (0,0)$ gives the uniform decay $O(\langle t\rangle^{-1})$.

\textbf{Case 2: $(\alpha, \beta) = (3,3)$.}
Here $\|\partial_\lambda^{\ell} B^\pm(\lambda)\|_{\mathbb{B}(L^2)} \lesssim \lambda^{-\ell}|\log\lambda|$. 
To neutralize this logarithmic divergence, we employ the representation \eqref{omega234}, 
which in contrast to \eqref{omega0123} provides an additional factor of $\lambda$ while preserving the  logarithmic singularity (see \eqref{est:J-0-1}).

	Substituting the representation  \eqref{omega234} into \eqref{K_1} and \eqref{Lambda_1}, and noting that $\lambda^{4 - k_3 - k_3} = \lambda^2$, we again obtain the canonical oscillatory integral forms \eqref{rf}, where $r = |x| + |y|$ and
\[
\mathcal{E}^\pm(\lambda, x,y) = \lambda^2 e^{\mp i\lambda r}\,\widetilde{\chi}_1(\lambda)\,
\bigl\langle B^\pm(\lambda) \mathcal{J}_{3,\pm}(\lambda,\cdot,y),\; \mathcal{J}_{3,\mp}(\lambda,\cdot,x) \bigr\rangle.
\]
	Applying Leibniz's rule, the $\lambda^2$ factor neutralizes the combined logarithmic singularities. Specifically,
	$$
	\begin{aligned}
			\bigl|\partial_\lambda^{\ell} \mathcal{E}^\pm(\lambda, x,y) \bigr|
		& \lesssim \sum_{\ell_1+\ell_2+\ell_3 =\ell}
		 \bigl\|\partial_\lambda^{\ell_1}\bigl(\widetilde{\chi}_1(\lambda) B^\pm (\lambda)\bigr) \bigr\|_{\mathbb{B}(L^2)} \\
		 & \ \ \ \ \ \ \ \ \ \ \  \times  \bigl\|\partial_\lambda^{\ell_2} \big(e^{\mp i \lambda|y|}\mathcal{J}_{3,\pm}(\lambda,\cdot,y)\big) \bigr\|_{L^2} \bigl\|\partial_\lambda^{\ell_3} \big(e^{\pm i \lambda |x|}\mathcal{J}_{3,\mp}(\lambda,\cdot,x)\big) \bigr\|_{L^2} \\
		&\lesssim \lambda^{2-\ell} |\log\lambda|^3 \langle \lambda x \rangle^{-1/2} \langle \lambda y \rangle^{-1/2} \widetilde{\chi}_1(\lambda/2)
		\lesssim \langle \lambda r \rangle^{-1/2}\lambda^{-\ell}\widetilde{\chi}_1(\lambda/2),
	\end{aligned}
	$$
	where in the last inequality we used $\lambda^2 |\log \lambda|^3 \lesssim 1$ for $0 < \lambda \ll 1$.
	Applying Lemma~\ref{oscillatory}(ii) once again yields the same $O(\langle t\rangle^{-1})$ decay, which completes the proof.
\end{proof}

\subsection{The $L^1_\omega \to L^\infty_{-\omega}$ estimates} \label{subsec:regular_first_2}

We now demonstrate that an improved decay rate of $(|t|\log|t|)^{-1}$ can be achieved by incorporating spatial weights.

Recall that $Q_0$ is the orthogonal projection onto the span of $v$ (i.e., $Q_0=P$); thus, it inherently lacks any cancellation against $v$. In contrast, for any  $(\alpha, \beta) \neq (0,0)$, at least one index is strictly positive. This allows us to exploit the exact cancellation property $Q_\alpha v = 0$ (or $Q_\beta v = 0$) on at least one side, thereby neutralizing the highly singular leading-order term of the free resolvent $R_0^\pm(\lambda^4)$.

Given this structural difference, we divide the analysis  into the following two cases:
\begin{itemize}
	\item[(I)] $B^\pm(\lambda)=\mathcal{M}_{\alpha,\beta}^\pm(\lambda)$ for $(\alpha,\beta)\neq(0,0)$ (see Proposition \ref{prop_K_weight_1});
	\item[(II)] $B^\pm(\lambda)=\mathcal{M}_{0,0}^\pm(\lambda)$ (see Proposition \ref{prop_K_weight_2}).
\end{itemize}

By  Theorem~\ref{thm:M_inverse} (I), we have for $(\alpha,\beta)\neq(0,0)$:
\begin{align}\label{Mneq00}
\bigl\|\partial_\lambda^\ell\mathcal{M}_{\alpha,\beta}^\pm(\lambda)\bigr\|_{\mathbb{B}(L^2)}
	\lesssim   \lambda^{-\ell}|\log \lambda|,  \quad  \ell = 0,1,2,
\end{align}
and
\begin{equation}\label{eq:M_00}
	\mathcal{M}_{0,0}^\pm(\lambda) =(a_0^\pm)^{-1} \|V\|_{L^1(\mathbb{R}^2)}^{-1}Q_0 + \lambda (-\log \lambda)^{3/2}\Lambda^{\pm}(\lambda),
\end{equation}		
where $\Lambda^{\pm}(\lambda) \in \mathbb{B}(L^2)$ satisfies
\(
\big\|\partial_\lambda^{\ell} \Lambda^{\pm}(\lambda) \big\|_{\mathbb{B}(L^2)} \lesssim \lambda^{-\ell}
\)
for $\ell = 0,1,2.$

To obtain the improved decay rates in the weighted setting, we use the following asymptotic expansion of the free resolvent from Lemma \ref{lem-reso}:
\begin{equation}\label{eq:expansion}
	R_0^{\pm}(\lambda^4)(x,y) = a_0^{\pm} \lambda^{-2}G_0(x,y) + E_1^{\pm}(\lambda,x,y)=a_0^{\pm} \lambda^{-2}+ E_1^{\pm}(\lambda,x,y),
\end{equation}
where  $
E_1^{\pm}(\lambda,x,y) = a_1^\pm |x-y|^2 + b_0 |x-y|^2 \log(\lambda |x-y|)
+ O_2\bigl(\lambda^{2}|x-y|^{4}\bigr).
$ For $0<\lambda\ll1$ and $\ell=0,1,2,$ one can verify that
\begin{equation}\label{derivative_1}
	\bigl| \partial_\lambda^\ell E_1^\pm(\lambda,x,y)\bigr| \lesssim \lambda^{-\ell-}\langle x \rangle^{4} \langle y \rangle^{4}.
\end{equation}

In this subsection, we set
\begin{equation}\label{eq:def_mathcal_E}
	\mathcal{E}^\pm(\lambda,x,y)
	= \widetilde{\chi}_1(\lambda) \lambda^{4-k_\alpha-k_\beta}
	\bigl\langle B^\pm(\lambda) \bigl( Q_\beta v R_0^{\pm}(\lambda^4) \bigr)(\cdot,y),
	\bigl( Q_\alpha v R_0^{\pm} (\lambda^4)\bigr)(\cdot,x) \bigr\rangle,
\end{equation}
then  the kernels  $\mathcal{K}_{B}^\pm$ and $\mathcal{N}_{B}^\pm$ defined in \eqref{K_1} and \eqref{Lambda_1} can be expressed as
\begin{align}
	\label{K-N-weig}
	& \mathcal{K}_{B}^\pm=\frac{2}{\pi i}\int_0^{\infty} \cos(t \lambda^2)\lambda  \mathcal{E}^\pm(\lambda,x,y)\, d \lambda, \  \ \mathcal{N}_{B}^\pm=\frac{2}{\pi i t}\int_0^{\infty} \sin(t\lambda^2)\lambda^{-1} \mathcal{E}^\pm(\lambda,x,y) \, d \lambda. \end{align}
Hence, we will focus on the estimate for  $\mathcal{E}^\pm(\lambda,x,y)$ to obtain the desired bound of $\mathcal{K}_{B}^\pm$ and $\mathcal{N}_{B}^\pm$.
\begin{proposition}\label{prop_K_weight_1}
	Let $B^\pm(\lambda)=\mathcal{M}_{\alpha,\beta}^\pm(\lambda)$ with $(\alpha,\beta) \neq (0,0)$. Then for $|t| \ge 2,$
	\[
	\bigl|\mathcal{K}_{B}^\pm(t,x,y)\bigr| +
	\bigl|\mathcal{N}_{B}^\pm(t,x,y)\bigr| \lesssim \frac{\omega(x)\omega(y)}{|t| \log|t|}.
	\]
\end{proposition}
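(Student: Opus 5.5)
Since we already have the uniform bound $\langle t\rangle^{-1}$ from Proposition~\ref{prop_K}, it suffices to prove a polynomially improved pointwise bound of the form
\[
|\mathcal K_B^\pm|+|\mathcal N_B^\pm|\lesssim |t|^{-1-\epsilon_0}\langle x\rangle^{N}\langle y\rangle^{N}
\]
for some $\epsilon_0>0$ and finite $N$. Interpolating via $\min(1,a/b)\lesssim \log a/\log b$, exactly as in the proof of Theorem~\ref{main_theorem_high}, then gives $\omega(x)\omega(y)(|t|\log|t|)^{-1}$. The improved bound will come from Lemma~\ref{oscillatory}(ii) with $r=0$ and some $\sigma<0$. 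This works because the phase-free version allows the range $-2<\sigma<2$, and the decay rate there is $|t|^{-1+\sigma/2}$.

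By symmetry, assume $\alpha\ge1$; the case $\beta\ge1$ is identical. The cancellation $Q_\alpha v=0$ from Remark~\ref{cancellationQ} kills the term $a_0^\pm\lambda^{-2}$ in \eqref{eq:expansion}. Hence $Q_\alpha vR_0^\pm(\lambda^4)(\cdot,x)=Q_\alpha vE_1^\pm(\lambda,\cdot,x)$, and by \eqref{derivative_1} its $L^2$ norm with $\ell$ derivatives is $\lesssim\lambda^{-\ell-}\langle x\rangle^4$. This uses $\mu>11$ so that $v\langle\cdot\rangle^4\in L^2$.

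For the other factor we split into two cases.
\begin{itemize}
\item If $\beta\ge1$, the same bound holds for the $\beta$ factor.
\item If $\beta=0$, we have only $Q_0vR_0^\pm=a_0^\pm\lambda^{-2}Q_0v+Q_0vE_1^\pm$, which is of size $\lambda^{-2}$.
\end{itemize}

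Next we count powers of $\lambda$ in $\mathcal E^\pm$ from \eqref{eq:def_mathcal_E}, using the table of $k_\alpha$ and the bound \eqref{Mneq00} on $B^\pm$, which loses at most $|\log\lambda|$.
\begin{itemize}
\item For $\alpha,\beta\ge1$ (so $k_\alpha=k_\beta=1$), we get $\lambda^{2}\cdot\lambda^{0-}\cdot\lambda^{0-}$, i.e.
\[
|\partial_\lambda^\ell\mathcal E^\pm|\lesssim \lambda^{2-\ell-}\langle x\rangle^4\langle y\rangle^4 .
\]
\item For $\beta=0$, $\alpha\ge1$ we get $\lambda^{3}\cdot\lambda^{-2}\cdot\lambda^{0-}$, i.e. $\lambda^{1-\ell-}\langle x\rangle^4\langle y\rangle^4$.
\end{itemize}
In both cases the bound is at least $\lambda^{1/2-\ell}$ on the support of $\widetilde\chi_1(\lambda/2)$. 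Applying Lemma~\ref{oscillatory}(ii) with $r=0$, $(\sigma,\nu)=(-1/2,0)$ and $h=\langle x\rangle^4\langle y\rangle^4$ gives $|t|^{-5/4}h$ for both $\mathcal K_B^\pm$ and $\mathcal N_B^\pm$. Combining with Proposition~\ref{prop_K} finishes the argument.

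The main obstacle is the index $(3,3)$, and more generally the logarithmic factors: $\mathcal M_{3,3}$ carries $|\log\lambda|$, and $E_1^\pm$ has $\log\lambda$ terms whose derivatives produce $\lambda^{-\ell}$ with logs. These must be absorbed into the $\lambda^{0-}$ slack. This is harmless since we have at least $\lambda^{1}$ to spare relative to $\sigma=-1/2$.

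One should also double-check the following point. The derivative bound \eqref{derivative_1} on $E_1^\pm$ must hold uniformly, including in the region $\lambda|x-y|\gtrsim1$ where the expansion is not literally valid. There I would instead use $R_0^\pm-a_0^\pm\lambda^{-2}$ directly from \eqref{reso-big}: each $\lambda$-derivative costs at most $|x-y|$, which yields the polynomial weight $\langle x\rangle^4\langle y\rangle^4$ at worst.
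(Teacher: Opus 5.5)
Your proposal is correct and follows the paper's proof essentially step for step. The cancellation $Q_\alpha v=0$ removes the $a_0^\pm\lambda^{-2}$ term, and the amplitude is bounded by $\lambda^{1/2-\ell}\langle x\rangle^4\langle y\rangle^4$, with the worst case when exactly one index is $0$. Lemma~\ref{oscillatory}(ii) with $r=0$ and $(\sigma,\nu)=(-1/2,0)$ then gives $|t|^{-5/4}\langle x\rangle^4\langle y\rangle^4$, and interpolating with the uniform $|t|^{-1}$ bound of Proposition~\ref{prop_K} produces the logarithmic gain. Your extra check of \eqref{derivative_1} in the region $\lambda|x-y|\gtrsim1$, using $\lambda^{-2}\lesssim|x-y|^2$ there, is a detail the paper leaves to the reader, and it goes through as you describe.
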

\begin{proof}
	For $\alpha \ge 1$, the exact cancellation property $Q_\alpha v = 0$ (Remark~\ref{cancellationQ}) ensures that the highly singular term $a_0^{\pm} \lambda^{-2}$ vanishes in \eqref{eq:expansion}, yielding
	\[
	\bigl( Q_\alpha v R_0^{\pm}(\lambda^4)\bigr)(x,y) = \bigl( Q_\alpha v E_1^{\pm}(\lambda,\cdot,y) \bigr)(x).
	\]
	Thus, applying \eqref{derivative_1}, we obtain for $0 < \lambda \ll 1$ and  $\ell=0,1,2$:
	\[
	\|\partial_\lambda^\ell  \bigl(Q_\alpha v R_0^{\pm}(\lambda^4)\bigr)(\cdot,y)\|_{L^2}
	\lesssim\|v \partial_\lambda^\ell E_1^\pm(\lambda,\cdot,y)\|_{L^2}
	\lesssim \langle y\rangle^4\lambda^{-\ell-}.
	\]
	Conversely, for $\alpha = 0$ (where no such cancellation occurs), the $a_0^{\pm} \lambda^{-2}$ term remains, then
	\[
	\|\partial_\lambda^\ell \bigl(Q_0 v R_0^{\pm}(\lambda^4)\bigr)(\cdot,y)\|_{L^2}
	\lesssim \lambda^{-2-\ell}\|v\|_{L^2} + \langle y\rangle^4\lambda^{-\ell-}
	\lesssim \langle y\rangle^4\lambda^{-2-\ell}.
	\]
	Combine these bounds with the  estimates \eqref{Mneq00} for $\mathcal{M}_{\alpha,\beta}^\pm$ and note that the worst case
	occurs when exactly one of the indices $\alpha,\beta$ is zero, then via Leibniz's rule and H\"{o}lder's inequality,
	\begin{align}\label{eq:estimate_E}
		\bigl| \partial_\lambda^\ell \mathcal{E}^\pm(\lambda,x,y) \bigr|
		\lesssim \lambda^{1/2-\ell}\widetilde{\chi}_1(\lambda/2) \langle x\rangle^{4} \langle y\rangle^{4}, \ \  \ell = 0,1,2, \ \text{and } \ (\alpha,\beta) \neq (0,0).
	\end{align}
Applying Lemma~\ref{oscillatory} with  $r(x,y)=0$ and  $(\sigma,\nu)=(-1/2,0)$, we obtain that
	\begin{equation}\label{eq:bound_K_N}
		\bigl|\mathcal{K}_{B}^\pm(t,x,y)\bigr| + \bigl|\mathcal{N}_{B}^\pm(t,x,y)\bigr| \lesssim |t|^{-5/4}\langle x\rangle^4\langle y\rangle^4.
	\end{equation}
	
	On the other hand, Proposition \ref{prop_K} provides the uniform estimate
	\[
	\sup_{x,y \in \mathbb{R}^2} \left| \mathcal{K}_{B}^\pm(t,x,y) \right|
	+
	\sup_{x,y \in \mathbb{R}^2} \left| \mathcal{N}_{B}^\pm(t,x,y) \right| \lesssim
	|t|^{-1} .
	\]
	Combining this with \eqref{eq:bound_K_N}
	gives that
	\begin{equation}\label{eq:inequality_3}
		\min\Bigl(\frac{1}{|t|},
		\frac{\langle x\rangle^{4}\langle y\rangle^{4}}{|t|^{5/4}}\Bigr)
		=\frac{1}{|t|}\min\!\Bigl(1,
		\frac{\langle x\rangle^{4}\langle y\rangle^{4}}{|t|^{1/4}}\Bigr)
		\lesssim\frac{\omega(x)\omega(y)}{|t|\log|t|}, \qquad |t| \ge 2.
	\end{equation}
	This completes the proof.
\end{proof}

\begin{proposition}\label{prop_K_weight_2}
	Let $B^\pm(\lambda)=\mathcal{M}_{0,0}^\pm(\lambda)$. Then for $|t| \ge 2$,
	\begin{align}\label{K-N-cri}
		\bigl|\mathcal{K}_{B}^\pm(t,x,y)\bigr|  \lesssim \frac{\omega(x)\omega(y)}{|t|\log|t|},  \quad
		(\mathcal{N}_{B}^+ - \mathcal{N}_{B}^-)(t,x,y)= \frac{1}{8|t|} +O\left(\frac{\omega(x)\omega(y)}{|t|\log|t|}\right).
	\end{align}
\end{proposition}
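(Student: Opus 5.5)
We split $\mathcal{M}_{0,0}^\pm(\lambda)$ according to \eqref{eq:M_00}. Since $k_0=0$, the weight is $\lambda^{4-k_0-k_0}=\lambda^4$. The remainder piece $\lambda(-\log\lambda)^{3/2}\Lambda^\pm(\lambda)$ will be handled exactly as in Proposition~\ref{prop_K_weight_1}. By \eqref{eq:expansion} and \eqref{derivative_1}, $\|\partial_\lambda^\ell (Q_0vR_0^\pm(\lambda^4))(\cdot,y)\|_{L^2}\lesssim \langle y\rangle^4\lambda^{-2-\ell}$. Hence the corresponding amplitude
\[
\mathcal{E}^\pm=\widetilde\chi_1\lambda^{4}\cdot\lambda(-\log\lambda)^{3/2}\langle\Lambda^\pm Q_0vR_0^\pm(\cdot,y),Q_0vR_0^\mp(\cdot,x)\rangle
\]
satisfies $|\partial_\lambda^\ell\mathcal{E}^\pm|\lesssim \lambda^{1-\ell}|\log\lambda|^{3/2}\langle x\rangle^4\langle y\rangle^4$. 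Lemma~\ref{oscillatory}(ii) with $r=0$ and $\sigma=-1+$ then gives $|t|^{-3/2+}\langle x\rangle^4\langle y\rangle^4$. Interpolating with the uniform bound $|t|^{-1}$ from Proposition~\ref{prop_K} via $\min(1,a/b)\lesssim\log a/\log b$ yields $\omega(x)\omega(y)(|t|\log|t|)^{-1}$, for both $\mathcal{K}$ and $\mathcal{N}$.

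The main term is the rank-one operator $c_\pm Q_0$ with $c_\pm=(a_0^\pm)^{-1}\|V\|_{L^1}^{-1}$. Its contribution to the kernel is
\[
\lambda^4 c_\pm\|V\|_{L^1}^{-1}\Big(\int R_0^\pm(\lambda^4)(y,z)V(z)\,dz\Big)\Big(\int R_0^\pm(\lambda^4)(x,w)V(w)\,dw\Big).
\]
Here we insert $R_0^\pm=a_0^\pm\lambda^{-2}+E_1^\pm$. We expand the product into three types of terms.

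\textbf{(a) The $\lambda^{-4}$ term.} It contributes $\lambda^{4}c_\pm (a_0^\pm)^2\lambda^{-4}\|V\|_{L^1}=a_0^\pm$, which is a constant independent of $\lambda,x,y$.

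\textbf{(b) The cross terms.} These are of the form $a_0^\pm\lambda^{2}\int E_1^\pm V$, with amplitude bounded by $\lambda^{2-}\langle x\rangle^4$.

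\textbf{(c) The $E_1E_1$ term.} It carries $\lambda^4$ and is even better.

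For the cosine kernel, all three types are fine. Term (a) gives $\int\cos(t\lambda^2)\lambda\widetilde\chi_1(\lambda)\,d\lambda=O(|t|^{-N})$, since $\widetilde\chi_1$ is smooth and even in $\lambda$ near $0$ (substitute $s=\lambda^2$, so $\widetilde\chi_1(\sqrt s)=\chi_1(s^2)$ is smooth). Terms (b) and (c) give $|t|^{-2+}\langle x\rangle^4\langle y\rangle^4$ by Lemma~\ref{oscillatory}(ii) with $r=0$, which interpolates as before.

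For the sine kernel, terms (b) and (c) again produce $|t|^{-2+}$ times polynomial weights, since the $\lambda^{-1}$ factor is absorbed by $\lambda^2$; they are treated as above. The crucial step is term (a) in the difference $\mathcal{N}^+-\mathcal{N}^-$. Its amplitude is $-(a_0^+-a_0^-)=-\tfrac{i}{4}$, with the sign fixed by the resolvent identity. This gives
\[
\frac{2}{\pi i t}\int_0^\infty \sin(t\lambda^2)\lambda^{-1}\widetilde\chi_1(\lambda)\Big(-\frac{i}{4}\Big)d\lambda .
\]
With $s=t\lambda^2$ this equals $-\frac{1}{4\pi t}\int_0^\infty \frac{\sin s}{s}\chi_1(s^2/t^2)\,ds=-\frac{1}{8|t|}+O(|t|^{-2})$, up to sign convention. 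This is precisely the constant that must be reconciled with the claimed $\frac{1}{8|t|}$.

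\textbf{Main obstacle: the sign of term (a).} The expected obstacle is to verify the sign and constant carefully, so that in the full resolvent formula this term cancels against the free term of Proposition~\ref{prop_free_sin}. The statement writes $+\frac{1}{8|t|}$ for $\mathcal{N}_B^+-\mathcal{N}_B^-$ as defined (before the minus sign in \eqref{id-RV}). I would first recompute directly: $c_\pm(a_0^\pm)^2\|V\|_{L^1}^{-1}\|V\|_{L^1}=a_0^\pm=\pm\frac i8$. The difference is then $\frac i4$, and $\frac{2}{\pi i t}\cdot\frac{i}{4}\cdot\frac{\pi}{4}=\frac{1}{8t}$, confirming $+\frac{1}{8|t|}$ with the appropriate sign of $t$ (oddness of $\sin$). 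The cutoff error $\int\frac{\sin s}{s}(1-\chi_1(s^2/t^2))\,ds$ is bounded by integration by parts by $O(|t|^{-1})$, which contributes $O(|t|^{-2})$ overall.
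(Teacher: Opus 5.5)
Your proposal is correct and is essentially the paper's argument. You split $R_0^\pm(\lambda^4)=a_0^\pm\lambda^{-2}+E_1^\pm$ and split $\mathcal{M}_{0,0}^\pm$ via \eqref{eq:M_00}, isolate the $x,y$-independent amplitude $a_0^\pm\widetilde{\chi}_1(\lambda)$, and use the rapid decay of the cosine integral and \eqref{eq:integral_sin} for the sine difference. Everything else is bounded by $|t|^{-1-}\langle x\rangle^4\langle y\rangle^4$ and interpolated against the uniform $|t|^{-1}$ bound as in \eqref{eq:inequality_3}; only the order of the two splittings and your sharper choices of $\sigma$ differ from the paper.

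One point to clean up is the intermediate claim that the constant amplitude is $-(a_0^+-a_0^-)$ ``fixed by the resolvent identity''. $\mathcal{N}_B^\pm$ as defined in \eqref{Lambda_1} does not carry the minus sign of \eqref{id-RV}, and with $v^2=|V|$ (not $V$, as in your kernel formula) the amplitude is exactly $+a_0^\pm$. That is what your final recomputation correctly uses to obtain $+\frac{1}{8|t|}$.
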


\begin{proof}
	Substituting  $R_0^{\pm}(\lambda^4)(x,y) = a_0^{\pm} \lambda^{-2}G_0(x,y) + E_1^{\pm}(\lambda,x,y)$ into \eqref{eq:def_mathcal_E} and observing that $(Q_0vG_0)(\cdot,y)=(Q_0v)(\cdot)=v(\cdot)$, we separate an $x,y$-independent principal part and a remainder:
	\[
	\mathcal{E}^\pm(\lambda,x,y) = \widetilde{\chi}_1(\lambda) (a_0^\pm)^2 \bigl\langle \mathcal{M}_{0,0}^\pm(\lambda) v, v \bigr\rangle + \mathcal{E}_{\mathrm{rem}}^\pm(\lambda,x,y).
	\]
	Since $E_1^\pm$ satisfies the bounds \eqref{derivative_1}, $\mathcal{E}_{\mathrm{rem}}^\pm$ obeys the estimate \eqref{eq:estimate_E}, ensuring that its contribution to both kernels is bounded by $|t|^{-5/4}\langle x\rangle^4\langle y\rangle^4$.
	
	For the principal part, using the asymptotic expansion \eqref{eq:M_00} for $\mathcal{M}_{0,0}^\pm(\lambda)$, we obtain that
	\[
	\widetilde{\chi}_1(\lambda) (a_0^\pm)^2 \bigl\langle \mathcal{M}_{0,0}^\pm(\lambda) v, v \bigr\rangle = a_0^\pm \|V\|_{L^1}^{-1} \bigl\langle Q_0 v, v \bigr\rangle \widetilde{\chi}_1(\lambda) + \lambda (-\log \lambda)^{3/2} \widetilde{\chi}_1(\lambda) (a_0^\pm)^2 \bigl\langle \Lambda^{\pm}(\lambda) v, v \bigr\rangle.
	\]
	The second term on the right satisfies the bounds \eqref{eq:estimate_E}, yielding that its contribution to both kernels is bounded by $|t|^{-5/4}\langle x\rangle^4\langle y\rangle^4$.
	
	It remains to evaluate the integrals  \eqref{K-N-weig} with  $\mathcal{E}^\pm=a_0^\pm \|V\|_{L^1}^{-1} \bigl\langle Q_0 v, v \bigr\rangle\widetilde{\chi}_1(\lambda)=a_0^\pm\widetilde{\chi}_1(\lambda)$. For the cosine kernel,  by integrating by parts, we have
	\[
	a_0^\pm \int_0^\infty \cos(t\lambda^2)\lambda\widetilde{\chi}_1(\lambda)\,d\lambda  = O(|t|^{-N}),  \ \  \forall N>0.
	\]
	Combining this with the remainder bounds above gives $|\mathcal{K}_{B}^\pm(t,x,y)| \lesssim |t|^{-5/4}\langle x\rangle^4\langle y\rangle^4$. Interpolating this with the uniform $|t|^{-1}$ bound exactly as in \eqref{eq:inequality_3} yields the desired weighted estimate for $\mathcal{K}_{B}^\pm$.
	
	For the sine kernel difference, extracting the explicit leading-order integral gives
	$$
	\begin{aligned}
		(\mathcal{N}_{B}^+ - \mathcal{N}_{B}^-)(t,x,y) = (a_0^+ - a_0^-) \frac{2}{\pi i t} \int_0^\infty \sin(t\lambda^2) \lambda^{-1} \widetilde{\chi}_1(\lambda) \, d\lambda + O\Bigl(\frac{\langle x\rangle^4\langle y\rangle^4}{|t|^{5/4}}\Bigr).
	\end{aligned}
	$$
	Using the substitution $\rho = |t|\lambda^2$ and $\int_0^\infty \frac{\sin u}{u} du = \frac{\pi}{2},$  we obtain
	\begin{equation}\label{eq:integral_sin}
		\frac{1}{t}\int_0^{\infty} \sin(t\lambda^2) \lambda^{-1} \widetilde{\chi}_1(\lambda) \, d\lambda
		= \frac{\pi}{4|t|}
		- \frac{1}{2|t|} \int_0^{\infty} \frac{\sin\rho}{\rho}  \widetilde{\chi}_2\bigl(\sqrt{\frac{\rho}{|t|}}\bigr) \, d\rho=\frac{\pi}{4|t|}+O\bigl(\frac{1}{|t|^{2}}\bigr),
	\end{equation}
	where an integration by parts shows that the $\rho$-integral is   $O(|t|^{-1})$.
	Combining this with $a_0^{+} -a_0^{-}= \frac{i}{4}$ yields
	$$
	(\mathcal{N}_{B}^+ - \mathcal{N}_{B}^-)(t,x,y)=\frac{1}{8 |t|}+ O\Bigl(\frac{\langle x\rangle^4\langle y\rangle^4}{|t|^{5/4}}\Bigr).
	$$
	Combine this spatial remainder with the uniform bound $|t|^{-1}$, then by \eqref{eq:inequality_3}, we  obtain  the desired asymptotic expansion.
\end{proof}
\begin{remark}
{\rm
In the regular and first-kind resonance cases, all $\mathcal{N}_{B}^\pm$ satisfy the desired weighted bound $\omega(x)\omega(y)(|t|\log|t|)^{-1}$ for all pairs, save for the  contribution \eqref{K-N-cri} (i.e., the case $B^\pm(\lambda)=\mathcal{M}_{0,0}^\pm(\lambda)$).
Combining this with Proposition~\ref{prop_free_sin}, we see that the leading term ${1}/{(8|t|)}$ of the free sine propagator is exactly annihilated by the leading term in \eqref{K-N-cri}, yielding the weighted sine bound in \eqref{eq:thm_weighted}. The  cosine bound follows immediately, as both the free and perturbed cosine kernels
intrinsically exhibit the required decay.}
\end{remark}

\section{The second kind resonance case}\label{sec:second}
This section establishes the low-energy estimates in the
second-kind resonance case. We first translate the moment conditions
in Theorem \ref{main_theorem-2} into the projection framework used
in the resolvent expansion.

By Proposition \ref{characterizations-1} (also see Proposition \ref{characterizations}), when $V\neq0$, zero is a
second-kind resonance if and only if
$
S_3L^2\neq\{0\}$, and $
S_4L^2=\{0\}.
$
Moreover, the moment conditions on the resonance space
$\mathcal E$ correspond exactly to the following algebraic
conditions on the projections introduced in Definitions \ref{defS} and \ref{defQ}:
\begin{itemize}
\item 
$\ \ \tau\not\equiv0
\quad\Longleftrightarrow\quad
S_3L^2\neq S_4^0L^2
\quad\Longleftrightarrow\quad
Q_4^0=S_3-S_4^0\neq0;
$
\vskip0.2cm
\item $
\ \ \ker\mathfrak M_2\neq\{0\}
\quad\Longleftrightarrow\quad
S_4^1L^2\neq\{0\}
\quad\Longleftrightarrow\quad
Q_{42}^1=S_4^1-S_4\neq0,
$
\end{itemize}
where the last equivalence uses $S_4=0$ in the second-kind
resonance case.

Thus the mixed second-moment configuration in
Theorem \ref{main_theorem-2} is equivalent to
$
Q_4^0\neq0$ and
$Q_{42}^1\neq0,
$
whereas its complement is characterized by
$
Q_4^0=0$ or
$Q_{42}^1=0.
$

In view of this reduction, together with the free estimates in
Section \ref{sec:free_case} and the high-energy estimates in
Theorem \ref{main_theorem_high}, it remains to prove the following
Theorem \ref{main_theorem_low_2}.

\begin{theorem}\label{main_theorem_low_2}
Let $H = \Delta^2 + V$ with $|V(x)| \lesssim \langle x \rangle^{-14-}$.
Assume that $H$ has no positive embedded eigenvalues and that zero is a second-kind resonance of $H$. We distinguish two cases:
\begin{itemize}
    \item If $Q_4^0 \neq 0$ and $Q_{42}^1 \neq 0$, then
$$
\begin{aligned}
& \left\|\cos (t \sqrt{H}) P_{\mathrm{ac}}(H)\chi_1(H)\right\|_{L^1 \rightarrow L^{\infty}}
+ \bigg \|\frac{\sin (t \sqrt{H})}{t\sqrt{H}} P_{\mathrm{ac}}(H)\chi_1(H)\bigg \|_{L^1 \to L^{\infty}}
\lesssim \langle t \rangle^{-1}(\log (2+|t|))^2,
\end{aligned}
$$
and in the weighted setting $L^1_\omega\to L^\infty_{-\omega}$, for $|t| \ge 2,$
     \begin{align*}
		\left\|
			\cos(t\sqrt H)P_{\mathrm{ac}}(H)\chi_1(H)
			\right\|_{L^1_\omega\to L^\infty_{-\omega}}\lesssim \frac{\log|t|}{|t|},\quad \bigg\|
		\frac{\sin(t\sqrt H)}{t\sqrt H}
			P_{\mathrm{ac}}(H)\chi_1(H)
			\bigg\|_{L^1_\omega\to L^\infty_{-\omega}}
			\sim
			\frac{(\log|t|)^2}{|t|}. 
		\end{align*}

    \item If $Q_4^0 = 0$ or $Q_{42}^1 = 0$,  then  the unweighted  bound improves to
\begin{equation}\label{2-t}
    \left\|\cos (t \sqrt{H}) P_{\mathrm{ac}}(H)\chi_1(H)\right\|_{L^1 \to L^{\infty}} +
    \bigg\|\frac{\sin (t \sqrt{H})}{t\sqrt{H}} P_{\mathrm{ac}}(H) \chi_1(H) \bigg\|_{L^1 \to L^{\infty}} \lesssim \langle t \rangle^{-1},
\end{equation}
and in the weighted setting $L^1_\omega\to L^\infty_{-\omega}$, for $|t| \ge 2,$
\[
\left \|\cos (t \sqrt{H}) P_{\mathrm{ac}}(H)\chi_1(H)\right\|_{L^1_\omega\to L^\infty_{-\omega}} \lesssim\frac{1}{|t|\log |t|},  \ \ \bigg\|\frac{\sin (t \sqrt{H})}{t\sqrt{H}} P_{\mathrm{ac}}(H)\chi_1(H)\bigg\|_{L^1_\omega\to L^\infty_{-\omega}}\sim|t|^{-1}.
\]
      \end{itemize}
      \end{theorem}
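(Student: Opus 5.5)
The plan follows the template of Section~\ref{sec:regular_first}. By Theorem~\ref{thm:M_inverse}(II), the low-energy Stone formulas reduce to the kernel differences $\mathcal K_B^\pm$ and $\mathcal N_B^\pm$ in \eqref{K_1}--\eqref{Lambda_1}, with $B^\pm=\mathcal M^\pm_{\alpha,\beta}$ for $0\le\alpha,\beta\le4$. I split the pairs into three groups.

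\textbf{Group A: $0\le\alpha,\beta\le3$.} These pairs obey \eqref{M0-1}, so Propositions~\ref{prop_K}, \ref{prop_K_weight_1} and \ref{prop_K_weight_2} apply verbatim. They give the unweighted bound $\langle t\rangle^{-1}$ and the weighted bound $\omega(x)\omega(y)(|t|\log|t|)^{-1}$. The exception is $\mathcal N_{0,0}$, whose leading part $1/(8|t|)$ is an explicit constant.

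\textbf{Group B: mixed pairs $(4,\alpha)$ and $(\alpha,4)$ with $1\le\alpha\le3$.} Here the weight is $\lambda^{4-k_4-k_\alpha}=\lambda$ and $\|\partial^\ell_\lambda\mathcal M\|\lesssim\lambda^{1-\ell}|\log\lambda|^4$. I use $\mathcal J_{4,\pm}$ and $\mathcal J_{\alpha,\pm}$ from Lemma~\ref{lemma_projection}(ii) for $\alpha=2,3$, and $\lambda^{-1}\Omega_{1,\pm}$ for $\alpha=1$. The amplitude is then $O(\lambda^{1-\ell}|\log\lambda|^{6}\langle\lambda r\rangle^{-1/2})$, and Lemma~\ref{oscillatory}(ii) with $\sigma=-1$ (after absorbing the logs) gives $\langle t\rangle^{-1}$. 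For the weighted version I switch to $r=0$ and the $E_1^\pm$ expansion as in Proposition~\ref{prop_K_weight_1}, which gives $|t|^{-1-}\langle x\rangle^4\langle y\rangle^4$. Interpolating as in \eqref{eq:inequality_3} yields $\omega\omega/(|t|\log|t|)$.

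\textbf{Group C: the principal pairs $(0,4),(4,0),(4,4)$.} This is where the dichotomy arises. For $(4,4)$ the weight is $\lambda^0$, and I use Lemma~\ref{lemma_projection}(iii) in the form $Q_4vR_0^\pm=b_0Q_4vG_2^0+b_0(\log\lambda)Q_4^0(v|\cdot|^2)+\mathcal T$. The $\log\lambda$ piece carries only $Q_4^0$, while $Q_{42}^1$ gains the factor $\lambda$. Pairing this with the structure of $\mathcal M_{4,4}$:
\begin{itemize}
\item In Case 1, or Case 2 with $Q_{42}^1=0$, the $(\log\lambda)^{-(2-h-l)}$ factors exactly cancel each $\log\lambda$ produced by $Q_4^0$. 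The amplitude is then $O(\lambda^{-\ell}\langle\lambda r\rangle^{-1/2}h)$ with $h=\log(2+|x|)\log(2+|y|)$ coming from \eqref{lemma_projection_G}.
\item When $Q_4^0,Q_{42}^1\ne0$, the $(\log\lambda)^2Q_4^1\Lambda Q_4^1$ term times the $Q_4^0$-log factors leaves a net $|\log\lambda|^2$.
\end{itemize}
Lemma~\ref{oscillatory}(ii) with $\sigma=0$ and $\nu=0$ or $\nu=-2$ gives $\langle t\rangle^{-1}$ or $\langle t\rangle^{-1}(\log t)^2$ respectively; on the support this must be combined with the phase-aligned $\langle\lambda r\rangle^{-1/2}$ representation for the unweighted claim. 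The pairs $(0,4)$ and $(4,0)$ are handled the same way, with $Q_0vR_0=a_0^\pm\lambda^{-2}v+\dots$ balanced by $\lambda^{4-0-2}=\lambda^2$.

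\textbf{Weighted bounds and sharpness.} For the cosine, the $\lambda$-independent leading amplitude of $\mathcal K$ integrates against $\cos(t\lambda^2)\lambda$ to $O(t^{-N})$. Each extra logarithm costs at most one log in the $\omega$-weighted bound, giving $\log|t|/|t|$, respectively $(|t|\log|t|)^{-1}$. For the sine, however, $\frac1t\int\sin(t\lambda^2)\lambda^{-1}(\log\lambda)^j\widetilde\chi_1\,d\lambda\sim c_j(\log|t|)^j/|t|$. So the principal $(4,4)$ and $(0,4)$ terms contribute non-cancelling constants $c\,(\log|t|)^{j}/|t|$ times kernels like $\langle Q_4\Lambda(0)Q_4\,b_0vG_2^0(\cdot,y),\dots\rangle$. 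This produces the lower bounds $\sim(\log|t|)^2/|t|$, respectively $\sim|t|^{-1}$. In the non-mixed case, the free term $1/(8|t|)$ is no longer annihilated by $\mathcal N_{0,0}$ alone, since the $(0,4),(4,4)$ constants interfere.

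\textbf{Main obstacle.} The hardest step is proving these leading coefficients do not vanish, i.e.\ computing the $\pm$-independent limit of $\mathcal M_{4,4}^+-\mathcal M_{4,4}^-$ (and of $\mathcal M_{0,4}^\pm$) from the inversion scheme of Section~\ref{subsec:proof_inverse}. I would first reduce this to the nondegeneracy of $Q_4^1vG_4vQ_4^1$ restricted to $Q_{42}^1$, i.e.\ a moment matrix built from $\langle |x|^2V,\phi_1\rangle\ne0$ paired with $\phi_2\in\ker\mathfrak M_2$. I would then test at $x=y=0$ and choose $|x|$ large to exhibit a lower bound.
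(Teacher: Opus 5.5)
Your upper bounds follow the paper's route (Propositions \ref{prop_second_2_3}, \ref{prop_second_2_1}, \ref{prop_second_1}, \ref{prop_second_critical_block}) closely enough. This requires that, for the unweighted $(0,4)$ and $(4,4)$ blocks, you use $\mathcal J_{4,\pm}$ for the logarithmic channel together with the weight-free, log-free decomposition of $Q_4^1vR_0^\pm$. Lemma \ref{lemma_projection}(iii) with $h=\omega(x)\omega(y)$ would not do.

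The genuine gap is in the two-sided ``$\sim$'' claims, and your proposed reduction for the mixed case fails. $Q_{42}^1$ annihilates $v$, $x_jv$ and $x_ix_jv$, and every monomial of $|x-y|^4$ has degree at most $2$ in one of the variables. Hence $Q_{42}^1vG_4vQ_{42}^1=0$, so $Q_4^1vG_4vQ_4^1$ is identically zero on $Q_{42}^1L^2$. This degeneracy is precisely why $\mathcal M_{4,4}^\pm$ grows like $(\log\lambda)^2$.

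The nondegeneracy actually comes from the $Q_0$--$Q_4^0$ coupling. The $(\log\lambda)^2$-coefficient is $(a_2^\pm)^{-1}\mathcal S^{-1}$ with $\mathcal S=\mathcal W_1^*\boldsymbol d_2\mathcal W_1$ and $\mathcal W_1=(b_0Q_0vG_2vQ_4^0)^{-1}Q_0T_0Q_{42}^1$. It is positive for two reasons: $PT_0f\neq0$ for $0\neq f\in Q_{42}^1L^2$, and $\boldsymbol d_2>0$ as a Schur complement of $G_4$ on $Q_4^0L^2\oplus Q_{41}^1L^2$.

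You also need the full refined expansion \eqref{huaM44}, including the cross terms $\mathcal C^\pm,\mathcal C_1^\pm,\mathcal C_2^\pm$ that pair $b_0(\log\lambda)Q_4^0(v|\cdot|^2)$ with $b_0Q_{42}^1vG_2^0$. Only then does the leading kernel collapse to $b_0^2[(a_2^+)^{-1}-(a_2^-)^{-1}](A\mathcal W_1-\mathcal B)\mathcal S^{-1}(A\mathcal W_1-\mathcal B)^*$. Even this is not automatically nonzero, because $A\mathcal W_1\psi$ is a constant function and could cancel $\mathcal B\psi=G_2^0v\psi$. The paper rules this out by applying $\Delta^2$ (so $v\psi=0$) and then Proposition \ref{characterizations-1}(ii) ($\psi=Uv\phi$) to get $\psi=0$. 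Testing at $x=y=0$ or at large $|x|$ does not replace this injectivity argument.

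In the non-mixed case there is a second error. Proposition \ref{prop_K_weight_2} does not apply verbatim, since the refined expansion of $\mathcal M_{0,0}^\pm$ is not part of Theorem \ref{thm:M_inverse}(II), and it changes:
\begin{itemize}
\item If $Q_4^0=0$, its leading term is $(a_0^\pm)^{-1}\|V\|_{L^1}^{-1}Q_0+\widetilde{\mathcal D}_{0,0}^\pm$.
\item If $Q_4^0\neq0$, then $\mathcal M_{0,0}^\pm=(\log\lambda)^{-2}Q_0\Lambda^\pm(\lambda)Q_0$, so nothing in $\mathcal N_{0,0}$ removes the free $\frac{1}{8|t|}$.
\end{itemize}
The $|t|^{-1}$-coefficient must therefore be assembled and shown nonzero separately in each subcase, and ``non-cancelling'' is exactly what needs proof. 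For $Q_4^0=0$, the paper uses positivity of $i\bigl((\boldsymbol d^+)^{-1}-(\boldsymbol d^-)^{-1}\bigr)$ on $Q_4^1L^2$ together with injectivity of $G_0vQ_0\mathcal W-b_0G_2^0vQ_4^1$. For $Q_4^0\neq0$, $Q_{42}^1=0$, there is genuine partial cancellation: $b_0G_0vQ_0\mathfrak DQ_4^0vG_2=G_0$, and likewise for its adjoint counterpart. One must compute $\mathfrak A=-\frac{63}{512}G_0+\frac{8}{\pi^2}\mathcal XD_0\mathcal X^*$ and exclude $\mathfrak A=0$ by a $\Delta^2$ and contradiction argument.

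As written, your proposal establishes only the upper bounds. A minor point: in the mixed case the cosine's leading amplitude is $(\log\lambda)^2\mathcal A^\pm(x,y)$, which integrates to $\frac{\pi}{8}\frac{\log|t|}{|t|}$ rather than $O(|t|^{-N})$. Your stated bound $\log|t|/|t|$ is nonetheless correct.
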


\subsection{The estimates for the case $Q_{4}^0 \neq 0$ and $Q_{42}^1 \neq 0$} \label{subsec:second_1}  
Building upon the $L^1\to L^\infty$ dispersive bounds already established for the regular and first-kind resonance cases in Subsection \ref{subsec:regular_first_1}, we only need to bound the kernel differences $(\mathcal{K}_{B}^+-\mathcal{K}_{B}^-)(t,x,y)$ and $(\mathcal{N}_{B}^+-\mathcal{N}_{B}^-)(t,x,y)$ defined in \eqref{K_1} and \eqref{Lambda_1}, restricting our attention to the operators $B^\pm(\lambda)=\mathcal{M}_{\alpha, \beta}^{\pm}(\lambda)$ with the index pairs $(\alpha, \beta)$ satisfying either $\alpha=4$ with $0 \le \beta \le 4$, or $0 \le \alpha \le 4$ with $\beta=4$.  

Recall from Theorem \ref{thm:M_inverse} (II) that for $\alpha = 1,2,3$:  
\[
	\|\partial_\lambda^\ell \mathcal{M}_{4,\alpha}^\pm(\lambda)\|_{\mathbb{B}(L^2)} + \|\partial_\lambda^\ell \mathcal{M}_{\alpha,4}^\pm(\lambda)\|_{\mathbb{B}(L^2)} \lesssim \lambda^{1-\ell}|\log \lambda|^{4}, \quad \ell = 0,1,2.  
\]
Moreover, for pairs involving indices 0 and 4, we have:  
\begin{align}  
	\mathcal{M}_{0,4}^{\pm}(\lambda) &= (\log \lambda)^{-1} Q_0 \Lambda^\pm(\lambda) Q_{4}^0 + Q_0 \Lambda^\pm(\lambda) Q_{4}^1, \nonumber\\  
	\mathcal{M}_{4,0}^{\pm}(\lambda) &= (\log \lambda)^{-1} Q_{4}^0 \Lambda^\pm(\lambda) Q_0 + Q_{4}^1 \Lambda^\pm(\lambda) Q_0, \nonumber\\ 
	\label{M44-1}
    	\mathcal{M}_{4,4}^{\pm}(\lambda) &= Q_{4} \Lambda^\pm(\lambda) Q_{4} + (\log\lambda)\bigl(Q_{4} \Lambda^\pm(\lambda) Q_{4}^1+Q_{4}^1 \Lambda^\pm(\lambda) Q_{4}\bigr) + (\log\lambda)^2 Q_{4}^1 \Lambda^\pm(\lambda) Q_{4}^1.
\end{align} 
Throughout the paper, $\Lambda^\pm(\lambda)$ denotes  a generic operator in $\mathbb{B}(L^2)$, possibly different at each occurrence, satisfying
$
\|\partial_\lambda^\ell \Lambda^\pm(\lambda)\|_{\mathbb{B}(L^2)} \lesssim \lambda^{-\ell}, \  \ell=0,1,2.
$

In this subsection, we first establish the unweighted estimate in
Proposition~\ref{prop_second_2_3}. We then derive the weighted estimate and  the asymptotic
expansion associated with 
\(B^\pm(\lambda)=\mathcal M_{4,4}^\pm(\lambda)\), whose contribution exhibits 
slowest decay, in Proposition~\ref{prop_second_1}. Finally, in
Proposition~\ref{sine-sharp}, we prove the sharp decay rate for the
sine evolution.

\begin{proposition}\label{prop_second_2_3}
	Let $B^\pm(\lambda) \in \{\mathcal{M}_{4,\alpha}^{\pm}(\lambda), \mathcal{M}_{\alpha,4}^{\pm}(\lambda)\}$ with $\alpha \in \{0,1,2,3,4\}$. Then
	\[  
\sup _{x, y \in \mathbb{R}^2}\left| \mathcal{K}_{B}^\pm(t,x,y)\right| + \sup _{x, y \in \mathbb{R}^2}\left|\mathcal{N}_{B}^\pm (t,x,y)\right| \lesssim
	\begin{cases}
	\langle t\rangle^{-1}, & \alpha\in\{0,1,2,3\},\\
	\langle t\rangle^{-1}\bigl(\log(2+|t|)\bigr)^2, & {\alpha} = 4.
\end{cases}  
	\]  
\end{proposition}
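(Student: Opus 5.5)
The plan is to follow the scheme of Proposition~\ref{prop_K}: rewrite each kernel $\mathcal{K}_B^\pm$, $\mathcal{N}_B^\pm$ in the canonical form \eqref{rf} with $r=|x|+|y|$. We then bound the amplitude $\mathcal{E}^\pm$ by Leibniz's rule, H\"older's inequality, and the $L^2$-bounds of Lemma~\ref{lemma_projection}, and finally apply Lemma~\ref{oscillatory}(ii). Since $k_4=2$, the prefactor in \eqref{K_1}--\eqref{Lambda_1} is $\lambda^{4-k_\alpha-2}=\lambda^{2-k_\alpha}$. For the side carrying $Q_4$ (or $Q_4^0,Q_4^1$) we use the representation $\mathcal{J}_{4,\pm}$ of Lemma~\ref{lemma_projection}(ii). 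Its bound \eqref{J} is $\lambda^{-\ell}|\log\lambda|\langle\lambda y\rangle^{-1/2}$, so it costs only a logarithm. For the index $\alpha\in\{0,1,2,3\}$ on the other side we use $\lambda^{-1-\delta_{\alpha0}}\Omega_{\alpha,\pm}$, or $\mathcal{J}_{3,\pm}$ when $\alpha=3$, with bound \eqref{lemma_projection_1}.

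\textbf{Case $\alpha\in\{1,2,3\}$.} The powers give $\lambda^{2-k_\alpha}\cdot\lambda^{-1}=\lambda^{0}$ when $k_\alpha=1$. Combined with $\|\partial^\ell_\lambda\mathcal{M}^\pm_{4,\alpha}\|\lesssim\lambda^{1-\ell}|\log\lambda|^4$ and the factor $|\log\lambda|$ from $\mathcal{J}_{4}$, this yields $|\partial_\lambda^\ell\mathcal{E}^\pm|\lesssim\lambda^{1-\ell}|\log\lambda|^5\langle\lambda r\rangle^{-1/2}\widetilde\chi_1(\lambda/2)\lesssim\lambda^{-\ell}\langle\lambda r\rangle^{-1/2}\widetilde\chi_1(\lambda/2)$. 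Lemma~\ref{oscillatory}(ii) with $(\sigma,\nu)=(0,0)$ then gives $\langle t\rangle^{-1}$.

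\textbf{Case $\alpha=0$.} Here the prefactor is $\lambda^{2}$ and the $Q_0$ side contributes $\lambda^{-2}\Omega_{0,\pm}$, so the powers cancel. This leaves $\mathcal{M}_{0,4}^\pm$ paired with $\mathcal{J}_{4}$ on the $Q_4$ side. To avoid the log from $\mathcal{J}_4$, we split $\mathcal{M}_{0,4}^\pm=(\log\lambda)^{-1}Q_0\Lambda^\pm Q_4^0+Q_0\Lambda^\pm Q_4^1$. In the first piece, the $(\log\lambda)^{-1}$ cancels the $|\log\lambda|$ from $\mathcal{J}$ for $Q_4^0$. In the second piece, Lemma~\ref{lemma_projection}(iii) gives $Q_4^1vR_0^\pm=b_0Q_4^1vG_2^0+\mathcal{T}^1_{4,\pm}+\mathcal{T}_4^1$ with no $\log\lambda$ term. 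The difficulty is that $b_0Q_4^1vG_2^0$ and $\mathcal{T}_4^1$ carry no oscillating factor $e^{i\lambda|y|}$ and no decay $\langle\lambda y\rangle^{-1/2}$. I would rather use an unlogged $\Omega$-type representation for $Q_4^1$ obtained as in Lemma~\ref{lemma_projection}(i), namely $Q_4^1vR_0^\pm=\lambda^{-1}\Omega_{4,\pm}$. That route loses $\lambda^{-1}$, which is not acceptable. I expect instead that the proof of Lemma~\ref{lemma_projection} (the case $Q_4^1(|x|^2v)=0$) yields $\mathcal{J}$-type bounds without the log for $Q_4^1$, and I will assume this refinement. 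In any case the amplitude is bounded by $\lambda^{-\ell}\langle\lambda r\rangle^{-1/2}$, and we again obtain $\langle t\rangle^{-1}$.

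\textbf{Case $\alpha=4$, the main obstacle.} The prefactor is $\lambda^0$, both sides are $\mathcal{J}_{4}$-type, and $\mathcal{M}^\pm_{4,4}$ from \eqref{M44-1} contains up to $(\log\lambda)^2Q_4^1\Lambda Q_4^1$. Sorting by projections, the $Q_4^1\cdots Q_4^1$ term gets $(\log\lambda)^2$ from $\mathcal{M}$ but no log from the kernels, if the refined $Q_4^1$ bound holds. The mixed terms $Q_4\Lambda Q_4^1$ get $\log\lambda$ from $\mathcal{M}$ and one $\log$ from the $Q_4^0$ kernel. The $Q_4\Lambda Q_4$ term gets $|\log\lambda|^2$ from the two kernels. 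Altogether $|\partial_\lambda^\ell\mathcal{E}^\pm|\lesssim|\log\lambda|^2\lambda^{-\ell}\langle\lambda r\rangle^{-1/2}\widetilde\chi_1(\lambda/2)$. This corresponds to $(\sigma,\nu)=(0,-2)$ in Lemma~\ref{oscillatory}(ii), which is allowed since $\nu\le0$, and gives $\langle t\rangle^{-1}(\log(2+|t|))^2$. The delicate point is that the bookkeeping only closes if the $Q_4^1$ kernels carry no logarithm. Otherwise one obtains $(\log)^4$. So the first thing I would verify is the log-free $\mathcal{J}$-bound for $Q_4^1vR_0^\pm$ with oscillatory factor $e^{\mp i\lambda|y|}$, using $Q_4^1(v)=Q_4^1(x_jv)=Q_4^1(|x|^2v)=0$ to cancel the $\log\lambda\,|x-y|^2$ term in \eqref{free-widetilde_R}.
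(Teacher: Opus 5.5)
Your reduction to Lemma~\ref{oscillatory}, your splitting of $\mathcal M^\pm_{0,4}$, and your logarithm count for $\alpha=4$ match the paper. In that count each block of \eqref{M44-1} ends up with exactly $|\log\lambda|^2$, hence $(\sigma,\nu)=(0,-2)$.

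The gap is on the $Q_4^1$-side for $\alpha=0$ and $\alpha=4$. There you do not prove the bound you use. Instead you \emph{assume} that $e^{\mp i\lambda|y|}\bigl(Q_4^1vR_0^\pm(\lambda^4)\bigr)(\cdot,y)$ satisfies a log-free bound $\lambda^{-\ell}\langle\lambda y\rangle^{-1/2}$. Lemma~\ref{lemma_projection} does not provide this, and it is not a formality. The two non-oscillatory pieces $b_0Q_4^1vG_2^0(\cdot,y)$ and $\mathcal T_4^1(\lambda,\cdot,y)$ are only bounded uniformly in $y$ (by $1$ and $\lambda^{-\ell}$). When $Q_{41}^1\neq0$, neither decays as $|y|\to\infty$. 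By the second-order term in Lemma~\ref{taylor}(iii), for $\lambda|y|\gg1$ their leading parts are $\pm b_0\,Q_4^1\bigl(v\langle y/|y|,\cdot\rangle^2\bigr)$. These cancel only in the sum, so your refinement would need a separate cancellation argument that you have not given.

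The missing observation is that no such estimate is needed. Lemma~\ref{oscillatory} allows an arbitrary real phase $r(x,y)$ and asks for decay $\langle\lambda r\rangle^{-1/2}$ only in that same $r$. That decay is used only at the stationary point $\lambda\sim|r|/t$, so a factor that carries no phase needs to carry no decay. You should therefore keep the three pieces of Lemma~\ref{lemma_projection}(iii) separate:
\begin{itemize}
\item attach no phase to $b_0Q_4^1vG_2^0$ and $\mathcal T_4^1$, and use only the uniform bounds \eqref{lemma_projection_G} and \eqref{lemma_projection_T};
\item attach $e^{\pm i\lambda|y|}$ to $\mathcal T^1_{4,\pm}$.
\end{itemize}
For $\alpha=0$ this gives the phases $r=|x|$, with the decay $\langle\lambda x\rangle^{-1/2}$ supplied by $\Omega_{0,\mp}(\lambda,\cdot,x)$, and $r=|x|+|y|$. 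For $\alpha=4$ it gives $r\in\{0,|x|,|y|,|x|+|y|\}$. In every case the amplitude obeys $\lambda^{-\ell}\langle\lambda r\rangle^{-1/2}$, resp.\ $\lambda^{-\ell}|\log\lambda|^2\langle\lambda r\rangle^{-1/2}$, with $h\equiv1$. Your logarithm bookkeeping then closes verbatim with $(\sigma,\nu)=(0,0)$, resp.\ $(0,-2)$. This is exactly how the paper argues, and it requires no new estimate on $Q_4^1vR_0^\pm$.
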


\begin{proof}
	By Lemma \ref{lemma_projection}, the projected free resolvents relevant
	to the present proposition satisfy
	\begin{align}
		\bigl(Q_\alpha vR_0^\pm(\lambda^4)\bigr)(\cdot,z)
		&=
		\lambda^{-1-\delta_{\alpha0}}
		\Omega_{\alpha,\pm}(\lambda,\cdot,z),
		\label{Qalpha}\\
		\bigl(Q_4vR_0^\pm(\lambda^4)\bigr)(\cdot,z)
		&=
		\mathcal{J}_{4,\pm}(\lambda,\cdot,z),
		\label{De all Q}\\
		\bigl(Q_4^1vR_0^\pm(\lambda^4)\bigr)(\cdot,z)
		&=
		(b_0Q_4^1vG_2^0)(\cdot,z)
		+
		\mathcal{T}_4^1(\lambda,\cdot,z)
		+
		\mathcal{T}_{4,\pm}^1(\lambda,\cdot,z).
		\label{eq:resolvent_decomp-Q4}
	\end{align}
	Moreover, for $\ell=0,1,2$,
	\begin{equation}
		\begin{split}
			&	\big\|
			\partial_\lambda^\ell
			\bigl(
			e^{\mp i\lambda|z|}
			\mathcal{J}_{4,\pm}(\lambda,\cdot,z)
			\bigr) 
			\big\|_{L^2} \lesssim 
			\lambda^{-\ell} 	|\log\lambda|
			\langle\lambda z\rangle^{-1/2},\\
		&
		\big\|
		\partial_\lambda^\ell
		\bigl(
		e^{\mp i\lambda|z|}
		\Omega_{\alpha,\pm}(\lambda,\cdot,z)
		\bigr)
		\big\|_{L^2}
		+
		\big\|
		\partial_\lambda^\ell
		\bigl(
		e^{\mp i\lambda|z|}
		\mathcal{T}_{4,\pm}^1(\lambda,\cdot,z)
		\bigr)
		\big\|_{L^2}
		\lesssim
		\lambda^{-\ell}
		\langle\lambda z\rangle^{-1/2},\\
	&	\|
		\partial_\lambda^\ell
		\mathcal{T}_4^1(\lambda,\cdot,z)
		\|_{L^2}
		\lesssim
		\lambda^{-\ell},\quad 	\|(b_0Q_4^1vG_2^0)(\cdot,z)\|_{L^2}
		\lesssim1.
		\label{eq:E_bound-1}
		\end{split}
	\end{equation}
	We divide the proof of this proposition into three cases: $\alpha=0; 1\leq\alpha\leq3;$ and $\alpha=4.$
	\medskip

	\textbf{Case 1: $B^\pm=\mathcal{M}_{0,4}^\pm$ or
		$\mathcal{M}_{4,0}^\pm$.}
	We consider $\mathcal{M}_{0,4}^\pm$  (the symmetric case $\mathcal{M}_{4,0}^{\pm}$ follows identically). Write
	$$
	\mathcal{M}_{0,4}^\pm(\lambda)
	=
	B_0^\pm(\lambda)+B_1^\pm(\lambda),
	$$
	where
	$
	B_0^\pm(\lambda)
	=
	(\log\lambda)^{-1}
	Q_0\Lambda^\pm(\lambda)Q_4^0$
	and $
	B_1^\pm(\lambda)
	=
	Q_0\Lambda^\pm(\lambda)Q_4^1.
	$ 	By linearity, it suffices to estimate the contributions of
	$B_0^\pm$ and $B_1^\pm$ separately.
	
	We first treat $B_0^\pm$. Since
	$
	4-k_0-k_4=2,
	$
	substituting \eqref{Qalpha} and \eqref{De all Q} into both
	\eqref{K_1} and \eqref{Lambda_1} gives
	\begin{align}
		\mathcal{K}_{B_0}^\pm(t,x,y)
		={}&
		\frac{2}{\pi i}
		\int_0^\infty
		\cos(t\lambda^2)
		e^{\pm i\lambda(|x|+|y|)}
		\lambda
		\mathcal{E}_{0}^\pm(\lambda,x,y)
		\,d\lambda,
		\label{eq:K-B0-standard}\\
			\mathcal{N}_{B_0}^\pm(t,x,y)
		={}&
		\frac{2}{\pi it}
		\int_0^\infty
		\sin(t\lambda^2)
		e^{\pm i\lambda(|x|+|y|)}
		\lambda^{-1}
		\mathcal{E}_{0}^\pm(\lambda,x,y)
		\,d\lambda,
		\label{eq:N-B0-standard}
	\end{align}
	where
	\begin{align}
		\mathcal{E}_{0}^\pm(\lambda,x,y)
		={}&
		\widetilde{\chi}_1(\lambda)
		(\log\lambda)^{-1}
		\Bigl\langle
		Q_0\Lambda^\pm(\lambda)Q_4^0
		\Bigl(
		e^{\mp i\lambda|y|}
		\mathcal{J}_{4,\pm}(\lambda,\cdot,y)
		\Bigr),
		e^{\pm i\lambda|x|}
		\Omega_{0,\mp}(\lambda,\cdot,x)
		\Bigr\rangle.
		\label{eq:E-B0}
	\end{align}
	Indeed, the factor $\lambda^2$ arising from
	$\lambda^{4-k_0-k_4}$ exactly cancels the factor $\lambda^{-2}$
	in the $Q_0$-resolvent expansion.
	
	Observe that  the cosine and sine kernels have exactly the forms required in
	Lemma \ref{oscillatory}, with
	$
	r(x,y)=|x|+|y|
	$
	and the common amplitude factor $\mathcal{E}_{0}^\pm$ given by
	\eqref{eq:E-B0}.
Since $\|\partial_\lambda^\ell \Lambda^\pm(\lambda)\|_{\mathbb{B}(L^2)} \lesssim \lambda^{-\ell}$ for $\ell=0,1,2$ and the factor $(\log\lambda)^{-1}$ exactly compensates for the logarithmic growth of $\mathcal{J}_{4,\pm}$ in \eqref{eq:E-B0}, it follows from Leibniz's rule, H\"older's inequality, and \eqref{eq:E_bound-1} that
	\begin{equation*}
		\left|
		\partial_\lambda^\ell
		\mathcal{E}_{0}^\pm(\lambda,x,y)
		\right|
		\lesssim
		\widetilde{\chi}_1(\lambda/2)
		\langle\lambda(|x|+|y|)\rangle^{-1/2}
		\lambda^{-\ell},
		\qquad
		\ell=0,1,2.
	\end{equation*}
 Lemma
	\ref{oscillatory}(ii) with $(\sigma,\nu)=(0,0)$ therefore yields
	the desired $\langle t\rangle^{-1}$ bound for both
	$\mathcal{K}_{B_0}^\pm$ and $\mathcal{N}_{B_0}^\pm$.
	
	We next treat $B_1^\pm$. By
	\eqref{eq:resolvent_decomp-Q4}, the corresponding kernels 	$\mathcal{K}_{B_1}^\pm$ and $\mathcal{N}_{B_1}^\pm$ split into
	three contributions, associated respectively with
	$Q_4^1vG_2^0$, $\mathcal{T}_4^1$, and
	$\mathcal{T}_{4,\pm}^1$. The same calculation leading to
	\eqref{eq:K-B0-standard} and \eqref{eq:N-B0-standard} shows that each
	contribution has the form required in Lemma \ref{oscillatory}. We
	therefore only specify the corresponding phases and amplitude factors.
	
	The contribution of $Q_4^1vG_2^0$ has the phase
	$
	r(x,y)=|x|
	$
	and the amplitude factor
	\begin{equation*}
		\mathcal{E}_{G}^\pm(\lambda,x,y)
		=
		\widetilde{\chi}_1(\lambda)
		\Bigl\langle
		Q_0\Lambda^\pm(\lambda)Q_4^1
	(Q_4^1vG_2^0)(\cdot,y),
		e^{\pm i\lambda|x|}
		\Omega_{0,\mp}(\lambda,\cdot,x)
		\Bigr\rangle.
	\end{equation*}
	The contribution of $\mathcal{T}_4^1$ has the same phase
	$r(x,y)=|x|$ and  the amplitude factor
	\begin{equation*}
		\mathcal{E}_{\mathcal{T}}^\pm(\lambda,x,y)
		=
		\widetilde{\chi}_1(\lambda)
		\Bigl\langle
		Q_0\Lambda^\pm(\lambda)Q_4^1
		\mathcal{T}_4^1(\lambda,\cdot,y),
		e^{\pm i\lambda|x|}
		\Omega_{0,\mp}(\lambda,\cdot,x)
		\Bigr\rangle.
	\end{equation*}
	Finally, the contribution of $\mathcal{T}_{4,\pm}^1$ has the  phase
	$
	r(x,y)=|x|+|y|
	$
	and the  amplitude factor
	\begin{align*}
		\mathcal{E}_{osc}^\pm(\lambda,x,y)
		={}&
		\widetilde{\chi}_1(\lambda)
		\Bigl\langle
		Q_0\Lambda^\pm(\lambda)Q_4^1
		\Bigl(
		e^{\mp i\lambda|y|}
		\mathcal{T}_{4,\pm}^1(\lambda,\cdot,y)
		\Bigr),
		e^{\pm i\lambda|x|}
		\Omega_{0,\mp}(\lambda,\cdot,x)
		\Bigr\rangle.
	\end{align*}
	
	By \eqref{eq:E_bound-1}, the derivative
	bounds for $\Lambda^\pm$, Leibniz's rule, and H\"older's inequality, each of these amplitude factors satisfies
	\begin{equation*}
		\left|
		\partial_\lambda^\ell
		\mathcal{E}^\pm(\lambda,x,y)
		\right|
		\lesssim
		\widetilde{\chi}_1(\lambda/2)
		\langle\lambda r\rangle^{-1/2}
		\lambda^{-\ell},
		\qquad
		\ell=0,1,2,
	\end{equation*}
	where $\mathcal{E}^\pm$ denotes any of
	$\{
	\mathcal{E}_{G}^\pm,
	\mathcal{E}_{\mathcal{T}}^\pm,
	\mathcal{E}_{osc}^\pm\}
	$
	and $r$ is the corresponding phase specified above. Then applying Lemma
	\ref{oscillatory}(ii) with $(\sigma,\nu)=(0,0)$ yields  the desired $\langle t\rangle^{-1}$ bound for both
	$\mathcal{K}_{B_1}^\pm$ and $\mathcal{N}_{B_1}^\pm$.
	
	Combining the estimates for $B_0^\pm$ and $B_1^\pm$ gives
	$$
	\sup_{x,y\in \mathbb{R}^2}
	\bigl|
	\mathcal{K}_{\mathcal{M}_{0,4}}^\pm(t,x,y)
	\bigr|
	+
	\sup_{x,y\in \mathbb{R}^2}
	\bigl|
	\mathcal{N}_{\mathcal{M}_{0,4}}^\pm(t,x,y)
	\bigr|
	\lesssim
	\langle t\rangle^{-1}.
	$$

	\textbf{Case 2: $B^\pm=\mathcal{M}_{\alpha,4}^\pm$ or
		$\mathcal{M}_{4,\alpha}^\pm$, $1\leq\alpha\leq3$.}
	We consider $\mathcal{M}_{\alpha,4}^\pm$ (the symmetric case $\mathcal{M}_{4,\alpha}^{\pm}$ follows identically). Since
	$
	4-k_\alpha-k_4=1
	$
	and
	$
	\bigl(
	Q_\alpha vR_0^\mp(\lambda^4)
	\bigr)(\cdot,x)
	=
	\lambda^{-1}
	\Omega_{\alpha,\mp}(\lambda,\cdot,x),
	$
	the explicit factor $\lambda$ arising from
	$\lambda^{4-k_\alpha-k_4}$  in
	\eqref{K_1} and \eqref{Lambda_1} exactly cancels this
	$\lambda^{-1}$ singularity. By repeating the calculation in
	\eqref{eq:K-B0-standard}--\eqref{eq:N-B0-standard}, the two kernels $	\mathcal{K}_{\mathcal{M}_{\alpha,4}}^\pm$ and $	\mathcal{N}_{\mathcal{M}_{\alpha,4}}^\pm$ 
	take the forms required in Lemma \ref{oscillatory} with the  phase
	$
	r(x,y)=|x|+|y|
	$
	and the  amplitude factor
	\begin{align*}
		\mathcal{E}_{\alpha4}^\pm(\lambda,x,y)
		={}&
		\widetilde{\chi}_1(\lambda)
		\Bigl\langle
		\mathcal{M}_{\alpha,4}^\pm(\lambda)
		\Bigl(
		e^{\mp i\lambda|y|}
		\mathcal{J}_{4,\pm}(\lambda,\cdot,y)
		\Bigr),
		e^{\pm i\lambda|x|}
		\Omega_{\alpha,\mp}(\lambda,\cdot,x)
		\Bigr\rangle.
	\end{align*}
	Recall that
$
\|\partial_\lambda^\ell \mathcal{M}_{\alpha, 4}^{\pm}(\lambda)\|_{\mathbb{B}(L^2)} \lesssim \lambda^{1-\ell}|\log\lambda|^{4}$ for   $\ell=0,1,2.$ 
Applying Leibniz's rule and H\"older's inequality alongside \eqref{eq:E_bound-1}, we see that the additional $\lambda$ factor from $ \mathcal{M}_{\alpha, 4}^{\pm}(\lambda)$ absorbs the logarithmic singularities:
\[  
\bigl| \partial_\lambda^{\ell} \mathcal{E}_{\alpha4}^\pm(\lambda,x,y) \bigr| \lesssim \widetilde{\chi}_1(\lambda/2) \langle \lambda r \rangle^{-1/2} \lambda^{1-\ell}|\log\lambda|^5 \lesssim \widetilde{\chi}_1(\lambda/2) \langle \lambda r \rangle^{-1/2} \lambda^{-\ell}, \quad \ell = 0,1,2. 
\]   
	Hence Lemma \ref{oscillatory}(ii) again gives
	$$
	\sup_{x,y}
	\bigl|
	\mathcal{K}_{\mathcal{M}_{\alpha,4}}^\pm(t,x,y)
	\bigr|
	+
	\sup_{x,y}
	\bigl|
	\mathcal{N}_{\mathcal{M}_{\alpha,4}}^\pm(t,x,y)
	\bigr|
	\lesssim
	\langle t\rangle^{-1}.
	$$

	\textbf{Case 3: $B^\pm=\mathcal{M}_{4,4}^\pm$.}
	This case  relies on the representations \eqref{De all Q} and \eqref{eq:resolvent_decomp-Q4}.
	Note that the expansion for $Q_4v R_0^\pm(\lambda^4)$ contains an $|\log\lambda|$ singularity, whereas the expansion for $Q_{4}^1v R_0^\pm(\lambda^4)$ does not. Moreover, none of the bounds in \eqref{eq:E_bound-1} carry the spatial weight $\omega(z)$.
	Note that  $4-k_4-k_4=0$, the intrinsic $\lambda$-factor in the definitions \eqref{K_1} and \eqref{Lambda_1} is $\lambda^0 = 1$.
Substituting  \eqref{De all Q}, \eqref{eq:resolvent_decomp-Q4} and  the  expansion 
		\begin{align}\label{eq:M44_rearranged-1}
		\mathcal{M}_{4,4}^{\pm}(\lambda) = Q_{4} \Lambda^\pm(\lambda) Q_{4} + (\log\lambda)\bigl(Q_{4} \Lambda^\pm(\lambda) Q_{4}^1+Q_{4}^1 \Lambda^\pm(\lambda) Q_{4}\bigr) + (\log\lambda)^2 Q_{4}^1 \Lambda^\pm(\lambda)Q_{4}^1,
	\end{align}
	 into \eqref{K_1} and \eqref{Lambda_1}, the kernels $\mathcal{K}_B^\pm$ and $\mathcal{N}_B^\pm$ reduce to finite linear combinations of canonical oscillatory integrals of the form:
	\begin{equation}\label{eq:canonical_oscillatory_prop1}
		\frac{2}{\pi i}	\int_0^\infty \lambda \cos(t\lambda^2) e^{\pm i\lambda r(x,y)} \mathcal{E}_{f,g}^\pm(\lambda, x, y) \, d\lambda \quad \text{and} \quad
		\frac{2}{\pi it} \int_0^\infty \lambda^{-1} \sin(t\lambda^2) e^{\pm i\lambda r(x,y)} \mathcal{E}_{f,g}^\pm(\lambda, x, y) \, d\lambda,
	\end{equation}
	where the amplitude factor $\mathcal{E}_{f,g}^\pm$ is defined via the inner product:
	\begin{equation}\label{eq:amplitude_def_prop1}
		\mathcal{E}_{f,g}^\pm(\lambda, x, y) = e^{\mp i\lambda r(x,y)} \widetilde{\chi}_1(\lambda) \big\langle M_{sub}^\pm(\lambda) f(\lambda, \cdot, y), \, g(\lambda, \cdot, x) \big\rangle.
	\end{equation}
	Here, $M_{sub}^\pm(\lambda)$ is a constituent operator from \eqref{eq:M44_rearranged-1}, while $f$ and $g$ are corresponding states selected from the expansions in \eqref{De all Q} and \eqref{eq:resolvent_decomp-Q4}. The phase function $r(x,y) \in \{0, |x|, |y|, |x|+|y|\}$ neutralizes the exponential phases embedded within $f$ and $g$.

	To uniformly bound \eqref{eq:canonical_oscillatory_prop1}, we apply H\"older's inequality to \eqref{eq:amplitude_def_prop1}, tracing the $\lambda$-logarithmic singularities inherited from each component:
	\begin{itemize}
		\item For $M_{sub}^\pm(\lambda) = Q_{4} \Lambda^\pm(\lambda) Q_{4}$, it acts between the states $f = \mathcal{J}_{4,\pm}$ and $g = \mathcal{J}_{4,\mp}$. The product of their individual $L^2$-bounds yields an amplitude factor constrained by $ |\log\lambda|^2$.
		\vskip0.2cm
		\item For $M_{sub}^\pm(\lambda) = (\log\lambda) Q_{4} \Lambda^\pm(\lambda) Q_{4}^1$ (the symmetric case $(\log\lambda) Q_{4}^1 \Lambda^\pm(\lambda)Q_{4}$ follows identically),
		the operator pairs $f\in\{b_0 Q_{4}^1 v G_2^0, \mathcal{T}_{4,\pm}^{1}, \mathcal{T}_{4}^{1}\}$ and $g = \mathcal{J}_{4,\mp}(\lambda, \cdot, x)$. The inherent logarithm in the operator multiplies the singularity of $g$, yielding a total $\lambda$-singularity of $ |\log\lambda|^2$.
		\vskip0.2cm
		\item For $M_{sub}^\pm(\lambda) = (\log\lambda)^2 Q_{4}^1 \Lambda^\pm(\lambda) Q_{4}^1$, this operator pairs with the bounded, $\log$-free $Q_{4}^1$-expansions on both sides, i.e., $f,g\in\{b_0 Q_{4}^1 v G_2^0, \mathcal{T}_{4,\pm}^{1}, \mathcal{T}_{4}^{1}\}.$  The $\lambda$-singularity arises entirely from the operator itself and is bounded by $|\log\lambda|^2$.
	\end{itemize}
	Thus, for all cross-terms, applying Leibniz's rule and H\"older's inequality to the amplitude factor yields
	\[
	\bigl| \partial_\lambda^{\ell} \mathcal{E}_{f,g}^\pm(\lambda,x,y) \bigr| \lesssim \widetilde{\chi}_1(\lambda/2) \langle \lambda r(x,y) \rangle^{-1/2} \lambda^{-\ell} |\log\lambda|^2, \quad \ell = 0,1,2.
	\]
	An application of Lemma \ref{oscillatory}(ii) with parameters $(\sigma,\nu)=(0,-2)$ then yields $$
	\sup_{x,y\in \mathbb{R}^2}
	\bigl|
	\mathcal{K}_{\mathcal{M}_{4,4}}^\pm(t,x,y)
	\bigr|
	+
	\sup_{x,y\in \mathbb{R}^2}
	\bigl|
	\mathcal{N}_{\mathcal{M}_{4,4}}^\pm(t,x,y)
	\bigr|
	\lesssim
	\langle t\rangle^{-1}\bigl(\log(2+|t|)\bigr)^2.
	$$
\end{proof}
Next, we turn to establishing the asymptotic expansion in the weighted setting \(L^1_\omega\to L^\infty_{-\omega}\), for which a more precise description of \(\mathcal M_{4,4}^\pm(\lambda)\) than that provided by \eqref{M44-1} is required. Returning to the proof of Theorem~\ref{thm:M_inverse} in Section~\ref{subsec:proof_inverse}, we combine \eqref{D40D04D44_2}, \eqref{Upsilon44}, and 
$
\mathcal M_{4,4}^\pm(\lambda)=\mathcal D_{4,4}^\pm(\lambda)+\Upsilon_{4,4}^\pm(\lambda)
$
to derive the refined expansion:
\begin{align}\label{huaM44}
\mathcal{M}_{4,4}^{\pm}(\lambda) &= Q_4^0\mathcal{C}^\pm Q_4^0 +(\log \lambda) \bigl(Q_4^0 \mathcal{C}_1^\pm Q_{42}^1 + Q_{42}^1 \mathcal{C}_2^\pm Q_4^0 \bigr)+ (\log \lambda)^2 (a_2^\pm)^{-1} Q_{42}^1 \mathcal{S}^{-1} Q_{42}^1 \nonumber \\
&  \quad + (\log \lambda)^{-1} Q_4^0\Lambda^\pm(\lambda) Q_4^0 + \bigl(Q_4^0 \Lambda^\pm(\lambda) Q_{4}^1 + Q_{4}^1 \Lambda^\pm(\lambda) Q_4^0\bigr)  +(\log \lambda) Q_{4}^1 \Lambda^\pm(\lambda) Q_{4}^1.  
\end{align}  
Here, with $\mathcal{W}_1 = (b_0 Q_0 v G_2 v Q_4^0)^{-1} Q_0 T_0 Q_{42}^1,$ the operators $\mathcal{C}^\pm, \mathcal{C}_1^\pm, \mathcal{C}_2^\pm$ are given by  
\begin{equation}\label{eq:mathcal_C}  
\mathcal{C}^\pm := (a_2^\pm)^{-1}\mathcal{W}_1 \mathcal{S}^{-1} \mathcal{W}_1^*, \quad  
\mathcal{C}_1^\pm := - (a_2^\pm)^{-1}\mathcal{W}_1 \mathcal{S}^{-1}, \quad  
\mathcal{C}_2^\pm := - (a_2^\pm)^{-1}\mathcal{S}^{-1}\mathcal{W}_1^*,
\end{equation}  
and $\mathcal{S}$ is the strictly positive operator on $Q_{42}^1 L^2$
defined by  $$
\mathcal{S} =\mathcal{W}_1^* \bigl[Q_4^0 v G_4 v Q_4^0 - Q_4^0 v G_4 v Q_{41}^1 (Q_{41}^1 v G_4 v Q_{41}^1)^{-1} Q_{41}^1 v G_4 v Q_4^0\bigr] \mathcal{W}_1,
$$
where  the inverse $(Q_{41}^1 v G_4 v Q_{41}^1)^{-1}$ is interpreted
as the zero operator  
if $Q_{41}^1L^2=\{0\}$.

Since \(\mathcal K_B^\pm\) and \(\mathcal N_B^\pm\) are uniformly
\(O(|t|^{-1})\) whenever
\(B^\pm(\lambda)=\mathcal M_{\alpha,\beta}^\pm(\lambda)\) with
\((\alpha,\beta)\neq(4,4)\), the desired weighted estimate reduces
to the contribution of
\(B^\pm(\lambda)=\mathcal M_{4,4}^\pm(\lambda)\), which is treated in
Proposition~\ref{prop_second_1}.

\begin{proposition}\label{prop_second_1}  
	Let $B^\pm(\lambda)= \mathcal{M}_{4,4}^{\pm}(\lambda)$. Then  
 for $|t|\geq 2$, 
\begin{align*}
\mathcal{K}_{B}^\pm(t,x,y)&=O\bigl(\omega(x)\omega(y)|t|^{-1} \log|t|\bigr),\nonumber\\
\mathcal{N}_{B}^\pm(t,x,y)&=	\mathcal{A}^\pm(x,y)\frac{(\log|t|)^2}{8i|t|}+{O}\bigl(\omega(x)\omega(y)|t|^{-1}\log|t|\bigr), 
	\end{align*} 
	where the  kernel $\mathcal{A}^\pm(x,y)$ is defined by:
	\begin{align}\label{eq:mathcal_A_pm}
		\mathcal{A}^\pm(x,y) :={}& b_0^2 \langle \mathcal{C}^\pm Q_4^0(v |\cdot|^2),  Q_4^0(v |\cdot|^2)\rangle 
		+ b_0^2 \langle  \mathcal{C}_1^\pm (Q_{42}^1 v G_2^0)(\cdot,y),  Q_4^0(v |\cdot|^2)\rangle  \nonumber\\
		&+ b_0^2 \langle  \mathcal{C}_2^\pm Q_4^0(v |\cdot|^2),  (Q_{42}^1 v G_2^0)(\cdot,x)\rangle + b_0^2 (a_2^\pm)^{-1} \bigl\langle  \mathcal{S}^{-1} (Q_{42}^1 v G_2^0)(\cdot,y), (Q_{42}^1 v G_2^0)(\cdot,x) \bigr\rangle.
	\end{align}
\end{proposition}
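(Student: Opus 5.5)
We substitute the refined expansion \eqref{huaM44} of $\mathcal M_{4,4}^\pm(\lambda)$, together with the decompositions of Lemma \ref{lemma_projection}(iii) for $Q_4^0vR_0^\pm(\lambda^4)$, $Q_{42}^1vR_0^\pm(\lambda^4)$ and $Q_4^1vR_0^\pm(\lambda^4)$, into \eqref{K_1} and \eqref{Lambda_1}. Since $4-k_4-k_4=0$, no power of $\lambda$ appears in front, and each kernel becomes a finite sum of integrals of the form \eqref{eq:canonical_oscillatory_prop1}. We sort the terms by their total power of $\log\lambda$. These powers come from three sources: the explicit $(\log\lambda)^j$ in \eqref{huaM44}; the factor $b_0(\log\lambda)Q_4^0(v|\cdot|^2)$, which occurs only on the $Q_4^0$ side; and $\log$-free pieces, namely $b_0Q vG_2^0$ (bounded by $\log(2+|z|)$ for $Q_4^0$ and by $1$ for $Q_{42}^1,Q_4^1$) and $\mathcal T,\mathcal T_{\pm}$.

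\emph{Principal terms.} The terms of order exactly $(\log\lambda)^2$ with $\lambda$-independent, sign-free amplitude are the following.
\begin{itemize}
\item $Q_4^0\mathcal C^\pm Q_4^0$ paired with $b_0\log\lambda\, Q_4^0(v|\cdot|^2)$ on both sides.
\item $(\log\lambda)Q_4^0\mathcal C_1^\pm Q_{42}^1$ paired with $b_0\log\lambda\,Q_4^0(v|\cdot|^2)$ and $b_0Q_{42}^1vG_2^0(\cdot,y)$.
\item The symmetric term built from $\mathcal C_2^\pm$.
\item $(\log\lambda)^2(a_2^\pm)^{-1}Q_{42}^1\mathcal S^{-1}Q_{42}^1$ paired with $b_0Q_{42}^1vG_2^0$ on both sides.
\end{itemize}
Together these give $\mathcal E^\pm=\widetilde\chi_1(\lambda)(\log\lambda)^2\mathcal A^\pm(x,y)$ with no phase ($r=0$), where $|\mathcal A^\pm|\lesssim\omega(x)\omega(y)$. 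For the cosine kernel, $\int\cos(t\lambda^2)\lambda(\log\lambda)^2\widetilde\chi_1\,d\lambda$ is handled by the substitution $s=\lambda^2$ and one integration by parts. The boundary term vanishes, and the remainder is $t^{-1}\int\sin(ts)\,\partial_s[(\log s)^2\chi]/2\,ds=O(|t|^{-1}\log|t|)$, since $\int_0^1\sin(ts)\,\frac{\log s}{s}\,ds\sim -\frac{\pi}{2}\log t$. For the sine kernel, we write $(\log\lambda)^2=\frac14(\log\lambda^2)^2$ and substitute $\rho=|t|\lambda^2$ as in \eqref{eq:integral_sin}. This gives
$$\frac1t\int_0^\infty\sin(t\lambda^2)\lambda^{-1}(\log\lambda)^2\widetilde\chi_1\,d\lambda=\frac{1}{8|t|}\int_0^\infty\frac{\sin\rho}{\rho}(\log\rho-\log|t|)^2\widetilde\chi_1(\sqrt{\rho/|t|})\,d\rho=\frac{\pi(\log|t|)^2}{16|t|}+O\Big(\frac{\log|t|}{|t|}\Big).$$
Multiplying by $\frac{2}{\pi i}$ produces exactly $\mathcal A^\pm(x,y)\frac{(\log|t|)^2}{8i|t|}$.

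\emph{Remainder terms.} Every other term carries at most $|\log\lambda|^{1}$ times a spatial factor bounded by $\omega(x)\omega(y)$. This covers terms with $\Lambda^\pm$ in \eqref{huaM44}, terms pairing $\mathcal C$-operators with $\mathcal T$ instead of the leading pieces, and $b_0Q_4^0vG_2^0$ paired with $\log\lambda$. Alternatively, such a term carries $|\log\lambda|^2$ but also an extra factor $\lambda^{0+}$, or the derivative structure coming from $\mathcal T_{4,\pm}$; in the latter case it has a genuine phase $r\in\{|x|,|y|,|x|+|y|\}$ and $\langle\lambda r\rangle^{-1/2}$ decay. For the non-oscillatory pieces ($r=0$) we apply Lemma \ref{oscillatory}(ii) with $(\sigma,\nu)=(0,-1)$ and $h=\omega(x)\omega(y)$, which yields $\omega(x)\omega(y)|t|^{-1}\log|t|$. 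For the terms of order $(\log\lambda)^2$ involving $\mathcal T$ or $\mathcal T_{\pm}$, we use the structure in Lemma \ref{lemma_projection}: $\mathcal T^1_{4}$ and $\mathcal T_{4,\pm}^1$ vanish as $\lambda\to0$ at a small power, at the cost of polynomial weights. This gives a bound of the form $|t|^{-1-\epsilon}\langle x\rangle^N\langle y\rangle^N$, which we interpolate with the uniform bound $|t|^{-1}(\log|t|)^2$ of Proposition \ref{prop_second_2_3} through the inequality $\min(1,a/b)\lesssim\log a/\log b$, as in \eqref{eq:inequality_3}.

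\emph{Main obstacle.} The hard step is this last part. Lemma \ref{lemma_projection} as stated records only $\lambda^{-\ell}$ bounds for $\mathcal T$, with no vanishing. We therefore expect to reopen its proof to extract a $\lambda^{\epsilon}\langle z\rangle^{N}$ bound for $\mathcal T_4^1$, $\mathcal T_{4,\pm}^1$ and $\mathcal T_4^0$ (obtained by subtracting the constant $\lambda$-free part, which is already absorbed into $G_2^0$). Without it, the $(\log\lambda)^2$-cross terms would contribute only the non-sharp order $(\log|t|)^2/|t|$ with an unspecified coefficient. A further point to check is that the cross term between $Q_4^0vG_2^0$ (weight $\log\langle y\rangle$) and the $\log\lambda$ factors produces at most one net power $\log|t|$, consistent with the claimed remainder.
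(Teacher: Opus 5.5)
Your proposal follows the paper's proof. You substitute \eqref{huaM44} and the three expansions of Lemma \ref{lemma_projection}(iii). You extract the four $(\log\lambda)^2$ pairings that assemble into $\mathcal{A}^\pm$ with $r=0$. You send all other terms through Lemma \ref{oscillatory}(ii) with $(\sigma,\nu)=(0,-1)$ and $h=\omega(x)\omega(y)$, and you evaluate the two temporal integrals; your computations agree with Lemma \ref{lemma:I_J}.

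The one correction concerns your ``main obstacle'', which is not actually there. No $\lambda^{\epsilon}\langle z\rangle^{N}$ refinement of Lemma \ref{lemma_projection} and no interpolation against $|t|^{-1}(\log|t|)^2$ is needed. The reason is that no non-principal term reaches order $(\log\lambda)^2$ without an explicit factor $\lambda$. There are three cases.
\begin{itemize}
\item The pieces $\mathcal{T}_4^1$ and $\mathcal{T}_{4,\pm}^1$ occur only in the $Q_4^1$ slots of $Q_4^0\Lambda^\pm Q_4^1$, $Q_4^1\Lambda^\pm Q_4^0$ and $(\log\lambda)Q_4^1\Lambda^\pm Q_4^1$. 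Since the $Q_4^1$-expansion is log-free, these terms have total order at most $|\log\lambda|$.
\item The pieces $b_0Q_4^0vG_2^0$, $\mathcal{T}_4^0$, $\mathcal{T}_{4,\pm}^0$ in a $Q_4^0$ slot replace the factor $b_0(\log\lambda)Q_4^0(v|\cdot|^2)$. They therefore lower the order by one.
\item In the $Q_{42}^1$ slots of the principal operators you should use the $Q_{42}^1$-expansion of Lemma \ref{lemma_projection}(iii). There the remainders already come as $\lambda\mathcal{T}_{4,\pm}^{1,2}+\lambda\mathcal{T}_4^{1,2}$.
\end{itemize}
It seems you expanded $Q_{42}^1vR_0^\pm(\lambda^4)$ through the coarser $Q_4^1$-expansion, which hides this factor $\lambda$.

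With the $Q_{42}^1$-expansion, the remaining cross terms are $O(\lambda(\log\lambda)^2)$, with the corresponding $\lambda^{1-\ell}(\log\lambda)^2$ derivative bounds. They are therefore dominated by $|\log\lambda|\,\lambda^{-\ell}\langle\lambda r\rangle^{-1/2}$. So they fall into the same $(\sigma,\nu)=(0,-1)$ class as everything else, whatever the phase $r$, exactly as in the paper. Your interpolation argument would be valid if that refinement were proved, but it is superfluous.
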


\begin{proof} 
	We require the following detailed  resolvent expansions from  Lemma \ref{lemma_projection}:
	\begin{equation}\label{eq:resolvent_decomp}  \begin{split}
			\bigl(Q_4^0 v R_0^\pm(\lambda^4)\bigr)(\cdot,z) &=  b_0 (\log \lambda) Q_4^0(v|\cdot|^2) + (b_0 Q_4^0 v G_2^0)( \cdot,z) + \mathcal{T}_{4,\pm}^0(\lambda, \cdot,z)+\mathcal{T}_{4}^0(\lambda, \cdot,z),  \\
			\bigl(Q_{4}^1v R_0^\pm(\lambda^4)\bigr)(\cdot,z) & =(b_0 Q_{4}^1 v G_2^0)( \cdot,z) + \mathcal{T}_{4,\pm}^{1}(\lambda, \cdot,z)+\mathcal{T}_{4}^{1}(\lambda, \cdot,z), \\
			\bigl(Q_{42}^1v	R_0^\pm(\lambda^4)\bigr)(\cdot,z) &=  (b_0 Q_{42}^1v G_2^0)( \cdot,z) + \lambda\mathcal{T}_{4,\pm}^{1,2}(\lambda, \cdot,z)+\lambda\mathcal{T}_{4}^{1,2}(\lambda, \cdot,z).
		\end{split}
	\end{equation}  
	By \eqref{lemma_projection_G}--\eqref{lemma_projection_1}, we have \begin{equation}\label{eq:E_bound}  
		\begin{split}   
			&\|(b_0 Q_4^0 v G_2^0)(\cdot, z)\|_{L^2} \lesssim \omega(z) \   \text{and}\ \|(b_0 Qv G_2^0)(\cdot, z)\|_{L^2} \lesssim1\  \text{with}\ Q=Q_{4}^1, Q_{42}^1,
		\end{split}  
	\end{equation}      and for $\ell=0,1,2,$
    \begin{equation}\label{eq:E_bound-2} 
    \begin{split}
    &\big\| \partial_\lambda^\ell \bigl( e^{\mp i\lambda|z|}\mathcal{T}_{\pm}(\lambda, \cdot, z) \bigr) \big\|_{L^2}  \lesssim \lambda^{-\ell} \langle \lambda z \rangle^{-1/2} , \quad \mathcal{T}_\pm \in \{  \mathcal{T}_{4, \pm}^0, \, \mathcal{T}_{4, \pm}^{1},\ \mathcal{T}_{4, \pm}^{1,2}\}\\ 
   & \big\| \partial_\lambda^\ell \mathcal{T}_4^0(\lambda, \cdot, z) \big\|_{L^2} \lesssim \lambda^{-\ell}\omega(z),\ \ 
    \big\| \partial_\lambda^\ell \mathcal{T}(\lambda, \cdot, z) \big\|_{L^2} \lesssim \lambda^{-\ell}, \quad \mathcal{T} \in \{  \mathcal{T}_4^{1},\ \mathcal{T}_4^{1,2}\}.\end{split}
	\end{equation}
	We partition the operator \eqref{huaM44} into subdominant and dominant categories based on their singular behavior.
	\vskip0.2cm
	\textbf{Case 1: Subdominant terms.} 
		Consider the following terms in $\mathcal{M}_{4,4}^{\pm}(\lambda)$:   
	\begin{equation} \label{some-op}
		\begin{split}
			& (\log \lambda)^{-1} Q_4^0\Lambda^\pm(\lambda) Q_4^0, \quad Q_4^0 \Lambda^\pm(\lambda) Q_{4}^1, \quad Q_{4}^1 \Lambda^\pm(\lambda) Q_4^0, \quad
			(\log \lambda) Q_{4}^1 \Lambda^\pm(\lambda)Q_{4}^1.
		\end{split}
	\end{equation} 
	Similarly, upon substitution of \eqref{eq:resolvent_decomp} and the term $B^\pm(\lambda)$ from \eqref{some-op} into the integral kernels \eqref{K_1} and \eqref{Lambda_1}, the kernels $\mathcal{K}_B^\pm$ and $\mathcal{N}_B^\pm$ expand into finite linear combinations of canonical oscillatory integrals of the same form as \eqref{eq:canonical_oscillatory_prop1}.

	Considering that in \eqref{eq:resolvent_decomp} only the term $b_0 (\log \lambda) Q_4^0(v|\cdot|^2)$ carries the $(\log\lambda)$ singularity, and combining this with \eqref{eq:E_bound}, \eqref{eq:E_bound-2} and \eqref{some-op}, for all cross-terms, applying Leibniz's rule and H\"older's inequality to the corresponding amplitude factor yields
	\[
	\bigl| \partial_\lambda^{\ell} \mathcal{E}_{f,g}^\pm(\lambda,x,y) \bigr| \lesssim \widetilde{\chi}_1(\lambda/2) \langle \lambda r(x,y) \rangle^{-1/2} \lambda^{-\ell} |\log\lambda| \, \omega(x)\omega(y), \quad \ell = 0,1,2.
	\]
Applying Lemma \ref{oscillatory}(ii) with $(\sigma,\nu)=(0,-1)$ guarantees that all contributions from this class are  absorbed into the  bound ${O}\bigl(\omega(x)\omega(y)\langle t \rangle^{-1}\log(2+|t|)\bigr)$.
	\vskip0.2cm
	\textbf{Case 2: Dominant terms.} 
	The exact asymptotic expansions are driven exclusively by the four principal operators generating $(\log\lambda)^2$ singularities:
	\[
	Q_4^0\mathcal{C}^\pm Q_4^0, \quad (\log \lambda)Q_4^0 \mathcal{C}_1^\pm Q_{42}^1, \quad (\log \lambda)Q_{42}^1 \mathcal{C}_2^\pm Q_4^0, \quad (\log \lambda)^2 (a_2^\pm)^{-1} Q_{42}^1 \mathcal{S}^{-1} Q_{42}^1.
	\]
	To maximize the logarithmic singularity, these operators must rigidly pair with the spatial states that do not carry an extra $\lambda$ decay. Given the precise structure of the $Q_4^0v R_0^\pm(\lambda^4)$ and $Q_{42}^1v R_0^\pm(\lambda^4)$ expansions in \eqref{eq:resolvent_decomp}, the dominant amplitude  components strictly evaluate as follows:
	\begin{itemize}
		\item The operator $Q_4^0\mathcal{C}^\pm Q_4^0$ pairs with $b_0(\log\lambda)Q_4^0(v |\cdot|^2)$ from both expansions, yielding the amplitude factor:
		\[
		b_0^2 (\log\lambda)^2\widetilde{\chi}_1(\lambda) \langle Q_4^0\mathcal{C}^\pm Q_4^0(v |\cdot|^2), \, Q_4^0(v |\cdot|^2)\rangle.
		\]
		\item For $(\log \lambda)Q_4^0 \mathcal{C}_1^\pm Q_{42}^1$, since
		the remainders $\lambda\mathcal{T}_{4,\pm}^{1,2}$ and $ \lambda\mathcal{T}_{4}^{1,2}$ in the $Q_{42}^1v	R_0^\pm(\lambda^4)$ expansion  carry explicit $\lambda$ decay, the $y$-variable state is forced to pair with  $(b_0 Q_{42}^1 v G_2^0)(\cdot, y)$, while the $x$-variable state pairs with $b_0(\log\lambda)Q_4^0(v |\cdot|^2)$. The resulting amplitude factor is:
		\[
		b_0^2 (\log\lambda)^2\widetilde{\chi}_1(\lambda) \langle Q_4^0 \mathcal{C}_1^\pm (Q_{42}^1 v G_2^0)(\cdot,y), \, Q_4^0(v |\cdot|^2)\rangle.
		\]
		\item Symmetrically, the operator $(\log \lambda)Q_{42}^1 \mathcal{C}_2^\pm Q_4^0$ contributes the amplitude factor:
		\[
		b_0^2 (\log\lambda)^2\widetilde{\chi}_1(\lambda) \langle Q_{42}^1 \mathcal{C}_2^\pm Q_4^0(v |\cdot|^2), \, (Q_{42}^1 v G_2^0)(\cdot,x)\rangle.
		\]
		\item Finally, the intrinsic $(\log\lambda)^2$ factor in $(a_2^\pm)^{-1}(\log \lambda)^2  Q_{42}^1 \mathcal{S}^{-1} Q_{42}^1$ dictates that maximum growth is attained if and only if $Q_{42}^1$ pairs with $(b_0 Q_{42}^1 vG_2^0)$ on both sides. This yields:
		\[
		b_0^2 (\log\lambda)^2 (a_2^\pm)^{-1}\widetilde{\chi}_1(\lambda) \langle \mathcal{S}^{-1} (Q_{42}^1 v G_2^0)(\cdot,y), \, (Q_{42}^1 v G_2^0)(\cdot,x) \rangle.
		\]
	\end{itemize}
	Summing these dominant contributions (all possessing $r(x,y) = 0$), and recognizing that all other cross-terms exhibit at most $|\log\lambda|$ singularities falling into Case 1, we  finally derive that  for $B^\pm(\lambda)= \mathcal{M}_{4,4}^{\pm}(\lambda),$
	\begin{equation}  \label{K-N}	\begin{split}
    \mathcal{K}_{B}^{\pm}(t,x,y) &= \frac{2}{\pi i}\mathcal{A}^\pm(x,y) I(t) + {O}\bigl(\omega(x)\omega(y)\langle t \rangle^{-1}\log(2+|t|)\bigr), \\ \mathcal{N}_{B}^{\pm}(t,x,y) &= \frac{2}{\pi i}\mathcal{A}^\pm(x,y) J(t) + O\bigl(\omega(x)\omega(y)\langle t \rangle^{-1}\log(2+|t|)\bigr).  \end{split}
	\end{equation}    
	The temporal integrals are given by:
	\begin{align*}
		I(t) &= \int_0^\infty\cos(t\lambda^2)\lambda\widetilde{\chi}_1(\lambda)(\log\lambda)^2 \, d\lambda, \quad\text{and}\quad
		J(t) = t^{-1}\int_0^\infty \sin(t\lambda^2) \lambda^{-1} \widetilde{\chi}_1(\lambda) (\log \lambda)^2 \, d\lambda.
	\end{align*}
	By Lemma \ref{lemma:I_J}, these integrals admit the following asymptotic behaviors for $|t| \ge 2$:
	\begin{align*}    
		I(t) &= \frac{\pi}{8} \frac{\log |t|}{|t|} + {O}\big(|t|^{-1}\big), \quad \text{and} \quad J(t) = \frac{\pi}{16} \frac{(\log |t|)^2}{|t|} + {O}\Big(\frac{\log |t|}{|t|}\Big).
	\end{align*}
    Substituting the evaluations for $I(t)$ and $J(t)$ into \eqref{K-N} completes the proof.
\end{proof}

Next, we are devoted to the proof of the sharp bound for the sine evolution.

\begin{proposition}\label{sine-sharp}
	If $Q_4^0 \neq 0$ and $Q_{42}^1 \neq 0,$  then
\begin{equation}\label{optimality_0}
	\Big\|\frac{\sin (t \sqrt{H})}{t\sqrt{H}} P_{\mathrm{ac}}(H)\chi_1(H)\Big\|_{L^1_\omega \to L^{\infty}_{-\omega}} \sim \frac{(\log |t|)^2}{|t|},\ \ |t|\ge2.
\end{equation}  
\end{proposition}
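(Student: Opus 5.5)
The upper bound $\lesssim(\log|t|)^2/|t|$ in \eqref{optimality_0} is already in hand. Proposition~\ref{prop_second_1} gives $|\mathcal N_B^\pm|\lesssim \omega(x)\omega(y)(\log|t|)^2/|t|$ for $B=\mathcal M_{4,4}^\pm$. For the remaining pairs $(\alpha,\beta)\neq(4,4)$, Proposition~\ref{prop_second_2_3} and Subsection~\ref{subsec:regular_first_2} give $O(\omega(x)\omega(y)|t|^{-1})$. The free low-frequency sine kernel is $O(|t|^{-1})$ by Proposition~\ref{prop_free_sin}. So the whole task is the lower bound. The plan is to collect every contribution to the low-frequency sine kernel into the form
\[
\frac{\sin(t\sqrt H)}{t\sqrt H}P_{\mathrm{ac}}(H)\chi_1(H)(x,y)
=-\frac{(\mathcal A^+-\mathcal A^-)(x,y)}{8i}\,\frac{(\log|t|)^2}{|t|}
+O\Bigl(\frac{\omega(x)\omega(y)\log|t|}{|t|}\Bigr).
\]
The sign comes from the minus sign in \eqref{id-RV}. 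After this, it suffices to find a pair $(x_0,y_0)$ with $(\mathcal A^+-\mathcal A^-)(x_0,y_0)\neq0$. Testing the kernel at that fixed pair, as in the lower-bound argument of Proposition~\ref{free_case}, gives $\gtrsim(\log|t|)^2/|t|$ for $|t|$ large. For $2\le|t|\lesssim1$ the bound is trivial after adjusting constants.

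The next step is to compute $\mathcal A^+-\mathcal A^-$ explicitly from \eqref{eq:mathcal_A_pm} and \eqref{eq:mathcal_C}. Every $\pm$-dependence sits in the scalar factor $(a_2^\pm)^{-1}$; $\mathcal W_1$ and $\mathcal S$ are sign independent. Hence
\[
\mathcal A^+-\mathcal A^-=b_0^2\bigl((a_2^+)^{-1}-(a_2^-)^{-1}\bigr)\bigl\langle \mathcal S^{-1}F(y),F(x)\bigr\rangle,
\qquad F(z):=(Q_{42}^1vG_2^0)(\cdot,z)-\mathcal W_1^*Q_4^0(v|\cdot|^2).
\]
This identity has to be checked by carefully tracking the signs in $\mathcal C_1^\pm,\mathcal C_2^\pm$. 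The prefactor is $-2\cdot 512\, i\neq0$. Since $\mathcal S>0$ on $Q_{42}^1L^2$, we have $\langle\mathcal S^{-1}F(x),F(x)\rangle>0$ as soon as $F(x)\neq0$. So it suffices to take $x_0=y_0$ with $F(x_0)\neq0$.

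The main obstacle is showing that $F(x)\neq0$ for some $x$. The term $\mathcal W_1^*Q_4^0(v|\cdot|^2)$ is a fixed vector $c\in Q_{42}^1L^2$, independent of $x$. If $F\equiv 0$, then $\langle (Q_{42}^1vG_2^0)(\cdot,x),\phi\rangle=\langle c,\phi\rangle$ for all $x$ and all $\phi\in Q_{42}^1L^2$. The left side is $\int v\phi(y)|x-y|^2\log|x-y|\,dy$. I would argue that this cannot be constant in $x$ for $\phi\neq0$, and my first choice is to apply $\Delta_x^2$: using $\Delta^2(b_0G_2^0)=\delta$, this yields $b_0^{-1}v\phi\equiv0$. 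That would force $\phi=0$, contradicting $Q_{42}^1\neq0$. Alternatively, one can expand for large $|x|$. The moment cancellations $Q_{42}^1v=Q_{42}^1(x_jv)=Q_{42}^1(x_ix_jv)=0$ make the function vanish at infinity, since the leading term is $O(|x|^{-1}\log|x|)$ after the cancellation. A constant function that decays at infinity must be $0$, so $\langle c,\phi\rangle=0$, and then the distributional argument above applies. Choosing $\phi=F(x_0)$-type test vectors finishes the nonvanishing. Combining this with the expansion, the two-sided bound \eqref{optimality_0} follows.
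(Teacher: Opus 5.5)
Your plan matches the paper's proof in its essentials. You use the same expansion from Proposition~\ref{prop_second_1}. You collapse the four terms of $\mathcal A^+-\mathcal A^-$ into $b_0^2\bigl((a_2^+)^{-1}-(a_2^-)^{-1}\bigr)\langle\mathcal S^{-1}F(y),F(x)\rangle$, which is exactly the kernel of the paper's $(A\mathcal W_1-\mathcal B)\mathcal S^{-1}(A\mathcal W_1-\mathcal B)^*$. And you use the same identity $\Delta^2G_2^0=8\pi\delta_0$.

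Your finish, evaluating on the diagonal $x_0=y_0$ and using $\mathcal S^{-1}>0$, is slightly more economical than the paper's. You only need $F\not\equiv0$, not injectivity of $A\mathcal W_1-\mathcal B$ on $Q_{42}^1L^2$. A nonzero diagonal value also gives the lower bound on the $L^1_\omega\to L^\infty_{-\omega}$ norm (an essential supremum of the kernel) directly. For that, note that $z\mapsto F(z)\in L^2$ is continuous and that the error $O(\omega(x)\omega(y)\log|t|/|t|)$ is uniform near $(x_0,x_0)$.

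One step does not hold as written. From $v\phi\equiv0$ you conclude ``that would force $\phi=0$''. For a general $\phi\in L^2$ this is false. The hypothesis $|V(x)|\lesssim\langle x\rangle^{-14-}$ allows $v$ to vanish on a set of positive measure (e.g.\ compactly supported $V$), and then any $\phi$ supported in $\{v=0\}$ satisfies $v\phi=0$.

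What closes the argument is a structural fact: every $\phi\in Q_{42}^1L^2\subset S_3L^2$ vanishes a.e.\ on $\{v=0\}$. By Proposition~\ref{characterizations-1}(ii), $\phi=Uv\tilde\phi$ for some $\tilde\phi\in W_{-1}(\mathbb R^2)$. Directly: $S_0T_0\phi=0$ gives $U\phi=v\,(c-b_0G_2^0v\phi)$ for a constant $c$, and $U=\pm1$. This is exactly the step the paper records in \eqref{psi1}--\eqref{psi2}. Your alternative large-$|x|$ argument does not bypass it, since it ends in the same identity $v\phi=0$. With this fact added, the proof is complete.

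A minor citation point: for the upper bound on the $(0,0)$ block, cite the unweighted Proposition~\ref{prop_K}. The form of $\mathcal M_{0,0}^\pm$ used in Subsection~\ref{subsec:regular_first_2} differs in the present case, though $O(|t|^{-1})$ is all you need.
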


\begin{proof}  
	Collating the results from Propositions \ref{prop_second_2_3}--\ref{prop_second_1} and the regular and first-kind resonance cases (Subsection~\ref{subsec:regular_first_1}),    we derive   the  following  asymptotic expansions  in the weighted space $L^1_\omega \to L^\infty_{-\omega}:$
	\begin{align}\label{expan-sin}
	\frac{\sin(t\sqrt{H})}{t\sqrt{H}} P_{\mathrm{ac}}(H)\chi_1(H)  
	= -\frac{(\log|t|)^2}{8i|t|}(\mathcal{A}^+ - \mathcal{A}^-) + \mathcal{O}\!\left(|t|^{-1}\log |t|\right),\ \ |t|\ge 2,
	\end{align}
	where $\mathcal{A}^\pm$ is the integral operator with the kernel $\mathcal{A}^\pm(x,y).$ 
	From the expression \eqref{eq:mathcal_A_pm} for the kernel $\mathcal{A}^\pm(x,y)$ and the fact that $G_2(x,y)=|x-y|^2$ satisfies $(Q_4^0 v G_2)(x,y) = Q_4^0(v|\cdot|^2)(x)$,  we derive 
	\[
	\begin{aligned}
		\mathcal{A}^+ - \mathcal{A}^- &= b_0^2 G_2 v Q_4^0(\mathcal{C}^+-\mathcal{C}^-) Q_4^0 v G_2
		+ b_0^2 G_2 v Q_4^0(\mathcal{C}_1^+-\mathcal{C}_1^-) Q_{42}^1 v G_2^0 \\  
		&\quad + b_0^2 G_2^0 v Q_{42}^1 (\mathcal{C}_2^+-\mathcal{C}_2^-) Q_4^0 v G_2  
		+ b_0^2\bigl[(a_2^+)^{-1}-(a_2^-)^{-1}\bigr] G_2^0 v Q_{42}^1 \mathcal{S}^{-1} Q_{42}^1 v G_2^0.  
	\end{aligned}
	\]  
	Consequently, by  \eqref{expan-sin}, the proof of \eqref{optimality_0} reduces to verifying that $\mathcal{A}^+ - \mathcal{A}^-$ is not identically zero.

Introduce the operators  
\[  
\mathcal{W}_1 = (b_0 Q_0 v G_2 v Q_4^0)^{-1} Q_0 T_0 Q_{42}^1, \quad  
A = G_2 v Q_4^0, \quad  
\mathcal{B} = G_2^0 v Q_{42}^1,  
\]  
and substitute the expressions defined in \eqref{eq:mathcal_C} for $\mathcal{C}^\pm, \mathcal{C}_1^\pm, \mathcal{C}_2^\pm$, then we derive
\[  
\mathcal{A}^+ - \mathcal{A}^- = b_0^2 \bigl[(a_2^+)^{-1} - (a_2^-)^{-1}\bigr] (A \mathcal{W}_1 - \mathcal{B}) \mathcal{S}^{-1} (A \mathcal{W}_1 -\mathcal{B})^*.  
\]  
Since $b_0^2[(a_2^+)^{-1} - (a_2^-)^{-1}] \neq 0$ and $\mathcal{S}^{-1}$ is positive definite on $Q_{42}^1 L^2$, the problem reduces to demonstrating that the operator $A\mathcal{W}_1 - \mathcal{B}$ is injective on the finite-dimensional space $Q_{42}^1 L^2$.  
	
	For any $\Psi \in Q_4^0 L^2$, orthogonality dictates that $\int_{\mathbb{R}^2} y^\alpha v(y) \Psi(y)  dy = 0$ for $|\alpha| \le 1$. Recalling $G_2(x,y) = |x|^2 + |y|^2 - 2x \cdot y$, we see that $A$ maps $\Psi$ to a constant:  
	$$  
	(A \Psi)(x) = \int_{\mathbb{R}^2} |y|^2 v(y) \Psi(y) \, dy =: C(\Psi).  
	$$  
	Thus, setting $\Psi = \mathcal{W}_1 \psi \in Q_4^0 L^2$ for any $\psi \in Q_{42}^1 L^2$, we obtain $(A \mathcal{W}_1 \psi)(x) \equiv C(\mathcal{W}_1 \psi)$.  
	
	Suppose that $(A\mathcal{W}_1 - \mathcal{B})\psi=0$ for some
	$\psi\in Q_{42}^1L^2$. Since
	$
	A \mathcal{W}_1 \psi
	$
	is constant,
	$
	G_2^0 v Q_{42}^1\psi
	$
	is constant. Applying $\Delta^2$ in the distributional sense and
	using
	$
	\Delta^2G_2^0
	=
	8\pi\delta_0,
	$
	we obtain
	$
	vQ_{42}^1\psi=v\psi=0.
	$ Thus, \begin{align}\label{psi1}
\psi=0,
\qquad\text{almost everywhere on }\{x:v(x)\neq0\}.
\end{align} Moreover,
	 by Proposition \ref{characterizations-1}(ii) and the inclusion $\psi \in Q_{42}^1 L^2 \subset S_3 L^2$, there exists $\phi \in W_{-1}(\mathbb{R}^2)$ such that $\psi = U v \phi$, implying that \begin{align}\label{psi2}
\psi=0,
\qquad\text{almost everywhere on }\{x:v(x)=0\}.
\end{align}
 Consequently, $\psi = 0$.
Therefore, $A\mathcal{W}_1 - \mathcal{B}$ is injective, then $\mathcal{A}^+ - \mathcal{A}^-$ represents a non-trivial operator, concluding the proof of optimality.  
\end{proof}  

\subsection{The estimates for the case $Q_{4}^0 = 0$ or $Q_{42}^1 = 0$}\label{subsec:second_2}
This condition $Q_{4}^0 = 0$ or $Q_{42}^1 = 0$ allows us to establish an unweighted $| t |^{-1}$ decay (i.e., \eqref{2-t}) and the following weighted estimates:  
\[ 
	\left \|\cos (t \sqrt{H}) P_{\mathrm{ac}}(H)\chi_1(H)\right\|_{L^1_\omega\to L^\infty_{-\omega}} \lesssim\frac{1}{|t|\log |t|},  \quad \bigg\|\frac{\sin (t \sqrt{H})}{t\sqrt{H}} P_{\mathrm{ac}}(H)\chi_1(H)\bigg\|_{L^1_\omega\to L^\infty_{-\omega}}\sim|t|^{-1}.  
\]  
We divide the analysis into two parts, starting with the unweighted $| t |^{-1}$ estimate.  

\vspace{2mm}  
\textbf{Part I. The $L^1\to L^\infty$ estimate.}  
Recall from Theorem \ref{thm:M_inverse} (II) that for $\alpha = 1,2,3$,
\[
\|\partial_\lambda^\ell \mathcal{M}_{4,\alpha}^\pm(\lambda)\|_{\mathbb{B}(L^2)} + \|\partial_\lambda^\ell \mathcal{M}_{\alpha,4}^\pm(\lambda)\|_{\mathbb{B}(L^2)} \lesssim \lambda^{1-\ell}|\log \lambda|^{4}, \quad \ell = 0,1,2;
\]
for the pairs $(\alpha, \beta) = (0,4), (4,0), (4,4)$,
        \begin{itemize}
        \item if $Q_4^0 = 0$, then $Q_4 = Q_4^1$ and
        \begin{align}\label{Q4=Q41}
         \mathcal{M}_{0, 4}^\pm(\lambda) =  Q_0 \Lambda^\pm(\lambda) Q_4^1, \ \  \mathcal{M}_{4, 0}^\pm(\lambda) =  Q_4^1 \Lambda^\pm(\lambda) Q_0, \ \
         \mathcal{M}_{4,4}^\pm(\lambda) = Q_4^1 \Lambda^\pm(\lambda) Q_4^1.
        \end{align}
       
    \item If $Q_4^0 \neq 0$ and $Q_{42}^1 = 0$, then $Q_4 = Q_4^0 \oplus Q_{41}^1$, and
        \begin{align}
		& \mathcal{M}_{4,0}^\pm(\lambda)=(\log \lambda)^{-1}  Q_4 \Lambda^\pm(\lambda) Q_0,  \quad
          \mathcal{M}_{0,4}^\pm(\lambda)=(\log \lambda)^{-1}  Q_0 \Lambda^\pm(\lambda) Q_4, \nonumber   \\ \label{2-M4-4}
		& \mathcal{M}_{4,4}^\pm(\lambda) = \sum_{h,l\in\{0,1\}}(\log \lambda)^{-(2-h-l)} Q_4^h \Lambda^\pm(\lambda) Q_4^l.
        \end{align}
        \end{itemize}
Given the $L^1\to L^\infty$ dispersive bounds established for the regular and first-kind resonance cases in Subsection \ref{subsec:regular_first_1}, it remains to prove Proposition \ref{prop_second_2_1}.

\begin{proposition}\label{prop_second_2_1}
	Let $B^\pm(\lambda) \in \{\mathcal{M}_{4,\alpha}^{\pm}(\lambda), \mathcal{M}_{\alpha, 4}^{\pm}(\lambda)\}$ with $\alpha \in \{0,1,2,3,4\}$. Then
	\[  
\sup _{x, y \in \mathbb{R}^2}\left| \mathcal{K}_{B}^\pm(t,x,y)\right| + \sup _{x, y \in \mathbb{R}^2}\left|\mathcal{N}_{B}^\pm (t,x,y)\right| \lesssim \langle t\rangle^{-1}.  
	\]  
\end{proposition}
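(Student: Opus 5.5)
We follow the proof of Proposition \ref{prop_second_2_3} term by term. The only place where the $(\log(2+|t|))^2$ loss entered there was Case 3 ($B^\pm=\mathcal M_{4,4}^\pm$). The terms with $\alpha\in\{1,2,3\}$ are treated exactly as in Case 2 of that proof and already give $\langle t\rangle^{-1}$: the factor $\lambda^{1-\ell}|\log\lambda|^4$ in $\mathcal M_{\alpha,4}^\pm$ absorbs the logarithm of $\mathcal J_{4,\pm}$. It therefore remains to redo the pairs $(0,4),(4,0),(4,4)$ using the simplified structures \eqref{Q4=Q41} and \eqref{2-M4-4}. Throughout we use the representations \eqref{Qalpha}, \eqref{De all Q}, \eqref{eq:resolvent_decomp-Q4} and the bounds \eqref{eq:E_bound-1}. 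Each resulting kernel is reduced to the canonical form of Lemma \ref{oscillatory}, with phase $r\in\{0,|x|,|y|,|x|+|y|\}$, and we verify the amplitude bound $|\partial_\lambda^\ell\mathcal E^\pm|\lesssim \widetilde\chi_1(\lambda/2)\langle\lambda r\rangle^{-1/2}\lambda^{-\ell}$. Lemma \ref{oscillatory}(ii) with $(\sigma,\nu)=(0,0)$ then gives $\langle t\rangle^{-1}$.

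\emph{Subcase $Q_4^0=0$.} Here $Q_4=Q_4^1$, and every $\mathcal M_{\alpha,\beta}^\pm$ with an index $4$ is of the form $Q\,\Lambda^\pm Q'$ with no logarithmic factors. On the $Q_4$ side we use \eqref{eq:resolvent_decomp-Q4} instead of \eqref{De all Q}. Its three pieces $b_0Q_4^1vG_2^0$, $\mathcal T_4^1$ and $\mathcal T_{4,\pm}^1$ are all $\log$-free and $\omega$-free in $L^2$ by \eqref{eq:E_bound-1}. The first two are non-oscillatory, so they contribute phase $0$. The third carries $e^{\pm i\lambda|z|}$ with the decay $\langle\lambda z\rangle^{-1/2}$. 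On a $Q_0$ side we use $\Omega_{0,\mp}$, whose $\lambda^{-2}$ is cancelled by $\lambda^{4-k_0-k_4}=\lambda^2$, exactly as in Case 1 of Proposition \ref{prop_second_2_3}. For $(4,4)$ we have $4-2k_4=0$, and the product of two $\log$-free states with $Q_4^1\Lambda^\pm Q_4^1$ already satisfies the required bound.

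\emph{Subcase $Q_4^0\ne0$, $Q_{42}^1=0$.} For $(0,4)$ and $(4,0)$ the prefactor $(\log\lambda)^{-1}$ compensates the single $|\log\lambda|$ of $\mathcal J_{4,\pm}$, as in the treatment of $B_0^\pm$ in Case 1. For $(4,4)$ we split \eqref{2-M4-4} into the four terms $(\log\lambda)^{-(2-h-l)}Q_4^h\Lambda^\pm Q_4^l$. On a $Q_4^0$ side we use $\mathcal J_{4,\pm}$ (restricted by $Q_4^0$), which costs at most $|\log\lambda|$. On a $Q_4^1$ side we use \eqref{eq:resolvent_decomp-Q4}, which costs nothing. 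The total logarithmic power is then $(2-h-l)$ from the states minus $(2-h-l)$ from the prefactor, i.e.\ zero.

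The main obstacle is bookkeeping the logarithms so that none survive. A derivative $\partial_\lambda$ falling on $(\log\lambda)^{-k}$ only produces $\lambda^{-1}(\log\lambda)^{-k-1}$, which is harmless. The point to check is that $Q_4^0\mathcal J_{4,\pm}$ has at most one logarithm, namely the term $b_0(\log\lambda)Q_4^0(v|\cdot|^2)$ in Lemma \ref{lemma_projection}(iii). If that were unclear, we would instead use the explicit $Q_4^0$ expansion from Lemma \ref{lemma_projection}(iii) and treat the non-oscillatory pieces directly with $r=0$; the bound holds uniformly in $x,y$, since $\omega$ enters only through $\mathcal J_{4,\pm}$'s $\langle\lambda z\rangle^{-1/2}$ control, which is weight-free.
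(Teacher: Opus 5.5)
Your proposal is correct and follows the paper's argument. The blocks with $\alpha\in\{1,2,3\}$ and the $(0,4),(4,0)$ blocks are handled as in Proposition~\ref{prop_second_2_3}. For $\mathcal M_{4,4}^\pm$, each $Q_4^0$ appearing in \eqref{Q4=Q41}--\eqref{2-M4-4} carries a factor $(\log\lambda)^{-1}$, which cancels the single logarithm of $Q_4^0vR_0^\pm(\lambda^4)=Q_4^0\mathcal J_{4,\pm}$ from \eqref{J}, while the $Q_4^1$ pieces are log-free; so Lemma~\ref{oscillatory}(ii) with $(\sigma,\nu)=(0,0)$ applies.

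Drop your fallback remark, though. Splitting $Q_4^0vR_0^\pm(\lambda^4)$ via Lemma~\ref{lemma_projection}(iii) would bring in $b_0Q_4^0vG_2^0$ and $\mathcal T_4^0$, whose $L^2$ bounds carry the weight $\log(2+|z|)$. That route would not give a bound uniform in $x,y$. It is precisely the weight-free bound \eqref{J} that makes the uniform estimate work.
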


\begin{proof}
	For $B^\pm(\lambda) \in \{\mathcal{M}_{4,\alpha}^{\pm}(\lambda), \mathcal{M}_{\alpha, 4}^{\pm}(\lambda)\}$ with $\alpha \in \{0,1,2,3\}$, the desired estimates are already covered by the arguments in Proposition \ref{prop_second_2_3}. Hence, it suffices to consider the  case $B^\pm(\lambda) = \mathcal{M}_{4,4}^{\pm}(\lambda)$.
	
	The key observation is that, among the components of
	$Q_4vR_0^\pm(\lambda^4)$, only the $Q_4^0$-component carries a
	logarithmic singularity. More precisely,
	$
	Q_4^0vR_0^\pm(\lambda^4)
	=
	Q_4^0\mathcal{J}_{4,\pm}(\lambda)
    	$ and 
	 $\| e^{\mp i\lambda|z|}\mathcal{J}_{4,\pm}(\lambda,\cdot,z)\|_{L^2}$ is bounded by $|\log\lambda|\langle\lambda z\rangle^{-1/2}$, whereas
	$Q_4^1vR_0^\pm(\lambda^4)$ has no logarithmic singularity.

    Under the condition that either 
	 $Q_4^0=0$ or $Q_4^0\neq0$ and $Q_{42}^1=0$,  the expansions
\eqref{Q4=Q41} and \eqref{2-M4-4} show that every occurrence of
$Q_4^0$ in $\mathcal{M}_{4,4}^\pm(\lambda)$ is accompanied by one
factor $(\log\lambda)^{-1}$. This factor exactly compensates for
the logarithmic singularity of the corresponding
$Q_4^0vR_0^\pm(\lambda^4)$ term, while the $Q_4^1$-components
require no such compensation.
    
    Using 
	$
	\|
	\partial_\lambda^\ell
	\Lambda^\pm(\lambda)
	\|_{\mathbb{B}(L^2)}
	\lesssim
	\lambda^{-\ell}
	$ for $
	\ell=0,1,2,$
	together with the projected resolvent bounds from Lemma
	\ref{lemma_projection}, Leibniz's rule, and H\"older's inequality, we obtain that every resulting amplitude factor
	$\mathcal{E}^\pm(\lambda,x,y)$ satisfies
	$$
	\left|
	\partial_\lambda^\ell
	\mathcal{E}^\pm(\lambda,x,y)
	\right|
	\lesssim
	\widetilde{\chi}_1(\lambda/2)
	\langle\lambda r(x,y)\rangle^{-1/2}
	\lambda^{-\ell},
	\qquad
	\ell=0,1,2.
	$$
	Therefore Lemma \ref{oscillatory}(ii), applied with
	$(\sigma,\nu)=(0,0)$, yields the uniform bound $\langle t\rangle^{-1}.$
\end{proof}

\vspace{2mm}  
\textbf{Part II. Weighted $L^1_\omega \to L^\infty_{-\omega}$ estimate.} 
Building on the weighted estimates established for the regular and
first-kind resonance cases in
Subsection~\ref{subsec:regular_first_2}, we now consider the additional cases
$
B^\pm(\lambda)\in
\left\{
\mathcal M_{4,\alpha}^\pm(\lambda),
\mathcal M_{\alpha,4}^\pm(\lambda)
\right\}$ for
$\alpha\in\{0,1,2,3,4\}.$
The case
$
B^\pm(\lambda)=\mathcal M_{0,0}^\pm(\lambda)
$
must also be analyzed separately, since the leading-order expansion
of $\mathcal M_{0,0}^\pm(\lambda)$ in the second-kind resonance case may differ from that in the regular
and first-kind resonance cases.

 By arguments analogous to those in Proposition \ref{prop_K_weight_1}, we can obtain that for $|t|\ge 2,$
\begin{equation}\label{eq:fast_decay}  
\left| \mathcal{K}_{B}^\pm(t,x,y)\right| +
\left|\mathcal{N}_{B}^\pm(t,x,y)\right| \lesssim \frac{\omega(x)\omega(y)}{|t|\log |t|}, \  B^\pm(\lambda) \in \{\mathcal{M}_{4,\alpha}^{\pm}(\lambda), \mathcal{M}_{\alpha,4}^{\pm}(\lambda)\}\ \text{with}\ \alpha \in \{1,2,3\}.  
\end{equation}  
Consequently, the analysis is reduced to the case
$B^\pm(\lambda)= \mathcal{M}_{\alpha,\beta}^{\pm}(\lambda)$
with $(\alpha,\beta) \in \{0,4\} \times \{0,4\}$.
Within this setting, we distinguish two scenarios:
\textbf{Case 1:} $Q_4^0 = 0$;
\textbf{Case 2:} $Q_4^0 \neq 0$ and $Q_{42}^1 = 0$.

\textbf{Case 1: \(Q_4^0=0\).}
In this case, \(Q_4=Q_4^1\neq0\). Tracing the argument in the proof of
Theorem~\ref{thm:M_inverse} in
Section~\ref{subsec:proof_inverse}, and combining
\eqref{D00}, \eqref{Upsilon44-2} with $
\mathcal M_{\alpha,\beta}^\pm(\lambda)=\mathcal D_{\alpha,\beta}^\pm(\lambda)+\Upsilon_{\alpha,\beta}^\pm(\lambda)
$ for $\alpha,\beta \in \{0,4\}$, we obtain 
the following refined expansions:
\[
	\mathcal{M}_{\alpha,\beta}^{\pm}(\lambda) = \mathcal{D}_{\alpha,\beta}^{\pm} + \lambda (-\log \lambda)^{3/2} \Lambda^\pm(\lambda), \qquad \alpha,\beta \in \{0,4\}, 
\] where
the operators $\mathcal{D}_{\alpha,\beta}^{\pm}$ are explicitly given by  
\begin{align}\label{Doo}
	\mathcal{D}_{0,0}^{\pm} &= (a_0^\pm)^{-1} \|V\|_{L^1}^{-1}Q_0 + (a_0^\pm)^{-2}\mathcal{W} (\boldsymbol{d}^\pm)^{-1} \mathcal{W}^* := (a_0^\pm)^{-1} \|V\|_{L^1}^{-1}Q_0 + \widetilde{\mathcal{D}}_{0,0}^{\pm}, \\
	\label{eq:expre_D_00} 
	\mathcal{D}_{4,4}^{\pm} &= (\boldsymbol{d}^\pm)^{-1}, \quad  
	\mathcal{D}_{0,4}^{\pm} = - (a_0^\pm)^{-1}  \mathcal{W} (\boldsymbol{d}^\pm)^{-1}, \quad  
	\mathcal{D}_{4,0}^{\pm} = - (a_0^\pm)^{-1} (\boldsymbol{d}^\pm)^{-1} \mathcal{W}^*,
\end{align}  
with $\mathcal{W} = \|V\|_{L^1}^{-1}Q_0 T_0 Q_4^1$ and  
$
(\boldsymbol{d}^\pm)^{-1} = [a_2^\pm Q_4^1 v G_4 v Q_4^1 - (a_0^\pm)^{-1} \|V\|_{L^1}^{-1} Q_4^1 T_0  Q_0 T_0 Q_4^1]^{-1}.  
$  In particular, in the present second-kind resonance case, the leading
term of \(\mathcal M_{0,0}^\pm(\lambda)\) is
$
(a_0^\pm)^{-1}\|V\|_{L^1}^{-1}Q_0
+
\widetilde{\mathcal D}_{0,0}^\pm,
$
rather than merely
$
(a_0^\pm)^{-1}\|V\|_{L^1}^{-1}Q_0,
$
as in the regular and first-kind resonance cases.

\begin{proposition}\label{prop_second_critical_block}  
	Let $B^\pm(\lambda) = \mathcal{M}_{\alpha,\beta}^{\pm}(\lambda)$ for $(\alpha,\beta) \in \{0,4\} \times \{0,4\}$. If $Q_4^0 = 0$ $(\text{i.e.,}\  Q_4=Q_4^1 )$, then  
	\[  
\bigl|\mathcal{K}_{B}^\pm(t,x,y)\bigr| \lesssim \frac{\omega(x)\omega(y)}{|t|\log|t|}, \quad  
\mathcal{N}_{B}^\pm(t,x,y) = \frac{1}{2 i |t|} C^\pm_{\alpha,\beta}(x,y) + {O}\!\left(\frac{\omega(x)\omega(y)}{|t|\log|t|}\right),  \ \ |t| \ge 2,
	\]  
    where the integral kernels $C^\pm_{\alpha,\beta}(x,y)$ 
	are uniformly bounded and explicitly given by:    
	\begin{equation*}  
		\begin{split}
		C^\pm_{0,0}(x,y) &= (a_0^\pm)^2\bigl\langle  \mathcal{D}_{0,0}^{\pm}(Q_0v G_0)(\cdot), (Q_0v G_0)(\cdot) \bigr\rangle,\\
        C^\pm_{0,4}(x,y) &= b_0a_0^\pm \bigl\langle  \mathcal{D}_{0,4}^{\pm} (Q_{4}^1 v G_2^0)(\cdot,y), (Q_0v G_0)(\cdot) \bigr\rangle, \\  
		C^\pm_{4,0}(x,y) &= b_0a_0^\pm \bigl\langle  \mathcal{D}_{4,0}^{\pm}(Q_0v G_0)(\cdot), (Q_{4}^1 v G_2^0)(\cdot,x) \bigr\rangle,\\
		C^\pm_{4,4}(x,y)&= b_0^2 \bigl\langle \mathcal{D}_{4,4}^{\pm} (Q_{4}^1 v G_2^0)(\cdot,y), (Q_{4}^1 v G_2^0)(\cdot,x) \bigr\rangle.  
		\end{split}
	\end{equation*}  
\end{proposition}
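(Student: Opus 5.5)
The plan mirrors Propositions \ref{prop_K_weight_2} and \ref{prop_second_1}. I would split each $B^\pm$ into a $\lambda$-independent principal part and a remainder, using the refined expansion $\mathcal M_{\alpha,\beta}^\pm(\lambda)=\mathcal D_{\alpha,\beta}^\pm+\lambda(-\log\lambda)^{3/2}\Lambda^\pm(\lambda)$. The projected resolvents are expanded as follows:
\begin{itemize}
\item $(Q_0vR_0^\pm(\lambda^4))(\cdot,z)=a_0^\pm\lambda^{-2}(Q_0vG_0)+Q_0vE_1^\pm(\lambda,\cdot,z)$, from \eqref{eq:expansion};
\item $(Q_4^1vR_0^\pm(\lambda^4))(\cdot,z)=(b_0Q_4^1vG_2^0)(\cdot,z)+\mathcal T_{4,\pm}^1+\mathcal T_4^1$, from \eqref{eq:resolvent_decomp}.
\end{itemize}
Since $Q_4^0=0$, $Q_4=Q_4^1$ carries no $\log\lambda$ term. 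Also $4-k_\alpha-k_\beta\in\{4,2,0\}$ for $(0,0),(0,4)/(4,0),(4,4)$, which exactly cancels the $\lambda^{-2}$ factors coming from the $Q_0$ sides. Inserting these into \eqref{eq:def_mathcal_E}, the amplitude becomes
\[
\mathcal E^\pm(\lambda,x,y)=\widetilde\chi_1(\lambda)\,C^\pm_{\alpha,\beta}(x,y)+\mathcal E^\pm_{\rm rem}(\lambda,x,y),
\]
where $C^\pm_{\alpha,\beta}$ are the kernels in the statement. They are uniformly bounded because $\|(Q_4^1vG_2^0)(\cdot,z)\|_{L^2}\lesssim1$ by \eqref{lemma_projection_G}, and $\mathcal D^\pm_{\alpha,\beta}$ is a fixed finite-rank bounded operator.

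For the principal part, the computations in Proposition \ref{prop_K_weight_2} apply directly. The integral $\int\cos(t\lambda^2)\lambda\widetilde\chi_1\,d\lambda$ is $O(|t|^{-N})$. Formula \eqref{eq:integral_sin} gives $\frac{2}{\pi i t}\int\sin(t\lambda^2)\lambda^{-1}\widetilde\chi_1\,d\lambda=\frac{2}{\pi i}\frac{\pi}{4|t|}+O(|t|^{-2})=\frac{1}{2i|t|}+O(|t|^{-2})$. This produces exactly the stated leading term $\frac{1}{2i|t|}C^\pm_{\alpha,\beta}$.

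For the remainder, the terms involve at least one of $\lambda(-\log\lambda)^{3/2}\Lambda^\pm$, $Q_0vE_1^\pm$ (with $\lambda^2$ to spare against the $\lambda^{-2}$ prefactor), $\mathcal T_4^1$, or $\mathcal T_{4,\pm}^1$. I would handle them in two classes.
\begin{itemize}
\item Terms containing $\Lambda^\pm$ or $E_1^\pm$ satisfy $|\partial_\lambda^\ell\mathcal E|\lesssim\lambda^{1/2-\ell}\langle x\rangle^4\langle y\rangle^4$ using \eqref{derivative_1}. Lemma \ref{oscillatory}(ii) with $r=0$ and $\sigma=-1/2$ then gives $|t|^{-5/4}\langle x\rangle^4\langle y\rangle^4$.
\item Terms with $\mathcal T_4^1$ or $\mathcal T^1_{4,\pm}$ but no extra power of $\lambda$ are the main obstacle. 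Lemma \ref{lemma_projection} only gives $\lambda^{-\ell}$ bounds for them, with no gain.
\end{itemize}
For this second class I would reopen the construction of $\mathcal T_4^1$ and $\mathcal T_{4,\pm}^1$ in Section \ref{section 8}. They arise from the $\lambda^2|x-y|^4$ and higher terms of $R_0^\pm$, after $Q_4^1$ has killed the $v$, $x_jv$ and $|x|^2v$ moments, together with the oscillatory tail for $\lambda|x-y|\gtrsim1$. I expect a small-$\lambda$ bound of the form $\|\partial^\ell_\lambda\mathcal T(\lambda,\cdot,z)\|_{L^2}\lesssim\lambda^{\epsilon-\ell}\langle z\rangle^{C}$ for some $\epsilon>0$; this is plausible since the first surviving term is $O(\lambda^{2-}|x-y|^{4+})$. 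The decay $\mu>14$ should make this available. With it, Lemma \ref{oscillatory}(ii) with $r=0$ and $\sigma=-\epsilon$ gives $|t|^{-1-\epsilon/2}\langle x\rangle^C\langle y\rangle^C$.

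Each remainder contribution is also uniformly $O(|t|^{-1})$, by the unweighted analysis of Proposition \ref{prop_second_2_1} (and the $|t|^{-1}$ bound on the principal part). Interpolating as in \eqref{eq:inequality_3}, $\min\bigl(|t|^{-1},\langle x\rangle^C\langle y\rangle^C|t|^{-1-\epsilon/2}\bigr)\lesssim\omega(x)\omega(y)(|t|\log|t|)^{-1}$. This gives the cosine bound and the sine remainder, and completes the proof.
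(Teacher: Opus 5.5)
Your proposal is essentially the paper's proof. It uses the same splitting $\mathcal M^\pm_{\alpha,\beta}=\mathcal D^\pm_{\alpha,\beta}+\lambda(-\log\lambda)^{3/2}\Lambda^\pm(\lambda)$, the same extraction of $\frac{1}{2i|t|}C^\pm_{\alpha,\beta}$ via \eqref{eq:integral_sin}, and the same interpolation of a $|t|^{-5/4}\langle x\rangle^4\langle y\rangle^4$ remainder bound against the uniform $|t|^{-1}$ bound as in \eqref{eq:inequality_3}.

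The step you flag as the main obstacle closes at once if you do not split $\mathcal T_4^1+\mathcal T_{4,\pm}^1$, which is what the paper does. Because $Q_4^1$ annihilates $v$, $x_jv$ and $|x|^2v$, this sum equals $Q_4^1v\bigl(R_0^\pm(\lambda^4)-b_0G_2^0\bigr)=Q_4^1v\widetilde E_1^\pm$, and $\widetilde E_1^\pm=O_2(\lambda^2|x-y|^4)$ uniformly in $p=\lambda|x-y|$: for $p\gtrsim1$ one has $\lambda^2\widetilde E_1^\pm=F_\pm(p)-a_0^\pm-b_0p^2\log p-a_1^\pm p^2=O(p^2\log p)\lesssim p^4$. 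Hence $\|\partial_\lambda^\ell(Q_4^1v\widetilde E_1^\pm)(\lambda,\cdot,y)\|_{L^2}\lesssim\lambda^{2-\ell}\langle y\rangle^4$, which is your expected bound with $\epsilon=2$, $C=4$. Lemma \ref{oscillatory}(ii) with $r=0$ then finishes; take, e.g., $\sigma=-1/2$ rather than the excluded endpoint $\sigma=-2$.
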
 

\begin{proof}      
	For the case $(\alpha,\beta)=(0,0)$, the estimates are obtained by replacing 
    $(a_0^\pm)^{-1} \|V\|_{L^1(\mathbb{R}^2)}^{-1}Q_0$ with
    $\mathcal{D}_{0,0}^{\pm}$ in 
   Proposition~\ref{prop_K_weight_2}.   

	For the remaining cases involving the index $4$, we utilize the expansion \eqref{eq:expansion} for $R_0^\pm(\lambda^4)(x,y) $:    
	\[    
	R_0^\pm(\lambda^4)(x,y) - b_0G_2^0(x,y) = \frac{a_0^\pm}{\lambda^2}G_0(x,y) + g_0^\pm(\lambda)|x-y|^2 + \widetilde{E}_1^{\pm}(\lambda,x,y),   
	\]    
	where the remainder satisfies $\widetilde{E}_1^{\pm}(\lambda,x,y) = O_2(\lambda^{2}|x-y|^{4})$.     
	Crucially, the exact cancellation property $Q_{4}^1(x^\gamma v)=Q_{4}^1(|x|^2 v)= 0$ for $|\gamma| \le 1$ annihilates the first two terms, namely, applying $Q_{4}^1 v$ to the expansion yields an expression governed entirely by the $\lambda^2$ remainder $Q_{4}^1 v \widetilde{E}_1^\pm$: 
	\[    
	\bigl[Q_{4}^1 v \bigl(R_0^\pm(\lambda^4) - b_0 G_2^0\bigr)(\cdot, y)\bigr](x) = \bigl(Q_{4}^1 v \widetilde{E}_1^\pm\bigr)(\lambda,x,y),  
	\]    
    where $\|\partial_\lambda^\ell\bigl(Q_{4}^1 v \widetilde{E}_1^\pm\bigr)(\lambda,x,y)\|_{L_x^2}\lesssim \lambda^{2-\ell}\langle y\rangle^4$ for $\ell=0,1,2.$	Consequently, the dominant low-energy singularity arises from the product of three components: $b_0Q_{4}^1vG_2^0,$ $a_0^\pm\lambda^{-2}Q_0vG_0$ and the leading term $\mathcal{D}_{\alpha,\beta}^{\pm}$ in $\mathcal{M}_{\alpha,\beta}^{\pm}(\lambda).$  
	
	All other cross terms gain a relative energy factor of at least $\lambda^{1-}$. 
	By employing the same oscillatory integral strategy as in Proposition \ref{prop_K_weight_1}, these  remainder combinations lead to a rapid temporal decay of at least $|t|^{-5/4}\langle x\rangle^4\langle y\rangle^4$, which is readily bounded by $\omega(x)\omega(y)(|t|\log|t|)^{-1}$ via \eqref{eq:inequality_3}.    
	
	Extracting the leading singular constituents perfectly recovers the spatial kernels $C^\pm_{\alpha,\beta}(x,y)$, which are uniformly bounded since $\|(b_0 Q_{4}^1 v G_2^0)(\cdot,y)\|_{L^2} \lesssim 1$. For the sine evolution, this  kernel $C^\pm_{\alpha,\beta}(x,y)$ couples with the following integral (see \eqref{eq:integral_sin}):    
	\[
	\frac{2}{\pi i t}\int_0^\infty \sin(t\lambda^2)\lambda^{-1}\widetilde{\chi}_1(\lambda)\,d\lambda = \frac{1}{2 i |t|}  +{O}(|t|^{-2}).
	\]   
	Conversely, the corresponding integral for the cosine evolution  decays rapidly:    
	\[    
	\int_0^\infty \cos(t\lambda^2)\lambda\widetilde{\chi}_1(\lambda)\,d\lambda = O(|t|^{-N}), \quad \forall N>0.    
	\]    
	This directly establishes the desired  bound and asymptotic expansions, completing the proof.    
\end{proof}  

We are now ready to prove the sharpness of the sine estimate. 
\begin{proposition}\label{sine-sharp-2}
	If $Q_4^0= 0,$  then
\begin{equation}
\label{si-optimality-3}
	\Big\|\frac{\sin (t \sqrt{H})}{t\sqrt{H}} P_{\mathrm{ac}}(H)\chi_1(H)\Big\|_{L^1_\omega \to L^{\infty}_{-\omega}} \sim |t|^{-1},\ \ |t|\ge 2.
\end{equation}  
\end{proposition}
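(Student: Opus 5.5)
The upper bound $\lesssim |t|^{-1}$ is already available: it follows from the unweighted estimate \eqref{2-t} in Part I, since $\omega\ge \log 2$ and hence $\|\cdot\|_{L^1_\omega\to L^\infty_{-\omega}}\lesssim\|\cdot\|_{L^1\to L^\infty}$. The real content is the lower bound. I will obtain it by deriving an asymptotic expansion of the low-energy sine kernel in the weighted space and then showing that its leading coefficient is a nonzero kernel.

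\textbf{Step 1: the expansion.} I collect the contributions from four sources:
\begin{itemize}
\item the free low-energy sine part, via Proposition \ref{prop_free_sin}, which gives $\frac{1}{8|t|}+O\bigl(\frac{\omega(x)\omega(y)}{|t|\log|t|}\bigr)$;
\item all blocks $(\alpha,\beta)$ with $0\le\alpha,\beta\le 3$ and $(\alpha,\beta)\neq(0,0)$, via Proposition \ref{prop_K_weight_1};
\item the mixed blocks with index $4$ and $\alpha\in\{1,2,3\}$, via \eqref{eq:fast_decay};
\item the critical block $\{0,4\}\times\{0,4\}$, via Proposition \ref{prop_second_critical_block}.
\end{itemize}
The free term $\frac{1}{8|t|}$ cancels exactly against the part of $C^+_{0,0}-C^-_{0,0}$ coming from $(a_0^\pm)^{-1}\|V\|_{L^1}^{-1}Q_0$, as in Proposition \ref{prop_K_weight_2}. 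What remains is
\[
\frac{\sin(t\sqrt H)}{t\sqrt H}P_{\mathrm{ac}}(H)\chi_1(H)(x,y)= -\frac{1}{2i|t|}\,\mathcal{C}(x,y)+O\Bigl(\frac{\omega(x)\omega(y)}{|t|\log|t|}\Bigr),
\]
where
\[
\mathcal{C}(x,y)=\sum_{\alpha,\beta\in\{0,4\}}\bigl(C^+_{\alpha,\beta}-C^-_{\alpha,\beta}\bigr)(x,y)-\text{(the } Q_0\text{-piece)}.
\]
It therefore suffices to show $\mathcal{C}\not\equiv0$. Once this holds, a fixed pair $(x_0,y_0)$ with $\mathcal{C}(x_0,y_0)\neq0$ gives the lower bound $\gtrsim|t|^{-1}$ for large $|t|$, after testing against approximate delta functions near $(x_0,y_0)$. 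Continuity of the kernel is needed for this, and it is fine because $G_2^0$ is continuous.

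\textbf{Step 2: factorize $\mathcal{C}$.} I insert the formulas \eqref{Doo}--\eqref{eq:expre_D_00}. Since $Q_0vG_0=v$ is constant in $x,y$, I set
\[
A:=(Q_0vG_0)^*\ (\text{a constant map}),\qquad \mathcal{B}:=b_0G_2^0vQ_4^1 .
\]
Then the four $\pm$-dependent pieces combine as in the proof of Proposition \ref{sine-sharp} into
\[
\mathcal{C}=\bigl(\mathcal{X}^+-\mathcal{X}^-\bigr),\qquad \mathcal{X}^\pm=\bigl(\mathcal{B}-a_0^\pm(a_0^\pm)^{-1}A\mathcal{W}\bigr)(\boldsymbol d^\pm)^{-1}\bigl(\mathcal{B}-A\mathcal{W}\bigr)^* ,
\]
that is, $\mathcal{X}^\pm=(\mathcal{B}-A\mathcal{W})(\boldsymbol d^\pm)^{-1}(\mathcal{B}-A\mathcal{W})^*$, where the scalar factors $a_0^\pm$ cancel. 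The operator $\mathcal{B}-A\mathcal{W}$ is independent of the sign $\pm$, so
\[
\mathcal{C}=(\mathcal{B}-A\mathcal{W})\bigl[(\boldsymbol d^+)^{-1}-(\boldsymbol d^-)^{-1}\bigr](\mathcal{B}-A\mathcal{W})^*.
\]

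\textbf{Step 3: the main obstacle.} There are two things to prove.

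First, $(\boldsymbol d^+)^{-1}-(\boldsymbol d^-)^{-1}$ must be definite (say, of fixed sign imaginary part) on $Q_4^1L^2$. Using $a_2^\pm=\pm\frac{i}{512}$ and $a_0^\pm=\pm\frac i8$, both summands of $\boldsymbol d^\pm$ are $\pm i$ times self-adjoint operators, so
\[
\boldsymbol d^\pm=\pm i\,\mathcal{D},\qquad \mathcal{D}=\frac{1}{512}Q_4^1vG_4vQ_4^1+8\|V\|_{L^1}^{-1}Q_4^1T_0Q_0T_0Q_4^1 .
\]
Here the second term is $\ge0$, and the first is positive on $Q_4^1L^2$: $G_4$ is conditionally positive there, because of the moment cancellations of $Q_4^1$, which is the same positivity used for $\mathcal{S}$. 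Hence $\mathcal{D}>0$ and
\[
(\boldsymbol d^+)^{-1}-(\boldsymbol d^-)^{-1}=-2i\,\mathcal{D}^{-1}\neq0 .
\]

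Second, $\mathcal{B}-A\mathcal{W}$ must be injective on $Q_4^1L^2$. I argue exactly as in Proposition \ref{sine-sharp}. The operator $A\mathcal{W}\psi$ is a constant function, so $(\mathcal{B}-A\mathcal{W})\psi=0$ forces $G_2^0vQ_4^1\psi$ to be constant. Applying $\Delta^2$ gives $v\psi=0$, and the representation $\psi=Uv\phi$ from Proposition \ref{characterizations-1} then yields $\psi=0$. Because $Q_4^1\neq0$ (as $Q_4^0=0$ while $S_3\neq0$), these two facts give $\mathcal{C}\neq0$, which completes the lower bound.

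The delicate points are verifying the positivity of $\mathcal{D}$ and the bookkeeping showing that the $\pm$ factors cancel to leave a sign-independent $\mathcal{B}-A\mathcal{W}$.
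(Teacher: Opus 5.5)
Your overall route is the paper's: the same weighted expansion, in which the free $\frac{1}{8|t|}$ is cancelled by the $(a_0^\pm)^{-1}\|V\|_{L^1}^{-1}Q_0$ piece. The same factorization $\widetilde{\mathcal A}=X\bigl[(\boldsymbol d^+)^{-1}-(\boldsymbol d^-)^{-1}\bigr]X^*$ also appears, with $X=G_0vQ_0\mathcal W-b_0G_2^0vQ_4^1$, which is the negative of your $\mathcal B-A\mathcal W$. The injectivity argument via $\Delta^2$ and $\psi=Uv\phi$ is also the same. Several of your details are correct:
\begin{itemize}
\item the cancellation of the scalar factors $a_0^\pm$;
\item the identity $\boldsymbol d^\pm=\pm i\mathcal D$, where $\mathcal D=\frac1{512}Q_4^1vG_4vQ_4^1+8\|V\|_{L^1}^{-1}Q_4^1T_0Q_0T_0Q_4^1$;
\item the observation that $Q_4^1=Q_4\neq0$.
\end{itemize}

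The step that fails as written is your justification of $\mathcal D>0$. For $f\in Q_4^1L^2$ the moment cancellations give $\langle Q_4^1vG_4vQ_4^1f,f\rangle=4\sum_{j,k}|\langle x_jx_kv,f\rangle|^2$. This form is only semidefinite: it vanishes identically on $Q_{42}^1L^2=S_4^1L^2$. Moreover, $Q_{42}^1\neq0$ is compatible with $Q_4^0=0$; this is the case $\tau\equiv0$ with $\ker\mathfrak M_2\neq\{0\}$, which the proposition covers. So the first summand is not definite on $Q_4^1L^2$. The analogy with $\mathcal S$ does not help either, since $\mathcal S$ lives on $Q_{42}^1L^2$ in the case $Q_4^0\neq0$.

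Definiteness must come from the two summands together. Each summand is $\ge0$, so $\langle\mathcal Df,f\rangle=0$ forces $\langle x_jx_kv,f\rangle=0$ for all $j,k$. It also forces $\|Q_0T_0f\|=0$, that is, $PT_0f=0$. Since $f\in S_3L^2$, these conditions put $f\in S_4L^2$, and $Q_4^1L^2\cap S_4L^2=\{0\}$ gives $f=0$. This is exactly the argument the paper uses to invert $\boldsymbol d^\pm$ in Case 1 of Step 1(iii) in the proof of Theorem~\ref{thm:M_inverse}. It yields the strict positivity of $i\bigl((\boldsymbol d^+)^{-1}-(\boldsymbol d^-)^{-1}\bigr)=2\mathcal D^{-1}$ on $Q_4^1L^2$. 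With this repair your argument is complete.
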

\begin{proof}
Following the framework established for the regular and first-kind resonance cases in Subsection \ref{subsec:regular_first_2}, and using the estimates from \eqref{eq:fast_decay} and in Proposition \ref{prop_second_critical_block}, we can obtain that 
\begin{align}\label {expan-si-2} 
\frac{\sin(t\sqrt{H})}{t\sqrt{H}} P_{\mathrm{ac}}(H)\chi_1(H)    
= -\frac{1}{2 i |t|}\widetilde{\mathcal{A}} + \mathcal{O}\!\left(\frac{1}{|t|\log |t|}\right),   
\end{align} 
in the sense of $\mathbb{B}(L^1_\omega, L^\infty_{-\omega})$, where the bounded  operator $\widetilde{\mathcal{A}}$ is defined as 
	\begin{equation}  \label{A-ex}  
    \begin{split}
		\widetilde{\mathcal{A}} = &G_0 vQ_0 \bigl((a_0^+)^2 \widetilde{\mathcal{D}}_{0,0}^{+}-(a_0^-)^2 \widetilde{\mathcal{D}}_{0,0}^{-} \bigr)Q_0v G_0  + b_0  G_0 v Q_0 \bigl(a_0^+\mathcal{D}_{0,4}^{+}-a_0^-\mathcal{D}_{0,4}^{-} \bigr) Q_{4}^1 v G_2^0 \\  
		& + b_0  G_2^0 v Q_{4}^1 \bigl(a_0^+\mathcal{D}_{4,0}^{+}-a_0^-\mathcal{D}_{4,0}^{-}\bigr) Q_{0} v G_0 + b_0^2 G_2^0 v Q_{4}^1 \bigl( \mathcal{D}_{4,4}^{+} - \mathcal{D}_{4,4}^{-} \bigr) Q_{4}^1 v G_2^0.
        \end{split}
	\end{equation}  
Here $\widetilde{\mathcal{D}}_{0,0}^{\pm}$ and $\mathcal{D}_{\alpha,\beta}^{\pm}$ are given in \eqref{Doo} and \eqref{eq:expre_D_00}. Note that we use $\widetilde{\mathcal{D}}_{0,0}^{\pm}$ rather than $\mathcal{D}_{0,0}^{\pm}$  in \eqref{A-ex}, because the $Q_0$-component $(a_0^\pm)^{-1} \|V\|_{L^1}^{-1} Q_0$ neutralizes the leading term ${1}/{(8|t|)}$ from $(\sin(t\Delta))/({t\Delta})\chi_1(\Delta)$, exactly as in the regular and first-kind resonance cases.   
By \eqref{expan-si-2}, to establish \eqref{si-optimality-3}, it suffices to prove   $\widetilde{\mathcal{A}} \neq 0$. 

	Substituting the expressions \eqref{Doo} and \eqref{eq:expre_D_00} of $\widetilde{\mathcal{D}}_{0,0}^{\pm}$ and $\mathcal{D}_{\alpha,\beta}^{\pm}$ into \eqref{A-ex}, we deduce that 
	\[  
	\widetilde{\mathcal{A}} = \bigl(G_0vQ_0\mathcal{W}-b_0G_2^0vQ_{4}^1\bigr)\bigl((\boldsymbol{d}^+)^{-1} - (\boldsymbol{d}^-)^{-1}\bigr)\bigl(G_0vQ_0\mathcal{W}-b_0G_2^0vQ_{4}^1\bigr)^*,
	\]  
	where $\mathcal{W} = \|V\|_{L^1}^{-1}Q_0 T_0 Q_4^1$ and  
$
(\boldsymbol{d}^\pm)^{-1} = [a_2^\pm Q_4^1 v G_4 v Q_4^1 - (a_0^\pm)^{-1} \|V\|_{L^1}^{-1} Q_4^1 T_0  Q_0 T_0 Q_4^1]^{-1}.  
$ 

	Note that 
	$
	i \bigl((\boldsymbol{d}^+)^{-1} - (\boldsymbol{d}^-)^{-1}\bigr)  
	$ is strictly positive definite on $Q_{4}^1 L^2$. Consequently, in order to show $\widetilde{\mathcal{A}} \neq 0$, it reduces to proving $G_0vQ_0\mathcal{W}-b_0G_2^0vQ_{4}^1$ is injective  on the finite-dimensional space $Q_{4}^1 L^2$.  
	
	Assume $\bigl(G_0vQ_0\mathcal{W}-b_0G_2^0vQ_{4}^1\bigr)\psi = 0$ for some $\psi \in Q_{4}^1 L^2$. Then
	\[
		b_0 \bigl(G_2^0 v Q_{4}^1 \psi\bigr)(x) = \langle \mathcal{W} \psi, v \rangle.  
	\]  
Applying $\Delta^2$ in the distributional sense and
	using
	$
	\Delta^2G_2^0
	=
	8\pi\delta_0,
	$
	we obtain
	$
	vQ_4^1\psi=v\psi=0.
	$ Then, by the same argument used to derive \eqref{psi1} and \eqref{psi2}, we get \(\psi=0\).
Thus, the operator $G_0vQ_0\mathcal{W}-b_0G_2^0vQ_{4}^1$ is injective on $Q_{4}^1 L^2$, confirming $\widetilde{\mathcal{A}} \neq 0$.  
\end{proof}

\textbf{Case 2:}  $Q_4^0\neq0$ and $Q_{42}^1=0.$ In this case,
$
Q_4
=
Q_4^0\oplus Q_4^1,\ 
Q_4^1=Q_{41}^1.
$
By tracing the contributions of $\Lambda^\pm(\lambda)$ in \eqref{D40D04D44_1}, and using \eqref{Upsilon44-2} together with  $
\mathcal M_{\alpha,\beta}^\pm(\lambda)=\mathcal D_{\alpha,\beta}^\pm(\lambda)+\Upsilon_{\alpha,\beta}^\pm(\lambda)
$ for $\alpha,\beta \in \{0,4\},$  we obtain
\begin{align*}
	\mathcal{M}_{0,0}^\pm(\lambda)=&(\log \lambda)^{-2}  Q_0 \Lambda^\pm(\lambda) Q_0,\\
	\mathcal{M}^\pm_{0,4}(\lambda)=&(\log\lambda)^{-1}Q_0\mathfrak{D}Q_4^0+(\log\lambda)^{-1}Q_0\Lambda^\pm(\lambda)Q_4^1+(\log\lambda)^{-2}Q_0\Lambda^\pm(\lambda)Q_4,\\
	\mathcal{M}^\pm_{4,0}(\lambda)=&(\log\lambda)^{-1}Q_4^0\mathfrak{D}^*Q_0+(\log\lambda)^{-1}Q_4^1\Lambda^\pm(\lambda)Q_0+(\log\lambda)^{-2}Q_4\Lambda^\pm(\lambda)Q_0,\\
	\mathcal{M}^\pm_{4,4}(\lambda)=&(\log\lambda)^{-2}Q_4^0\mathfrak{C}_{00}^\pm Q_4^0+(\log\lambda)^{-1}\bigl(Q_4^0\mathfrak{C}_{01}^\pm Q_4^1+Q_4^1\mathfrak{C}_{10}^\pm Q_4^0\bigr)+Q_4^1\mathfrak{C}_{11}^\pm Q_4^1\\&+(\log\lambda)^{-3}Q_4^0\Lambda^\pm(\lambda)Q_4^0+(\log\lambda)^{-2}\bigl(Q_4^1\Lambda^\pm(\lambda)Q_4^0+Q_4^0\Lambda^\pm(\lambda)Q_4^1\bigr)+(\log\lambda)^{-1}Q_4^1\Lambda^\pm(\lambda)Q_4^1,
\end{align*}
where $\mathfrak{D}=(b_0Q_4^0vG_2vQ_0)^{-1},$ $\mathfrak{C}^\pm_{11}=(a_2^{\pm})^{-1}D_0$ with $D_0=\bigl(Q_{41}^1vG_4vQ_{41}^1\bigr)^{-1}$ and 
\begin{align*}
	\mathfrak{C}^\pm_{00}&=\mathfrak{D}^*	\Bigl((a_2^{\pm})^{-1}
	Q_0T_0Q_4^1D_0Q_4^1T_0Q_0
	-a_2^{\pm}Q_0vG_0vQ_0
	\Bigr)\mathfrak{D},\\
	\mathfrak{C}^\pm_{01}&=-(a_2^{\pm})^{-1}\mathfrak{D}^*Q_0T_0Q_4^1D_0,\quad 	\mathfrak{C}^\pm_{10}=-(a_2^{\pm})^{-1}D_0Q_4^1T_0Q_0\mathfrak{D}.
\end{align*}
Here,  if $Q_{41}^1L^2=\{0\}$, the inverse $(Q_{41}^1 v G_4 v Q_{41}^1)^{-1}$ is interpreted as the zero operator.

 Note that
$(Q_0vR_0^\pm(\lambda^4))(x,y)=a_0^{\pm}\lambda^{-2}(Q_0vG_0)(x,y)+(Q_0vE_1^\pm)(\lambda,x,y)$ (see \eqref{eq:expansion})  and
\begin{align*}
	\bigl(Q_{4}^0 v R_0^\pm(\lambda^4)\bigr)(x,y) &=b_0(\log\lambda) (Q_4^0vG_2)(x,y)+a_1^\pm (Q_4^0vG_2)(x,y)+(b_0 Q_4^0vG_2^0)(x, y)+\bigl(Q_{4}^0 v \widetilde{E}_1^\pm)(\lambda,x,y),\\
		\bigl(Q_{4}^1 v R_0^\pm(\lambda^4)\bigr)(x,y) &= (b_0 Q_4^1vG_2^0)(x, y) +\bigl(Q_{4}^1 v \widetilde{E}_1^\pm)(\lambda,x,y),
\end{align*}
where $\bigl\|(Q_0 v G_0)(\cdot,y)\bigr\|_{L^2}+\bigl\|(Q_4^0v G_2)(\cdot,y)\bigr\|_{L^2}+\bigl\|(Q_4^1v G_2^0)(\cdot,y)\bigr\|_{L^2}\lesssim1,$ $\bigl\|(b_0Q_4^0 v G_2^0)(\cdot,y)\bigr\|_{L^2}\lesssim\omega(y),$ and for $\ell=0,1,2,$
\begin{align*}	\bigl\|\partial_\lambda^\ell(Q_0v E_1^\pm)(\lambda,\cdot,y)\bigr\|_{L^2}\lesssim\lambda^{-\ell-}\langle y\rangle^4,\quad 	\bigl\|\partial_\lambda^\ell(Qv \widetilde{E}_1^\pm)(\lambda,\cdot,y)\bigr\|_{L^2}\lesssim\lambda^{2-\ell}\langle y\rangle^4\ \text{for}\ Q\in\{Q_4^0, Q_4^1\}.
\end{align*}
Combining the above estimates with the integral bound
\begin{align*}
	\ \ \ \ \ \left|
	\int_0^\infty e^{-it\lambda^2} \lambda \mathcal{E}(\lambda,x,y)\,d\lambda
	\right|
	\lesssim
	\begin{cases}	\dfrac{\omega(x)\omega(y)}{|t|\log|t|}, 
		& \text{if } 
		\begin{minipage}[t]{0.6\linewidth}
			$|\partial_\lambda^\ell \mathcal{E}(\lambda,x,y)|\lesssim\widetilde{\chi}_1(\lambda/2)|\log\lambda|^{-1} \lambda^{-\ell}\omega(x)\omega(y),$
		\end{minipage}\\[2ex]
		\dfrac{\langle x\rangle^4\langle y\rangle^4}{|t|^{5/4}}, 
		& \text{if } 
		\begin{minipage}[t]{0.6\linewidth}
			$|\partial_\lambda^\ell \mathcal{E}(\lambda,x,y)|\lesssim\widetilde{\chi}_1(\lambda/2)\lambda^{1/2-\ell} \langle x\rangle^4\langle y\rangle^4,$
		\end{minipage}
	\end{cases}
\end{align*}
for $\ell=0,1,2$, and proceeding analogously to Proposition \ref{prop_second_critical_block}, we obtain
$$
|\mathcal{K}_{\mathcal{M}_{0,0}}^\pm(t,x,y)|+|\mathcal{N}_{\mathcal{M}_{0,0}}^\pm(t,x,y)|\lesssim\omega(x)\omega(y)(|t|\log|t|)^{-1},
$$
and for $B^\pm(\lambda)\in\{\mathcal{M}^\pm_{0,4}(\lambda), \mathcal{M}^\pm_{4,0}(\lambda), \mathcal{M}^\pm_{4,4}(\lambda)\}$,
\[
\bigl|\mathcal{K}_{B}^\pm(t,x,y)\bigr| \lesssim \frac{\omega(x)\omega(y)}{|t|\log|t|}, \quad
\mathcal{N}_{B}^\pm(t,x,y) = \frac{1}{2 i |t|} \mathfrak{T}^\pm_{\alpha,\beta}(x,y) + O\!\left(\frac{\omega(x)\omega(y)}{|t|\log|t|}\right), \quad |t| \ge 2,
\]
where the integral kernels $\mathfrak{T}^\pm_{\alpha,\beta}(x,y)$ are uniformly bounded and explicitly given by:   
\begin{equation}  \label{sharp-operator}
	\begin{split}
	\mathfrak{T}^\pm_{0,4}(x,y) =& b_0a_0^\pm \bigl\langle  \mathfrak{D} (Q_{4}^0 v G_2)(\cdot,y), (Q_0v G_0)(\cdot) \bigr\rangle, \ \  
	\mathfrak{T}^\pm_{4,0}(x,y)= b_0a_0^\pm \bigl\langle  \mathfrak{D}^*(Q_0v G_0)(\cdot), (Q_{4}^0v G_2)(\cdot,x) \bigr\rangle,\\
		\mathfrak{T}^\pm_{4,4}(x,y)= &b_0^2 \Bigl(\bigl\langle \mathfrak{C}_{00}^\pm (Q_{4}^0 v G_2)(\cdot,y), (Q_{4}^0 v G_2)(\cdot,x) \bigr\rangle+\bigl\langle \mathfrak{C}_{01}^\pm (Q_{4}^1 v G_2^0)(\cdot,y), (Q_{4}^0 v G_2)(\cdot,x) \bigr\rangle\\
		&\ \ \ +\bigl\langle \mathfrak{C}_{10}^\pm (Q_{4}^0 v G_2)(\cdot,y), (Q_{4}^1 v G^0_2)(\cdot,x) \bigr\rangle+\bigl\langle \mathfrak{C}_{11}^\pm (Q_{4}^1 v G_2^0)(\cdot,y), (Q_{4}^1 v G^0_2)(\cdot,x) \bigr\rangle\Bigr).  
	\end{split}
\end{equation}  
Hence, 
it remains to prove the following sharp bound for the sine evolution.
\begin{proposition}\label{2-sine-sharp-2}
	If $Q_4^0\neq 0$  and $Q_{42}^1=0,$ then
	\begin{equation*}
		\Big\|\frac{\sin (t \sqrt{H})}{t\sqrt{H}} P_{\mathrm{ac}}(H)\chi_1(H)\Big\|_{L^1_\omega \to L^{\infty}_{-\omega}} \sim |t|^{-1},\ \ |t|\ge 2.
	\end{equation*}  
\end{proposition}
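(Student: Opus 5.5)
The argument follows the template of Proposition~\ref{sine-sharp-2}. First I would collect the weighted asymptotics already established in Case 2. These are the bound $\omega(x)\omega(y)(|t|\log|t|)^{-1}$ for $\mathcal{M}_{0,0}^\pm$, the estimate \eqref{eq:fast_decay} for the mixed pairs with $\alpha\in\{1,2,3\}$, the regular/first-kind estimates of Subsection~\ref{subsec:regular_first_2}, and the expansions $\mathcal{N}_B^\pm=\frac{1}{2i|t|}\mathfrak{T}^\pm_{\alpha,\beta}+O(\omega(x)\omega(y)(|t|\log|t|)^{-1})$ for $(\alpha,\beta)\in\{(0,4),(4,0),(4,4)\}$. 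Together with Proposition~\ref{prop_free_sin}, whose leading term $1/(8|t|)$ is cancelled by the $Q_0$-part exactly as in Proposition~\ref{prop_K_weight_2}, these give
\[
\frac{\sin(t\sqrt H)}{t\sqrt H}P_{\mathrm{ac}}(H)\chi_1(H)=-\frac{1}{2i|t|}\widehat{\mathcal{A}}+\mathcal{O}\Bigl(\frac{1}{|t|\log|t|}\Bigr),
\]
where $\widehat{\mathcal{A}}$ is the operator with kernel $\sum(\mathfrak{T}^+_{\alpha,\beta}-\mathfrak{T}^-_{\alpha,\beta})$. Each kernel $\mathfrak{T}^\pm_{\alpha,\beta}$ is uniformly bounded, so the upper bound $|t|^{-1}$ follows. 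The lower bound therefore reduces to showing $\widehat{\mathcal{A}}\neq0$.

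Next I would compute $\widehat{\mathcal{A}}$ explicitly from \eqref{sharp-operator}. The terms $\mathfrak{T}_{0,4}^\pm$ and $\mathfrak{T}_{4,0}^\pm$ carry the factor $a_0^\pm$ and a $\pm$-independent $\mathfrak{D}$, so their differences are $b_0(a_0^+-a_0^-)=\frac{i}{4}b_0$ times fixed operators. In $\mathfrak{C}^\pm_{00}$ the sign dependence enters through $(a_2^\pm)^{-1}$ and $a_2^\pm$, and in the remaining $\mathfrak{C}$'s through $(a_2^\pm)^{-1}$ alone. Since $a_2^\pm=\pm\frac{i}{512}$ and $a_0^\pm=\pm\frac i8$, all differences are purely imaginary multiples of fixed self-adjoint pieces. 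The aim is to regroup $\widehat{\mathcal{A}}$ as
\[
\widehat{\mathcal{A}}=c_1\,(\mathcal{X}D_0\mathcal{X}^*)+c_2\,\Bigl(G_2vQ_4^0\mathfrak{D}^*Q_0vG_0vQ_0\mathfrak{D}Q_4^0vG_2+\dots\Bigr),
\]
with $\mathcal{X}=b_0\bigl(G_2vQ_4^0\mathfrak{D}^*Q_0T_0Q_4^1-G_2^0vQ_{4}^1\bigr)$, plus the cross terms involving $\mathfrak{D}$ and $Q_0vG_0$.

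To prove nonvanishing, I would test $\widehat{\mathcal{A}}$ against suitable functions instead of proving a global positivity. A natural choice is to evaluate the kernel $\widehat{\mathcal{A}}(x,y)$ as a function of $x,y$. Since $Q_4^0vG_2(\cdot,y)=Q_4^0(v|\cdot|^2)$ and $Q_0vG_0=v$ are independent of $y$, the $Q_4^0$ and $Q_0$ contributions give a constant kernel. The $Q_{41}^1$ term is $b_0^2[(a_2^+)^{-1}-(a_2^-)^{-1}]\langle D_0 Q_4^1vG_2^0(\cdot,y),Q_4^1vG_2^0(\cdot,x)\rangle$ plus cross terms that are affine in $G_2^0$-potentials.

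If $Q_{41}^1\neq0$, I would argue as in Propositions~\ref{sine-sharp} and~\ref{sine-sharp-2}: I would show that $\psi\mapsto G_2^0vQ_4^1\psi-(\text{const})$ is injective on $Q_{41}^1L^2$, using $\Delta^2G_2^0=8\pi\delta$ together with \eqref{psi1} and \eqref{psi2}. Positivity of $D_0$ then shows that the $(x,y)$-dependent part of $\widehat{\mathcal{A}}$ is nonzero. If $Q_{41}^1=0$, the kernel reduces to the constant
\[
\frac{i}{4}b_0\cdot 2\Re\langle\mathfrak{D}Q_4^0(v|\cdot|^2),v\rangle-b_0^2(a_2^+-a_2^-)\langle Q_0vG_0vQ_0\mathfrak{D}Q_4^0(v|\cdot|^2),\mathfrak{D}Q_4^0(v|\cdot|^2)\rangle,
\]
and I would need to show this constant is nonzero.

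This last step is the main obstacle. The two contributions have different structures, and cancellation between them must be excluded. I would first try using $Q_0vG_0vQ_0=\|V\|_{L^1}Q_0$ and the definition $\mathfrak{D}=(b_0Q_4^0vG_2vQ_0)^{-1}$, which gives $\langle v,\mathfrak{D}Q_4^0(v|\cdot|^2)\rangle=\|V\|_{L^1}b_0^{-1}$. This should make both terms explicit real multiples of $i$, and I would then check their sum directly.
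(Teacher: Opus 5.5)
\textbf{Gap: the free term is not cancelled here.} You claim that the free leading term $\frac{1}{8|t|}$ of Proposition~\ref{prop_free_sin} is cancelled by the $Q_0$-part ``exactly as in Proposition~\ref{prop_K_weight_2}''. That step fails in this case.
\begin{itemize}
\item The cancellation in Proposition~\ref{prop_K_weight_2} came from the $O(1)$ piece $(a_0^\pm)^{-1}\|V\|_{L^1}^{-1}Q_0$ of $\mathcal M_{0,0}^\pm(\lambda)$.
\item When $Q_4^0\neq0$, the leading block $\mathbf A_0^{-1}$ is off-diagonal on $Q_0L^2\oplus Q_4^0L^2$. Hence $\mathcal M_{0,0}^\pm(\lambda)=(\log\lambda)^{-2}Q_0\Lambda^\pm(\lambda)Q_0$, which contributes only $O(\omega(x)\omega(y)(|t|\log|t|)^{-1})$. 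That is the very bound you list yourself, so nothing absorbs $\frac{1}{8|t|}$.
\item The leading operator is therefore $\frac{1}{|t|}\bigl(\frac18G_0-\frac1{2i}\widehat{\mathcal A}\bigr)$, i.e.\ the paper's $\frac1{|t|}\mathfrak A$, not $-\frac{1}{2i|t|}\widehat{\mathcal A}$.
\end{itemize}
This is decisive in your branch $Q_{41}^1=0$. There everything is a multiple of $G_0$, and the value of that multiple is the whole argument, so you would be checking the wrong constant.

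\textbf{Second slip in the same branch.} Applying $b_0Q_4^0vG_2vQ_0$ to $v$ gives $b_0\|V\|_{L^1}Q_4^0(v|\cdot|^2)$. Hence $\mathfrak DQ_4^0(v|\cdot|^2)=b_0^{-1}\|V\|_{L^1}^{-1}v$ and $\langle v,\mathfrak DQ_4^0(v|\cdot|^2)\rangle=b_0^{-1}$, not $\|V\|_{L^1}b_0^{-1}$. With your value, your constant becomes $\frac i2\|V\|_{L^1}-\frac{i}{256}\|V\|_{L^1}^2$, which vanishes when $\|V\|_{L^1}=128$.

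\textbf{Corrected constant.} With both corrections, the contributions to $\mathfrak A$ are:
\begin{itemize}
\item $-\frac14G_0$ from the $(0,4)$ and $(4,0)$ blocks;
\item $\frac1{512}G_0$ from the $a_2^\pm$-part of $\mathfrak C_{00}^\pm$;
\item $\frac18G_0$ from the free term.
\end{itemize}
The total is $-\frac{63}{512}G_0\neq0$, matching the paper's $\mathfrak A=-\frac{63}{512}G_0+\frac{8}{\pi^2}\mathcal XD_0\mathcal X^*$. The proposition survives, but only after the free term is restored.

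\textbf{Your $Q_{41}^1\neq0$ branch is unaffected}, since a constant kernel cannot cancel a non-constant one. It is a valid alternative to the paper's route. The paper assumes $\mathfrak A=0$, applies $\Delta^2$ to get $vQ_4^1D_0\mathcal X^*=0$, and concludes $\mathcal X=0$, contradicting injectivity.

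To make your version precise:
\begin{itemize}
\item State the injectivity as ``$G_2^0v\psi$ constant $\Rightarrow\psi=0$'' on $Q_{41}^1L^2$. This follows from $\Delta^2G_2^0=8\pi\delta_0$ together with \eqref{psi1}--\eqref{psi2}.
\item For a basis $\{e_k\}$ of $Q_{41}^1L^2$, this makes $\phi_k=\mathcal Xe_k$ linearly independent modulo constants.
\item If the kernel $\sum_{k,l}(D_0)_{kl}\phi_k(x)\overline{\phi_l(y)}$ were constant, differencing in $x$ would force $D_0\overline{\Phi(y)}=0$ for all $y$, with $\Phi=(\phi_l)_l$. Invertibility of $D_0$ then gives $\mathcal Xe_l\equiv0$, a contradiction.
\end{itemize}
This avoids the paper's second $\Delta^2$ step. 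As in the paper, however, the degenerate case $Q_4^1=0$ still rests entirely on the explicit constant computed above.
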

\begin{proof}
	By an argument similar to that in the proof of Proposition \ref{sine-sharp-2},  combining  \eqref{sharp-operator}  with  $$\frac{\sin(t\Delta)}{t\Delta}\chi_1(\Delta)
	=\frac{1}{8 |t|}G_0+\mathcal{O}\bigl(({|t|\log|t|})^{-1}\bigr),$$
we reduce the problem to proving that the operator $\mathfrak{A}\neq0,$ where
	\begin{align*}
		\mathfrak{A}=\frac{1}{8}G_0-\frac{1}{2i}\Bigl[ & b_0(a_0^+-a_0^-)\bigl(G_0vQ_0\mathfrak{D}Q_4^0vG_2+G_2vQ_4^0\mathfrak{D}^*Q_0vG_0\bigr)\\
		&+b_0^2\Bigl(G_2vQ_4^0\bigl(\mathfrak{C}_{00}^+-\mathfrak{C}_{00}^-\bigr)Q_4^0vG_2+G_2vQ_4^0\bigl(\mathfrak{C}_{01}^+-\mathfrak{C}_{01}^-\bigr)Q_4^1vG^0_2\\
		&+G^0_2vQ_4^1\bigl(\mathfrak{C}_{10}^+-\mathfrak{C}_{10}^-\bigr)Q_4^0vG_2+G^0_2vQ_4^1\bigl(\mathfrak{C}_{11}^+-\mathfrak{C}_{11}^-\bigr)Q_4^1vG^0_2\Bigr)\Bigr].
	\end{align*}
	By the cancellation of $Q_4^0$ and the  definition of 
	$\mathfrak{D}$, we obtain
	\begin{equation*}
		b_0G_0vQ_0\mathfrak{D}Q_4^0vG_2
		=
		G_0,
		\qquad
		b_0G_2vQ_4^0\mathfrak{D}^*Q_0vG_0
		=
		G_0.
	\end{equation*}
Hence, by calculating, we derive that 
		\begin{equation*}
		\mathfrak{A}
		=
		-\frac{63}{512}G_0
		+
		\frac{8}{\pi^2}
		\mathcal{X}D_0\mathcal{X}^*,
	\end{equation*}
	where $	\mathcal{X}
	=
	G_2vQ_4^0\mathfrak{D}^*Q_0T_0Q_4^1
	-
	G_2^0vQ_4^1.$ If $Q_4^1L^2=\{0\},$ then $	\mathfrak{A}=	-\frac{63}{512}G_0\neq0.$ Hence, it reduces to showing $\mathfrak{A}\neq0$ when $Q_4^1L^2=Q_{41}^1L^2\neq\{0\}.$

We first  prove that
	$\mathcal{X}$ is
	injective on $Q_4^1L^2\neq\{0\}.$
	Suppose that $\mathcal{X}\psi=0$ for some
	$\psi\in Q_4^1L^2=Q_{41}^1L^2$. Since
	$
	G_2vQ_4^0\mathfrak{D}^*Q_0T_0Q_4^1\psi
	$
	is constant, 
	$
	G_2^0vQ_4^1\psi
	$
	is constant. Applying $\Delta^2$ in the distributional sense and
	using
	$
	\Delta^2G_2^0
	=
	8\pi\delta_0,
	$
	we obtain
	$
	vQ_4^1\psi=v\psi=0.
	$  Proceeding as in the derivation of \eqref{psi1} and \eqref{psi2}, we conclude that \(\psi=0\).
	 Hence, $\mathcal{X}$ is
	injective on $Q_4^1L^2$.

	Suppose, for contradiction, that
	$
	\mathfrak{A}=0.
	$
	Then $
		({8}/{\pi^2})
		\mathcal{X}D_0\mathcal{X}^*=({63}/{512})G_0.$
	Applying $\Delta^2$ to both sides, and observing that
	$
	\Delta^2G_0=0,
$ and $
	\Delta^2\mathcal{X}
	=
	-8\pi vQ_4^1,
	$
	we obtain
	$
	vQ_4^1D_0\mathcal{X}^*=0.
	$
	For any test function $f$, set
	$
	\psi
	=
	D_0\mathcal{X}^*f
	\in Q_4^1L^2.
	$
	Then
	$
	vQ_{4}^1\psi=0,
	$
	and hence
	$$
	D_0^{-1}\psi
	=
	Q_{41}^1vG_4vQ_{41}^1\psi=	Q_{4}^1vG_4vQ_{4}^1\psi
	=
	0.
	$$
	Since $D_0^{-1}$ is invertible on $Q_4^1L^2$, we conclude that
	$
	\psi=0.
	$
	Thus
	$
	D_0\mathcal{X}^*=0.
	$
	The invertibility of $D_0$ on $Q_{41}^1L^2=Q_{4}^1L^2\neq\{0\}$ implies
	$
	\mathcal{X}^*=0,
	$
	and hence
	$
	\mathcal{X}=0.
	$
	This contradicts the injectivity of $\mathcal{X}$ on the nonzero
	space $Q_4^1L^2$. Therefore
	$
	\mathfrak{A}\neq0.
	$
\end{proof}
\section{The third kind resonance and eigenvalue cases}\label{sec:third_fourth}
This section focuses on the low-energy part for the  third kind resonance and eigenvalue cases, that is, the following Theorems \ref{main_theorem_low_31} and \ref{main_theorem_low_32}.

\begin{theorem}\label{main_theorem_low_31}
Let $H = \Delta^2 + V$ with $|V(x)| \lesssim \langle x \rangle^{-18-}$.
Assume that $H$ has no positive embedded eigenvalues and that zero is either a third-kind resonance or an eigenvalue accompanied by a d-wave resonance $(\text{i.e.,}\  Q_5 \neq 0 )$. Then
    \begin{equation}\label{optimality_31}
        \left\|\cos (t \sqrt{H}) P_{\mathrm{ac}}(H)\chi_1(H)\right\|_{L^1\to L^{\infty}} +
        \bigg\|\frac{\sin (t \sqrt{H})}{t\sqrt{H}} P_{\mathrm{ac}}(H)\chi_1(H)\bigg\|_{L^1 \to L^{\infty}} \sim \frac{1}{\log |t|}, \quad |t| \gg 1.
    \end{equation}
\end{theorem}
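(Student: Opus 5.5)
We plan to follow the same scheme as in Sections~\ref{sec:regular_first} and~\ref{sec:second}. The high-energy part is controlled by Theorem~\ref{main_theorem_high}, and the low-energy free part by Remark~\ref{rmk:free_case}. We then insert the expansion \eqref{eq:M_inverse} with $\mathbf k\in\{3,4\}$ into \eqref{id-RV} and bound the kernels $\mathcal K_B^\pm,\mathcal N_B^\pm$ of \eqref{K_1}--\eqref{Lambda_1} for $B^\pm=\mathcal M^\pm_{\alpha,\beta}$ with $0\le\alpha,\beta\le 6$. For $0\le \alpha,\beta\le 4$, Theorem~\ref{thm:M_inverse}(III) says that all second-kind conclusions persist, so Propositions~\ref{prop_K} and~\ref{prop_second_2_3} give at worst $\langle t\rangle^{-1}(\log(2+|t|))^2$. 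This is $o((\log|t|)^{-1})$. Hence the whole problem reduces to the blocks involving $\alpha$ or $\beta\in\{5,6\}$.

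\textbf{Mixed blocks.} For $\alpha\in\{5,6\}$ and $\beta\le4$ (and symmetrically), $\|\partial^\ell_\lambda\mathcal M_{\alpha,\beta}^\pm\|\lesssim\lambda^{-\ell}$, and the prefactor is $\lambda^{4-k_\alpha-k_\beta}\ge\lambda^{-1}$. We would use Lemma~\ref{lemma_projection}(iii) on the $Q_{5,6}$ side, which gives the form $b_0Q_\alpha vG_2^0+\lambda^{m_\alpha}(\mathcal T_{\alpha,\pm}+\mathcal T_\alpha)$ with no $\log\lambda$ term. On the other side we would use (i)/(ii). Counting powers, the amplitude is $\lambda^{-\sigma}$ with $\sigma\le 1$ up to logarithms, and the phases are $r\in\{|x|,|y|,|x|+|y|\}$. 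Lemma~\ref{oscillatory}(ii) then gives decay $|t|^{-1/2+}$ at worst, which is again $o((\log |t|)^{-1})$. We will have to check the case $(\alpha,\beta)=(5,0)$ carefully, since there the prefactor is $\lambda$ times $\lambda^{-2}$ from the $Q_0$ side.

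\textbf{Principal blocks.} For $(\alpha,\beta)\in\{5,6\}^2$ the prefactor is $\lambda^{-2}$, and $\mathcal M^\pm_{\alpha,\beta}=\mathcal A_{\alpha,\beta}(\lambda)+(\log\lambda)^{-2}\Gamma^\pm_{\alpha,\beta}$, with $\mathcal A_{\alpha,\beta}$ independent of the sign.

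For the leading terms $b_0Q_\alpha vG_2^0$ on both sides, the $\mathcal A$-part has a $\pm$-independent amplitude. It therefore cancels in the difference $\mathcal K^+_B-\mathcal K^-_B$ (respectively $\mathcal N^+_B-\mathcal N^-_B$), and this cancellation is the essential use of the sign-independence. The $(\log\lambda)^{-2}\Gamma$ part gives an amplitude obeying \eqref{condit 2} with $(\sigma,\nu)=(2,2)$ and $r=0$, so Lemma~\ref{oscillatory}(ii) yields $O((\log|t|)^{-1})$ uniformly in $x,y$.

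Cross terms contain at least one factor $\lambda^{m_\alpha}\mathcal T$ with $m_\alpha\ge1$, so $\sigma\le1$ and they decay polynomially. The exception is any residual piece of $\mathcal A_{\alpha,\beta}$ that is not sign-symmetric after pairing with $\mathcal T_{\alpha,\pm}$. Those pieces have $\sigma\le 1$ and are harmless.

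This proves the upper bound in \eqref{optimality_31}, since the cosine and sine kernels are both covered by Lemma~\ref{oscillatory}.

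\textbf{Lower bound (main obstacle).} The lower bound requires showing that the $(\log|t|)^{-1}$ term does not vanish. We would first compute the asymptotics explicitly:
\[
\frac{2}{\pi i}\int_0^\infty\cos(t\lambda^2)\lambda^{-1}\widetilde\chi_1(\lambda)(\log\lambda)^{-2}\,d\lambda \sim \frac{c}{\log|t|},
\]
and similarly for the sine, in the spirit of Lemma~\ref{lemma:I_J}. This isolates a leading operator of the form
\[
b_0^2\,G_2^0vQ_5\bigl(\Gamma_0^+-\Gamma_0^-\bigr)Q_5vG_2^0,
\]
where $\Gamma_0^\pm$ is the leading constant part of $\Gamma^\pm_{5,5}$. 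We would read $\Gamma_0^\pm$ off the proof of Theorem~\ref{thm:M_inverse}; we expect it to be, up to a constant, $(g_0^\pm\text{-coefficient})^{-1}$ times the inverse of $Q_5vG_6vQ_5$-type data, so that $i(\Gamma_0^+-\Gamma_0^-)$ is definite on $Q_5L^2$.

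Nonvanishing then reduces, exactly as in Propositions~\ref{sine-sharp} and~\ref{sine-sharp-2}, to injectivity of $G_2^0vQ_5$ on $Q_5L^2$. We would prove this by applying $\Delta^2$ and using the arguments giving \eqref{psi1} and \eqref{psi2}. Identifying $\Gamma^\pm_0$ precisely and verifying its definiteness is the step we expect to be hardest.
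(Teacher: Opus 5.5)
\textbf{Upper bound.} Your upper bound is essentially the paper's argument (Propositions \ref{third_merged_1} and \ref{third_merged_2}). The sign-independent part $\mathcal A_{\alpha,\beta}$ enters the $\pm$ difference only through a factor $\lambda^{m_\alpha}\mathcal T_{\alpha,\pm}$, the mixed blocks have $\sigma\le1$, and only $(\log\lambda)^{-2}\Gamma^\pm_{\alpha,\beta}$ paired with $b_0Q_\alpha vG_2^0$ on both sides gives $(\sigma,\nu)=(2,2)$. Computing the full $\lambda$-integral asymptotically, instead of truncating to $|t|\lambda^2\ll1$ as the paper does, is also fine.

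\textbf{Lower bound: the gap.} The lower bound fails as proposed when zero is an eigenvalue with a $d$-wave, i.e.\ $Q_5\neq0$ and $Q_6\neq0$. In that case the $(\log|t|)^{-1}$ term is not only the $(5,5)$ block. By \eqref{Gamma55}--\eqref{Gamma66}, every $\Gamma^\pm_{\alpha,\beta}$ with $\alpha,\beta\in\{5,6\}$ has a constant, sign-dependent leading part $\mathcal L^\pm_{\alpha,\beta}$ from \eqref{mathcal_L}, each proportional to $a_3^\pm$. The leading operator is therefore a multiple $\sim(\log|t|)^{-1}$ of
\[
\sum_{\alpha,\beta\in\{5,6\}} b_0^2\,G_2^0vQ_\alpha\bigl(\mathcal L^+_{\alpha,\beta}-\mathcal L^-_{\alpha,\beta}\bigr)Q_\beta vG_2^0 .
\]
Definiteness of the $(5,5)$ coefficient plus injectivity of $G_2^0vQ_5$ does not rule out cancellation against the $(5,6)$, $(6,5)$, $(6,6)$ terms.

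\textbf{How to close it.} The paper tests against $\varphi=vU\psi$ with $0\neq\psi\in Q_5L^2$. Since $Q_5T_0=Q_6T_0=0$, one has $b_0Q_5vG_2^0\varphi=-\psi$ and $b_0Q_6vG_2^0\varphi=-Q_6\psi=0$. Hence only the $(5,5)$ term survives in the quadratic form, and $\varphi\in L^{2,\sigma}\subset L^1$ by Lemma \ref{estimate_G_2^0}. Alternatively, the block matrix $(\mathcal L^\pm_{\alpha,\beta})$ factors as
\[
a_3^\pm b_1^{-1}\bigl(\begin{smallmatrix} I\\ -Z\end{smallmatrix}\bigr)\mathcal D_{5,5}\bigl(I\ \ {-Z^*}\bigr),\qquad Z=b_1\mathcal D_{6,6}Q_6vG_6^0vQ_5 .
\]
Your injectivity argument then applies to $G_2^0v(Q_5-Q_6Z)$ on $Q_5L^2$, using Proposition \ref{characterizations-1}(iii) on $S_4L^2$.

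In the third-kind case $Q_6=0$, and your reduction works as stated once $\Gamma_0^\pm$ is identified correctly.

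\textbf{Identifying $\Gamma_0^\pm$.} Your guess is off. The sign enters through $a_3^\pm$, the constant in $g_1^\pm$ (the $\lambda^4|x-y|^6$ term of $R_0^\pm$), not through an inverted $g_0^\pm$ coefficient. One has $\mathcal L^\pm_{5,5}=-a_3^\pm b_1^{-2}(Q_5vG_6vQ_5)^{-1}$, with $a_3^+-a_3^-\in i\mathbb R\setminus\{0\}$ and $Q_5vG_6vQ_5$ negative definite on $Q_5L^2$. You also need the refinement $\Gamma^\pm_{\alpha,\beta}=\mathcal L^\pm_{\alpha,\beta}+(-\log\lambda)^{-1/2}\Lambda^\pm(\lambda)$, so that the remainder contributes only $O((\log|t|)^{-3/2})$.
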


\begin{theorem}\label{main_theorem_low_32}
	Let $H = \Delta^2 + V$ with $|V(x)| \lesssim \langle x \rangle^{-22-}$.  
	Assume that $H$ has no positive embedded eigenvalues and that zero is an eigenvalue of $H$ but exhibits no d-wave resonance $(\text{i.e.,}\  Q_5 = 0 )$.
	\begin{enumerate}  
		\item If $H$ admits a p-wave resonance $(\text{i.e.,}\  Q_4 \neq 0 )$, then 
		$$  
		\begin{aligned}  
		\left\|\cos (t \sqrt{H}) P_{\mathrm{ac}}(H)\chi_1(H)\right\|_{L^1 \to L^{\infty}} + 
		\bigg\|\frac{\sin (t \sqrt{H})}{t\sqrt{H}} P_{\mathrm{ac}}(H)\chi_1(H)\bigg\|_{L^1 \to L^{\infty}} &\lesssim\langle t \rangle^{-1} {(\log (2+|t|))^2}.  
		\end{aligned}  
		$$  
		\item If $H$ lacks a p-wave resonance $(\text{i.e.,}\  Q_4 = 0 )$, then  
		\begin{equation*}  
		\left\|\cos(t\sqrt{H}) P_{\mathrm{ac}}(H)\chi_1(H)\right\|_{L^1 \to L^\infty} +  \bigg\|\frac{\sin(t\sqrt{H})}{t\sqrt{H}} P_{\mathrm{ac}}(H)\chi_1(H)\bigg\|_{L^1 \to L^\infty} \lesssim \langle t\rangle^{-1}.  
		\end{equation*}  
		Furthermore, we distinguish two subcases for the weighted estimate when $|t| \ge 2$:  
		\begin{itemize}  
			\item If ${S}_5L^2\neq\widetilde{S}_5L^2 $, then   
			\[   \left \|\cos (t \sqrt{H}) P_{\mathrm{ac}}(H)\chi_1(H)\right\|_{L^1_\omega\to L^\infty_{-\omega}} \lesssim\frac{1}{|t|\log |t|},\  \ \bigg\|\frac{\sin (t \sqrt{H})}{t\sqrt{H}} P_{\mathrm{ac}}(H)\chi_1(H)\bigg\|_{L^1_\omega\to L^\infty_{-\omega}}\sim|t|^{-1}.  
			\]  
			\item If ${S}_5L^2=\widetilde{S}_5L^2 $, then
			\[ 
			\left\|\cos (t \sqrt{H}) P_{\mathrm{ac}}(H)\chi_1(H)\right\|_{L^1_\omega \to L^{\infty}_{-\omega}} +  
			\bigg\|\frac{\sin (t \sqrt{H})}{t\sqrt{H}} P_{\mathrm{ac}}(H)\chi_1(H)\bigg\|_{L^1_\omega \to L^{\infty}_{-\omega}} \lesssim \frac{1}{|t| \log|t|}.  
			\] 
            \end{itemize}
		Here, the subspace characterizing the isotropic property is defined as:  
		\begin{equation} \label{S5wide}   
		\widetilde{S}_5L^2 = \left\{ \psi \in S_5 L^2 \;\middle|\;     
		\begin{aligned}    
		&\langle x_1^3 x_2 v, \psi \rangle = 0, \quad \langle x_1 x_2^3 v, \psi \rangle = 0, \\  
		&\langle x_1^4 v, \psi \rangle = \langle x_2^4 v, \psi \rangle = 3 \langle x_1^2 x_2^2 v, \psi \rangle    
		\end{aligned}    
		\right\}.    
		\end{equation}  
	\end{enumerate}  
\end{theorem}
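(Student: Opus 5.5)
The plan is to follow the same reduction used in Sections~\ref{sec:regular_first} and~\ref{sec:second}. Substituting the expansion of Theorem~\ref{thm:M_inverse}(III) (with $\mathbf{k}=4$ and $Q_5=0$) into \eqref{id-RV} reduces everything to bounding $\mathcal{K}_B^\pm$ and $\mathcal{N}_B^\pm$ in \eqref{K_1}--\eqref{Lambda_1} for $B^\pm=\mathcal{M}_{\alpha,\beta}^\pm$ with $0\le\alpha,\beta\le 6$. All pairs with $\alpha,\beta\le4$ are already covered by Propositions~\ref{prop_K}, \ref{prop_second_2_3} and~\ref{prop_second_2_1}. This gives the $\langle t\rangle^{-1}(\log(2+|t|))^2$ bound in item (1), since only the $(4,4)$ block can produce the $(\log)^2$ loss. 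If $Q_4=0$, those blocks vanish and we are left with the $\langle t\rangle^{-1}$ bound.

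Since $Q_5=0$, the new blocks are $(6,\beta)$, $(\beta,6)$ and $(6,6)$. Here $k_6=3$, so the prefactor is $\lambda^{4-k_\alpha-k_\beta}=\lambda^{-2}$ for $(6,6)$ and $\lambda^{-1}$ for $(6,\beta)$ with $\beta\in\{1,2,3\}$; the remaining off-diagonal blocks are handled analogously. The compensation comes from Lemma~\ref{lemma_projection}(iii) with $m_6=2$: we have $Q_6vR_0^\pm(\lambda^4)=b_0Q_6vG_2^0+\lambda^2(\mathcal{T}_{6,\pm}+\mathcal{T}_6)$, with no $\log\lambda$ term. Using in addition $Q_6T_0=T_0Q_6=0$, we have $Q_6vG_2^0=b_0^{-1}Q_6(T_0-U)=-b_0^{-1}Q_6U$, whose kernel $Q_6(U\delta_y)$ is not a genuine $y$-function. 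The right move is to pair it directly with the $v$ on the other side, $\langle U\,\cdot, vR_0\rangle$, i.e.\ to rewrite the leading piece through $Q_6vR_0^\pm$ applied inside the resolvent identity. Concretely, $R_0vQ_6\mathcal{A}_{6,6}Q_6vR_0$ has constant-in-$\lambda$ leading part $b_0^2 G_2^0vQ_6\mathcal{A}_{6,6}Q_6vG_2^0$ times $\lambda^{-2}$. In the Stone formula this appears as $\lambda\cdot\lambda^{-2}$ and $\lambda^{-1}\cdot\lambda^{-2}$. The $\lambda^{-2}$ singular terms are the zero-eigenvalue projection part; they must cancel in $\mathcal{K}^+-\mathcal{K}^-$ because $\mathcal{A}_{\alpha,\beta}$ is independent of $\pm$ (this is exactly why Theorem~\ref{thm:M_inverse} isolates $\mathcal{A}_{\alpha,\beta}$). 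After taking the $\pm$ difference, only $(\log\lambda)^{-2}\Gamma^\pm$ and the $\lambda^2$-gain terms survive. The $\lambda^2$-gain cross terms give amplitudes satisfying \eqref{condit 2} with $\sigma=0$ and, for the sine kernel, after the extra $\lambda^{-1}$ is absorbed by one $\lambda^2$ gain, still $\sigma\le0$. The worst surviving term is $\lambda^{-2}(\log\lambda)^{-2}\Gamma^\pm$ with $G_2^0$-states on both sides, which corresponds to $\sigma=2$, $\nu=2$ and would only yield $(\log t)^{-1}$. So the second key step is to show that when $Q_5=0$, the $(\log\lambda)^{-2}$ remainder itself starts at order $\lambda^{2}$ relative to the leading term. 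I would go back to the inversion in Section~\ref{subsec:proof_inverse}: the $(\log)^{-2}$ terms originate from the $d$-wave channel $Q_5$, since the Feshbach complement through $S_5$ involves $\langle x_ix_jv,\cdot\rangle$, which vanishes on $Q_6$. When $Q_5=0$, the $S_6=0$ Schur complement then gives $\mathcal{M}_{6,6}^\pm=\mathcal{A}+\lambda^2\log\lambda\,\Lambda^\pm$. Verifying this refined expansion is the main obstacle I anticipate.

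With this refinement, every amplitude satisfies \eqref{condit 2} with $\sigma\le 0$, and Lemma~\ref{oscillatory}(ii) gives $\langle t\rangle^{-1}$. For the weighted statements I would proceed exactly as in Propositions~\ref{prop_K_weight_1} and~\ref{prop_second_critical_block}: expand each kernel with $r=0$, note that all terms with an extra $\lambda^{1/2}$ give $|t|^{-5/4}\langle x\rangle^4\langle y\rangle^4$, and interpolate via \eqref{eq:inequality_3}. The surviving $\frac{1}{2i|t|}$ coefficient comes from the order-$\lambda^0$ part of $\mathcal{M}_{6,6}^+-\mathcal{M}_{6,6}^-$ tested against $\lambda^{2}$ terms of $Q_6vR_0$, namely $a_2^\pm Q_6vG_4$, whose restriction to $Q_6$ is determined by the fourth moments $\langle x_ix_jx_kx_lv,\psi\rangle$. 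Isotropy (membership in $\widetilde{S}_5$) makes $Q_6vG_4(\cdot,y)$ combine with the $Q_0$ channel into a multiple of $G_0$ that cancels against $\frac{1}{8|t|}$ as in Proposition~\ref{prop_K_weight_2}, giving $(|t|\log|t|)^{-1}$. Otherwise, an injectivity argument applying $\Delta^2$, as in Propositions~\ref{sine-sharp-2} and~\ref{2-sine-sharp-2}, shows that the leading operator is nonzero, giving the sharp $|t|^{-1}$ bound.
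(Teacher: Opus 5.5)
Your unweighted reduction follows the paper's route: you use the $Q_5=0$ expansion of Lemma~\ref{M_inverse_optimal}, and you treat the sign-independent $\mathcal{D}_{6,6}$ through $R_0^+-R_0^-$ as in \eqref{mathcalA66}. The weighted dichotomy in item (2), however, has a genuine gap: you misidentify both where the $|t|^{-1}$ coefficient comes from and how isotropy enters.

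\textbf{The term you single out cancels.} Your candidate is $\mathcal{D}_{6,6}$ tested against $b_0Q_6vG_2^0$ on one side and $a_2^\pm\lambda^2Q_6vG_4$ on the other. (The order-$\lambda^0$ part of $\mathcal{M}_{6,6}^+-\mathcal{M}_{6,6}^-$ itself is zero, since $\mathcal{D}_{6,6}$ is sign-independent.) For each sign this term is cancelled exactly by the $(6,0)$ and $(0,6)$ blocks, which your plan never brings into the weighted computation.
\begin{itemize}
\item When $Q_4=Q_5=0$ one has $\mathcal{M}_{6,0}^\pm=-a_2^\pm(a_0^\pm)^{-1}\|V\|_{L^1}^{-1}\lambda\,\mathcal{D}_{6,6}Q_6vG_4vQ_0+\cdots$.
\item With the prefactor $\lambda^{4-k_6-k_0}=\lambda$ and $Q_0vR_0^\pm=a_0^\pm\lambda^{-2}Q_0vG_0+\cdots$, this block contributes at order $\lambda^0$ the term $-b_0a_2^\pm\|V\|_{L^1}^{-1}G_2^0v\mathcal{D}_{6,6}Q_6vG_4vQ_0vG_0$.
\item Since only the $|x|^4$-moment survives $Q_6$, this equals $-b_0a_2^\pm G_2^0v\mathcal{D}_{6,6}Q_6vG_4$. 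The $(0,6)$ block is symmetric.
\end{itemize}

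\textbf{What actually survives.} It is the order-$\lambda^2$ part of $\mathcal{M}_{6,6}^\pm$, namely $-a_4^\pm\lambda^2\mathcal{D}_{6,6}Q_6vG_8vQ_6\mathcal{D}_{6,6}+(a_2^\pm)^2(a_0^\pm)^{-1}\|V\|_{L^1}^{-1}\lambda^2\mathcal{D}_{6,6}Q_6vG_4vQ_0vG_4vQ_6\mathcal{D}_{6,6}$, paired with $b_0Q_6vG_2^0$ on both sides. Obtaining it requires pushing the inversion to this order, using the $a_4^\pm\lambda^6G_8$ term of \eqref{reso_small} and the Schur complement through $Q_0$. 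Inserting the values of $a_0^\pm,a_2^\pm,a_4^\pm$ gives $\widetilde{\mathcal{B}}=-\frac{ib_0^2}{16384}\,G_2^0vQ_6\mathcal{D}_{6,6}\mathbf{K}\mathcal{D}_{6,6}Q_6vG_2^0$, with $\mathbf{K}=\frac{1}{36}Q_6vG_8vQ_6-\|V\|_{L^1}^{-1}Q_6vG_4vQ_0vG_4vQ_6$, as in Proposition~\ref{prop_part2}.

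\textbf{Your isotropy mechanism cannot work.} The free term $\frac{1}{8|t|}$ is removed by the $Q_0$-part of $\mathcal{M}_{0,0}^\pm$ exactly as in Proposition~\ref{prop_K_weight_2}, independently of the $Q_6$ channel. Nothing from the $Q_6$ channel can be a nonzero multiple of $G_0$: by Proposition~\ref{characterizations-1}(iv), the functions $G_2^0v\psi$ with $\psi\in Q_6L^2=S_5L^2$ are $L^2$ zero eigenfunctions.

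Isotropy enters instead through the quadratic form of $\mathbf{K}$. Writing $A,B,C,D,E$ for the fourth moments, $\langle\mathbf{K}f,f\rangle$ is the sum of squares \eqref{eq:negative_squares}, so $\mathbf{K}\ge0$ and $\ker\mathbf{K}=\widetilde{S}_5L^2$. The $\Delta^2$-injectivity of $G_2^0vQ_6$ that you invoke only reduces $\widetilde{\mathcal{B}}\neq0$ to $\mathbf{K}\neq0$. It is the explicit computation of $\mathbf{K}$ that yields $\widetilde{\mathcal{B}}=0$ if and only if $S_5L^2=\widetilde{S}_5L^2$ (Lemma~\ref{lemma:non-zero_1}). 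That equivalence is what gives both the $(|t|\log|t|)^{-1}$ bound in the isotropic case and the lower bound $\sim|t|^{-1}$ otherwise.

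\textbf{A smaller problem in the unweighted part of item (2).} Your guessed refinement $\mathcal{M}_{6,6}^\pm=\mathcal{A}+\lambda^2\log\lambda\,\Lambda^\pm$ is too weak. With prefactor $\lambda^{-2}$ and two $O(1)$ states $b_0Q_6vG_2^0$, it produces an amplitude of size $|\log\lambda|$, i.e.\ $(\sigma,\nu)=(0,-1)$ in Lemma~\ref{oscillatory}, and hence only $\langle t\rangle^{-1}\log(2+|t|)$. The correct picture in Lemma~\ref{M_inverse_optimal} is:
\begin{itemize}
\item when $Q_4\neq0$, a remainder $\lambda^2(\log\lambda)^2$ from the coupling to $Q_4$;
\item when $Q_4=0$, a log-free $\lambda^2$ term, together with $\mathcal{M}_{6,0}^\pm=\lambda Q_6\Lambda^\pm Q_0$.
\end{itemize}
Consequently, in item (1) the $(6,4)$ and $(4,6)$ blocks and the $(6,6)$ remainder also produce $(\log|t|)^2$, not only the $(4,4)$ block.

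\textbf{Two slips.} The prefactor for $(6,\beta)$ with $\beta\in\{1,2,3\}$ is $\lambda^0$, not $\lambda^{-1}$. Also, $Q_6T_0=0$ gives $Q_6vG_2^0v=-b_0^{-1}Q_6U$, not an identity for $Q_6vG_2^0$; that detour is unnecessary anyway, since \eqref{lemma_projection_G} already gives $\|(b_0Q_6vG_2^0)(\cdot,y)\|_{L^2}\lesssim1$.
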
  

\subsection{The estimates for the case $Q_{5} \neq 0$}     
 This subsection is devoted to the proof of Theorem \ref{main_theorem_low_31}. Assume that zero is either a third-kind resonance or an eigenvalue with a $d$-wave resonance. Following the strategy from the previous sections, we only need to estimate the integral kernels $\mathcal{K}_{B}^+-\mathcal{K}_{B}^-$ and $\mathcal{N}_{B}^+-\mathcal{N}_{B}^-$ defined in \eqref{K_1} and \eqref{Lambda_1}, restricted to the operators $B^\pm(\lambda)=\mathcal{M}_{\alpha, \beta}^{\pm}(\lambda)$ where either $\alpha \in \{5,6\}$ or $\beta \in \{5,6\}$.  

Recall from Theorem \ref{thm:M_inverse} (III) that
\[
	 \begin{aligned}
&\mathcal{M}_{\alpha,\beta}^\pm(\lambda)= \mathcal{A}_{\alpha,\beta}(\lambda)+(\log \lambda)^{-2}\Gamma_{\alpha,\beta}^{\pm}(\lambda), \ (\alpha,\beta)=(5,5),(5,6),(6,5), (6,6),
	\end{aligned}
    \]
with
$$
\left\|\partial_\lambda^{\ell} \mathcal{A}_{\alpha,\beta}(\lambda)\right\|_{\mathbb{B}\left(L^2\right)}
+\big\|\partial_\lambda^{\ell} \Gamma_{\alpha, \beta}^{ \pm}(\lambda) \big\|_{\mathbb{B}\left(L^2\right)} \lesssim \lambda^{-\ell},
\ \ell=0,1,2,\ \text{and}\ \alpha,\beta=5,6.
$$
Moreover, for $ \alpha=5,6, \ 0 \le \beta \le 4$ and $\ell=0,1,2,$
\begin{align*}
\big\|\partial_\lambda^{\ell} \mathcal{M}_{\alpha,\beta}^\pm(\lambda)\big\|_{\mathbb{B}\left(L^2\right)}
+\big\|\partial_\lambda^{\ell} \mathcal{M}_{\beta,\alpha}^\pm(\lambda)\big\|_{\mathbb{B}\left(L^2\right)}
\lesssim \lambda^{-\ell}.
\end{align*}

Therefore it suffices to analyze $\mathcal{K}_{B}^+-\mathcal{K}_{B}^-$ and $\mathcal{N}_{B}^+-\mathcal{N}_{B}^-$ for the following cases:
\begin{itemize}
	\item $B^\pm(\lambda)=\mathcal{M}_{\alpha,\beta}^\pm(\lambda) \ \text{with} \ (\alpha,\beta)=(5,5),(5,6),(6,5),(6,6);$
	\item $ B^\pm(\lambda)= \mathcal{M}_{\alpha, \beta}^\pm(\lambda) \ \text{with} \  (\alpha,\beta)  \in \{ \mathbb{Z}^2 \mid \alpha=5,6,\ 0 \le \beta \le 4 \ \text{or} \ \beta=5,6,\ 0 \le \alpha \le 4\}.$
\end{itemize}

\begin{proposition}\label{third_merged_1} 
	Let $B^\pm(\lambda)=\mathcal{M}_{\alpha, \beta}^{\pm}(\lambda)$ with $\alpha,\beta \in \{5,6\}$. Then      
	\[      
	\sup _{x, y \in \mathbb{R}^2}\left|\left(\mathcal{K}_{B}^+-\mathcal{K}_{B}^-\right)(t,x,y)\right| +      
	\sup _{x, y \in \mathbb{R}^2}\left|\left(\mathcal{N}_{B}^+-\mathcal{N}_{B}^-\right)(t,x,y)\right|  \lesssim \frac{1}{\log(2+|t|)}.      
	\]      
\end{proposition}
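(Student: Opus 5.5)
The plan is to split $\mathcal{M}_{\alpha,\beta}^\pm(\lambda)=\mathcal{A}_{\alpha,\beta}(\lambda)+(\log\lambda)^{-2}\Gamma_{\alpha,\beta}^\pm(\lambda)$ as in Theorem~\ref{thm:M_inverse}(III) and treat the two pieces separately. Since $k_5=k_6=3$, the factor $\lambda^{4-k_\alpha-k_\beta}=\lambda^{-2}$ appears in \eqref{K_1} and \eqref{Lambda_1}. For $\alpha,\beta\in\{5,6\}$ we use Lemma~\ref{lemma_projection}(i), which gives $(Q_\alpha vR_0^\pm(\lambda^4))(\cdot,z)=\lambda^{-1}\Omega_{\alpha,\pm}(\lambda,\cdot,z)$ with the bound \eqref{lemma_projection_1}. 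The kernels then take the canonical form of Lemma~\ref{oscillatory} with $r=|x|+|y|$. The amplitude is
\[
\mathcal{E}^\pm(\lambda,x,y)=\lambda^{-2}e^{\mp i\lambda r}\widetilde{\chi}_1(\lambda)\bigl\langle B^\pm(\lambda)\Omega_{\beta,\pm}(\lambda,\cdot,y),\Omega_{\alpha,\mp}(\lambda,\cdot,x)\bigr\rangle ,
\]
which is of size $\lambda^{-2}$. This is the borderline $\sigma=2$ case, and that case is only summable with a logarithmic gain $\nu>1$.

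\textbf{The $\Gamma$-part.} Here we have the extra factor $(\log\lambda)^{-2}$. Leibniz's rule and H\"older's inequality give
\[
|\partial_\lambda^\ell\mathcal{E}^\pm|\lesssim\widetilde{\chi}_1(\lambda/2)\langle\lambda r\rangle^{-1/2}|\log\lambda|^{-2}\lambda^{-2-\ell}.
\]
Lemma~\ref{oscillatory}(ii) with $(\sigma,\nu)=(2,2)$ then bounds $\mathcal{K}^\pm$ and $\mathcal{N}^\pm$ separately by $(\log(2+|t|))^{-1}$, uniformly in $x,y$.

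\textbf{The $\mathcal{A}$-part.} This is the main obstacle. It has no logarithmic gain, so we must exploit the difference $+$ minus $-$ together with the fact that $\mathcal{A}_{\alpha,\beta}$ does not depend on the sign. We write
\[
\lambda^{-2}\bigl[\langle\mathcal{A}\,Q_\beta vR_0^+(\cdot,y),Q_\alpha vR_0^-(\cdot,x)\rangle-\langle\mathcal{A}\,Q_\beta vR_0^-(\cdot,y),Q_\alpha vR_0^+(\cdot,x)\rangle\bigr]
\]
and telescope it into terms each containing one factor $Q vR_0^+-Q vR_0^-$. By \eqref{reso_small} we have $R_0^+-R_0^-=(a_0^+-a_0^-)\lambda^{-2}+(a_1^+-a_1^-)G_2+(a_2^+-a_2^-)\lambda^2G_4+\cdots$, so the logarithm $b_0\log\lambda$ and $G_2^0$ cancel, since they are sign-independent. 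The cancellations $Q_{5}(x^\gamma v)=0$ for $|\gamma|\le2$ and $Q_6(x^\gamma v)=0$ for $|\gamma|\le3$ from Remark~\ref{cancellationQ} then kill the $\lambda^{-2}$ and $G_2$ terms. The difference is therefore $O(\lambda^2\langle z\rangle^{4})$ in $L^2$ for small $\lambda|z|$.

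For large $\lambda|z|$ we instead write the difference in oscillatory form with phases $e^{\pm i\lambda|z|}$, each piece obeying \eqref{lemma_projection_1}. The other factor is handled through Lemma~\ref{lemma_projection}(iii): $(Q_\alpha vR_0^\pm)=b_0Q_\alpha vG_2^0+\lambda^{m_\alpha}(\mathcal{T}_{\alpha,\pm}+\mathcal{T}_\alpha)$, with $b_0Q_\alpha vG_2^0$ bounded by $1$ by \eqref{lemma_projection_G}. Combining these should leave an amplitude of order $\lambda^{0}$, or at worst $\lambda^{-2}$ multiplied by a cutoff to $\lambda\lesssim\langle z\rangle^{-1}$. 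The case $\sigma=0$ of Lemma~\ref{oscillatory}(ii) would then give $\langle t\rangle^{-1}$.

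If this uniform telescoping fails because of the spatial growth, the fallback is to split according to $\lambda\max(|x|,|y|)\lessgtr1$. In the region $\lambda\max(|x|,|y|)\le1$ we use the cancelled expansion. In the region $\lambda\max(|x|,|y|)\ge1$ we use the bound $\lambda^{-2}\langle\lambda r\rangle^{-1/2}\lesssim\lambda^{-2}(\lambda r)^{-1/2}$ together with direct integration by parts, checking that the result is at least as good as $1/\log|t|$ uniformly in $x,y$. This is where I expect the real work: the region near $\lambda\sim r^{-1}$ must be controlled without any loss in $x,y$.

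Summing the $\Gamma$-part and $\mathcal{A}$-part estimates over $(\alpha,\beta)\in\{5,6\}^2$ yields the proposition.
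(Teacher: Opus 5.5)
Your overall scheme (split $\mathcal M^\pm_{\alpha,\beta}=\mathcal A_{\alpha,\beta}+(\log\lambda)^{-2}\Gamma^\pm_{\alpha,\beta}$, use $(\sigma,\nu)=(2,2)$ for the $\Gamma$-part, and telescope the sign-independent $\mathcal A$-part so that one factor is $Qv(R_0^+-R_0^-)$) is the paper's scheme. However, both halves have a concrete gap, and the same tool closes both.

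\textbf{The $\Gamma$-part.} Your power count is wrong. With Lemma~\ref{lemma_projection}(i), each projected resolvent contributes a factor $\lambda^{-1}$. After substituting into \eqref{K_1}, the amplitude is $\lambda^{-2}\cdot\lambda^{-1}\cdot\lambda^{-1}(\log\lambda)^{-2}\langle\Gamma^\pm\Omega_{\beta,\pm},\Omega_{\alpha,\mp}\rangle$. This has size $\lambda^{-4}|\log\lambda|^{-2}$, not $\lambda^{-2}|\log\lambda|^{-2}$. That is $\sigma=4$, which lies outside Lemma~\ref{oscillatory}; already the low-frequency integral $\int_0\lambda^{-3}|\log\lambda|^{-2}\,d\lambda$ diverges. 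Part (i) uses only $Q_\alpha v=0$. For $\alpha\in\{5,6\}$ you must use part (iii):
\[
Q_\alpha vR_0^\pm(\cdot,z)=b_0Q_\alpha vG_2^0(\cdot,z)+\lambda^{m_\alpha}(\mathcal T_\alpha+\mathcal T_{\alpha,\pm})(\lambda,\cdot,z),
\]
which is uniformly $O(1)$ by \eqref{lemma_projection_G}--\eqref{lemma_projection_1}. Inserting this on both sides gives an amplitude $\lambda^{-2}|\log\lambda|^{-2}$ for the $G_2^0$--$G_2^0$ pairing, with $r=0$. Every other pairing picks up an extra factor $\lambda^{m}$. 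Only then do $(\sigma,\nu)=(2,2)$ and $(1,2)$, respectively, apply. This is the paper's argument.

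\textbf{The $\mathcal A$-part.} You correctly see that you need a positive power of $\lambda$ from the difference factor, uniformly in $z$. You do not obtain it, and the routes you sketch cannot give it.
\begin{itemize}
\item The small-$\lambda|z|$ expansion gives $O(\lambda^2\langle z\rangle^4)$, so the amplitude is $O(\langle z\rangle^4)$. Then $\int_0^{\langle z\rangle^{-1}}\lambda\langle z\rangle^4\,d\lambda\sim\langle z\rangle^2$, which is not uniform.
\item For $\lambda|z|\gtrsim1$, the raw difference has size $\lambda^{-2}\langle\lambda z\rangle^{-1/2}$. Part (i) only improves it to $\lambda^{-1}(\Omega_{\alpha,+}-\Omega_{\alpha,-})$.
\item An amplitude of the form ``$\lambda^{-2}$ times a cutoff to $\lambda\lesssim\langle z\rangle^{-1}$'' leads to the divergent integral $\int_0\lambda^{-1}\,d\lambda$.
\end{itemize}
Again part (iii) settles this at once. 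Since $b_0Q_\alpha vG_2^0$ and $\mathcal T_\alpha$ are sign-independent, and the $\log\lambda$ term is absent for $\alpha=5,6$, we have
\[
Q_\alpha v\bigl(R_0^+(\lambda^4)-R_0^-(\lambda^4)\bigr)(\cdot,z)=\lambda^{m_\alpha}\bigl(\mathcal T_{\alpha,+}-\mathcal T_{\alpha,-}\bigr)(\lambda,\cdot,z),\qquad m_5=1,\ m_6=2,
\]
for all $z$, with no region splitting. The uniformity comes from the third-order Taylor expansion built into that lemma. Pairing this with the other factor, which is $O(1)$, gives amplitudes of size at most $\lambda^{-1}$. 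Lemma~\ref{oscillatory}(ii) with $(\sigma,\nu)=(1,0)$ then gives $\langle t\rangle^{-1/2}$.

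Your predicted $O(\lambda^0)$ amplitude and $\langle t\rangle^{-1}$ decay hold only for $\mathcal A_{6,6}$. The reason is that $Q_5$ does not annihilate cubic moments, so the term $\lambda^2(a_2^+-a_2^-)Q_5vG_4(\cdot,z)$ has size $\lambda^2\langle z\rangle$, which is only $\lesssim\lambda$ uniformly. Since $\langle t\rangle^{-1/2}\lesssim(\log(2+|t|))^{-1}$, this weaker bound still suffices for the proposition.
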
      
\begin{proof}   
	For $\alpha, \beta \in \{5,6\}$, the operators $B^\pm(\lambda)$ are decomposed into a $\pm$-independent principal term $\mathcal{A}_{\alpha,\beta}(\lambda)$ and a $\pm$-dependent remainder: $(\log \lambda)^{-2}\Gamma_{\alpha,\beta}^{ \pm}(\lambda)$. We analyze the integral kernels generated by these two parts separately.  
	Recall from Lemma \ref{lemma_projection} (iii) that  
	\begin{align}\label{ex-Q56}  
		\bigl(Q_j v R_0^\pm(\lambda^4)\bigr)(\cdot,z) &= (b_0 Q_j v G_2^0)( \cdot,z) +\lambda^{m_j}\bigl(\mathcal{T}_{j}(\lambda, \cdot,z)+\mathcal{T}_{j,\pm}(\lambda, \cdot,z)\bigr), \quad j=5,6,
	\end{align}  
	where $m_5=1$ and $m_6=2$. 	By \eqref{lemma_projection_G}--\eqref{lemma_projection_1},  for $j=5,6,$  one has $\|Q_jvG_2^0(\cdot, z)\|_{L^2}\lesssim 1$ and
	\begin{equation*}
	\begin{split}       
	\big\| \partial_\lambda^\ell \bigl( e^{\mp i\lambda|z|}\mathcal{T}_{j,\pm}(\lambda, \cdot, z) \bigr) \big\|_{L^2}& \lesssim \lambda^{-\ell} \langle \lambda z \rangle^{-1/2}, \ \ 
	\big\| \partial_\lambda^\ell \mathcal{T}_j(\lambda, \cdot, z) \big\|_{L^2} \lesssim \lambda^{-\ell}, \ \   \ell=0,1,2.
	\end{split}    
	\end{equation*}        
	By \eqref{ex-Q56}, the kernels $\mathcal{K}_B^\pm$ and $\mathcal{N}_B^\pm$ defined in \eqref{K_1} and \eqref{Lambda_1} decompose into finite sums of canonical oscillatory integrals of the form:
	\begin{align}\label{3-4res-integral}
	\frac{2}{\pi i}\int_0^\infty \lambda \cos(t\lambda^2) e^{\pm i\lambda r(x,y)} \mathcal{E}_{fg}^\pm(\lambda, x, y) \, d\lambda \quad \text{and} \quad  
	\frac{2}{\pi i t} \int_0^\infty \lambda^{-1} \sin(t\lambda^2) e^{\pm i\lambda r(x,y)} \mathcal{E}_{fg}^\pm(\lambda, x, y) \, d\lambda,  
	\end{align}
	where the common amplitude factor $	\mathcal{E}_{fg}^\pm(\lambda, x, y) $ is defined by  
\begin{equation}\label{eq:amplitude_def}  
		\mathcal{E}_{fg}^\pm(\lambda, x, y) = e^{\mp i\lambda r(x,y)} \lambda^\gamma \widetilde{\chi}_1(\lambda) \big\langle B^\pm(\lambda) f(\lambda, \cdot, y), g(\lambda, \cdot, x) \big\rangle.    
	\end{equation}    
	Here, $f, g \in\{b_0 Q_j v G_2^0, \mathcal{T}_{j,\pm}, \mathcal{T}_{j}\}$ for $j\in\{5,6\}$. Moreover, $r(x,y)$ isolates the inherent spatial oscillations of $f$ and $g$: it contributes $|y|$ (resp. $|x|$) if the state is $\mathcal{T}_{j,\pm}(\lambda, \cdot, y)$ (resp. $\mathcal{T}_{j,\mp}(\lambda, \cdot, x)$), and contributes $0$ otherwise.   The exponent $\gamma$ is given by $\gamma = 4 - k_\alpha - k_\beta + p=-2+p$ for $\alpha,\beta \in \{5,6\}$, where $p \ge 0$ is the total $\lambda$-power contributed by $f$ and $g$. For example if $(f,g)=(b_0Q_\beta v G_2^0, \mathcal{T}_{\alpha,\mp}),$  then $r=|x|$, $p=m_\alpha.$
	
	\vspace{1.5mm}      
\textbf{Case 1: The $\pm$-independent principal terms $\mathcal{A}_{\alpha,\beta}(\lambda)$.} \\  
	For these terms, $B^+(\lambda)=B^-(\lambda) := B(\lambda)$. The kernel difference $\mathcal{K}_B^+-\mathcal{K}_B^-$ can be  
    rewritten as a sum of terms involving
    the resolvent difference:  
	\begin{align*} 
	&\bigl[R_0^+ (\lambda^4)v Q_\alpha B(\lambda) Q_\beta v R_0^+(\lambda^4)\bigr](x,y) - \bigl[R_0^- (\lambda^4)v Q_\alpha B(\lambda) Q_\beta v R_0^-(\lambda^4)\bigr](x,y)  \\  
	&= \bigl[(R_0^+(\lambda^4)-R_0^-(\lambda^4))v  Q_\alpha B(\lambda) Q_\beta v R_0^+(\lambda^4) \bigr](x,y)+\big[ R_0^-(\lambda^4) v  Q_\alpha B(\lambda) Q_\beta v (R_0^+(\lambda^4)-R_0^-(\lambda^4))\bigr](x,y).  
	\end{align*}  
	By \eqref{ex-Q56}, we obtain that
		\begin{align}\label{difference5,6}
			\bigl[Q_\alpha v\bigl(R_0^+(\lambda^4)-R_0^-(\lambda^4)\bigr)\bigr](\cdot, z)&=\lambda^{m_\alpha}\bigl(\mathcal{T}_{\alpha,+}(\lambda, \cdot,z)-\mathcal{T}_{\alpha,-}(\lambda,\cdot,z)\bigr).
	\end{align}  
 Consequently, in the amplitude factor \eqref{eq:amplitude_def}, at least one state in the pairing $(f,g)$ must be a $\mathcal{T}_{j,\pm}$ remainder with $j\in\{5,6\}$, which guarantees a  factor $\lambda^{m_j}$ ($m_j \ge 1$).   
	Thus, we deduce that $p \ge 1$, yielding $\gamma \ge -1$.  
	
	The most singular amplitude factor \eqref{eq:amplitude_def} occurs  when $p=1$. Since $m_5=1$ and $m_6=2$, this requires $\mathcal{T}_{5,+}-\mathcal{T}_{5,-}$ to be paired with $b_0 Q_j v G_2^0$  ($j\in\{5,6\}$)  (e.g., $f = \mathcal{T}_{5,\pm}$, $g = b_0 Q_j v G_2^0$, yielding $r(x,y) = |y|$ and $B(\lambda)=\mathcal{A}_{j,5}(\lambda)$). Such amplitude factor satisfies:  
	\begin{align*}   
	\bigl|\partial_\lambda^{\ell} \mathcal{E}_{fg}^\pm(\lambda,x,y) \bigr| \lesssim \langle \lambda r(x,y)\rangle^{-1/2} \lambda^{-1-\ell} \widetilde{\chi}_1 (\lambda/2), \quad \ell=0,1,2.      
	\end{align*}    
	Applying Lemma \ref{oscillatory} (ii) with parameters $(\sigma,\nu)=(1,0)$  yields the bound $\langle t\rangle^{-1/2}$.   
	
	In particular, when we take $B(\lambda)=\mathcal{A}_{6,6}(\lambda)$, the expansion \eqref{difference5,6} guarantees a factor $\lambda^2$ (note that $m_6=2$), yielding $\gamma \ge 0.$ Hence, we can apply Lemma \ref{oscillatory}(ii) with  $(\sigma,\nu)=(0,0)$  to obtain that 
	\begin{equation}\label{mathcalA66}
	\sup _{x, y \in \mathbb{R}^2}\left|\left(\mathcal{K}_{B}^+-\mathcal{K}_{B}^-\right)(t,x,y)\right| +      
	\sup _{x, y \in \mathbb{R}^2}\left|\left(\mathcal{N}_{B}^+-\mathcal{N}_{B}^-\right)(t,x,y)\right|  \lesssim \langle t\rangle^{-1}, \ \text{with} \ B^\pm(\lambda)=\mathcal{A}_{6,6}(\lambda).
	\end{equation}
	
	\vspace{1.5mm}      
	\textbf{Case 2: The $\pm$-dependent remainder terms $(\log\lambda)^{-2}\Gamma_{\alpha,\beta}^\pm(\lambda)$.} \\  
	Since these operators depend on $\pm$, the algebraic identity \eqref{difference5,6} is inapplicable.  Thus we evaluate the kernels directly using the expansion \eqref{ex-Q56}. The dominant contribution to the decay rate arises from $f = b_0 Q_\beta v G_2^0$ and $g = b_0 Q_\alpha v G_2^0$.  
	
	In this scenario, no $\lambda$-powers are gained ($p=0$, hence $\gamma = -2$), and  $r(x,y)=0$. Incorporating the operator norm $\|\partial_\lambda^\ell \Gamma_{\alpha,\beta}^\pm\| \lesssim \lambda^{-\ell}$, the corresponding  amplitude factor obeys:  
	\[    
	|\partial_\lambda^\ell \mathcal{E}_{fg}^\pm(\lambda, x, y)| \lesssim |\log\lambda|^{-2}\lambda^{-2-\ell} \widetilde{\chi}_1(\lambda/2), \quad \ell=0,1,2.    
	\]    
	Applying Lemma \ref{oscillatory}(ii) with  $(\sigma,\nu)=(2,2)$  dictates the slowest  decay rate $(\log(2+|t|))^{-1}$. All other cross-pairings contain remainder powers ($p \ge 1$), which yield the bound $\langle t \rangle^{-1/2} (\log(2+|t|))^{-2}$ by  Lemma \ref{oscillatory}(ii) with parameters $(\sigma,\nu)=(1,2).$   
	
	 Summing the bounds from Case 1 and Case 2 concludes the proof. In particular, we derive
for	$B^\pm(\lambda)=\mathcal{M}_{\alpha, \beta}^{\pm}(\lambda)$ with $\alpha,\beta \in \{5,6\}$, 
	 \[
	 	\begin{aligned}
	 \left(\mathcal{K}_{B}^+-\mathcal{K}_{B}^-\right)(t,x,y)&=\frac{2}{\pi i} K_{\alpha,\beta}(t,x,y)+O\bigl(\langle t \rangle^{-\frac{1}{2}} \bigr),\\
  \left(\mathcal{N}_{B}^+-\mathcal{N}_{B}^-\right)(t,x,y)&=\frac{2}{\pi i} N_{\alpha,\beta}(t,x,y)+O\bigl(\langle t \rangle^{-\frac{1}{2}} \bigr),
  \end{aligned}
     \]
	 where 
	 \begin{equation}\label{kn-sharp}
	 \begin{split}
	 	K_{\alpha,\beta}(t,x,y)&=b_0^2\int_0^\infty  \cos(t\lambda^2) \lambda^{-1}(\log\lambda)^{-2}\widetilde{\chi}_1(\lambda) \big\langle \bigl(\Gamma_{\alpha,\beta}^+
        -\Gamma_{\alpha,\beta}^-\bigr)(\lambda)(Q_\beta v G_2^0)( \cdot, y), (Q_\alpha v G_2^0)( \cdot, x) \big\rangle d\lambda,\\
		N_{\alpha,\beta}(t,x,y)&=\frac{b_0^2}{t}\int_0^\infty  \sin(t\lambda^2) \lambda^{-3}(\log\lambda)^{-2}\widetilde{\chi}_1(\lambda) \big\langle \bigl(\Gamma_{\alpha,\beta}^+
        -\Gamma_{\alpha,\beta}^-\bigr)(\lambda)(Q_\beta v G_2^0)( \cdot, y), (Q_\alpha v G_2^0)( \cdot, x) \big\rangle d\lambda,
	 	\end{split}
	 \end{equation}
	 satisfying $|K_{\alpha,\beta}(t,x,y)|+|N_{\alpha,\beta}(t,x,y)|\lesssim(\log(2+|t|))^{-1},$ uniformly in $x,y$.
\end{proof}

\begin{proposition}\label{third_merged_2} 
	Let $B^\pm(\lambda)=\mathcal{M}_{\alpha,\beta}^{\pm}(\lambda)$ with $\alpha=5,6$, $0\le\beta\le4$, or $0\le\alpha\le4$, $\beta=5,6$. Then
	\[
\sup_{x,y\in\mathbb{R}^2}\left|\mathcal{K}_{B}^\pm(t,x,y)\right|
	+ \sup_{x,y\in\mathbb{R}^2}\left|\mathcal{N}_{B}^\pm(t,x,y)\right|
	\lesssim{\langle t\rangle^{-1/2}} \log(2+|t|).
	\]
\end{proposition}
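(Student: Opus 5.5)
We argue by symmetry, treating only $\alpha\in\{5,6\}$, $0\le\beta\le4$; the transposed pairs are identical after exchanging the roles of $x$ and $y$. By Theorem~\ref{thm:M_inverse}(III), $\|\partial_\lambda^\ell\mathcal M_{\alpha,\beta}^\pm(\lambda)\|\lesssim\lambda^{-\ell}$. Since $k_\alpha=3$, the prefactor in \eqref{K_1}--\eqref{Lambda_1} is $\lambda^{4-k_\alpha-k_\beta}=\lambda^{1-k_\beta}$. We will bound $\mathcal K_B^\pm$ and $\mathcal N_B^\pm$ separately; no $\pm$-cancellation is needed, which is consistent with the weaker rate claimed.

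For the $x$-side we use Lemma~\ref{lemma_projection}(iii):
\[
(Q_\alpha vR_0^\mp(\lambda^4))(\cdot,x)=(b_0Q_\alpha vG_2^0)(\cdot,x)+\lambda^{m_\alpha}\bigl(\mathcal T_\alpha+\mathcal T_{\alpha,\mp}\bigr)(\lambda,\cdot,x).
\]
Each piece is $\lambda$-bounded and carries no logarithm. For the $y$-side the choice depends on $\beta$:
\begin{itemize}
\item for $\beta=0,1$ we use the representation $\lambda^{-2+k_\beta}\Omega_{\beta,\pm}$ from Lemma~\ref{lemma_projection}(i);
\item for $\beta=2,3,4$ we use $\mathcal J_{\beta,\pm}$ from Lemma~\ref{lemma_projection}(ii), which satisfies \eqref{J} with a $|\log\lambda|$ loss.
\end{itemize}
In every case the total $\lambda$-power of the integrand becomes $\lambda^{-1}$ times bounded factors. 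For $\beta=0,1$ we get $\lambda^{1-k_\beta}\lambda^{-2+k_\beta}=\lambda^{-1}$. For $\beta\ge2$ we have $k_\beta\ge1$, so we get $\lambda^{1-k_\beta}$, i.e. $\lambda^{0}$ or $\lambda^{-1}$ (for $\beta=4$), times $|\log\lambda|$.

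Inserting these into \eqref{K_1}--\eqref{Lambda_1} produces canonical integrals. The phase is $r\in\{|y|,|x|+|y|\}$, according to whether the $x$-state is the non-oscillatory $b_0Q_\alpha vG_2^0$ or $\mathcal T_\alpha$, or the oscillatory $\mathcal T_{\alpha,\mp}$. The amplitude is
\[
\mathcal E^\pm=e^{\mp i\lambda r}\lambda^{\gamma}\widetilde\chi_1\langle B^\pm f,g\rangle,\qquad \gamma\ge-1 .
\]
By Leibniz's rule, H\"older's inequality, \eqref{lemma_projection_G}, \eqref{lemma_projection_T}, \eqref{lemma_projection_1} and \eqref{J}, we obtain
\[
|\partial_\lambda^\ell\mathcal E^\pm|\lesssim\langle\lambda r\rangle^{-1/2}\lambda^{-1-\ell}|\log\lambda|\,\widetilde\chi_1(\lambda/2),\qquad \ell=0,1,2.
\]
Here the decay $\langle\lambda r\rangle^{-1/2}$ comes from the oscillatory states ($\langle\lambda y\rangle^{-1/2}$, and $\langle\lambda x\rangle^{-1/2}$ when present), and $\langle\lambda y\rangle^{-1/2}\langle\lambda x\rangle^{-1/2}\lesssim\langle\lambda(|x|+|y|)\rangle^{-1/2}$. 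Applying Lemma~\ref{oscillatory}(ii) with $(\sigma,\nu)=(1,-1)$ then gives $\langle t\rangle^{-1/2}\log(2+|t|)$, uniformly in $x,y$. Terms with $\gamma\ge0$ are handled with $\sigma\le0$ and decay faster.

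The main obstacle is bookkeeping for $\beta=4$ in the second-kind sub-structure. There $\mathcal J_{4,\pm}$ has only a single $|\log\lambda|$; but if $\mathcal M_{\alpha,4}$ inherited extra logarithms analogous to \eqref{M44-1}, the exponent $\nu$ would worsen. Theorem~\ref{thm:M_inverse}(III) asserts the clean bound $\lambda^{-\ell}$ for these off-diagonal blocks, so I will rely on it. If some block nevertheless carried $(\log\lambda)^k$, the same lemma with $\nu=-1-k$ would still yield a $\langle t\rangle^{-1/2}$-type bound up to logarithms. I would first recheck that no such factor occurs, so that exactly one logarithm survives and the stated rate follows.
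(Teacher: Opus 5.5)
Your proposal is correct and follows the paper's argument. The $Q_5$/$Q_6$ side is expanded via Lemma~\ref{lemma_projection}(iii), the $Q_\beta$ side via parts (i)--(ii), and the operator bound $\|\partial_\lambda^\ell\mathcal M_{\alpha,\beta}^\pm(\lambda)\|_{\mathbb B(L^2)}\lesssim\lambda^{-\ell}$ from Theorem~\ref{thm:M_inverse}(III) is used as stated. Lemma~\ref{oscillatory}(ii) with $(\sigma,\nu)=(1,-1)$, where the single logarithm comes from $\mathcal J_{4,\pm}$, then yields the rate.

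The only difference is cosmetic. The paper uses the $\Omega_{\beta,\pm}$ representation for all $\beta\le3$, giving $(\sigma,\nu)=(1,0)$, whereas you use $\mathcal J_{\beta,\pm}$ for $\beta=2,3$. That choice is equally valid and is already covered by your uniform amplitude bound $\lambda^{-1-\ell}|\log\lambda|$, so your closing worry about extra logarithms is unnecessary.
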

\begin{proof}
The proof follows the same argument as in Proposition
\ref{third_merged_1}. Indeed, using \eqref{ex-Q56} for the
$Q_5$- and $Q_6$-projected free resolvents, together with the
corresponding estimates for the $Q_\beta$-projected resolvent
($0\leq\beta\leq4$) (see Lemma \ref{lemma_projection} (i)-(ii)), one reduces the kernels to the canonical
oscillatory integrals in \eqref{3-4res-integral}. The resulting
amplitude factors satisfy the bounds required by Lemma
\ref{oscillatory}(ii) with $(\sigma,\nu)=(1,0)$ when
$0\leq\beta\leq3$, and with $(\sigma,\nu)=(1,-1)$ when
$\beta=4$, the latter logarithmic loss coming from
$\mathcal J_{4,\pm}$. Hence the desired
$\langle t\rangle^{-1/2}\log(2+|t|)$ bound follows. The symmetric case is treated in the same way.
\end{proof}

	It remains to establish the sharpness of the $(\log|t|)^{-1}$ bound.
	Synthesizing the preceding analysis, we conclude that for $|t|\gg 1$, 
the decay rate is determined by the kernels $\mathcal{K}_{\alpha,\beta}(t,x,y)$ and $\mathcal{N}_{\alpha,\beta}(t,x,y)$ defined in \eqref{kn-sharp}.
	To identify their leading-order behavior, we need
	a more precise decomposition of
	\(\Gamma_{\alpha,\beta}^\pm(\lambda)\).    Returning to the proof of Theorem~\ref{thm:M_inverse} in Section~\ref{subsec:proof_inverse}, we combine \eqref{Gamma55}, \eqref{Gamma56} \eqref{Gamma65}, and \eqref{Gamma66}
	to derive the refined expansion:  
	\[    
	\Gamma_{\alpha,\beta}^{\pm}(\lambda) = \mathcal{L}_{\alpha,\beta}^\pm + (-\log \lambda)^{-1/2} \Lambda^\pm(\lambda),
	\]    
where the leading terms are given by  
	\begin{equation}\label{mathcal_L} 
		\begin{aligned}  
			\mathcal{L}_{5,5}^{\pm} &= -a_3^\pm b_1^{-2}(Q_5 v G_6 v Q_5)^{-1}, \ \  
			&&\mathcal{L}_{5,6}^{\pm} = -a_3^\pm \mathcal{D}_{5,5} (Q_5 v G_6^0 v Q_6) \mathcal{D}_{6,6}, \\  
			\mathcal{L}_{6,5}^{\pm} &= -a_3^\pm \mathcal{D}_{6,6} (Q_6 v G_6^0 v Q_5) \mathcal{D}_{5,5}, \ \  
			&&\mathcal{L}_{6,6}^{\pm} = a_3^\pm b_1 \mathcal{D}_{6,6} (Q_6 v G_6^0 v Q_5) \mathcal{D}_{5,5} (Q_5 v G_6^0 v Q_6) \mathcal{D}_{6,6},  
		\end{aligned}  
	\end{equation}  
	with $\mathcal{D}_{5,5} = -(b_1 Q_5 v G_6 v Q_5)^{-1}$ and $\mathcal{D}_{6,6} = (b_1 Q_6 v G_6^0 v Q_6)^{-1}$.
	
	This decomposition isolates the dominant singularity:    
	\begin{itemize}
	\item Replacing $\Gamma_{\alpha,\beta}^{\pm}(\lambda)$ with $(-\log\lambda)^{-1/2}\Lambda^{\pm}(\lambda)$ in \eqref{kn-sharp} yields an improved decay rate of $(\log(2+|t|))^{-3/2}$, obtained via Lemma~\ref{oscillatory} (ii) with $(\sigma, \nu) = (2, 5/2)$.
    \vskip0.2cm
	\item The term $\mathcal{L}_{\alpha,\beta}^{\pm}$ in $\Gamma_{\alpha,\beta}^{\pm}(\lambda)$  produces the slowest decay $(\log(2+|t|))^{-1}$, given by Lemma~\ref{oscillatory} (ii) with $(\sigma, \nu) = (2, 2)$. And by Remark~\ref{remark:osci}, this rate comes from low-frequency region $|t|\lambda^2 \ll 1$, while high-frequency region $|t|\lambda^2 \gtrsim 1$ yields the faster $(\log(2+|t|))^{-2}$ decay.
\end{itemize}

By isolating this primary low-frequency contribution (specifically, the regime $|t|\lambda^2 \ll 1$), we deduce the asymptotic expansion in $\mathbb{B}(L^1, L^\infty)$:      
\[      
	\begin{aligned}      
		\cos (t \sqrt{H}) P_{\mathrm{ac}}(H)\chi_1(H) &= -\frac{2}{\pi i}\sum_{\alpha,\beta = 5}^{6} T_{\mathcal{K}_{\alpha,\beta}}(t) + {O}\Big((\log|t|)^{-3/2}\Big), \\      
		\frac{\sin (t \sqrt{H})}{t\sqrt{H}} P_{\mathrm{ac}}(H)\chi_1(H) &= -\frac{2}{\pi i}\sum_{\alpha,\beta = 5}^{6} T_{\mathcal{N}_{\alpha,\beta}}(t) + {O}\Big((\log|t|)^{-3/2}\Big),   
	\end{aligned}      
\]    
where $T_{\mathcal{K}_{\alpha,\beta}}(t)$ and $T_{\mathcal{N}_{\alpha,\beta}}(t)$ are integral operators defined by the respective kernels:    
\[
\begin{aligned}      
	\mathcal{K}_{\alpha,\beta}(t,x,y) &= c_{\alpha,\beta}(x,y)\int_0^{\infty} \cos(t\lambda^2) \lambda^{-1} \widetilde{\chi}_1(\lambda)\chi_1(t \lambda^2) (\log \lambda)^{-2} \, d\lambda, \\      
	\mathcal{N}_{\alpha,\beta}(t,x,y) &= {c_{\alpha,\beta}(x,y)}{t}^{-1} \int_0^{\infty} \sin(t\lambda^2) \lambda^{-3} \widetilde{\chi}_1(\lambda) \chi_1(t \lambda^2) (\log \lambda)^{-2} \, d\lambda,   
\end{aligned}
\]
with  $c_{\alpha,\beta}(x,y) = b_0^2 \big\langle (\mathcal{L}_{\alpha,\beta}^+ -\mathcal{L}_{\alpha,\beta}^-) (Q_\beta v G_2^0)(\cdot, y), (Q_\alpha v G_2^0)(\cdot, x) \big\rangle$.   
Here the summation over $\alpha, \beta \in \{5,6\}$ accommodates the third-kind resonance scenario, where $Q_6 = 0$ implies $c_{\alpha,\beta} \equiv 0$ for all pairs $(\alpha,\beta) \neq (5,5)$.  
  
Thus, establishing the optimal bounds in \eqref{optimality_31} reduces to proving the following Proposition \ref{sharp-3-4}.

\begin{proposition}\label{sharp-3-4}  
	For $|t| \gg 1$, the leading operators satisfy the lower bounds:  
\begin{equation}\label{low_bound_weighted_1}  
		\Big\|\sum_{\alpha,\beta = 5}^{6}T_{ \mathcal{K}_{\alpha,\beta}}(t)\Big\|_{L^1 \to L^\infty} \gtrsim \frac{1}{\log |t|}, \quad   
		\Big\|\sum_{\alpha,\beta = 5}^{6} T_{\mathcal{N}_{\alpha,\beta}}(t)\Big\|_{L^1 \to L^\infty} \gtrsim \frac{1}{\log |t|}.  
	\end{equation}  
\end{proposition}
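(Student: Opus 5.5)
The proof splits into a time-integral part and a spatial part. Since each kernel factorizes as $c_{\alpha,\beta}(x,y)$ times a purely temporal integral, set
\[
I_c(t)=\int_0^\infty \cos(t\lambda^2)\lambda^{-1}\widetilde\chi_1(\lambda)\chi_1(t\lambda^2)(\log\lambda)^{-2}\,d\lambda,\qquad
I_s(t)=t^{-1}\int_0^\infty \sin(t\lambda^2)\lambda^{-3}\widetilde\chi_1(\lambda)\chi_1(t\lambda^2)(\log\lambda)^{-2}\,d\lambda .
\]
Then $\sum T_{\mathcal K_{\alpha,\beta}}(t)=I_c(t)\,\mathcal C$ and $\sum T_{\mathcal N_{\alpha,\beta}}(t)=I_s(t)\,\mathcal C$, where $\mathcal C$ is the operator with kernel $c(x,y)=\sum_{\alpha,\beta}c_{\alpha,\beta}(x,y)$. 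The lower bounds therefore follow from two facts: $|I_c(t)|,|I_s(t)|\gtrsim(\log|t|)^{-1}$, and $\sup_{x,y}|c(x,y)|>0$.

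\textbf{Temporal step.} Substitute $u=|t|\lambda^2$. For $|t|\gg1$ the cutoff $\widetilde\chi_1$ equals $1$ on the support of $\chi_1(u)$, which is $u\le 2\lambda_0$. This gives
\[
I_c(t)=\tfrac12\int_0^{2\lambda_0}\cos(u)\,\chi_1(u)\,u^{-1}\Bigl(\tfrac12\log u-\tfrac12\log|t|\Bigr)^{-2}du .
\]
Since $\lambda_0\ll1$, we have $\cos u\ge\tfrac12$ on this range, so the integrand is positive. Using $\int_0^{\epsilon}u^{-1}(\log|t|-\log u)^{-2}\,du=(\log|t|-\log\epsilon)^{-1}$, we get $I_c(t)\sim(\log|t|)^{-1}$ with an explicit leading constant. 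For $I_s$, note that $t^{-1}\lambda^{-3}d\lambda=\tfrac12\operatorname{sgn}(t)\,u^{-2}du$. Combined with $\sin(\operatorname{sgn}(t)u)/u\ge\tfrac12\operatorname{sgn}(t)$ near $0$, this yields $|I_s(t)|\sim(\log|t|)^{-1}$ in the same way.

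\textbf{Spatial step (main obstacle).} We must show $c\not\equiv0$. By \eqref{mathcal_L}, each $\mathcal L^\pm_{\alpha,\beta}$ equals $a_3^\pm$ times a $\pm$-independent operator. Hence $\mathcal L^+_{\alpha,\beta}-\mathcal L^-_{\alpha,\beta}=(a_3^+-a_3^-)\,\mathcal L_{\alpha,\beta}$. Here $a_3^+-a_3^-\neq0$, since $a_3^\pm$ are non-real conjugate-type constants coming from $J_6$.

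The next step is to factor the sum as
\[
\textstyle\sum_{\alpha,\beta}\mathcal L_{\alpha,\beta}=-\mathcal Y\,\mathcal D_{5,5}\,\mathcal Y^*,
\]
with $\mathcal D_{5,5}=-(b_1Q_5vG_6vQ_5)^{-1}$ and $\mathcal Y=Q_5-b_1\mathcal D_{6,6}Q_6vG_6^0vQ_5$ (suitably adjusted; the key point is that $\mathcal L_{6,6}$ is the square term completing the product). In the third-kind case $Q_6=0$ and only $\mathcal L_{5,5}$ survives. This gives
\[
c(x,y)=\kappa\,\bigl\langle (Q_5vG_6vQ_5)^{-1}\,\mathcal Y^*(Q vG_2^0)(\cdot,y),\ \mathcal Y^*(QvG_2^0)(\cdot,x)\bigr\rangle ,\qquad \kappa\neq0,\quad Q=Q_5+Q_6 .
\]
Assume $Q_5vG_6vQ_5$ is definite on $Q_5L^2$; this definiteness is used in Theorem~\ref{thm:M_inverse}. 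Then $c\equiv0$ would force $\mathcal Y^*(QvG_2^0)(\cdot,x)=0$ for every $x$. Pairing with $\psi\in Q_5L^2$, $\psi\neq0$, we would obtain that $G_2^0v\,\mathcal Y\psi$ vanishes identically.

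Applying $\Delta^2$ and using $\Delta^2G_2^0=8\pi\delta_0$ then gives $v\mathcal Y\psi=0$. By the argument behind \eqref{psi1}--\eqref{psi2} (elements of $S_3L^2$ have the form $Uv\phi$), this yields $\mathcal Y\psi=0$. But $Q_5\mathcal Y\psi=\psi$, so $\psi=0$, a contradiction. Hence $c(x_0,y_0)\neq0$ for some $(x_0,y_0)$.

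Because $c$ is continuous (each $(QvG_2^0)(\cdot,x)$ is $L^2$-continuous in $x$), testing against approximate identities at $x_0,y_0$ gives $\|\mathcal C\|_{L^1\to L^\infty}\ge|c(x_0,y_0)|>0$. Combined with the temporal step, this proves \eqref{low_bound_weighted_1}. The hardest part is the algebraic factorization of $\sum\mathcal L_{\alpha,\beta}$ into a definite quadratic form. If that factorization fails, the fallback is to restrict $x,y$ so that only the $Q_5$-pairing dominates: use the decay of $(Q_6vG_2^0)(\cdot,x)$, which is $O(|x|^{-1})$ because of the extra cubic cancellation, and compare with $(Q_5vG_2^0)$, whose tail is generically of order $|x|^{0}$ in the quadratic moments. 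Then argue non-vanishing from $\mathcal L_{5,5}$ alone.
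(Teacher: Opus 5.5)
Your proof is correct, but the spatial half takes a different route from the paper. The temporal step matches the paper's: the substitution $u=|t|\lambda^2$, positivity of $\cos u$ and of $\sin u/u$ on the support of $\chi_1(u)$, and the identity $\int_0^\epsilon u^{-1}(\log|t|-\log u)^{-2}\,du=(\log|t|-\log\epsilon)^{-1}$.

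For the spatial factor, your factorization is exact and needs no adjustment beyond a scalar. From \eqref{mathcal_L} one checks that $\sum_{\alpha,\beta}\mathcal L^\pm_{\alpha,\beta}=a_3^\pm b_1^{-1}\,\mathcal Y\mathcal D_{5,5}\mathcal Y^*$, with exactly your $\mathcal Y=Q_5-b_1\mathcal D_{6,6}Q_6vG_6^0vQ_5$. Definiteness of $\mathcal D_{5,5}$ is proved in Step 1(ii) of the proof of Theorem \ref{thm:M_inverse}, so you need not assume it. The rest then goes through:
\begin{itemize}
\item vanishing of the diagonal $c(x,x)$ forces $\mathcal Y^*(QvG_2^0)(\cdot,x)=0$ for every $x$;
\item the $\Delta^2$-injectivity of $\psi\mapsto G_2^0v\mathcal Y\psi$ (the device used in Propositions \ref{sine-sharp} and \ref{sine-sharp-2}) together with $Q_5\mathcal Y\psi=\psi$ then gives $c\not\equiv 0$;
\item $L^2$-continuity of $x\mapsto (QvG_2^0)(\cdot,x)$ turns this into $\|\mathcal C\|_{L^1\to L^\infty}>0$.
\end{itemize}

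The paper avoids this structure altogether. It tests against the single function $\varphi=vU\psi$ with $\psi\in Q_5L^2$. The identities $Q_5T_0=Q_6T_0=0$ give $b_0Q_5vG_2^0\varphi=-\psi$ and $b_0Q_6vG_2^0\varphi=0$. Hence every pairing except $(5,5)$ drops out, and $\langle c\varphi,\varphi\rangle=-(a_3^+-a_3^-)b_1^{-2}\langle (Q_5vG_6vQ_5)^{-1}\psi,\psi\rangle\neq0$. The $L^1\to L^\infty$ bound then follows from $L^{2,\sigma}\hookrightarrow L^1$ and duality.

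The trade-off is this. The paper needs only the form of $\mathcal L_{5,5}$ and no regularity of the kernel. Your route uses the precise form of all four blocks $\mathcal L_{\alpha,\beta}$, but it yields a more intrinsic statement: the kernel $c$ itself, even on the diagonal, does not vanish. It is also closer in spirit to the paper's other sharpness arguments.

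Your fallback paragraph is unnecessary, and its heuristics are off by one order. $Q_5$ already annihilates all quadratic moments, so $(Q_5vG_2^0)(\cdot,x)$ is $O(\langle x\rangle^{-1})$ rather than $O(1)$. $Q_6$ also kills cubic moments, so $(Q_6vG_2^0)(\cdot,x)$ is $O(\langle x\rangle^{-2})$.
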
  

\begin{proof}
	 By continuous embeddings $L^{2,\sigma}(\mathbb{R}^2) \hookrightarrow L^1(\mathbb{R}^2)$ and $L^\infty(\mathbb{R}^2) \hookrightarrow L^{2,-\sigma}(\mathbb{R}^2)$ for $\sigma > 1$, it is  sufficient to construct a specific test function $\varphi \in L^{2,\sigma}(\mathbb{R}^2)$ such that:  
\begin{equation}\label{eq:lower_bounds}  
		\Big|\Big\langle \sum_{\alpha,\beta = 5}^{\mathbf{k}+2} T_{\mathcal{K}_{\alpha,\beta}}(t) \varphi, \varphi \Big\rangle \Big| \gtrsim \frac{1}{\log |t|}, \quad \text{and} \quad \Big|\Big\langle \sum_{\alpha,\beta = 5}^{\mathbf{k}+2} T_{\mathcal{N}_{\alpha,\beta}}(t) \varphi, \varphi \Big\rangle \Big| \gtrsim \frac{1}{\log |t|}. 
	\end{equation}  
	
	Under the hypothesis that zero is a third-kind resonance or an eigenvalue accompanied by a $d$-wave resonance, the subspace $Q_5 L^2\neq\{0\}$. We fix  $\psi \in Q_5 L^2$ and explicitly construct our test function as $\varphi := v U \psi $. By $T_0Q_5=0,$ one has $U\psi=-b_0vG_2^0v\psi.$ Thus $\varphi= v U \psi = -b_0 |V| G_2^0 v\psi.$ By Lemma \ref{estimate_G_2^0},
	 the estimate $|\varphi(x)| \lesssim \langle x \rangle^{-\mu-1}(1+\log\langle x\rangle)$ guarantees that $\varphi \in L^{2,\sigma}(\mathbb{R}^2)$.  
	 
	 Since $Q_5 T_0 = 0$, we have $Q_5(U + b_0 v G_2^0 v)U \psi = 0$, which dictates that  
	\[  
	b_0 Q_5 v G_2^0 \varphi = b_0 Q_5 v G_2^0 v U \psi = -Q_5 \psi = -\psi.  
	\]  
Furthermore, by identical logic, using $Q_6T_0=0$ and the orthogonality of $\psi \in Q_5 L^2$ to $Q_6 L^2$, we have
	$$
	b_0 Q_6 v G_2^0 \varphi=b_0 Q_6 v G_2^0 v U \psi =  -Q_6 \psi=0.
	$$
 Consequently, $\langle c_{\alpha,\beta} \varphi, \varphi \rangle=b_0^2\big\langle (\mathcal{L}_{\alpha,\beta}^+ -\mathcal{L}_{\alpha,\beta}^-) (Q_\beta v G_2^0)\varphi, (Q_\alpha v G_2^0)\varphi \big\rangle=0$
  for all $(\alpha,\beta) \neq (5,5)$.  
	The only surviving term corresponds to $(\alpha,\beta)=(5,5)$. Substituting \(\mathcal{L}_{5,5}^{\pm}=-a_3^\pm b_1^{-2}(Q_5vG_6vQ_5)^{-1}\) (defined in \eqref{mathcal_L}), and utilizing the strict negative definiteness of \((Q_5 v G_6 v Q_5)^{-1}\) on $Q_5L^2$, together with $\text{Im}(a_3^\pm) \neq 0$, we obtain
	\begin{align*}  
		\langle c_{5,5} \varphi, \varphi \rangle &= b_0^2 \big\langle (\mathcal{L}_{5,5}^+ -\mathcal{L}_{5,5}^-) Q_5 v G_2^0 \varphi, Q_5 v G_2^0 \varphi \big\rangle \\  
		&= -(a_3^+ - a_3^-) b_1^{-2}\big\langle (Q_5 v G_6 v Q_5)^{-1} \psi, \psi \big\rangle := c_0 \neq 0.  
	\end{align*}  
	
	Finally, evaluating the temporal integrals for $|t| \gg 1$, we explicitly extract the logarithmic decay:  
	\begin{align*}  
		\left|\int_0^{\infty} \cos(t\lambda^2) \lambda^{-1} \widetilde{\chi}_1(\lambda)  \chi_1(t \lambda^2) (\log \lambda)^{-2} \, d\lambda \right| &\ge \cos 1 \int_0^{|t|^{-1/2} \lambda_0^{1/2}} \lambda^{-1} (\log \lambda)^{-2} \, d\lambda \gtrsim \frac{1}{\log |t|}.  
	\end{align*}  
		An analogous estimate using ${\sin x}/{x} \ge \sin 1$ on $[0,1]$ holds for the sine integral. Summing the non-vanishing $(5,5)$ contribution secures the lower bounds \eqref{eq:lower_bounds}, which, via the embeddings,  yields \eqref{low_bound_weighted_1}. 
\end{proof}

\subsection{The estimates for the case $Q_{5} = 0$}
We next turn to the case where zero is an eigenvalue of $H$ (i.e., $\mathbf{k} = 4$) and all solutions $\phi \in W_{0}(\mathbb{R}^2)$ to $H\phi = 0$ are in $L^2(\mathbb{R}^2)$, i.e., $Q_5 L^2 = \{0\}$.

The condition $Q_5 L^2 = \{0\}$ causes certain terms in $(M^\pm(\lambda))^{-1}$ to vanish, leading to the improved decay rate in Theorem \ref{main_theorem_low_32}.
Since the asymptotic expansion of $(M^\pm(\lambda))^{-1}$ under the assumption $Q_5 L^2 = \{0\}$ follows by an argument analogous to that in the proof of Theorem~\ref{thm:M_inverse}, we omit the details and directly state the result in Lemma~\ref{M_inverse_optimal} below.

\begin{lemma}\label{M_inverse_optimal}  
	Assume that zero is an eigenvalue of $H$ but exhibits no $d$-wave resonance $(\text{i.e.,}\  Q_5 = 0)$. Suppose that $|V(x)| \lesssim \langle x \rangle^{-22-}$ and $0<\lambda\ll1$. Then 
	\begin{itemize}  
		\item[(i)] For $0 \le \alpha, \beta \le 4$, the estimates for $\mathcal{M}_{\alpha,\beta}^\pm(\lambda)$ established in Theorem~\ref{thm:M_inverse} remain valid.
		
		\item[(ii)] For $1 \le \alpha \le 3$ and $\ell = 0,1,2$,  
		\begin{align}  \label{M6123}
		\bigl\|\partial_\lambda^{\ell} \mathcal{M}_{6,\alpha}^\pm(\lambda)\bigr\|_{\mathbb{B}(L^2)}  
		+ \bigl\|\partial_\lambda^{\ell} \mathcal{M}_{\alpha,6}^\pm(\lambda)\bigr\|_{\mathbb{B}(L^2)}  
		\lesssim \lambda^{1-\ell}.  
		\end{align}
		
		\item[(iii)] For $(\alpha,\beta) =(6,0),\,(0,6),\,(6,4),\,(4,6),\,(6,6)$, the operators admit the following expansions:  
		\[  
		\begin{aligned}  
			\mathcal{M}_{6,0}^{\pm}(\lambda) &= \lambda (\log \lambda)Q_6\Lambda^\pm(\lambda)Q_0, \quad 
			\mathcal{M}_{0,6}^{\pm}(\lambda) = \lambda (\log \lambda)Q_0\Lambda^\pm(\lambda)Q_6,  \\  
			\mathcal{M}_{6,4}^{\pm}(\lambda) &=  \lambda (\log \lambda)^2 Q_6 \Lambda^\pm(\lambda)Q_{42}^1+\lambda (\log \lambda) Q_6 \Lambda^\pm(\lambda)Q_4,\\
		\mathcal{M}_{4,6}^{\pm}(\lambda) &= \lambda (\log \lambda)^2 Q_{42}^1 \Lambda^\pm(\lambda)Q_{6} +\lambda (\log \lambda) Q_4 \Lambda^\pm(\lambda)Q_6, \\ 
		\mathcal{M}_{6,6}^{\pm}(\lambda) &= \mathcal{D}_{6,6}
			+  \lambda^2 (\log \lambda)^2Q_6 \Lambda^\pm(\lambda)Q_6.
		\end{aligned}  
		\]  
	\end{itemize}
	
	Moreover, if we additionally assume $Q_4 = 0$, then the operators $\mathcal{M}_{\alpha,4}^{\pm}(\lambda)$ and $\mathcal{M}_{4,\alpha}^{\pm}(\lambda)$ vanish identically for all $0\leq \alpha\leq 6$, and the following refined expansions hold:
	\[  
	\begin{aligned}  
		\mathcal{M}_{6,0}^{\pm}(\lambda) &= -a_2^\pm (a_0^\pm)^{-1} \|V\|_{L^1}^{-1} \lambda \mathcal{D}_{6,6} Q_6 vG_4 vQ_0   
		+ \lambda^2 (-\log \lambda)^{3/2}Q_6 \Lambda^\pm(\lambda)Q_0, \\  
		\mathcal{M}_{0,6}^{\pm}(\lambda) &= -a_2^\pm (a_0^\pm)^{-1} \|V\|_{L^1}^{-1} \lambda Q_0 vG_4 vQ_6 \mathcal{D}_{6,6}  
		+ \lambda^2 (-\log \lambda)^{3/2} Q_0\Lambda^\pm(\lambda)Q_6,  \\  
		\mathcal{M}_{6,6}^{\pm}(\lambda) &= \mathcal{D}_{6,6} -a_4^\pm \lambda^2 \mathcal{D}_{6,6} Q_6 vG_8 vQ_6 \mathcal{D}_{6,6} \\  
		&\quad +(a_2^\pm)^2 (a_0^\pm)^{-1} \|V\|_{L^1}^{-1} \lambda^2 \mathcal{D}_{6,6} Q_6 vG_4 v  Q_0 vG_4 vQ_6 \mathcal{D}_{6,6} +\lambda^3 (-\log \lambda)^{3/2} Q_6\Lambda^\pm(\lambda)Q_6.  
	\end{aligned}  
	\]  
In the above, $\mathcal{D}_{6,6} = (b_1 Q_6 v G_6^0 v Q_6)^{-1}$ and  $\Lambda^\pm(\lambda)$ denotes a generic $\lambda$-dependent operator in $\mathbb{B}(L^2)$ (possibly differing at each occurrence) satisfying
$\|\partial_\lambda^{\ell} \Lambda^\pm(\lambda)\|_{\mathbb{B}(L^2)} \lesssim \lambda^{-\ell}$ for $ \ell=0,1,2.
$
\end{lemma}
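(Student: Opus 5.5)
The plan is to rerun the Grushin/Feshbach inversion scheme from the proof of Theorem~\ref{thm:M_inverse} (Section~\ref{subsec:proof_inverse}), now with the extra input $Q_5=0$. The expansion \eqref{Mpm-3} of $M^\pm(\lambda)$ is still available, and under $\mu>22$ it can be pushed one order further: the $a_4^\pm\lambda^6 vG_8 v$ term from \eqref{reso_small} is kept, with a $\Gamma^\pm_{8-}$ remainder. I would then invert $M^\pm(\lambda)$ iteratively on the nested chain $S_0L^2\supseteq\cdots\supseteq S_4L^2\supseteq S_5L^2$, with $S_5=Q_6$ because $Q_5=0$, using the Jensen--Nenciu lemma at each stage.

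\textbf{Part (i).} The stages up to and including $Q_4$ never see $Q_5$. The Schur complements on $S_jL^2$ for $j\le 3$ are identical to those in Theorem~\ref{thm:M_inverse}(II), so (i) follows by inspecting that proof. The blocks $\mathcal M_{\alpha,\beta}$ with $\alpha,\beta\le4$ pick up corrections from the last stage of the form $(\text{stage-}4\ \text{coupling})\,\mathcal D_{6,6}\,(\text{coupling})$. These corrections carry extra powers of $\lambda$, so they are absorbed into the existing bounds.

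\textbf{Last stage.} On $Q_6L^2$, with the scaling $\lambda^{-6}$ from $k_6=3$, the leading part of the Schur complement is
\[
b_1\,Q_6vG_6^0vQ_6+g_1^\pm(\lambda)\,Q_6vG_6vQ_6 .
\]
The key algebraic fact is $Q_6vG_6vQ_6=0$. This holds because $G_6(x,y)=|x-y|^6$ is a polynomial in which every monomial has degree at most $3$ in $x$ or in $y$, and $Q_6$ annihilates $x^\gamma v$ for $|\gamma|\le3$. As a result the $\log\lambda$ coefficient drops out, and the leading invertible operator is $b_1Q_6vG_6^0vQ_6$. Its invertibility on $Q_6L^2$ is exactly the definition of $\mathcal D_{6,6}$ and comes from the characterization of the eigenvalue case in Proposition~\ref{characterizations}.

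\textbf{Off-diagonal blocks, part (ii) and (iii).} These come from the couplings $Q_6 (M^\pm\text{ expansion}) Q_\beta$.
\begin{itemize}
\item For $\beta=0$, the first nonvanishing coupling is $a_2^\pm\lambda^2 Q_6vG_4vQ_0$, since $Q_6vG_0=Q_6vG_2=0$. Multiplying by the $Q_0$-block inverse $\sim\lambda^2$ and tracking the $\lambda$-exponents $2-k_\alpha-k_\beta$ gives the factor $\lambda$ in $\mathcal M_{6,0}$. The $\log\lambda$ enters from the $Q_0$ block's first corrections.
\item For $\beta\in\{1,2,3\}$, the coupling again starts at $\lambda^2G_4$, and the additional cancellation against $x_jv$ gives \eqref{M6123}.
\item For $\beta=4$, the $(\log\lambda)^2$ factor is inherited from the $(\log\lambda)^2Q_{42}^1\mathcal S^{-1}Q_{42}^1$ part of $\mathcal M_{4,4}$ in \eqref{huaM44}. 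It appears only on the $Q_{42}^1$ side, which gives the stated form of $\mathcal M_{6,4}$.
\item $\mathcal M_{6,6}=\mathcal D_{6,6}+O(\lambda^2(\log\lambda)^2)$ is the Neumann series of the Schur complement; the error is dominated by the squared $Q_6$--$Q_4$ coupling.
\end{itemize}

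\textbf{Refined case $Q_4=0$.} The $Q_4$ stage disappears entirely and the $\log$ losses are gone. I would then compute the Schur complement exactly to second order:
\[
\mathcal D_{6,6}^{-1}+a_4^\pm\lambda^2Q_6vG_8vQ_6-(a_2^\pm)^2\lambda^2\, Q_6vG_4v\,(\text{$Q_0$-inverse})\,vG_4vQ_6+\dots,
\]
where the $Q_0$-inverse is $(a_0^\pm)^{-1}\|V\|^{-1}_{L^1}\lambda^2Q_0$ to leading order, by \eqref{eq:M_00}. Inverting this by a Neumann series yields the explicit $\lambda^2$ terms in $\mathcal M_{6,6}$. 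The remainder $\lambda^3(-\log\lambda)^{3/2}$ comes from the $\lambda(-\log\lambda)^{3/2}$ remainder of $\mathcal M_{0,0}$. The formulas for $\mathcal M_{6,0}$ and $\mathcal M_{0,6}$ are $-\mathcal D_{6,6}\cdot(\text{coupling})\cdot(Q_0\text{-inverse})$ at leading order.

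\textbf{Main obstacle.} The hardest part is the bookkeeping: verifying that the $\lambda$-exponents $2-k_\alpha-k_\beta$ line up with each coupling's vanishing order. In particular, one must check that the $Q_6$--$Q_{1,2,3}$ and $Q_6$--$Q_4$ cross terms produce no $\lambda^{0}$ contamination in $\mathcal M_{6,6}$. I would handle this first by recording, for each pair, the lowest-order nonzero term $Q_\alpha vG_kvQ_\beta$ using the cancellations in Remark~\ref{cancellationQ}.
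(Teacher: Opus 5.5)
\textbf{Verdict: right strategy, but the general-case bookkeeping for (ii)--(iii) has genuine gaps.} Your route is the paper's: the paper proves this lemma ``by an argument analogous to'' Theorem~\ref{thm:M_inverse}, using block rescaling, Jensen--Nenciu inversion of the leading matrix, and a Neumann series. Your observation $Q_6vG_6vQ_6=0$ is correct, and so is your second-order computation for $Q_4=0$. One small fix there: in your displayed Schur complement the $Q_0$-inverse must be the rescaled $(a_0^\pm)^{-1}\|V\|_{L^1}^{-1}Q_0$, not $\lambda^2$ times it, or that term becomes $O(\lambda^4)$. The bookkeeping for (ii)--(iii) is the real content of the lemma, and three of your stated mechanisms there are wrong or incomplete.

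\begin{enumerate}
\item \emph{The $Q_6$--$Q_\beta$ coupling, $1\le\beta\le3$, does not start at $\lambda^2G_4$.}
\begin{itemize}
\item In fact $Q_6vG_4vQ_\beta=0$ for $1\le\beta\le4$. On the $Q_6$ side only the monomial $|x|^4$ of $|x-y|^4$ survives, and it pairs with $y$-degree $0$, which is killed by $Q_\beta v=0$.
\item This vanishing is essential. A nonzero $a_2^\pm\lambda^2Q_6vG_4vQ_\beta$ would not be small after rescaling by $\lambda^{-k_6-k_\beta}=\lambda^{-4}$, so $\mathcal M_{6,\beta}^\pm$ could not satisfy \eqref{M6123}.
\item The true first coupling is $\lambda^4\bigl(g_1^\pm(\lambda)Q_6vG_6vQ_\beta+b_1Q_6vG_6^0vQ_\beta\bigr)$. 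Hence the rescaled entry in \eqref{partial} is $r_{6,\beta}^\pm=O(\lambda^2|\log\lambda|^{3/2})$.
\item Every Neumann term in the $(6,\beta)$ slot either starts with such an entry, or passes through two $O(\lambda^{1-})$ couplings: $r^\pm_{6,j}$ and $r^\pm_{l,m}$ with $j,l\in\{0,4\}$, $m\le3$. This gives \eqref{M6123}.
\end{itemize}

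\item \emph{The $\log\lambda$ in $\mathcal M_{6,0}^\pm$ does not come from the $Q_0$ block.}
\begin{itemize}
\item The path through $Q_0$ gives $-\mathcal D_{6,6}r^\pm_{6,0}\mathcal D^\pm_{0,0}=O(\lambda)$, with no logarithm. Moreover $\mathcal D^\pm_{0,0}=O((\log\lambda)^{-2})$ when $Q_4^0\neq0$.
\item The logarithm is $-\mathcal D_{6,6}r^\pm_{6,4}\mathcal D^\pm_{4,0}$. Here $r^\pm_{6,4}=b_1\lambda\log\lambda\,Q_6vG_6vQ_4+\lambda\Lambda^\pm(\lambda)$ carries the $b_1\log\lambda$ of $g_1^\pm(\lambda)$, and $\mathcal D^\pm_{4,0}$ has an $O(1)$ component on $Q_4^1$ (cf.\ \eqref{D40D04D44_2}).
\item This is why the logarithm disappears when $Q_4=0$.
\end{itemize}

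\item \emph{The exponents in $\mathcal M^\pm_{6,4}$ and $\mathcal M^\pm_{6,6}$ rest on a cancellation you do not identify.}
\begin{itemize}
\item What enters the Neumann series is $\mathcal D^\pm_{4,4}$ from \eqref{D40D04D44_2}, not $\mathcal M^\pm_{4,4}$, and it is $O((\log\lambda)^2)$. Naive counting with $r^\pm_{6,4}=O(\lambda|\log\lambda|)$ therefore gives $\lambda|\log\lambda|^3$ and $\lambda^2(\log\lambda)^4$.
\item The stated powers need $Q_6vG_6vQ_{42}^1=0$. A monomial of $|x-y|^6$ with $x$-degree $\ge4$ has $y$-degree $\le2$, which $Q_{42}^1$ annihilates. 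So $r^\pm_{6,4}Q_{42}^1=O(\lambda)$.
\item Consequently the entry $(\log\lambda)^2Q_{42}^1\mathcal S^{-1}Q_{42}^1$ only meets $O(\lambda)$ couplings.
\item The $\lambda\log\lambda$ part of $r^\pm_{6,4}$ lives on $Q_4^0\oplus Q_{41}^1$. There the rows of $\mathcal D^\pm_{4,4}$ are at most $O(|\log\lambda|)$, and the $\log\lambda$ entries ($Q_4^0\mathcal C_1^\pm Q_{42}^1$, $Q_{41}^1\Lambda^\pm Q_{42}^1$) end on $Q_{42}^1$.
\item This is exactly what produces $\lambda(\log\lambda)^2Q_6\Lambda^\pm Q_{42}^1+\lambda\log\lambda\,Q_6\Lambda^\pm Q_4$, and the $\lambda^2(\log\lambda)^2$ error in $\mathcal M^\pm_{6,6}$.
\end{itemize}
\end{enumerate}

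The table of lowest-order nonzero $Q_\alpha vG_kvQ_\beta$ that you plan to compute would expose points (1) and (3). As written, however, the proposal gives the wrong reason for (ii) and omits the cancellation on which (iii) depends.
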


Based on the expansions provided by Lemma~\ref{M_inverse_optimal}, we now proceed to prove Theorem~\ref{main_theorem_low_32}. In view of the extensive analysis carried out in the previous sections, the proof reduces to the study of the difference kernels
$
\mathcal{K}_{B}^+ - \mathcal{K}_{B}^-$ and $\mathcal{N}_{B}^+ - \mathcal{N}_{B}^-$,
with
\[
B^\pm(\lambda) \in \bigl\{ \mathcal{M}_{\alpha, \beta}^{\pm}(\lambda) \mid \alpha=6,\ 0 \le \beta \le 4 \ \text{or} \ 0 \le \alpha \le 4,\ \beta=6 \ \text{or} \ \alpha=\beta=6 \bigr\},
\]
where the kernels $\mathcal{K}_{B}^\pm$, $\mathcal{N}_{B}^\pm$ are as defined in \eqref{K_1} and \eqref{Lambda_1}, respectively. 
 We divide the proof into two parts according to
whether a $p$-wave resonance is present or absent.

\vspace{0.3cm}    
\textbf{Part I: The Case $Q_4 \neq 0$ (Presence of $p$-wave resonance).}    
 We establish the  kernel estimates for all components involving the index 6 in the following Proposition \ref{prop_part1}.  

\begin{proposition}\label{prop_part1}    
	Assume $Q_5 = 0$ and $Q_4 \neq 0$. For any operator $B^\pm(\lambda)$ containing the index $6$, the associated integral kernels satisfy the following estimates:  
	\begin{itemize}
		\item[(i)] If $B^\pm(\lambda) \in \{ \mathcal{M}_{\alpha,6}^{\pm}(\lambda), \mathcal{M}_{6,\alpha}^{\pm}(\lambda) \}$ for $1 \le \alpha \le 3$, then  
		\[
\sup_{x,y\in\mathbb{R}^2}\left|\mathcal{K}_{B}^\pm(t,x,y)\right|
+\sup_{x,y\in\mathbb{R}^2}\left|\mathcal{N}_{B}^\pm(t,x,y)\right|
	\lesssim \langle t\rangle^{-1}.
	\] 
	Moreover,  both $\mathcal{K}_{B}^\pm$ and $\mathcal{N}_{B}^\pm$  satisfy the  improved weighted bound $\omega(x)\omega(y)(|t|\log|t|)^{-1}$ for $|t| \ge 2$.
	In particular, the same unweighted and weighted bounds remain valid under the alternative assumption $Q_5 = Q_4 = 0$, since in this case $\mathcal{M}_{\alpha,6}^{\pm}(\lambda)$ and $\mathcal{M}_{6,\alpha}^{\pm}(\lambda)$ also satisfy the estimate \eqref{M6123}.
		\item[(ii)] If $B^\pm(\lambda) \in \{ \mathcal{M}_{0,6}^{\pm}(\lambda), \mathcal{M}_{6,0}^{\pm}(\lambda) \}$, then
		\[
\sup_{x,y\in\mathbb{R}^2}\left|\mathcal{K}_{B}^\pm(t,x,y)\right|
	+ \sup_{x,y\in\mathbb{R}^2}\left|\mathcal{N}_{B}^\pm(t,x,y)\right|
	\lesssim \langle t\rangle^{-1} \log(2+|t|).
	\]
		\item[(iii)] If $B^\pm(\lambda) \in \{ \mathcal{M}_{4,6}^{\pm}(\lambda), \mathcal{M}_{6,4}^{\pm}(\lambda), \mathcal{M}_{6,6}^{\pm}(\lambda) \}$, then  
		\[
\sup _{x, y \in \mathbb{R}^2}\left|\left(\mathcal{K}_{B}^+-\mathcal{K}_{B}^-\right)(t,x,y)\right| +
	\sup _{x, y \in \mathbb{R}^2}\left|\left(\mathcal{N}_{B}^+-\mathcal{N}_{B}^-\right)(t,x,y)\right|
	\lesssim \langle t\rangle^{-1} (\log(2+|t|))^{2}.
	\]
	\end{itemize}  
\end{proposition}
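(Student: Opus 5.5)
We follow the scheme of Propositions \ref{prop_K}, \ref{prop_second_2_3} and \ref{third_merged_1}. The $Q_6$-projected resolvent is written via Lemma \ref{lemma_projection}(iii) as
\[
\bigl(Q_6vR_0^\pm(\lambda^4)\bigr)(\cdot,z)=(b_0Q_6vG_2^0)(\cdot,z)+\lambda^{2}\bigl(\mathcal T_6+\mathcal T_{6,\pm}\bigr)(\lambda,\cdot,z),
\]
and the other factor uses the representations \eqref{Qalpha}, \eqref{De all Q}, \eqref{eq:resolvent_decomp-Q4}. Substituting into \eqref{K_1} and \eqref{Lambda_1} gives finite sums of canonical integrals \eqref{3-4res-integral}. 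Each amplitude $\mathcal E^\pm_{fg}$ carries the prefactor $\lambda^{4-k_\alpha-k_\beta}$ together with the $\lambda$- and $\log\lambda$-powers of $B^\pm$ and of the states $f,g$. The states $\mathcal T_{j,\pm}$ contribute the phase $|x|$ or $|y|$ and the factor $\langle\lambda\cdot\rangle^{-1/2}$. The remaining states, $b_0Q_6vG_2^0$ and $\mathcal T_6$, are uniformly bounded in $L^2$ by \eqref{lemma_projection_G}--\eqref{lemma_projection_T}. In every case we bound $|\partial_\lambda^\ell\mathcal E^\pm|$ by Leibniz and H\"older, and then apply Lemma \ref{oscillatory}(ii) with $\sigma=0$ and the appropriate $\nu\le0$.

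\textbf{(i)} Here $4-k_\alpha-k_6=0$. The $Q_\alpha$-side contributes $\lambda^{-1}\Omega_{\alpha,\mp}$, and this is compensated by the factor $\lambda^{1}$ in \eqref{M6123}. The amplitude is therefore bounded by $\langle\lambda r\rangle^{-1/2}\lambda^{-\ell}$, so $(\sigma,\nu)=(0,0)$ gives $\langle t\rangle^{-1}$.

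For the weighted bound we argue as in Proposition \ref{prop_K_weight_1}. Since $Q_\alpha v=0$ for $\alpha\ge1$, we have $Q_\alpha vR_0^\pm=Q_\alpha vE_1^\pm=O(\lambda^{0-}\langle x\rangle^4)$, and $Q_6vR_0^\pm$ is $O(\log\lambda\,\langle y\rangle^4)$. With the extra $\lambda$ from \eqref{M6123}, and taking $r=0$, this gives $|\partial_\lambda^\ell\mathcal E|\lesssim\lambda^{1/2-\ell}\langle x\rangle^4\langle y\rangle^4$. Hence the bound is $|t|^{-5/4}\langle x\rangle^4\langle y\rangle^4$, and interpolating with $|t|^{-1}$ via \eqref{eq:inequality_3} yields the weighted estimate. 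If $Q_5=Q_4=0$ the same estimate \eqref{M6123} holds, so the argument is unchanged.

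\textbf{(ii)} Here $4-k_0-k_6=1$. The $Q_0$-side contributes $\lambda^{-2}\Omega_{0,\mp}$, and $\mathcal M_{0,6}^\pm=\lambda\log\lambda\,Q_0\Lambda^\pm Q_6$. The powers of $\lambda$ cancel, leaving an amplitude bounded by $|\log\lambda|\langle\lambda r\rangle^{-1/2}\lambda^{-\ell}$. Then $(\sigma,\nu)=(0,-1)$ gives $\langle t\rangle^{-1}\log(2+|t|)$.

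\textbf{(iii)} For $\mathcal M_{4,6}^\pm$ we have $4-k_4-k_6=-1$, which is compensated by the factor $\lambda$ in the expansion. The worst term is $\lambda(\log\lambda)^2Q_{42}^1\Lambda Q_6$. It pairs with $Q_{42}^1vR_0^\pm$, which by Lemma \ref{lemma_projection} has no logarithm, so the total singularity is $|\log\lambda|^2$. The other term, $\lambda\log\lambda\,Q_4\Lambda Q_6$, pairs with $\mathcal J_{4,\pm}$, which carries $|\log\lambda|$, again giving $|\log\lambda|^2$. Thus $(\sigma,\nu)=(0,-2)$ applies and gives $\langle t\rangle^{-1}(\log(2+|t|))^2$.

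The $\mathcal M_{6,6}^\pm$ term is the main obstacle: $4-2k_6=-2$, and the leading part $\mathcal D_{6,6}$ paired with $b_0Q_6vG_2^0$ on both sides gives an amplitude of size $\lambda^{-2}$, to which Lemma \ref{oscillatory} does not apply. Since $\mathcal D_{6,6}$ is independent of $\pm$, we take the difference $\mathcal K^+-\mathcal K^-$ as in Case 1 of Proposition \ref{third_merged_1}. We write it as $(R_0^+-R_0^-)v Q_6\mathcal D_{6,6}Q_6vR_0^+ + R_0^-vQ_6\mathcal D_{6,6}Q_6v(R_0^+-R_0^-)$ and use
\[
Q_6v(R_0^+-R_0^-)=\lambda^2(\mathcal T_{6,+}-\mathcal T_{6,-}).
\]
This produces a factor $\lambda^2$, so the amplitude becomes $O(\lambda^{-\ell})$ and the bound is $\langle t\rangle^{-1}$, exactly as in \eqref{mathcalA66}. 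The remainder $\lambda^2(\log\lambda)^2Q_6\Lambda^\pm Q_6$ also produces $\lambda^2$ directly, giving an amplitude of size $|\log\lambda|^2\lambda^{-\ell}$ and hence, with $\nu=-2$, the bound $\langle t\rangle^{-1}(\log(2+|t|))^2$. The difficult check is that no cross-term between $\mathcal D_{6,6}$ and the $\pm$-dependent pieces escapes the difference structure.
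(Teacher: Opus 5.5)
Your proposal is correct and follows the paper's proof essentially line by line. It uses the same expansion $b_0Q_6vG_2^0+\lambda^2(\mathcal T_6+\mathcal T_{6,\pm})$, the same parameter choices $(\sigma,\nu)=(0,0),(0,-1),(0,-2)$ in Lemma \ref{oscillatory}(ii) for the three groups of blocks, the sign-difference trick of \eqref{mathcalA66} for $\mathcal D_{6,6}$, and the argument of Proposition \ref{prop_K_weight_1} for the weighted bound in (i). The ``difficult check'' you flag at the end is automatic: the kernels \eqref{K_1}--\eqref{Lambda_1} are linear in $B^\pm$, so splitting $\mathcal M_{6,6}^\pm=\mathcal D_{6,6}+\lambda^2(\log\lambda)^2Q_6\Lambda^\pm(\lambda)Q_6$ produces no cross-terms, and only the $\pm$-independent piece needs the difference structure.
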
   
\begin{proof}
	The proof follows the same canonical oscillatory-integral reduction
	used in the preceding propositions.  We use
	$
	Q_6vR_0^\pm(\lambda^4)
	=
	b_0Q_6vG_2^0
	+
	\lambda^2(\mathcal T_6+\mathcal T_{6,\pm}),
	$
	together with the projected resolvent expansions adjacent to
	$Q_0$, $Q_\alpha$ $(1\leq\alpha\leq3)$, and $Q_4$. Inserting
	these expansions and the bounds from Lemma
	\ref{M_inverse_optimal} into \eqref{K_1} and \eqref{Lambda_1}, we see that the
	resulting amplitudes satisfy the hypotheses of Lemma
	\ref{oscillatory}(ii) with $(\sigma,\nu)=(0,0)$ for the blocks
	$\mathcal M_{6,\alpha}^\pm,\mathcal M_{\alpha,6}^\pm$
	$(1\leq\alpha\leq3)$, with $(\sigma,\nu)=(0,-1)$ for the blocks
	$\mathcal M_{6,0}^\pm,\mathcal M_{0,6}^\pm$, and with
	$(\sigma,\nu)=(0,-2)$ for the blocks
	$\mathcal M_{6,4}^\pm,\mathcal M_{4,6}^\pm,\lambda^2 (\log \lambda)^2Q_6 \Lambda^\pm(\lambda)Q_6$.
	These three cases give respectively
	$\langle t\rangle^{-1}$,
	$\langle t\rangle^{-1}\log(2+|t|)$, and
	$\langle t\rangle^{-1}(\log(2+|t|))^2$. The sign-independent
	principal part $\mathcal D_{6,6}$ is handled as in
	\eqref{mathcalA66} and satisfies the stronger
	$\langle t\rangle^{-1}$ bound. 
    The weighted estimate in part (i)
	is obtained exactly as in Proposition \ref{prop_K_weight_1}.
\end{proof}

\begin{proof}[\textbf{Proof of Theorem \ref{main_theorem_low_32} (1)}]  
	Combining  Proposition \ref{prop_part1} with the  estimates established  in the regular, first-kind, and second-kind resonance cases,
	we conclude the proof of Theorem \ref{main_theorem_low_32}(1).  
\end{proof}  

\vspace{0.3cm}  
\textbf{Part II: The Case $Q_4 = 0$ (Absence of $p$-wave resonance).}  
In this part, based on the expansions detailed in Lemma \ref{M_inverse_optimal}, the absence of $Q_4$ guarantees that  $\mathcal{M}_{6,0}^{\pm}(\lambda)$ and $\mathcal{M}_{0,6}^{\pm}(\lambda)$ generate an explicit  $\lambda$ factor rather than $\lambda\log\lambda$. Concurrently, the remainder in $\mathcal{M}_{6,6}^{\pm}(\lambda)$ scales as ${O}(\lambda^2)$ instead of ${O}(\lambda^2(\log\lambda)^2)$:
\begin{align}\label{Q4=Q5}
\mathcal{M}_{6,0}^{\pm}(\lambda) = \lambda Q_6 \Lambda^\pm(\lambda)Q_0, \quad 
\mathcal{M}_{0,6}^{\pm}(\lambda) = \lambda Q_0 \Lambda^\pm(\lambda)Q_6, \quad 
\mathcal{M}_{6,6}^{\pm}(\lambda) = \mathcal{D}_{6,6} + \lambda^2 Q_6 \Lambda^\pm(\lambda)Q_6.
\end{align}
Following the same method as in Proposition \ref{prop_part1}, we obtain  for all $B^\pm(\lambda) \in \{ \mathcal{M}_{\alpha,6}^{\pm}(\lambda), \mathcal{M}_{6,\alpha}^{\pm}(\lambda) \}$ with $\alpha \in \{0,1,2,3,6\}$, the associated kernels exhibit a uniform ${O}(\langle t \rangle^{-1})$ decay. 

Synthesizing this with the regular, first-kind, and second-kind resonance cases, immediately establishes the unweighted estimate ${O}(\langle t \rangle^{-1})$  stated in Theorem \ref{main_theorem_low_32}(2).

To obtain the refined weighted asymptotic expansion, we isolate the precise leading-order constants generated by these operators. We formalize this in the following proposition.

\begin{proposition}\label{prop_part2}  
	Assume $Q_5 = 0$ and $Q_4 = 0$. For $|t|\ge 2$, the low-energy part  of the time evolution admits the following asymptotic expansions in $\mathbb{B}(L^1_\omega, L^\infty_{-\omega})$:
	\[
		\begin{aligned}
			\cos(t\sqrt{H}) P_{\mathrm{ac}}(H) \chi_1(H) &= -\frac{2}{\pi i}\widetilde{{\mathcal{B}}} I_c(t) + \mathcal{O}\!\bigl( |t|^{-1} (\log |t|)^{-1} \bigr), \\[4pt]  
			\frac{\sin(t\sqrt{H})}{t\sqrt{H}} P_{\mathrm{ac}}(H) \chi_1(H) &= -\frac{2}{\pi i}\widetilde{\mathcal{B}} I_s(t) + \mathcal{O}\!\bigl( |t|^{-1} (\log |t|)^{-1} \bigr),  
		\end{aligned}
	\]
	where  $I_c(t) = {O}(|t|^{-N})$ for   $\forall N>0$,  $I_s(t) = \frac{\pi}{4|t|} + {O}(|t|^{-2})$ and the time-independent  operator $\widetilde{\mathcal{B}}$ is given by:  
	\begin{equation}  \label{mathcal B}
		\begin{split}
			\widetilde{\mathcal{B}} &= - b_0^2(a_4^+ - a_4^-) G_2^0 v \mathcal{D}_{6,6} Q_6 v G_8 v Q_6 \mathcal{D}_{6,6} v G_2^0 \\  
			&\quad + b_0^2\bigl((a_2^+)^2 (a_0^+)^{-1} - (a_2^-)^2 (a_0^-)^{-1}\bigr)\|V\|_{L^1}^{-1} G_2^0 v \mathcal{D}_{6,6} Q_6 v G_4 v Q_0 v G_4 v Q_6 \mathcal{D}_{6,6} v G_2^0.  
		\end{split}
	\end{equation}
\end{proposition}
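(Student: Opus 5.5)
The plan is to reduce everything to the blocks containing the index $6$ and to show that only their $\lambda$-independent leading pieces survive at order $|t|^{-1}$. Since $Q_5=Q_4=0$, the blocks $\mathcal M_{\alpha,\beta}^\pm$ with $0\le\alpha,\beta\le3$ are handled exactly as in Subsection~\ref{subsec:regular_first_2}. Propositions~\ref{prop_K_weight_1} and \ref{prop_K_weight_2} together with Proposition~\ref{prop_free_sin} give an error of $\mathcal O((|t|\log|t|)^{-1})$ there; this includes the cancellation of the free term $1/(8|t|)$ against the $(a_0^\pm)^{-1}\|V\|_{L^1}^{-1}Q_0$ part of $\mathcal M_{0,0}^\pm$. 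One point needs checking: whether $\mathcal M_{0,0}^\pm$ acquires further $O(1)$ terms in this eigenvalue case. I expect Lemma~\ref{M_inverse_optimal} to give $\mathcal M_{0,0}^\pm=(a_0^\pm)^{-1}\|V\|^{-1}Q_0+\lambda(-\log\lambda)^{3/2}\Lambda^\pm$ as in the regular case, because $Q_4=0$ removes the $\widetilde{\mathcal D}_{0,0}$ correction. The blocks $\mathcal M_{6,\alpha}^\pm,\mathcal M_{\alpha,6}^\pm$ with $1\le\alpha\le3$ are already covered by Proposition~\ref{prop_part1}(i). What remains are the pairs $(6,0),(0,6),(6,6)$, which I treat using the refined expansions in Lemma~\ref{M_inverse_optimal}.

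For these pairs I insert $Q_6vR_0^\pm(\lambda^4)=b_0Q_6vG_2^0+\lambda^2(\mathcal T_6+\mathcal T_{6,\pm})$ and $Q_0vR_0^\pm=a_0^\pm\lambda^{-2}Q_0vG_0+Q_0vE_1^\pm$ into \eqref{K_1}--\eqref{Lambda_1}. Since $4-k_6-k_6=-2$, the $(6,6)$ amplitude carries the prefactor $\lambda^{-2}$.

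First consider $\mathcal D_{6,6}$. It is $\pm$-independent, so as in \eqref{mathcalA66} the difference $R_0^+-R_0^-$ falls on one side. I then expand $Q_6v(R_0^+-R_0^-)$ one step further using Lemma~\ref{lem-reso} and the cancellations $Q_6(x^\gamma v)=0$ for $|\gamma|\le3$ together with $Q_6T_0=0$. The terms $a_2^\pm\lambda^2G_4$ and $g_1^\pm\lambda^4G_6$ should be annihilated, with only $(a_4^+-a_4^-)\lambda^6G_8$-type and weighted remainders left. This gives at least $\lambda^{2}$ times spatially growing factors, i.e. an $r=0$ amplitude of size $\lambda^{0+}\langle x\rangle^N\langle y\rangle^N$. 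Lemma~\ref{oscillatory}(ii) in the $r=0$ range then yields $|t|^{-1-}\langle x\rangle^N\langle y\rangle^N$, and interpolating with the uniform $|t|^{-1}$ bound as in \eqref{eq:inequality_3} gives $\mathcal O((|t|\log|t|)^{-1})$. I will need to verify that the $\lambda^{-2}$ prefactor is exactly compensated here.

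Next the $\lambda^2$ terms of $\mathcal M_{6,6}^\pm$, namely $-a_4^\pm\lambda^2\mathcal D_{6,6}Q_6vG_8vQ_6\mathcal D_{6,6}$ and the $Q_0$-coupled term. When paired with $b_0Q_6vG_2^0$ on both sides, these give exactly the constant amplitude behind $\widetilde{\mathcal B}$; every other pairing gains $\lambda^{2}$ and is a remainder. For $(6,0)$ and $(0,6)$ the prefactor is $\lambda^{4-3-0}=\lambda$, and the $\lambda^{-2}a_0^\pm Q_0vG_0$ state is multiplied by the explicit factor $\lambda$ in $\mathcal M_{6,0}^\pm$. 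This also produces constant amplitudes, which combine with the $Q_0vG_4vQ_6$ term into the $(a_2^\pm)^2(a_0^\pm)^{-1}$ coefficient of \eqref{mathcal B}; their $\pm$ differences are what enter. The $\lambda^2(-\log\lambda)^{3/2}$ and $\lambda^3(-\log\lambda)^{3/2}$ remainders give amplitudes of size $\lambda^{1-}$, so they decay like $|t|^{-3/2+}$ with polynomial weights and are absorbed in the same way.

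Finally, the constant amplitude $\widetilde{\chi}_1(\lambda)$ enters the cosine kernel through $I_c(t)=\int\cos(t\lambda^2)\lambda\widetilde\chi_1\,d\lambda=O(|t|^{-N})$. In the sine kernel it enters through $I_s(t)=t^{-1}\int\sin(t\lambda^2)\lambda^{-1}\widetilde\chi_1\,d\lambda=\pi/(4|t|)+O(|t|^{-2})$ by \eqref{eq:integral_sin}. The leading operator has bounded kernel because $\|(Q_6vG_2^0)(\cdot,y)\|_{L^2}\lesssim1$. The main obstacle is the bookkeeping in the $(6,6)$ block: I must confirm that the $\lambda^{-2}$ prefactor times $\mathcal D_{6,6}$ produces no $|t|^{-1}$ contribution beyond those collected in $\widetilde{\mathcal B}$. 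This relies on the $\pm$-independence of $\mathcal D_{6,6}$ and on the higher moment cancellations of $Q_6$, and I would check it first.
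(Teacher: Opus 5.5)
Your reduction to the blocks $(6,0),(0,6),(6,6)$ matches the paper. The gap is in the $\mathcal D_{6,6}$ part of the $(6,6)$ block, which rests on a false cancellation. $Q_6$ kills only the moments $x^\gamma v$ with $|\gamma|\le3$, so $Q_6vG_4\neq0$. In $|z-y|^4$ the quartic part $|z|^4$ survives, and $(Q_6vG_4)(\cdot,y)=Q_6(v|\cdot|^4)$, which is nonzero in general. Since $a_2^+-a_2^-=i/256\neq0$, you have $Q_6v\bigl(R_0^+-R_0^-\bigr)(\lambda^4)=(a_2^+-a_2^-)\lambda^2Q_6vG_4+O(\lambda^4)$. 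After the prefactor $\lambda^{-2}$, the $\mathcal D_{6,6}$ contribution therefore has an exactly constant amplitude, not one of size $\lambda^{0+}$. In $\mathcal N_B^+-\mathcal N_B^-$ it produces $\frac{2}{\pi i}b_0(a_2^+-a_2^-)\bigl(G_2^0v\mathcal D_{6,6}Q_6vG_4+G_4vQ_6\mathcal D_{6,6}vG_2^0\bigr)I_s(t)$. This term is of exact order $|t|^{-1}$; it is neither an $\mathcal O((|t|\log|t|)^{-1})$ error nor part of \eqref{mathcal B}.

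The missing idea is a cross-block cancellation, and your attribution of the $(6,0)$, $(0,6)$ constants is also wrong. Pair the leading part $-a_2^\pm(a_0^\pm)^{-1}\|V\|_{L^1}^{-1}\lambda\,\mathcal D_{6,6}Q_6vG_4vQ_0$ of $\mathcal M_{6,0}^\pm$ with $a_0^\pm\lambda^{-2}Q_0vG_0$ and $b_0Q_6vG_2^0$. This gives the coefficient $-b_0a_2^\pm\|V\|_{L^1}^{-1}$, which is linear in $a_2^\pm$. So it cannot feed the $(a_2^\pm)^2(a_0^\pm)^{-1}$ term of \eqref{mathcal B}; that term comes only from the $\lambda^2$ part of $\mathcal M_{6,6}^\pm$ paired with $b_0Q_6vG_2^0$ on both sides. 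Now use $Q_0vG_0=v$ and the moment cancellations of $Q_6$: only the $|z|^4$ part of $|z-w|^4$ survives, so $Q_6vG_4vQ_0vG_0=\|V\|_{L^1}Q_6vG_4$. Hence the $(6,0)$ and $(0,6)$ blocks contribute $-b_0(a_2^+-a_2^-)G_2^0v\mathcal D_{6,6}Q_6vG_4$ and the mirror term $-b_0(a_2^+-a_2^-)G_4vQ_6\mathcal D_{6,6}vG_2^0$, each times $\frac{2}{\pi i}I_c(t)$ or $\frac{2}{\pi i}I_s(t)$. These exactly cancel the $\mathcal D_{6,6}$ cross terms above.

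To repair the plan, keep the term $a_2^\pm\lambda^2Q_6vG_4$ in the expansion $Q_6vR_0^\pm=b_0Q_6vG_2^0+a_2^\pm\lambda^2Q_6vG_4+Q_6vE_2^\pm$. Then collect all constant amplitudes from the three blocks and verify the cancellation; only then does the leading operator reduce to \eqref{mathcal B}. Your remainder bounds and the evaluation of $I_c$ and $I_s$ are fine.
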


\begin{proof}  
	To establish the weighted estimates, it suffices to focus  on  $B^\pm \in \{\mathcal{M}_{6,0}^{\pm}, \mathcal{M}_{0,6}^{\pm}, \mathcal{M}_{6,6}^{\pm}\}$. By  Proposition \ref{prop_part1}(i) and the results for the regular and first-kind resonance cases, all the remaining terms have already been shown to obey the  bound $\mathcal{O}\!\bigl( |t|^{-1} (\log |t|)^{-1} \bigr)$. 

We start to evaluate  $T_{\mathcal{K}_{B}^{+}}(t)-T_{\mathcal{K}_{B}^{-}}(t)$ for  $B^\pm \in \{\mathcal{M}_{6,0}^{\pm}, \mathcal{M}_{0,6}^{\pm}, \mathcal{M}_{6,6}^{\pm}\}$ when $|t|\ge 2$. Proceeding analogously to the proof of Proposition \ref{prop_second_critical_block}, we utilize  the free resolvent expansion in Lemma \ref{lem-reso}:
\begin{equation*}
	\begin{split}
		\bigl(Q_0v R^\pm_0(\lambda^4)\bigr)(\cdot, y) &= \frac{a_0^\pm}{\lambda^2}(Q_0vG_0)(\cdot, y) + (Q_0vE^\pm_1)(\lambda, \cdot, y), \\
		\bigl(Q_6v R^\pm_0(\lambda^4)\bigr)(\cdot, y) &= b_0(Q_6vG_2^0)(\cdot, y) + a_2^\pm\lambda^2(Q_6vG_4)(\cdot, y) + (Q_6vE^\pm_2)(\lambda, \cdot, y),
	\end{split}
\end{equation*}
where  $|\partial_\lambda^\ell E_1^\pm(\lambda,x,y)| \lesssim \lambda^{-\ell-}\langle x \rangle^{4} \langle y \rangle^{4}$ and $|\partial_\lambda^\ell E_2^\pm(\lambda,x,y)| \lesssim \lambda^{-\ell+4-}\langle x \rangle^{6} \langle y \rangle^{6}$ for $\ell=0,1,2$. 
Moreover, for the leading sign-independent term $\mathcal{D}_{6,6}$ within $\mathcal{M}_{6,6}^{\pm}(\lambda)$, we  also need to exploit  the difference:  
\begin{align*} 
	&\bigl[R_0^+ (\lambda^4)v Q_6 \mathcal{D}_{6,6} Q_6 v R_0^+(\lambda^4)\bigr](x,y) - \bigl[R_0^- (\lambda^4)v Q_6 \mathcal{D}_{6,6} Q_6 v R_0^-(\lambda^4)\bigr](x,y)  \\  
	&= \bigl[(R_0^+(\lambda^4)-R_0^-(\lambda^4))v  Q_6 \mathcal{D}_{6,6} Q_6 v R_0^+(\lambda^4) \bigr](x,y) + \bigl[ R_0^-(\lambda^4) v  Q_6 \mathcal{D}_{6,6} Q_6 v (R_0^+(\lambda^4)-R_0^-(\lambda^4))\bigr](x,y).  
\end{align*}  

Synthesizing this analysis and extracting the dominant constants, we derive the leading-order expansions for the integral operators:
\begin{itemize}  
	\item For $\mathcal{M}_{6,0}^{\pm}(\lambda)$:   
	$T_{\mathcal{K}_{B}^{+}} - T_{\mathcal{K}_{B}^{-}} = -\frac{2 b_0}{\pi i}  (a_2^+ - a_2^-)\|V\|_{L^1}^{-1} G_2^0 v \mathcal{D}_{6,6} Q_6 vG_4vQ_0vG_0 \, I_c(t) + \mathcal{O}\!\bigl( |t|^{-1} (\log |t|)^{-1} \bigr)$.  
	
	\item For $\mathcal{M}_{0,6}^{\pm}(\lambda)$:  
	$T_{\mathcal{K}_{B}^{+}} - T_{\mathcal{K}_{B}^{-}} = -\frac{2 b_0}{\pi i}  (a_2^+ - a_2^-)\|V\|_{L^1}^{-1} G_0vQ_0v G_4 vQ_6 \mathcal{D}_{6,6} v G_2^0 \, I_c(t) + \mathcal{O}\!\bigl( |t|^{-1} (\log |t|)^{-1} \bigr)$.  
	
	\item For $\mathcal{M}_{6,6}^{\pm}(\lambda)$:  
	$T_{\mathcal{K}_{B}^{+}} - T_{\mathcal{K}_{B}^{-}} = \frac{2}{\pi i} \Theta I_c(t) + \mathcal{O}\!\bigl( |t|^{-1} (\log |t|)^{-1} \bigr)$, where
	\begin{align*}  
		\Theta &= b_0 (a_2^+ - a_2^-) G_2^0 v \mathcal{D}_{6,6} Q_6 vG_4 + b_0 (a_2^+ - a_2^-) G_4 vQ_6 \mathcal{D}_{6,6} v G_2^0 \\  
		&\quad - b_0^2(a_4^+ - a_4^-) G_2^0 v \mathcal{D}_{6,6} Q_6 v G_8 v Q_6 \mathcal{D}_{6,6} v G_2^0 \\  
		&\quad + b_0^2 \bigl((a_2^+)^2 (a_0^+)^{-1} - (a_2^-)^2 (a_0^-)^{-1}\bigr) \|V\|_{L^1}^{-1}  G_2^0 v \mathcal{D}_{6,6} Q_6 v G_4 v Q_0 v G_4 v Q_6 \mathcal{D}_{6,6} v G_2^0.
	\end{align*}  
\end{itemize} 
	Here  $T_{\mathcal{K}_{B}^{\pm}}(t)$ denotes the integral operator with the kernel ${\mathcal{K}_{B}^{\pm}}(t,x,y)$ and
	$$I_c(t) = \int_0^\infty \cos(t\lambda^2) \lambda \widetilde{\chi}_1(\lambda) \, d\lambda = O(|t|^{-N}), \  \quad\forall N>0.$$
	By summing these contributions, the total coefficient of $I_c(t)$ is $\frac{2}{\pi i}\widetilde{\mathcal{B}}$ where
	$$  
	\widetilde{\mathcal{B}} = -b_0 (a_2^+ - a_2^-)\|V\|_{L^1}^{-1} G_2^0 v \mathcal{D}_{6,6} Q_6 vG_4vQ_0vG_0 - b_0 (a_2^+ - a_2^-) \|V\|_{L^1}^{-1} G_0vQ_0v G_4 vQ_6 \mathcal{D}_{6,6} v G_2^0 + \Theta.  
	$$  
	Crucially, since $Q_0$ is the projection onto $v$, by the cancellation property $Q_6(x^\gamma v)= 0$ for $|\gamma| \le 3$, we have the identities: 
	$$G_2^0 v \mathcal{D}_{6,6} Q_6 vG_4vQ_0vG_0 = \|V\|_{L^1}G_2^0 v \mathcal{D}_{6,6} Q_6 vG_4,\quad G_0vQ_0v G_4 vQ_6 \mathcal{D}_{6,6} v G_2^0 = \|V\|_{L^1}G_4 vQ_6 \mathcal{D}_{6,6} v G_2^0.$$  
	Thus we obtain the expression \eqref{mathcal B} for $\widetilde{\mathcal{B}}.$ The argument for the sine evolution is identical by substituting $I_c(t)$ with $I_s(t)$, where $$I_s(t) = t^{-1} \int_0^\infty \sin(t\lambda^2) \lambda^{-1} \widetilde{\chi}_1(\lambda) \, d\lambda = \frac{\pi}{4|t|} + O(|t|^{-2}),$$ (see \eqref{eq:integral_sin}). This completes the proof.  
\end{proof}

\begin{proof}[\textbf{Proof of Theorem \ref{main_theorem_low_32} (2)}]  We have established  the unweighted estimate ${O}(\langle t \rangle^{-1})$ by utilizing \eqref{Q4=Q5}.
	Regarding the weighted estimates, Proposition \ref{prop_part2} establishes that the time decay is  dictated by whether $\widetilde{\mathcal{B}}$ vanishes. If $\widetilde{\mathcal{B}} \neq 0$, the $I_s(t)$ term dominates, yielding $\sim |t|^{-1}$ decay for the sine evolution. If $\widetilde{\mathcal{B}} = 0$, the remaining terms yield the improved bound $\mathcal{O}(|t|^{-1} (\log |t|)^{-1})$. The theorem is thus established if we can prove the algebraic characterization of $\ker \widetilde{\mathcal{B}}$ detailed in Lemma \ref{lemma:non-zero_1}.  
\end{proof}  

\begin{lemma}\label{lemma:non-zero_1}  
	The operator $\widetilde{\mathcal{B}} \colon L^1_\omega \to L^\infty_{-\omega}$ is non-zero if and only if $S_5L^2 \neq \widetilde{S}_5L^2$.   
\end{lemma}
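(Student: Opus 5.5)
The plan is to factor $\widetilde{\mathcal B}$ through a finite-dimensional quadratic form on the fourth-order moments of $Q_6L^2$, and then read the equivalence off from the rotational decomposition of homogeneous quartic polynomials. Throughout, $Q_5=Q_4=0$, so $S_5L^2=Q_6L^2$, and every $\psi\in Q_6L^2$ satisfies $\langle x^\gamma v,\psi\rangle=0$ for $|\gamma|\le3$ (Remark \ref{cancellationQ}).

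\emph{Step 1: factorization.} Write $\widetilde{\mathcal B}=\mathcal Y\,\mathcal K\,\mathcal Y^*$ with $\mathcal Y:=G_2^0vQ_6\mathcal D_{6,6}$ and
\[
\mathcal K:=Q_6\Bigl[-b_0^2(a_4^+-a_4^-)\,vG_8v+b_0^2\bigl((a_2^+)^2(a_0^+)^{-1}-(a_2^-)^2(a_0^-)^{-1}\bigr)\|V\|_{L^1}^{-1}\,vG_4vQ_0vG_4v\Bigr]Q_6 .
\]
Since $\mathcal D_{6,6}$ is invertible on $Q_6L^2$, it suffices to show that $G_2^0vQ_6$ is injective on $Q_6L^2$. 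This is the argument used after \eqref{psi1}: if $G_2^0v\psi=0$, applying $\Delta^2$ gives $v\psi=0$. Then $\psi=0$ where $v\neq0$, and $\psi=Uv\phi$ (Proposition \ref{characterizations-1}) gives $\psi=0$ where $v=0$. With $\mathcal Y$ injective, $\widetilde{\mathcal B}=0$ if and only if $\mathcal K=0$ on $Q_6L^2$.

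\emph{Step 2: reduction to fourth moments.} Expand $|x-y|^{4}=(|x|^2+|y|^2-2x\cdot y)^2$ and $|x-y|^8$ into bihomogeneous parts. Pairing against $\psi,\varphi\in Q_6L^2$ kills every part of degree at most $3$ in either variable. Hence $\langle vG_8v\varphi,\psi\rangle$ reduces to the bidegree-$(4,4)$ part $\kappa(x,y)$ of $|x-y|^8$. Similarly, since $vQ_0v$ is the rank-one operator onto $v$, one gets
\[
Q_6vG_4vQ_0vG_4vQ_6=\|V\|_{L^1}\,\langle\cdot,Q_6(|x|^4v)\rangle\,Q_6(|x|^4v).
\]
Let $m(\psi)\in(\mathrm{Sym}^4)^*$ denote the fourth-moment functional $p\mapsto\langle pv,\psi\rangle$ on quartic polynomials $p$. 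Then $\langle\mathcal K\varphi,\psi\rangle=\mathfrak q\bigl(m(\varphi),m(\psi)\bigr)$ for an explicit Hermitian-type form $\mathfrak q$ on the $5$-dimensional space $\mathrm{Sym}^4$. This form is built from $\kappa$ and from the rank-one term $|x|^4\otimes|y|^4$.

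\emph{Step 3: rotational decomposition.} Decompose the quartics as $\mathcal H_4\oplus|x|^2\mathcal H_2\oplus\mathbb C|x|^4$, with $\mathcal H_k$ the harmonic polynomials of degree $k$. The kernel $\kappa$ is rotation invariant, as is $|x|^4\otimes|y|^4$. By Schur's lemma, $\mathfrak q$ is therefore diagonal with respect to this splitting, with one scalar on each block. The condition $\psi\in\widetilde S_5L^2$ in \eqref{S5wide} says exactly that $m(\psi)$ annihilates $\mathcal H_4\oplus|x|^2\mathcal H_2$, i.e.\ $m(\psi)$ is isotropic.

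On the two nonradial blocks only the $G_8$ term contributes. Its scalar is $-b_0^2(a_4^+-a_4^-)$, which is nonzero and purely imaginary, times a positive constant obtained from the expansion of $(x\cdot y)^4$. So $i\mathfrak q$ is definite there, and $\mathcal K$ is nonzero as soon as some $\psi\in Q_6L^2$ has a nonisotropic moment, i.e.\ as soon as $S_5L^2\neq\widetilde S_5L^2$.

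Conversely, suppose every moment is isotropic. Then only the radial block survives. Its scalar is $-b_0^2(a_4^+-a_4^-)\kappa_0+b_0^2\bigl((a_2^+)^2/a_0^+-(a_2^-)^2/a_0^-\bigr)$, where $\kappa_0$ is the $|x|^4\otimes|y|^4$ coefficient of $\kappa$. We must show this scalar vanishes identically, using $a_0^\pm=\pm i/8$, $a_2^\pm=\pm i/512$ and $a_4^\pm=\pm i/1179648$.

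\emph{Main obstacle.} The hard step is this last exact constant cancellation on the radial block, together with checking that $\kappa_0$ and the nonradial block constants are computed correctly. I would first compute $\kappa$ by averaging $|x-y|^8$ over rotations. A useful cross-check is that the cancellation should reflect $\Delta^2$-structure: the radial $\lambda^6$-coefficient of $R_0^+-R_0^-$ should combine with the Born-type $Q_0$ correction to produce no net radial contribution. If that identity fails numerically, the fallback is to recheck which constants $\mathcal D_{6,6}$ and $Q_0$ carry in Lemma \ref{M_inverse_optimal}.
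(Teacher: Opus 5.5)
Your proof is correct and has the same skeleton as the paper's. Both factor $\widetilde{\mathcal B}$ through the injective map $G_2^0vQ_6\mathcal D_{6,6}$, which is injective by the $\Delta^2$ argument together with Proposition \ref{characterizations-1}(iv). Both then use the vanishing of all moments of order $\le3$ on $Q_6L^2$ to reduce to a form in the fourth moments, and identify its kernel with $\widetilde S_5L^2$. The difference is the last step. The paper expands the bidegree-$(4,4)$ part $16(x\cdot y)^4+48|x|^2|y|^2(x\cdot y)^2+6|x|^4|y|^4$ of $|x-y|^8$ in the moments $A,\dots,E$ and completes squares by hand, arriving at \eqref{eq:negative_squares}. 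You instead use rotation invariance and Schur's lemma on $\mathcal H_4\oplus|x|^2\mathcal H_2\oplus\mathbb C|x|^4$. This explains why the kernel is exactly the isotropic moments and reduces everything to three scalars, but you then have to compute those scalars correctly. One small omission: to pass from $\mathcal Y\mathcal K\mathcal Y^*=0$ to $\mathcal K=0$ you also need $\mathcal Y^*$ to map onto $Q_6L^2$. This follows from injectivity of $\mathcal Y$ by finite-dimensional duality, as the paper notes.

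The step you flag as the main obstacle does close, but $\kappa_0$ must be the radial isotypic component. It is not the literal coefficient $6$ of $|x|^4|y|^4$ above, because $(x\cdot y)^4$ and $|x|^2|y|^2(x\cdot y)^2$ also have radial parts. Let $\theta$ be the angle between $x$ and $y$. Then $\cos^4\theta=\frac38+\frac12\cos2\theta+\frac18\cos4\theta$ and $\cos^2\theta=\frac12+\frac12\cos2\theta$ give
\[
\kappa(x,y)=|x|^4|y|^4\bigl(36+32\cos2\theta+2\cos4\theta\bigr).
\]
So $\kappa_0=36$, and
\[
36(a_4^+-a_4^-)=\frac{36i}{589824}=\frac{i}{16384}=(a_2^+)^2(a_0^+)^{-1}-(a_2^-)^2(a_0^-)^{-1}.
\]
Hence the radial scalar vanishes exactly. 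The nonradial constants are $2$ on $\mathcal H_4$ and $32$ on $|x|^2\mathcal H_2$. The latter also receives a contribution from the $48|x|^2|y|^2(x\cdot y)^2$ term, not only from $(x\cdot y)^4$. After dividing by $36$, these reproduce the paper's $\frac1{18}|A+B-6C|^2+\frac89|D-E|^2$ and $\frac89|A-B|^2+\frac{32}{9}|D+E|^2$.

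Your direction ``nonisotropic moment $\Rightarrow\mathcal K\neq0$'' also depends on this cancellation, not only the converse. If the radial scalar had the opposite sign to the nonradial ones, the form could vanish identically on the finite-dimensional moment image $m(Q_6L^2)$ without that image being isotropic. With the radial scalar equal to zero, $i\mathcal K$ is semidefinite and both directions follow, so your proof is complete.
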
  

\begin{proof}  
	Plugging  $a_0^\pm = \pm \frac{i}{8}$, $a_2^\pm = \pm \frac{i}{512}$, and $a_4^\pm = \pm \frac{i}{1179648}$ into the expression for $\widetilde{\mathcal{B}}$ yields:  
	\[  
	\widetilde{\mathcal{B}} = -b_0^2 \cdot \frac{i}{16384} \, G_2^0 v Q_6\mathcal{D}_{6,6} \mathbf{K} \mathcal{D}_{6,6} Q_6v G_2^0,  
	\]  
	where the operator $\mathbf{K} \colon Q_6 L^2 \to Q_6 L^2$ is defined by  
	\[  
	\mathbf{K} = \frac{1}{36}Q_6 v G_8 v Q_6 -  \|V\|_{L^1}^{-1} Q_6 v G_4 v Q_0 v G_4 v Q_6.  
	\]  
	We first prove $G_2^0 v Q_6 \colon Q_6 L^2 \to L^\infty_{-\omega}$ is injective. Suppose $G_2^0 v \psi = 0$ for some $\psi \in Q_6 L^2$. Applying $\Delta^2$ in the distributional sense yields $v \psi = 0$. Thus \(\psi=0\) a.e. on $\{x:v(x)\neq0\}.$
    By Proposition~\ref{characterizations-1}(iv) and the fact that $\psi \in Q_6 L^2 = S_5 L^2$, there exists $\phi \in L^2(\mathbb{R}^2)$ such that $\psi = U v \phi$, implying  \(\psi=0\) a.e.  on \(\{x: v(x)=0\}\). Hence $\psi  = 0$, confirming injectivity. By duality and $\dim(Q_6 L^2)< \infty$, $Q_6 v G_2^0$ is surjective onto $Q_6 L^2$. 

	Since $\mathcal{D}_{6,6}$ is invertible on $Q_6 L^2$, the operator $\widetilde{\mathcal{B}}$ is the zero operator if and only if $\mathbf{K} = 0$ on $Q_6 L^2$, i.e., $\ker\mathbf{K} = Q_6L^2 = S_5L^2$. Thus, to prove this lemma, it suffices to show $\ker\mathbf{K}=\widetilde{S}_5L^2$.  
	
Indeed,	for any $f \in Q_6 L^2$, a direct computation of the associated quadratic form yields:  
	\[  
	\langle \mathbf{K} f, f \rangle =  \frac{4}{9}  \sum_{i,j,k,l=1}^2 \bigl| \langle x_i x_j x_k x_l v, f \rangle \bigr|^2 + \frac{4}{3}  \sum_{i,j=1}^2 \bigl| \langle x_i x_j |x|^2 v, f \rangle \bigr|^2 - \frac{5}{6}  \bigl| \langle |x|^4 v, f \rangle \bigr|^2.  
	\]  
	By parameterizing the moments as $A = \langle x_1^4 v, f \rangle$, $B = \langle x_2^4 v, f \rangle$, $C = \langle x_1^2 x_2^2 v, f \rangle$, $D = \langle x_1^3 x_2 v, f \rangle$, and $E = \langle x_1 x_2^3 v, f \rangle$, we decouple the form into non-negative squares:  
	\begin{equation}\label{eq:negative_squares}  
		\langle \mathbf{K} f, f \rangle =  \frac{8}{9} |A-B|^2 + \frac{1}{18} |A+B-6C|^2 + \frac{32}{9} |D+E|^2 + \frac{8}{9} |D-E|^2\geq0.  
	\end{equation}  
	Therefore, $\mathbf{K}$ is positive semi-definite yielding $f \in \ker \mathbf{K} \iff \langle \mathbf{K}f, f \rangle = 0$. According to \eqref{eq:negative_squares}, this happens if and only if $A = B = 3C$ and $D = E = 0$. These equations coincide exactly with the definition of $\widetilde{S}_5L^2$ (see \eqref{S5wide}), yielding that $\ker\mathbf{K} = \widetilde{S}_5L^2$. The proof is complete.  
\end{proof}

\section{A characterization of zero resonances}\label{sec:proof_characterization}
In this section, we give a precise characterization of the zero-energy resonances of $H = \Delta^2 + V$ on $\mathbb{R}^2$, from which Proposition~\ref{characterizations} follows.
For $\sigma \in \mathbb{R}$, recall the intersection space
\[
W_{-\sigma}(\mathbb{R}^2) = \bigcap_{s>\sigma} L^{2,-s}(\mathbb{R}^2),
\]
and the projection subspaces $S_jL^2$ associated with the orthogonal projections $S_j$ for $0 \le j \le 5$ given in Definition~\ref{defS}:
\begin{itemize}
	\item $S_0L^2 = \{ f \in L^2 \mid \langle v, f \rangle = 0\}$;
	\item $S_1L^2 = \{ f \in S_0L^2 \mid \langle x_j v, f \rangle = 0,\ j = 1,2 \}$;
	\item $S_2L^2 = \{ f \in S_1L^2 \mid S_1T_0f = 0 \}$;
	\item $S_3L^2 = \{ f \in S_2L^2 \mid S_0T_0f = 0 \}$;
	\item $S_4L^2 = \{ f \in S_3L^2 \mid \langle x_ix_j v, f \rangle = 0,\ i,j = 1,2,\ \text{and}\ PT_0f = 0 \}$;
	\item $S_5L^2 = \{ f \in S_4L^2 \mid \langle x_ix_jx_k v, f \rangle = 0,\ i,j,k = 1,2 \}$.
\end{itemize}
Here $v = \sqrt{|V|}$, $P = \|V\|_{L^1(\mathbb{R}^2)}^{-1} \langle \cdot, v\rangle v$, and $T_0 = U + b_0 vG_2^0v$.

By the definition of the projection subspaces $S_jL^2$ and the zero-energy types introduced in Definition~\ref{definition1}, Proposition~\ref{characterizations} follows directly from Proposition~\ref{characterizations-1} below. Therefore, establishing Proposition~\ref{characterizations-1} is the primary focus of this section.

\begin{proposition}\label{characterizations-1}
	Assume that $H = \Delta^2 + V$ and $|V(x)| \lesssim \langle x \rangle^{-\mu}$ with $\mu > 0$.
	\begin{itemize}
		\item[(i)] If $\mu > 11$, then
		$\psi \in S_{2} L^2$ if and only if $\psi = Uv\phi$ with $\phi \in W_{-2}(\mathbb{R}^2)$ satisfying $H\phi = 0$ and
		\begin{equation}\label{phi-phi0}
			\phi(x) = \phi_0(x) + \sum_{|\alpha|=1} C_\alpha x^\alpha \quad \text{for some } \phi_0 \in W_{-1}(\mathbb{R}^2).
		\end{equation}
		\item[(ii)] If $\mu > 14$, then
		$\psi \in S_{3} L^2$ if and only if $\psi = Uv\phi$ with $\phi \in W_{-1}(\mathbb{R}^2)$ satisfying $H\phi = 0$.
		\item[(iii)] If $\mu > 18$, then
		$\psi \in S_{4} L^2$ if and only if $\psi = Uv\phi$ with $\phi \in W_{0}(\mathbb{R}^2)$ satisfying $H\phi = 0$.
		\item[(iv)] If $\mu > 18$, then
		$\psi \in S_{5} L^2$ if and only if $\psi = Uv\phi$ with $\phi \in L^2(\mathbb{R}^2)$ satisfying $H\phi = 0$.
	\end{itemize}
\end{proposition}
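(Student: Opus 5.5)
The plan follows the standard Jensen–Kato / Erdo\u{g}an–Green scheme: use the symmetric identity $H\phi=0 \Leftrightarrow \phi=-b_0G_2^0V\phi+(\text{polynomial})$ together with the relation $\psi=Uv\phi$. Each of (i)--(iv) is proved in two directions.

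\emph{Direction ``$\Leftarrow$''.} Let $\phi$ lie in the stated space with $H\phi=0$, and set $\psi=Uv\phi$. Then $\Delta^2\phi=-V\phi=-v\psi$. Since $\psi\in L^2$ with weights (by the decay of $v$), the function $w:=\phi+b_0G_2^0v\psi$ satisfies $\Delta^2 w=0$ and grows at most polynomially, so $w$ is a biharmonic polynomial. Expanding $G_2^0(x,y)=|x-y|^2\log|x-y|$ for large $|x|$ gives
\[
|x|^2\log|x|\,\langle v,\psi\rangle-2\log|x|\,x\cdot\langle yv,\psi\rangle+O(\log|x|)+\dots
\]
\begin{itemize}
\item For (i), the membership $\phi\in W_{-2}$ of the form \eqref{phi-phi0} forces $\langle v,\psi\rangle=0$ and $\langle x_jv,\psi\rangle=0$, since otherwise $|x|^2\log|x|$ or $|x|\log|x|$ growth would not be matched by any polynomial. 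It also forces $w$ to be affine. Writing $w=c_0+\sum C_\alpha x^\alpha$ and applying $Uv$, we get $T_0\psi=Uv\,w$, which lies in $\mathrm{span}\{v,x_1v,x_2v\}$. This is exactly $S_1T_0\psi=0$, so $\psi\in S_2L^2$.
\item For (ii), $\phi\in W_{-1}$ additionally kills the linear part, so $T_0\psi=c_0Uv\in\mathrm{span}\{v\}$ modulo $U$. More precisely, $S_0T_0\psi=0$, giving $\psi\in S_3L^2$.
\item For (iii), we expand $G_2^0$ one order further. The second-moment terms $\langle x_ix_jv,\psi\rangle\log|x|$ and the constant $c_0=-\|V\|_{L^1}^{-1}\langle T_0\psi,v\rangle$ must vanish for $\phi\in W_0$. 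This is $PT_0\psi=0$ together with the moment conditions.
\item For (iv), vanishing of the third moments is exactly what is needed for the remainder to decay like $|x|^{-1}$ and hence lie in $L^2$; conversely, $L^2$ membership forces these moments to vanish.
\end{itemize}

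\emph{Direction ``$\Rightarrow$''.} Given $\psi\in S_jL^2$, define
\[
\phi:=Uv\cdot(\ldots)\quad\text{via}\quad \phi=-b_0G_2^0v\psi+w,
\]
where $w$ is the polynomial prescribed by the projection condition: for $S_2$, $w=c_0+\sum C_\alpha x^\alpha$ read off from $T_0\psi\in\mathrm{span}\{v,x_jv\}$; for $S_3$, $w=c_0$; for $S_4$ and $S_5$, $w=0$. The condition $T_0\psi=Uv\,w$ gives $\psi=Uv\phi$. Then $\Delta^2\phi=-8\pi b_0v\psi=-V\phi$, using $\Delta^2(b_0G_2^0)=\delta$ with the normalization of the paper, so $H\phi=0$. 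Membership of $\phi$ in $W_{-2}$, $W_{-1}$, $W_0$ or $L^2$ follows from the large-$|x|$ expansion of $\int G_2^0(x,y)v(y)\psi(y)\,dy$. We Taylor-expand $|x-y|^2\log|x-y|$ in $y/|x|$ up to the needed order, and the vanishing moments of $v\psi$ cancel the growing terms. The remainders are controlled by $\int\langle y\rangle^{k}|v\psi|\,dy<\infty$, which is where $\mu>11,14,18$ enter.

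\emph{Main obstacle.} The key technical step, and the one I expect to be hardest, is the precise asymptotic lemma for
\[
(G_2^0v\psi)(x)=|x|^2\log|x|\,M_0+\dots+R_k(x),
\]
with sharp remainder bounds $|R_k(x)|\lesssim\langle x\rangle^{2-k}\log\langle x\rangle$ and $\psi$ in the appropriate weighted space. Particular care is needed in the region $|y|\gtrsim|x|/2$, where the expansion is invalid and must be handled using the decay of $v$. I would split the integral at $|y|=|x|/2$, Taylor-expand on the inner region, and bound the outer region crudely by $\langle x\rangle^{-\mu/2+\ldots}$. The $L^2$ step (iv) is the most delicate, since it needs remainder $O(|x|^{-1-})$ with the logarithm absent. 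This requires checking that the $\log|x|$ coefficients at orders $\le 3$ are exactly the listed moments, so that the isotropic fourth-order pieces produce only $|x|^{-2}$ terms.
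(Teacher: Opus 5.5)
Your argument is correct in outline. Your direction ``subspace $\Rightarrow$ resonance'' is the paper's: read $w$ off from $S_{2-k}T_0\psi=0$ (or $T_0\psi=0$ for $S_4,S_5$), set $\phi=w-b_0G_2^0v\psi$, and control $G_2^0v\psi$ through the moment cancellations by splitting at $|y|=|x|/2$ (Lemma \ref{estimate_G_2^0}).

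The converse direction obtains the vanishing moments differently. The paper derives them first, independently of $w$. It tests $v\psi=-\Delta^2\phi$ against $x^\alpha\eta(\rho x)$ and integrates $\Delta^2$ by parts onto $x^\alpha\eta(\rho x)$; since $\Delta^2x^\alpha=0$, every surviving term carries a derivative of the cutoff. Letting $\rho\to0$ gives $\langle x^\alpha v,\psi\rangle=0$ whenever $|\alpha|<3-s$ and $\phi\in L^{2,-s}$. Then $G_2^0v\psi$ already lies in the target space, so the biharmonic polynomial $w$ does too and is identified at once (for instance $w=0$ when $\phi\in W_0$). Only the third moments in (iv), which the cutoff argument cannot reach when $s=0$, come from matching explicit profiles of $G_2^0v\psi$ (Lemma \ref{S_5canc}).

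You use profile matching at every order. This is uniform and avoids the cutoff computation. However, it forces you to separate the moments and $w$ simultaneously, and the profiles are not independent in the way your sketch suggests. Write $M_{ij}=\int x_ix_jv\psi$, $M_{ijk}=\int x_ix_jx_kv\psi$ and $N_k=\sum_iM_{iik}$.
\begin{itemize}
\item At order $2$ only $\operatorname{tr}M$ multiplies $\log|x|$. The other second moments enter through the angular term $\sum_{i,j}M_{ij}x_ix_j/|x|^2$, which competes with the constant $c_0$ in $w$. So in (iii) you must use $\operatorname{tr}M=0$ together with $c_0-b_0\sum_{i,j}M_{ij}\omega_i\omega_j\equiv0$ on $|\omega|=1$. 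The latter gives $M_{ij}=(c_0/b_0)\delta_{ij}$, and then the trace condition forces $c_0=0$ and $M=0$.
\item At order $3$ there is no logarithm at all, contrary to your obstacle paragraph. The profile is $-N\cdot\omega+\tfrac23\sum_{i,j,k}M_{ijk}\omega_i\omega_j\omega_k$, and its vanishing implies $M_{ijk}=0$ only after splitting into first and third angular harmonics.
\end{itemize}

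Two slips need correcting.

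First, throughout you write $T_0\psi=Uvw$; it should be $T_0\psi=vw$. Indeed $U\psi=U^2v\phi=v\phi$, so $T_0\psi=v(\phi+b_0G_2^0v\psi)$. It is $vw\in\operatorname{span}\{v,x_1v,x_2v\}$ that corresponds to $S_1T_0\psi=0$, and $vw=c_0v$ that corresponds to $S_0T_0\psi=0$. In the direction subspace $\Rightarrow$ resonance, it is $T_0\psi=vw$ that yields $U\psi=v\phi$, hence $\psi=Uv\phi$. With $Uvw$ neither step goes through.

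Second, in your (iv) bullet, decay like $|x|^{-1}$ does not give $L^2(\mathbb{R}^2)$. Such functions lie in $W_0\setminus L^2$, and they are exactly the order-$3$ profiles. Vanishing third moments help because the remainder is then $O(\langle x\rangle^{-2}\log\langle x\rangle)$, by Lemma \ref{estimate_G_2^0} with $j=4$. The order-$4$ terms are automatically $O(|x|^{-2})$, so no isotropy condition enters here; that condition belongs to Theorem \ref{main_theorem-32}, not to this proposition.
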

From Proposition \ref{characterizations-1}, we also obtain the following  spectral characterizations:
\begin{itemize}
\item \textbf{$p$-wave resonance:} $S_3 \neq S_4$ iff there exists a nonzero solution $\phi \in W_{-1}(\mathbb{R}^2) \setminus W_{0}(\mathbb{R}^2)$ to $H\phi = 0$. In other words, $Q_4 \neq 0$ is  equivalent to  $H$ admitting a $p$-wave resonance.
	
	\item \textbf{$d$-wave resonance:} $S_4 \neq S_5$ iff there exists a nonzero solution $\phi \in W_{0}(\mathbb{R}^2) \setminus L^2(\mathbb{R}^2)$ to $H\phi = 0$. In other words, $Q_5 \neq 0$ is  equivalent to  $H$ admitting a $d$-wave resonance.
\end{itemize}

\subsection{Preliminary estimates for $G_2^0$}
Prior to the proof of Proposition \ref{characterizations-1}, we establish two technical lemmas concerning  the fundamental solution $G_2^0(x) = |x|^2 \log |x|$. These are crucial for deriving the decay rates and cancellation properties.

\begin{lemma} \label{estimate_G_2^0}
	Let $j \in \mathbb{N}^+$ and $f \in L^{2,\sigma}(\mathbb{R}^2)$ with $\sigma > \max\{j, 2\} + 1$. Suppose $f$ satisfies 
\begin{equation}\label{eq:cancellation}
		\int_{\mathbb{R}^2} y^\alpha f(y) \, dy = 0 \quad \text{for all multi-indices } \alpha \text{ with } |\alpha| \le j-1.
	\end{equation}
	Then 
	\[
	\left| (G_2^0f)(x) \right| \lesssim \|f\|_{L^{2,\sigma}} \, \langle x \rangle^{2-j} (1 + \log \langle x \rangle), \quad \forall x \in \mathbb{R}^2.
	\]
\end{lemma}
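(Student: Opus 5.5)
We write $(G_2^0f)(x)=\int K(x,y)f(y)\,dy$ with $K(x,y)=|x-y|^2\log|x-y|$ and argue separately in a bounded region $|x|\le 2$ and in the far region $|x|>2$.

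\emph{Bounded region.} For $|x|\le 2$ the kernel satisfies $|K(x,y)|\lesssim \langle y\rangle^{2}(1+\log\langle y\rangle)+1$. Hence Cauchy--Schwarz gives
\[
|(G_2^0f)(x)|\lesssim \|\langle\cdot\rangle^{3}f\|_{L^1}\lesssim\|f\|_{L^{2,\sigma}}
\qquad\text{since }\sigma>4.
\]
This region therefore needs no cancellation. Note also that $\langle x\rangle^{2-j}\sim1$ there.

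\emph{Far region.} For $|x|>2$ we split the $y$-integral into $|y|\le |x|/2$ and $|y|>|x|/2$.

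On $|y|\le|x|/2$ we expand $K(x,\cdot)$ in a Taylor polynomial in $y$ about $y=0$ of degree $j-1$. The cancellation \eqref{eq:cancellation} kills every term of degree $\le j-1$. The integral remainder involves $\partial^\alpha_z K(x,z)$ with $|\alpha|=j$ and $z$ on the segment $[0,y]$, so $|x-z|\sim|x|$. The derivatives of $|w|^2\log|w|$ of order $j$ are bounded by $|w|^{2-j}(1+|\log|w||)$; for $j\ge3$ they are homogeneous of degree $2-j$ with no log, and for $j\le2$ a log survives. This piece is therefore bounded by
\[
\langle x\rangle^{2-j}(1+\log\langle x\rangle)\int|y|^{j}|f(y)|\,dy .
\]
We finish with Cauchy--Schwarz, using $\sigma>j+1$.

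On $|y|>|x|/2$ we cannot Taylor expand. Instead we bound $|K(x,y)|\lesssim \langle y\rangle^{2}(1+\log\langle y\rangle)+1$ (using $|x-y|\le 3|y|$, with the region $|x-y|<1$ harmless). We then borrow decay from $f$: on this set $\langle y\rangle^{2}\le \langle y\rangle^{j+2}\langle x\rangle^{-j}\cdot C$, since $\langle y\rangle\gtrsim\langle x\rangle$. The contribution is then controlled by $\langle x\rangle^{-j}\|\langle\cdot\rangle^{j+2+}f\|_{L^1}$, which is bounded by $\|f\|_{L^{2,\sigma}}\langle x\rangle^{2-j}$ as long as the weight requirement is met. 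Checking this carefully, we need $\sigma>j+3$, which is more than assumed.

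\emph{Main obstacle.} The tail region $|y|>|x|/2$ is where the bookkeeping is delicate. Rather than trading all $j$ powers, we trade only $j-2$ of them (when $j\ge2$): bound $|x-y|^2\log|x-y|\lesssim\langle y\rangle^{2}\log\langle y\rangle$ and use $1\le (2\langle y\rangle/\langle x\rangle)^{j-2}$. This yields $\langle x\rangle^{2-j}\int\langle y\rangle^{j}\log\langle y\rangle|f|$, which is finite for $\sigma>j+1$. For $j=1$ the target bound $\langle x\rangle^{1}\log$ follows directly from $|K|\lesssim(\langle x\rangle^2+\langle y\rangle^2)\log(\langle x\rangle\langle y\rangle)$ together with the one degree of Taylor cancellation in the near region. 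The hypothesis $\sigma>\max\{j,2\}+1$ is exactly what makes $\langle y\rangle^{\max\{j,2\}}\log\langle y\rangle f\in L^1$, so every step closes.
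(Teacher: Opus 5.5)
Your overall architecture is the same as the paper's: you split into $|x|\le 2$ and $|x|>2$, then into $|y|\le|x|/2$ and $|y|>|x|/2$, use a Taylor expansion with integral remainder in the near region, and close with Cauchy--Schwarz. But one step is false as written, and the term it drops is never estimated.

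\textbf{The gap.} The moment conditions \eqref{eq:cancellation} hold over all of $\mathbb{R}^2$, not over the ball $\{|y|\le |x|/2\}$. So the Taylor polynomial terms do \emph{not} vanish in the near region. For $|\alpha|\le j-1$ one only has
\[
\partial_y^\alpha K(x,0)\int_{|y|\le |x|/2} y^\alpha f(y)\,dy=-\,\partial_y^\alpha K(x,0)\int_{|y|>|x|/2} y^\alpha f(y)\,dy .
\]
Your far-region argument bounds only $\int_{|y|>|x|/2}|K(x,y)|\,|f(y)|\,dy$, so these tail terms are missing. Equivalently, you may subtract the degree-$(j-1)$ Taylor polynomial $P_{j-1}(x,y)$ globally, which \eqref{eq:cancellation} permits. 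But then on $|y|>|x|/2$ you must bound $|K-P_{j-1}|$, not just $|K|$. This is exactly the extra term the paper isolates in its estimate of $\mathcal{I}_2(x)$.

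\textbf{The fix.} It uses tools you already have. For $|x|>2$, $|\partial_y^\alpha K(x,0)|\lesssim |x|^{2-|\alpha|}(1+\log|x|)$. On $|y|>|x|/2$, since $j-|\alpha|\ge 1$,
\[
|x|^{2-|\alpha|}|y|^{|\alpha|}\lesssim |x|^{2-j}|y|^{j}.
\]
Hence these terms are bounded by $\langle x\rangle^{2-j}(1+\log\langle x\rangle)\int|y|^j|f|\,dy$, which is finite for $\sigma>j+1$. The paper instead uses $\int_{|y|>|x|/2}|y|^{|\alpha|}|f|\lesssim |x|^{|\alpha|+1-\sigma}\|f\|_{L^{2,\sigma}}$, obtaining the stronger bound $|x|^{3-\sigma}\log|x|$. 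Either suffices. Your $j=1$ paragraph is vague, but it is fine once this term is included: for $j=1$ the missing piece is $K(x,0)\int_{|y|>|x|/2}f\,dy$. The tail contribution $\int_{|y|>|x|/2}|K||f|$ is then simply $O(1)$, since $\sigma>3$.

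\textbf{A smaller slip.} In the bounded region you bound the kernel by $\langle y\rangle^{3}$ and invoke $\sigma>4$. For $j=1,2$ the hypothesis only gives $\sigma>3$. Use $|K(x,y)|\lesssim\langle y\rangle^{2+\epsilon}$ with $0<\epsilon<\sigma-3$, as the paper does. Everything else is correct: your near-region remainder estimate, and your treatment of $\int_{|y|>|x|/2}|K||f|$ via $1\le(2\langle y\rangle/\langle x\rangle)^{j-2}$ for $j\ge2$.
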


\begin{proof}
	For $|x| \le 2$, fix $0<\epsilon< \min(\sigma-3, 2)$. Then $|G_2^0(x-y)|\lesssim \langle y \rangle^{2+\epsilon}$. By the Cauchy--Schwarz inequality, $|G_2^0 f(x)|\lesssim \|f\|_{L^{2,\sigma}}$, which verifies the bound locally.
	
	For $|x| > 2$, note that $\langle x \rangle \sim |x|$. We decompose the integration domain into $A_1(x) = \{ y : |y| > |x|/2 \}$ and $A_2(x) = \{ y : |y| \le |x|/2 \}$, writing $G_2^0f(x) =\mathcal {I}_1(x) + \mathcal {I}_2(x)$.
	
	\underline{Estimate of $\mathcal{I}_1(x)$:} For $y \in A_1(x)$, $|G_2^0(x-y)| \lesssim |y|^2 (1 + \log |y|)$. By Cauchy--Schwarz and switching to polar coordinates $r = |x|\rho$:
	\begin{align*}
		|\mathcal{I}_1(x)| &\lesssim \|f\|_{L^{2,\sigma}} \Bigl( \int_{|y| > |x|/2} |y|^4 (1 + \log |y|)^2 \langle y \rangle^{-2\sigma} \, dy \Bigr)^{1/2} \\
		&\lesssim \|f\|_{L^{2,\sigma}} \, |x|^{3-\sigma} \Bigl( \int_{1/2}^\infty \rho^{5-2\sigma} (1 + \log(|x|\rho))^2 \, d\rho \Bigr)^{1/2}.
	\end{align*}
	Using $1 + \log(ab) \lesssim (1 + \log a)(1 + \log b)$ for $a,b>1$ and $\sigma > \max\{j, 2\} + 1$,  we obtain
 	\[
	|\mathcal{I}_1(x)| \lesssim \|f\|_{L^{2,\sigma}} \langle x \rangle^{3-\sigma} \Big(1 + \log \frac{|x|}{2}\Big) \Big( \int_{1/2}^\infty \rho^{5-2\sigma} (1 + \log (2\rho))^2 d\rho \Big)^{1/2} \lesssim \|f\|_{L^{2,\sigma}}  \langle x\rangle^{2-j} (1 + \log \langle x\rangle).
	\]

\underline{Estimate of $\mathcal{I}_2(x)$:} Taylor expanding $G_2^0(x-y)$ about $y=0$ to order $j-1$, and utilizing the cancellation \eqref{eq:cancellation} and the relation $A_2(x) = \mathbb{R}^2 \setminus A_1(x)$, we obtain
	\begin{align*}
	\mathcal{I}_2(x) 
    &=-\sum_{|\alpha|\le j-1} \frac{(-1)^{|\alpha|}}{\alpha!}\partial_x^\alpha G_2^0(x)\int_{A_1(x)} y^\alpha f(y)dy + \mathcal{R}(x),
	\end{align*}
	where the remainder $\mathcal{R}(x)$ is given by
	\[
	\mathcal{R}(x) = \sum_{|\alpha|=j} \frac{(-1)^{|\alpha|}}{\alpha!} \int_{A_2(x)}  y^\alpha f(y) \Bigl( \int_0^1 j(1-\theta)^{j-1} (\partial_x^\alpha G_2^0)(x-\theta y) d\theta \Bigr) dy.
	\]
	Since $| \int_{A_1(x)} y^\alpha f(y) dy| \lesssim |x|^{|\alpha|+1-\sigma} \|f\|_{L^{2,\sigma}}$ and $|\partial_x^\alpha G_2^0(x)| \lesssim |x|^{2-|\alpha|} \log|x|$,
then the first part of $\mathcal{I}_2(x)$ is bounded by
$ \|f\|_{L^{2,\sigma}} |x|^{3-\sigma} \log|x|$.
	For the remainder $\mathcal{R}(x)$, since $|x - \theta y| \sim |x|$ for $y\in A_2(x)$ and $\theta\in[0,1]$, we have $|\partial_x^\alpha G_2^0(x-\theta y)| \lesssim |x|^{2-j}(1+\log|x|)$ for $|\alpha|=j$. The condition $\sigma > j+1$ ensures integrability, yielding $|\mathcal{R}(x)| \lesssim \|f\|_{L^{2,\sigma}} \langle x \rangle^{2-j}(1+\log\langle x \rangle)$.
\end{proof}
\begin{lemma} \label{S_5canc}
	Let $f \in L^{2,\sigma}(\mathbb{R}^2)$ with $\sigma > 5$. If $G_2^0 f \in L^2(\mathbb{R}^2)$, then $f$ must satisfy 
 \[
\int_{\mathbb{R}^2} y^\alpha f(y) \, dy = 0, \quad \text{for every multi-index}  \   \alpha \  \text{with}\  |\alpha| \leq 3.
\]
\end{lemma}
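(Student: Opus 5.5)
The idea is to reuse the Taylor-expansion argument from the proof of Lemma~\ref{estimate_G_2^0}, run in reverse. We prove the moment conditions one order at a time, by induction on $k=0,1,2,3$. At stage $k$ we assume $\int y^\alpha f(y)\,dy=0$ for all $|\alpha|\le k-1$; when $k=0$ this assumption is empty. Under this assumption we extract the leading large-$|x|$ asymptotics of $G_2^0f$. We then show that $G_2^0f\in L^2$ forces the order-$k$ moments to vanish as well.

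\textbf{Asymptotics of $G_2^0f$.} Split $G_2^0f=\mathcal I_1+\mathcal I_2$ over the regions $A_1(x)=\{|y|>|x|/2\}$ and $A_2(x)=\{|y|\le|x|/2\}$, as in Lemma~\ref{estimate_G_2^0}. On $A_2$, Taylor expand $G_2^0(x-y)$ about $y=0$ up to order $k$, with a remainder of order $k+1$. The terms of order $\le k-1$ pair with the tail moments $\int_{A_1}y^\alpha f$, which are $O(|x|^{|\alpha|+1-\sigma})$. Together with $\mathcal I_1=O(|x|^{3-\sigma}\log|x|)$, these contributions are $O(|x|^{3-\sigma}\log|x|)$. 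Since $\sigma>5$, this is in $L^2$ away from the origin.

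The order-$(k+1)$ remainder is $O(|x|^{1-k}\log|x|)$, which lies in $L^2(|x|>2)$ only when $k\ge 3$. To handle this for $k\le 2$, I would expand to order $3$ instead, not $k$. The terms of orders $k,\dots,3$ then give
\[
\sum_{k\le|\alpha|\le 3}\frac{(-1)^{|\alpha|}}{\alpha!}\,\partial^\alpha G_2^0(x)\,m_\alpha ,\qquad m_\alpha=\int y^\alpha f(y)\,dy ,
\]
plus errors of order $O(|x|^{-2}\log|x|)+O(|x|^{3-\sigma}\log|x|)$, all of which are in $L^2(|x|>2)$. The full moments $m_\alpha$ differ from the moments over $A_2$ by these same admissible tails. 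Here $\sigma>5$ ensures all moments up to order $4$ converge. Consequently, $G_2^0f\in L^2$ implies that
\[
P_k(x):=\sum_{|\alpha|=k}\frac{(-1)^{k}}{\alpha!}\,m_\alpha\,\partial^\alpha G_2^0(x)
\]
belongs to $L^2(|x|>2)$ modulo lower-order homogeneous pieces.

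\textbf{Showing $P_k\equiv 0$.} Each $\partial^\alpha G_2^0$ with $|\alpha|=k$ has the form $|x|^{2-k}\bigl(p_\alpha(\theta)\log|x|+q_\alpha(\theta)\bigr)$. Its size is $|x|^{2-k}$ up to logarithms, and $|x|^{2-k}\notin L^2(|x|>2)$ for $k\le 3$ (since $2(2-k)\ge -2$). The pieces of different homogeneity decouple asymptotically. So I would consider the top-order piece $P_k$, which is homogeneous of degree $2-k$ up to a logarithm, and integrate over dyadic annuli. This forces both its $\log|x|$ coefficient and its purely homogeneous coefficient to vanish identically as functions of $\theta$.

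This is the main obstacle: we must show that vanishing of these coefficients implies $m_\alpha=0$ for all $|\alpha|=k$. My first attempt would be to apply $\Delta^2$. Since $\Delta^2G_2^0=8\pi\delta$, if $P_k$ vanishes for $|x|>2$, then as a distribution $P_k=8\pi\sum_\alpha c_\alpha\partial^\alpha\delta$ modulo a polynomial-free correction. Hence $P_k$ is supported at the origin, which is impossible unless all $c_\alpha=0$, because $\delta$-derivatives of different multi-indices are linearly independent. A cleaner variant is to take the Fourier transform: the term $\sum m_\alpha(i\xi)^\alpha|\xi|^{-4}$ must have no non-local part, and this forces the polynomial $\sum_{|\alpha|=k}m_\alpha\xi^\alpha$ to vanish.

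The delicate case is $k=2$. There the $\log$ coefficient is $\sum m_\alpha\partial^\alpha|x|^2$, which is a constant equal to $2(m_{11}+m_{22})$. This only gives the vanishing of the trace. The remaining traceless part must come from the $\theta$-dependent homogeneous term. I would verify explicitly that the traceless part produces $\cos2\theta$ and $\sin2\theta$ terms. Once the order-$k$ moments are shown to vanish, induct to the next $k$, ending at $k=3$.
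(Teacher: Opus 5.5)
Your proposal is correct and is essentially the paper's argument: you expand $G_2^0f$ for $|x|>2$ to third order by the Taylor argument of Lemma~\ref{estimate_G_2^0}, with an $O(|x|^{-2}\log|x|)$ remainder (this is where $\sigma>5$ enters) and all tail terms in $L^2$, and then force each homogeneous layer $\sum_{|\alpha|=k}c_\alpha\partial^\alpha G_2^0$, $k\le 3$, to vanish, since no nonzero layer is square integrable at infinity --- the induction on $k$ is only bookkeeping, because the single expansion at $k=0$ already exhibits all layers. Your last step is more careful than the paper's bare assertion that ``all coefficients must vanish'' (at order $3$ the listed profiles are linearly dependent, e.g.\ $x_1/|x|^2=(x_1^3+x_1x_2^2)/|x|^4$), but you should phrase it as follows: once the angular coefficients vanish, $P_k=0$ on $\mathbb{R}^2\setminus\{0\}$, hence $P_k=0$ in $\mathcal{D}'$ because $P_k\in L^1_{\mathrm{loc}}$ for $k\le 3$, so $0=\Delta^2P_k=8\pi\sum_{|\alpha|=k}c_\alpha\partial^\alpha\delta$ and therefore every $c_\alpha=0$; it is $\Delta^2P_k$, not $P_k$ itself, that equals the $\delta$-combination.
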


\begin{proof}
	For $|x|>2$, following the argument in Lemma \ref{estimate_G_2^0} with $j=4$ and $\sigma > 5$, we can obtain
	\begin{align*}
		(G_2^0 f)(x) = \sum_{|\alpha| \leq 3} \frac{(-1)^{|\alpha|}}{\alpha!} M_\alpha \partial_x^\alpha G_2^0(x) +  O(|x|^{-2} \log |x|) =: \Psi_0(x) + O(|x|^{-2} \log |x|),
	\end{align*}
where $M_\alpha= \int_{\mathbb{R}^2} y^\alpha f(y) \, dy$.
	Since both $G_2^0 f$ and the remainder are in $L^2(\{ |x| > 2 \})$, we must have $\Psi_0(x) \in L^2(\{ |x| > 2 \})$. Computing $\partial_x^\alpha G_2^0(x)$ explicitly gives:
	\begin{align*}
		\Psi_0(x) &= |x|^2\log|x|M_0+ (-2\log|x|-1)\sum_{|\alpha|=1}x^{\alpha}M_\alpha \\
		&\quad + \Big[(\frac{1}{2}+\log|x|)\int_{\mathbb{R}^2}|y|^2 f(y)dy + \sum_{|\alpha|=2}\frac{x^{\alpha}}{|x|^2}M_\alpha\Big] \\
		&\quad + \Big[-\sum_{i=1}^{2}\frac{x_i}{|x|^2}\int_{\mathbb{R}^2}y_i|y|^2 f(y)dy + \frac{2}{3}\sum_{|\alpha|=3}\frac{x^{\alpha}}{|x|^4}M_\alpha\Big].
	\end{align*}
Note that for $|x|>2,$ we have $|x|^2 \log |x| \in W_{-3} \setminus W_{-2}$ and
\begin{align*}
	(2 \log |x| + 1)x^\alpha &\in W_{-2}\setminus W_{-1} \ \ \text{for}\ |\alpha|=1, \\
	\frac{1}{2} + \log |x|,\  \frac{x^\alpha}{|x|^2}  &\in W_{-1} \setminus W_0 \ \ \text{for}\ |\alpha|=2, \\
	\frac{x_i}{|x|^2}, \  \frac{x^\alpha}{|x|^4}  &\in W_0 \setminus L^2 \ \ \text{for}\ |\alpha|=3.
\end{align*}
	Hence, the only possibility for $\Psi_0(x) \in L^2(\{ |x| > 2 \})$ is that all the constant coefficients multiplying these profiles must be exactly zero, i.e., $M_\alpha = 0$ for all  $|\alpha| \leq 3$. This yields the desired cancellations.  
\end{proof}

\subsection{Proof of Proposition \ref{characterizations-1}}
We present a unified proof of the equivalence. Specifically, we will establish that $\psi \in S_{k+1} L^2$ if and only if $H\phi = 0$, where  $\psi = Uv\phi \in W_{k-3}(\mathbb{R}^2)$ for $k \in \{1, 2, 3\}$, and  $\psi = Uv\phi \in L^2(\mathbb{R}^2)$ for $k = 4$. Furthermore, for the  case $k=1$, we will show that this equivalence additionally requires $\phi$ to satisfy the structural form \eqref{phi-phi0}.

\textbf{Proof of ``$\bm{\Longrightarrow}$'' (From Subspace to Resonance):} 
Assume $\psi \in S_{{k}+1}L^2$. 
For ${k}=1,2$, Definition \ref{defS} dictates $S_{2-{k}}T_0\psi = 0$. Thus $T_0\psi = (I - S_{2-{k}})T_0\psi = v \sum_{|\alpha| \le 2-{k}} C_{\alpha} x^\alpha$. Then
\begin{align}\label{k=1,2}
	U\psi = -b_0 v G_2^0 v\psi + T_0\psi = -b_0 v G_2^0 v\psi + v\sum_{|\alpha| \le 2-{k}} C_{\alpha }x^\alpha.
\end{align}
For ${k}=3,4$, the condition $PT_0\psi = 0$ yields $T_0\psi =(P+S_0) T_0\psi=0$, implying
\begin{align}\label{k=3,4}
	U\psi = -b_0 v G_2^0 v\psi.
\end{align}
By the cancellation properties of $S_{{k}+1}$ (Definition \ref{defS}), $f = v\psi$ satisfies \eqref{eq:cancellation} with $j=2$ (for ${k}=1$) and $j={k}$ (for ${k}=2,3,4$). Applying Lemma \ref{estimate_G_2^0}, we obtain:
\begin{equation}\label{estimate_phi_G}
	\left| G_2^0 v\psi(x) \right| \lesssim
	\begin{cases}
		1 + \log \langle x \rangle, & {k} = 1,2, \\
		\langle x \rangle^{2 - {k}} \big( 1 + \log \langle x \rangle \big), & {k} = 3,4.
	\end{cases}
\end{equation}
This confirms $G_2^0 v\psi \in W_{-1}$ (for ${k}=1$) and $G_2^0 v\psi \in W_{{k}-3}$ (for ${k}=2,3,4$).

We now define the function $\phi(x)$ 
\begin{equation}\label{expression_psi}
	\phi(x) :=
	\begin{cases}
		-b_0 G_2^0 v\psi(x) + \sum_{|\alpha| \le 2 - {k}} C_\alpha x^\alpha, & {k} = 1,2, \\
		-b_0 G_2^0 v\psi(x), & {k} = 3,4.
	\end{cases}
\end{equation}
Combine this with  \eqref{k=1,2} and \eqref{k=3,4},  then $\psi = Uv\phi$. Moreover, $\phi$ falls exactly into the required spaces $W_{{k}-3}$ (or $L^2$ for ${k}=4$), with the special structure for ${k}=1$ automatically satisfied. Finally, since $\Delta^2 G_2^0 = b_0^{-1} \delta_0$, we verify the equation:
\[
(\Delta^2 + V)\phi = -b_0 \Delta^2 G_2^0 v\psi + V\phi = -v( Uv\phi) + V\phi = -V\phi + V\phi= 0,
\]
which completes the forward implication.

\vspace{0.2cm}
\textbf{Proof of ``$\bm{\Longleftarrow}$'' (From Resonance to Subspace):} 
Conversely, suppose there exists $\phi$ satisfying $(\Delta^2 + V)\phi = 0$ in the respective space for index ${k}$. Define $\psi = U v \phi$. We proceed in three steps to show $\psi \in S_{{k}+1}L^2$.

\textbf{Step 1: Establish the cancellation conditions.} 
We first prove $\langle x^{\alpha}v, \psi\rangle=0$ for $|\alpha| \leq 1$ when ${k}=1$, for $|\alpha| \leq {k}-1$ when ${k}=2,3$, and for $|\alpha| \leq 2$ when ${k}=4$.
Let $\eta \in C_c^\infty(\mathbb{R}^2)$ be a cutoff function satisfying $\eta(x) = 1$ for $|x| \le 1$ and $\eta(x) = 0$ for $|x| \ge 2$.  Using $(\Delta^2 + V)\phi = 0$ and integrating by parts against $\eta(\rho x)$ with $\rho \in (0,1)$, we obtain
\begin{align}
	\int_{\mathbb{R}^2} x^\alpha v(x) \psi(x) \eta(\rho x)  d x
	&= -\int_{\mathbb{R}^2} x^\alpha \eta(\rho x) \Delta^2 \phi(x)  d x \nonumber\\\label{psi-phi}
	&= \sum_{\substack{\alpha_i \geq \beta_i \text{ for } i=1,2 \\ |\beta|+|\gamma|=4, \beta, \gamma \in \mathbb{N}_0^2}} C_{\beta, \gamma}  \rho^{|\gamma|} \int_{\mathbb{R}^2} x^{\alpha-\beta} \phi(x) (\partial^\gamma \eta)(\rho x)  d x.
\end{align}
Then applying the Cauchy-Schwarz inequality and noting that the support of $(\partial^\gamma \eta)(y)$ is contained in $\{ |y| \le 2\}$ for all $\gamma\in \mathbb{N}_0^2$, we deduce that for $|\alpha|-|\beta|+s \ge 0$ (ensured by taking $s \ge 0$),
\begin{align*}
	\left| \int_{\mathbb{R}^2} x^{\alpha-\beta} \phi(x) (\partial^\gamma \eta)(\rho x) \, dx \right| 
	&\le \left\| \langle x\rangle^{|\alpha|-|\beta|+s} (\partial^\gamma \eta)(\rho x) \right\|_{L^2} \left\| \langle x\rangle^{-s} \phi \right\|_{L^2} \lesssim \rho^{-|\alpha|+|\beta|-s-1} \left\| \langle x\rangle^{-s} \phi \right\|_{L^2}.
\end{align*}
Summing up, we obtain the integral \eqref{psi-phi} is  bounded by $\rho^{3-|\alpha|-s} \| \langle x\rangle^{-s} \phi \|_{L^2}$. 
Since $\phi \in W_{{k}-3}$, we can select $s = 3 - {k} + \varepsilon$ with $0 < \varepsilon \ll 1$ such that $\phi \in L^{2,-s}$ and $3 - |\alpha| - s > 0$ for all strictly required $|\alpha|$. Taking the limit $\rho \to 0$, the integral vanishes, giving the cancellations for ${k}=2,3$. (For ${k}=1$, substituting $\phi$ with $\phi_0 \in W_{-1}$ and taking $s=1+\varepsilon$ yields the same).

For ${k}=4$, since $\phi \in L^2$, then $s=0$. The exponent $3 - |\alpha| > 0$ dictates that the integral vanishes for $|\alpha| \le 2$. The missing cancellation for $|\alpha|=3$ will be deduced via a bootstrap argument below.

\textbf{Step 2:  Deduce the structural forms.}
Using the cancellation properties established in Step~1 and arguing as in the derivation of \eqref{estimate_phi_G} for ${k}=1,2,3$, we conclude that $G_2^0 v\psi(x) \in W_{-1}(\mathbb{R}^2)$ for ${k}=1$, and $G_2^0 v\psi(x) \in W_{{k}-3}(\mathbb{R}^2)$ for ${k}=2,3$. Similarly, for ${k}=4$, the vanishing of the moments $\langle x^{\alpha}v, \psi\rangle$ for $|\alpha| \le 2$, already obtained in Step~1, at least yields $G_2^0 v\psi(x) \in W_{0}(\mathbb{R}^2)$.
Thus by  $\phi \in W_{{k}-3}(\mathbb{R}^2)$ for ${k}=1,2,3$  and $\phi\in L^2(\mathbb{R}^2),$
we have
\begin{equation}\label{estimate}
	\begin{split}
		\phi+b_0 G_2^0 v \psi \in
		W_{{k}-3}(\mathbb{R}^2)\ \text{for} \ {k}=1,2,3, \ \text{and at least } 	\phi+b_0 G_2^0 v \psi \in
		W_{0}(\mathbb{R}^2) \ \text{for} \ {k}=4.
	\end{split}
\end{equation}
Note that
$
	\Delta^2\left(\phi+b_0 G_2^0 v \psi\right) =\Delta^2 \phi+ v \psi=\left(\Delta^2+V\right) \phi=0.
$
It follows that Fourier transform of the function $\phi+b_0 G_2^0 v \psi $ is supported
at the origin, which is the sum of finite derivatives of Dirac distribution $\delta$ (see, e.g., \cite{GG21b}).
This immediately implies that $\phi+b_0 G_2^0 v \psi $ is a polynomial on $\mathbb{R}^2$.
Combining this with  \eqref{estimate}, we conclude that $\phi$ indeed admits the representation stated in \eqref{expression_psi} for ${k}=1,2,3,4$.

\textbf{Step 3: Finalize ${k}=4$ cancellations and projection conditions.}
Note that $v\psi=V\phi$ and
\begin{align*}
	|(V\phi)(x)|=|b_0V(x) G_2^0 v\psi(x)|\lesssim\langle x \rangle^{-\mu}\langle x \rangle^{-1} \big( 1 + \log \langle x \rangle \big)\lesssim\langle x \rangle^{-\mu}.
\end{align*}
Together with $\phi=-b_0G_2^0 v \psi \in L^2$ for ${k} = 4$, Lemma~\ref{S_5canc} with $f = v \psi$  grants the remaining cancellation condition for $|\alpha|=3$.
Finally, we verify $S_{2-{k}}T_0\psi$ for ${k}=1,2$ and $PT_0\psi=0$ for ${k}=3,4.$ Indeed,
for ${k}=1,2$, applying $S_{2-{k}}$ to $T_0\psi$ yields:
\begin{align*}
	S_{2-{k}}T_0\psi &= S_{2-k}U\psi + b_0 S_{2-{k}}v G_2^0 v\psi = S_{2-{k}}v\phi + b_0 S_{2-{k}}v G_2^0 v\psi \\
	&= S_{2-{k}}\Big( -b_0 vG_2^0 v\psi + v\sum_{|\alpha| \le 2 - {k}} C_\alpha x^\alpha \Big) + b_0 S_{2-{k}}v G_2^0 v\psi = 0,
\end{align*}
due to the intrinsic orthogonality of $S_{2-{k}}$. For ${k}=3,4$, since  $\phi = -b_0 G_2^0 v \psi$, then
\begin{align*}
	PT_0\psi(x)&=PU\psi(x)+b_0PvG_2^0 v\psi(x)\\
	&=Pv\phi+b_0PvG_2^0 v\psi(x)=-b_0PvG_2^0 v\psi(x)+b_0PvG_2^0 v\psi(x)=0. 
\end{align*}

This verifies all required conditions for $S_{{k}+1}L^2$, concluding the proof.
\hfill $\square$

\section{Asymptotic expansions of $(M^{\pm}(\lambda))^{-1}$ at zero energy}
\label{subsec:proof_inverse}

We begin by presenting two abstract algebraic lemmas that are essential for inverting the operator matrices in the proof of Theorem \ref{thm:M_inverse}.

\begin{lemma}[{\cite[Lemma 3.12]{JK}}]\label{lemma_1}
	Let $\mathfrak{X}, \mathfrak{Y}, X, Y$ be vector spaces. Let $\mathbf{A} : \mathfrak{X} \rightarrow \mathfrak{Y}, \mathbf{B} : X \rightarrow \mathfrak{X}, \mathbf{C} : \mathfrak{Y} \rightarrow Y$ be linear operators. Define $\mathbf{D}=\mathbf{C}\mathbf{A}\mathbf{B}$. If $\mathbf{D}^{-1}$ exists, then
	$$
\mathbf{A}^{-1}=\mathbf{B}\mathbf{D}^{-1}\mathbf{C}
	$$
	provided $\mathbf{B}$ is surjective and $\mathbf{C}$ is injective.
\end{lemma}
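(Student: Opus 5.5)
The argument is purely algebraic, so I would verify directly that $\mathbf{B}\mathbf{D}^{-1}\mathbf{C}$ is a two-sided inverse of $\mathbf{A}$. The first step is to show that $\mathbf{A}$ is injective and surjective, hence invertible as a linear map $\mathfrak{X}\to\mathfrak{Y}$. Once that is known, the formula follows by a one-line manipulation.

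For injectivity, suppose $\mathbf{A}\xi=0$ with $\xi\in\mathfrak{X}$. Since $\mathbf{B}$ is surjective, we can write $\xi=\mathbf{B}x$ for some $x\in X$. Then
\[
\mathbf{D}x=\mathbf{C}\mathbf{A}\mathbf{B}x=\mathbf{C}\mathbf{A}\xi=0,
\]
and invertibility of $\mathbf{D}$ gives $x=0$, hence $\xi=0$.

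For surjectivity, let $\eta\in\mathfrak{Y}$ and set $\xi:=\mathbf{B}\mathbf{D}^{-1}\mathbf{C}\eta$. Then
\[
\mathbf{C}(\mathbf{A}\xi-\eta)=\mathbf{D}\mathbf{D}^{-1}\mathbf{C}\eta-\mathbf{C}\eta=0.
\]
Injectivity of $\mathbf{C}$ yields $\mathbf{A}\xi=\eta$. This computation already shows $\mathbf{A}\mathbf{B}\mathbf{D}^{-1}\mathbf{C}=\mathrm{Id}_{\mathfrak{Y}}$.

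It remains to get the left-inverse identity. Since $\mathbf{A}$ is now known to be bijective, multiplying $\mathbf{A}(\mathbf{B}\mathbf{D}^{-1}\mathbf{C})=\mathrm{Id}_{\mathfrak{Y}}$ on the left by $\mathbf{A}^{-1}$ gives $\mathbf{B}\mathbf{D}^{-1}\mathbf{C}=\mathbf{A}^{-1}$.

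The only point needing care is the order of the steps. The right-inverse identity alone does not identify $\mathbf{B}\mathbf{D}^{-1}\mathbf{C}$ with $\mathbf{A}^{-1}$, because one also needs injectivity of $\mathbf{A}$. That injectivity is exactly where the surjectivity of $\mathbf{B}$ enters, so I would prove it first, as above. No topology is involved; the statement is about vector spaces only.
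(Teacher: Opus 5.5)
Your proof is correct and complete. Surjectivity of $\mathbf{B}$ together with injectivity of $\mathbf{D}$ gives injectivity of $\mathbf{A}$. Injectivity of $\mathbf{C}$ gives the right-inverse identity $\mathbf{A}\mathbf{B}\mathbf{D}^{-1}\mathbf{C}=\mathrm{Id}_{\mathfrak{Y}}$, and cancelling $\mathbf{A}$ then finishes the argument.

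The paper gives no proof of this lemma; it only quotes it from Jensen--Kato, so there is no competing argument to compare against.

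A slightly shorter route is available, because the hypotheses force $\mathbf{B}$ and $\mathbf{C}$ to be bijective. Injectivity of $\mathbf{D}=\mathbf{C}\mathbf{A}\mathbf{B}$ implies $\mathbf{B}$ is injective. Surjectivity of $\mathbf{D}$ implies $\mathbf{C}$ is surjective. Hence $\mathbf{A}=\mathbf{C}^{-1}\mathbf{D}\mathbf{B}^{-1}$ is a composition of bijections, and $\mathbf{A}^{-1}=\mathbf{B}\mathbf{D}^{-1}\mathbf{C}$ is immediate.

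Alternatively, you can check the left-inverse identity directly: for $\xi=\mathbf{B}x$ one has $\mathbf{B}\mathbf{D}^{-1}\mathbf{C}\mathbf{A}\xi=\mathbf{B}\mathbf{D}^{-1}\mathbf{D}x=\xi$. This makes the two halves of your argument symmetric, with surjectivity of $\mathbf{B}$ giving the left inverse and injectivity of $\mathbf{C}$ giving the right inverse.

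In the paper's application, $\mathbf{B}=\mathbf{B}_\lambda$ and $\mathbf{C}=\mathbf{B}_\lambda^*$ are already isomorphisms. The lemma is used there in exactly this algebraic form, and boundedness of $(M^\pm(\lambda))^{-1}$ is inherited from the product formula.
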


\begin{lemma}[{\cite[Lemma 2.3]{JN}}]\label{lemma_2}
	Let $A$ be an operator matrix on $\mathcal{H}=\mathcal{H}_1 \medoplus \mathcal{H}_2$:
	$$
	A=\left(\begin{array}{ll}
		a_{11} & a_{12} \\
		a_{21} & a_{22}
	\end{array}\right), \quad a_{i j}: \mathcal{H}_j \rightarrow \mathcal{H}_i, 1 \leq i, j \leq 2,
	$$
	where $a_{11}$ and $a_{22}$ are closed, and $a_{12}$, $a_{21}$ are bounded. Suppose $a_{11}$ has a bounded inverse. Define 
	$$
	\boldsymbol{d} := a_{22}-a_{21} a_{11}^{-1} a_{12}.
	$$
	Then the following statements hold:
	\begin{enumerate}
		\item[(i)] $A$ has a bounded inverse if and only if $\boldsymbol{d}$
		has a bounded inverse. Furthermore, if $\boldsymbol{d}$ has a bounded inverse, we have
		\begin{equation}\label{A-m}
		A^{-1}=\left(\begin{array}{cc}
			a_{11}^{-1} a_{12} \boldsymbol{d}^{-1} a_{21} a_{11}^{-1}+a_{11}^{-1} & -a_{11}^{-1} a_{12} \boldsymbol{d}^{-1}  \\
			-\boldsymbol{d}^{-1} a_{21} a_{11}^{-1}& \boldsymbol{d}^{-1}
		\end{array}\right) .
		\end{equation}
		\item[(ii)] If $A$ is self-adjoint, then $A > 0$ (strictly positive) if and only if $a_{11} > 0$ and $\boldsymbol{d} > 0$.
	\end{enumerate}
\end{lemma}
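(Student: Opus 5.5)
The statement is the Schur complement lemma (Lemma \ref{lemma_2}), and I will prove it by explicit block factorization. Since $a_{11}$ has a bounded inverse and $a_{12},a_{21}$ are bounded, I write
\[
A=\begin{pmatrix} I & 0\\ a_{21}a_{11}^{-1} & I\end{pmatrix}
\begin{pmatrix} a_{11} & 0\\ 0 & \boldsymbol{d}\end{pmatrix}
\begin{pmatrix} I & a_{11}^{-1}a_{12}\\ 0 & I\end{pmatrix}.
\]
I check this by multiplying out. The $(2,2)$ entry is $a_{21}a_{11}^{-1}a_{12}+\boldsymbol d=a_{22}$. Domains must be respected: $a_{11}^{-1}a_{12}$ maps $\mathcal H_2$ boundedly into $D(a_{11})$, so the identity holds on $D(a_{11})\oplus D(a_{22})$, and $D(\boldsymbol d)=D(a_{22})$.

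\textbf{Part (i).} The two outer triangular factors are bounded and boundedly invertible, with inverses obtained by flipping the sign of the off-diagonal entry. Hence $A$ has a bounded inverse if and only if the middle diagonal factor does. Because $a_{11}^{-1}$ is bounded, that happens if and only if $\boldsymbol d^{-1}$ exists and is bounded. When it does, I compute
\[
A^{-1}=\begin{pmatrix} I & -a_{11}^{-1}a_{12}\\ 0 & I\end{pmatrix}
\begin{pmatrix} a_{11}^{-1} & 0\\ 0 & \boldsymbol d^{-1}\end{pmatrix}
\begin{pmatrix} I & 0\\ -a_{21}a_{11}^{-1} & I\end{pmatrix}.
\]
Multiplying the three factors reproduces \eqref{A-m} entry by entry; for instance, the $(1,1)$ entry is $a_{11}^{-1}+a_{11}^{-1}a_{12}\boldsymbol d^{-1}a_{21}a_{11}^{-1}$.

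\textbf{Part (ii).} Self-adjointness gives $a_{21}=a_{12}^*$, $a_{11}=a_{11}^*$ and $a_{22}=a_{22}^*$. The factorization then becomes a congruence $A=L^*\,\mathrm{diag}(a_{11},\boldsymbol d)\,L$ with $L=\begin{pmatrix} I & a_{11}^{-1}a_{12}\\ 0& I\end{pmatrix}$ bounded and boundedly invertible, and $\boldsymbol d$ is self-adjoint. For $u=(u_1,u_2)$ and $w=Lu$, I get $\langle Au,u\rangle=\langle a_{11}w_1,w_1\rangle+\langle \boldsymbol d w_2,w_2\rangle$. Since $L$ is a bijection, $A>0$ is equivalent to $\mathrm{diag}(a_{11},\boldsymbol d)>0$, which is equivalent to $a_{11}>0$ and $\boldsymbol d>0$. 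If strict positivity means positivity with a lower bound $c\|u\|^2$, the constants transfer through $\|L\|$ and $\|L^{-1}\|$.

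\textbf{Main obstacle.} I expect the only delicate point to be the domain bookkeeping for the possibly unbounded closed operators $a_{11}$ and $a_{22}$. I handle it by noting that every correction term is bounded, so all domains are unchanged.
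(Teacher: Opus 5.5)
Your proof is correct. The paper gives no argument for this lemma; it imports it from Jensen--Nenciu \cite{JN}, so there is no in-paper proof to match.

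Your block factorization of $A$ through $\mathrm{diag}(a_{11},\boldsymbol d)$ is the standard self-contained route. In the self-adjoint case it becomes the congruence $A=L^*\,\mathrm{diag}(a_{11},\boldsymbol d)\,L$. Compared with the bare citation, it buys two things. First, \eqref{A-m} comes out by inverting three explicit factors, with no entry-by-entry check of $AA^{-1}=I$ and $A^{-1}A=I$. Second, the positivity statement (ii), which the paper asserts without justification, follows from the same identity.

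The one step you leave unproved is that self-adjointness of $A$ forces $a_{11}=a_{11}^*$ and $a_{21}=a_{12}^*$ when $a_{11},a_{22}$ are unbounded. This takes one line. The off-diagonal part $B$ of $A$ is bounded, so $A^*=\mathrm{diag}(a_{11}^*,a_{22}^*)+B^*$ on $D(a_{11}^*)\oplus D(a_{22}^*)$. Then $A=A^*$ gives equal domains and the block identities on the dense set $D(a_{11})$.

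In the paper's applications every block is bounded, so your domain bookkeeping is never needed there. Also, when the paper uses (ii) to show $\boldsymbol d_2>0$, the invertible pivot is the lower-right block $Q_{41}^1vG_4vQ_{41}^1$ of \eqref{QvGvQ}. Your argument covers this once the two summands are swapped.
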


Equipped with these algebraic tools, we now proceed with the proof of Theorem \ref{thm:M_inverse}.

\begin{proof}[Proof of Theorem \ref{thm:M_inverse}]
We first recall from Proposition~\ref{characterizations Q} that if zero is a $\mathbf{k}$-th kind resonance of $H$, the space $L^2$ admits the orthogonal decomposition $L^2 \cong \bigoplus_{\alpha=0}^{\mathbf{k}+2} Q_\alpha L^2$.
	 For  $0<\lambda\ll1$, we define an isomorphism $\mathbf{B}_\lambda: \bigoplus_{\alpha=0}^{\mathbf{k}+2} Q_\alpha L^2 \to L^2$ by
	\[
\mathbf{B}_\lambda(f_\alpha)_{\alpha=0}^{\mathbf{k}+2} := \sum_{\alpha=0}^{\mathbf{k}+2} \lambda^{-k_\alpha} \sigma_\alpha(\lambda) Q_\alpha f_\alpha,
	\]
	where the scaling factors $\sigma_\alpha(\lambda)$ are given by	\begin{equation}\label{tab:sigma_alpha}
		\begin{array}{c|ccccccc}
			\alpha & 0 & 1 & 2 & 3 & 4 & 5 & 6 \\
			\hline
			\sigma_\alpha(\lambda) & 1 & (-\log \lambda)^{-1/2} & 1 & (-\log \lambda)^{1/2} & 1 & (-\log \lambda)^{-1/2} & 1
		\end{array}
	\end{equation}
	and the exponents $k_\alpha$ are defined by
    \begin{equation*}
\begin{tabular}{c|ccccccc}
    \( \alpha \)    & 0 & 1 & 2 & 3 & 4 & 5 & 6 \\
    \hline
    \( k_\alpha \)  & 0 & 1 & 1 & 1 & 2 & 3 & 3
\end{tabular}
\end{equation*}
	Since $\mathbf{B}_\lambda$ is bijective, Lemma~\ref{lemma_1} dictates that if $\mathbf{B}_\lambda^*M^{\pm}(\lambda) \mathbf{B}_\lambda$ is invertible on $\bigoplus_{\alpha=0}^{\mathbf{k}+2} Q_\alpha L^2$ for sufficiently small $\lambda > 0$, then
	\begin{align}\label{M-nin}
	(M^{\pm}(\lambda))^{-1} = \mathbf{B}_\lambda (\mathbf{B}_\lambda^* M^{\pm}(\lambda) \mathbf{B}_\lambda)^{-1} \mathbf{B}_\lambda^*,
	\end{align}
	where $\mathbf{B}_\lambda^*$ denotes the adjoint of
\(\mathbf B_\lambda\). 
	Consequently, inverting $M^\pm(\lambda)$ on $L^2$  reduces to studying the invertibility of $\mathbf{B}_\lambda^* M^\pm(\lambda) \mathbf{B}_\lambda$.
	
	When zero is a $\mathbf{k}$-th kind resonance for some integer $0 \le \mathbf{k} \le 4$, combining the expansions \eqref{Mpm-1}--\eqref{Mpm-3} of $M^\pm(\lambda)$ with the cancellation properties of $Q_\alpha$ (Remark~\ref{cancellationQ}) yields the decomposition:
	\[
	\lambda^{-k_\alpha-k_\beta}\sigma_\alpha(\lambda)\sigma_\beta(\lambda)Q_\alpha M^\pm(\lambda)Q_\beta = \lambda^{-2}\left(E_{\alpha, \beta}^{\pm} + r_{\alpha, \beta}^{\pm}(\lambda)\right),
	\]
	where $E_{\alpha, \beta}^{\pm}$ and $r_{\alpha, \beta}^{\pm}(\lambda)$ are given by following \eqref{D_k} and \eqref{partial}-\eqref{partial2}, respectively.
	Consequently, on the direct sum space $\bigoplus_{\alpha=0}^{\mathbf{k}+2} Q_\alpha L^2$, the operator matrix $\mathbf{B}_\lambda^* M^{\pm}(\lambda) \mathbf{B}_\lambda$ admits the expansion
	\[
	\mathbf{B}_\lambda^* M^{\pm}(\lambda) \mathbf{B}_\lambda= \lambda^{-2} \left( \mathbf{D}_{\mathbf{k}}^{\pm} + \mathbf{S}_{\mathbf{k}}^{\pm}(\lambda) \right),
	\]
	where $\mathbf{D}_{\mathbf{k}}^{\pm} = \bigl( E_{\alpha, \beta}^{\pm} \bigr)_{\alpha, \beta=0}^{\mathbf{k}+2}$ is the leading matrix and $\mathbf{S}_{\mathbf{k}}^{\pm}(\lambda) = \bigl( r_{\alpha, \beta}^{\pm}(\lambda) \bigr)_{\alpha, \beta=0}^{\mathbf{k}+2}$ is the remainder. 
	
	We specify the exact structures of these matrices below:
	\begin{itemize}
		\item \textbf{Asymptotic expansion of the remainder.} The entries $r_{\alpha,\beta}^{\pm}(\lambda)$ exhibit the following asymptotic behaviors:
		\begin{equation}\label{partial}
			\begin{split}
				r_{\alpha,\beta}^{\pm}(\lambda) &= (-\log \lambda)^{-1/2} Q_\alpha T_0 Q_\beta + \lambda^2 (-\log \lambda)^{-1/2} \Lambda^\pm(\lambda),
				\quad \text{for } (\alpha,\beta) \in \{(1,2), (2,1)\}; \\
                r_{\alpha,\beta}^{\pm}(\lambda) &= b_1 (-\log \lambda)^{-1/2} Q_\alpha v G_6^0 v Q_\beta + \lambda^2 (-\log \lambda)^{-1/2} \Lambda^\pm(\lambda),
				\quad \text{for } (\alpha,\beta) \in \{(5,6), (6,5)\}; \\
                r_{1,1}^{\pm}(\lambda) &= (-\log \lambda)^{-1} \left( a_1^{\pm} Q_1 v G_2 v Q_1 + Q_1 T_0 Q_1 \right) + \lambda^2 (-\log \lambda)^{-1} \Lambda^\pm(\lambda); \\
				r_{\alpha,\beta}^{\pm}(\lambda) &= (-\log \lambda)^{-1} a_2^\pm Q_\alpha v G_4 v Q_\beta + \lambda^2 \Lambda^\pm(\lambda),
				\quad \text{for } (\alpha,\beta) \in \{(1,5), (5,1)\};\\
                r_{5,5}^{\pm}(\lambda) &= (-\log \lambda)^{-1} \left( a_3^\pm Q_5 v G_6 v Q_5 + b_1 Q_5 v G_6^0 v Q_5 \right) + \lambda^2 (-\log \lambda)^{-1} \Lambda^\pm(\lambda); \\
        r_{\alpha,\beta}^\pm(\lambda) &= b_1 \lambda \log \lambda Q_\alpha v G_6 v Q_\beta + \lambda \Lambda^\pm(\lambda),
				\quad \text{for } (\alpha,\beta) \in \{(4,6), (6,4)\};\\
				r_{\alpha,\beta}^\pm(\lambda) &=  \lambda a_2^\pm Q_\alpha v G_4 v Q_\beta + \lambda^3 (-\log \lambda) \Lambda^\pm(\lambda),
				\quad \text{for } (\alpha,\beta) \in \{(0,6), (6,0)\};  
			\end{split}
		\end{equation}
		and for all other index pairs $(\alpha,\beta)$,
		\begin{align}\label{partial2}
			r_{\alpha,\beta}^{\pm}(\lambda) = \lambda (-\log \lambda)^{3/2} \Lambda^\pm(\lambda).
		\end{align}
			Here and throughout the subsequent proofs, $\Lambda^\pm(\lambda)$ denotes a generic operator in $\mathbb{B}(L^2)$ (possibly varying at each occurrence) that satisfies the estimates: 
	\[  
	\left\|\partial_\lambda^\ell \Lambda^\pm(\lambda)\right\|_{\mathbb{B}(L^2)} \lesssim \lambda^{-\ell}, \quad \ell = 0, 1, 2.  
	\]  
	 
		\item \textbf{Explicit form for $\mathbf{k}=4$.} The leading matrix $\mathbf{D}_{4}^{\pm} = (E_{\alpha, \beta}^{\pm})_{\alpha, \beta=0}^{6}$ admits the structure:
		\begin{equation}\label{D_k}
			\mathbf{D}_{4}^{\pm} =
			\begin{pmatrix}
				a_0^\pm Q_0vG_0vQ_0 & 0 & E_{0,4}^{\pm} & 0 \\
				0 & \mathbf{D}_{1,1} & 0 & 0 \\
				E_{4,0}^{\pm} & 0 & a_2^\pm Q_4vG_4vQ_4 & 0 \\
				0 & 0 & 0 & \mathbf{D}_{2,2}
			\end{pmatrix},
		\end{equation}
		where $E_{\alpha, \beta}^{\pm} = g_0^{\pm}(\lambda) Q_\alpha vG_2 vQ_\beta + Q_\alpha T_0 Q_\beta$ for $(\alpha, \beta) \in \{(0,4), (4,0)\}$, and 
		\begin{equation}\label{D}
			\mathbf{D}_{1,1} = \begin{pmatrix}
				-b_0 Q_1vG_2vQ_1 & 0 & Q_1T_0Q_3 \\
				0 & Q_2T_0Q_2 & 0 \\
				Q_3T_0Q_1 & 0 & 0
			\end{pmatrix}, \
			\mathbf{D}_{2,2} = \begin{pmatrix}
				-b_1 Q_5vG_6vQ_5 & 0 \\
				0 & b_1 Q_6vG_6^0vQ_6
			\end{pmatrix}.
		\end{equation}
		We abbreviate the entries of $\mathbf{D}_{1,1}$ and $\mathbf{D}_{2,2}$ as $E_{\alpha,\beta}$ (dropping the $\pm$), since they are independent of the sign $\pm$.
		
		\item \textbf{General case for $0 \leq \mathbf{k} \leq 3$.}
		For general $\mathbf{k}$, the operator matrix $\mathbf{D}_{\mathbf{k}}^{\pm}$
		can be formally understood as  the restriction of $\mathbf{D}_{4}^{\pm}$ onto $\bigoplus_{\alpha \le \mathbf{k}+2} Q_\alpha L^2$.
	\end{itemize}
	To streamline the presentation, we will detail the subsequent inversion explicitly for the maximal resonance case $\mathbf{k}=4$. By the algebraic nature of our construction, the explicit formula for $(M^{\pm}(\lambda))^{-1}$ when $\mathbf{k}=4$  recovers the general $\mathbf{k}$ case  upon setting $Q_\alpha = 0$ for all $\alpha > \mathbf{k}+2$.
    
	We now establish the theorem for the  case $\mathbf{k}=4$, i.e., zero is an eigenvalue of $H.$  We organize the proof into three steps.

{\textbf{Step 1. Exact Invertibility of the Leading Matrix $\mathbf{D}_{4}^{\pm}$.}}

	By the structure of $\mathbf{D}_{4}^{\pm}$, its invertibility reduces to establishing the invertibility of the following three independent components:
	\begin{enumerate}
		\item[(i)] $\mathbf{D}_{1,1}$ on $Q_1 L^2 \medoplus Q_2 L^2 \medoplus Q_3 L^2$;
		\item[(ii)] $\mathbf{D}_{2,2}$ on $Q_5 L^2 \medoplus Q_6 L^2$;
		\item[(iii)] $\mathbf{A}^{\pm} = \begin{pmatrix} E_{0,0}^{\pm} & E_{0,4}^{\pm} \\ E_{4,0}^{\pm} & E_{4,4}^{\pm} \end{pmatrix}$ on $Q_0 L^2 \medoplus Q_4 L^2$.
	\end{enumerate}
Note that because the relevant subspaces $Q_j L^2$ are finite-dimensional for $j \neq  2$, injectivity is equivalent to invertibility, a fact we utilize implicitly below.
	
		\textbf{(i) Invertibility of $\mathbf{D}_{1,1}$:}
	First, $E_{2,2} = Q_2 T_0 Q_2$ is invertible on $Q_2 L^2$ by the definition of $Q_2$ and Riesz-Fredholm theory. By Lemma \ref{lemma_2}, it reduces to showing that the operator $Q_{3} T_0 Q_{1} (Q_{1}v G_{2} v Q_{1})^{-1} Q_{1}T_0 Q_{3}$ is invertible on $Q_{3} L^2$. For any nonzero $f \in Q_1 L^2$, we have
	$$
	\langle Q_1 v G_2 v Q_1 f, f \rangle = -2 \sum_{j=1}^2 |\langle x_j v, f \rangle|^2 < 0,
	$$
	implying that $(Q_1 v G_2 v Q_1)^{-1}$ is strictly negative definite. If $f \in Q_3 L^2$ satisfies $$\langle Q_3 T_0 Q_1 (Q_1 v G_2 v Q_1)^{-1} Q_1 T_0 Q_3 f, f \rangle = 0,$$ this strict negativity enforces $Q_1 T_0 Q_3 f = 0$. Since $f \in Q_3 L^2 \subseteq S_2 L^2$, we already have $S_1 T_0 f = 0$. Hence, $0 = Q_1 T_0 f = (S_0 - S_1) T_0 f = S_0 T_0 f$. This implies $f \in S_3 L^2$, forcing $f \in Q_3 L^2 \cap S_3 L^2 = \{0\}$. Thus, $\mathbf{D}_{1,1}$ is invertible.
	
\textbf{(ii) Invertibility of $\mathbf{D}_{2,2}$.} Since $\mathbf{D}_{2,2}$ is a diagonal operator matrix, it suffices to show  $E_{5,5}$ and $E_{6,6}$ are invertible on $Q_5 L^2$ and $Q_6 L^2,$ respectively.
	 For any nonzero $f \in Q_5 L^2$, we have
	\[
	\langle E_{5,5}f, f \rangle = -b_1 \langle Q_5 v G_6 v Q_5 f, f \rangle
	= b_1 \Big( 12 \sum_{j=1}^{2} |\langle |x|^2 x_j v, f \rangle|^2 + 8 \sum_{i,j,\ell=1}^{2} |\langle x_i x_j x_\ell v, f \rangle|^2 \Big)>0,
	\]
	with $ b_1 = {1}/({4608\pi}).$ Hence $ E_{5,5}$ is strictly positive definite on $Q_5L^2$.
	We next verify the invertibility of $E_{6,6} = b_1 Q_6 v G_6^0 v Q_6$. For any $f \in Q_6 L^2 \subset S_5 L^2$, the associated quadratic form evaluates to
	\[
	\langle E_{6,6} f, f \rangle = \int_{\mathbb{R}^2} |(\Delta^2)^{-1} v f(x)|^2 \, dx = \|b_0 G_2^0 v f\|_{L^2}^2.
	\]
	Proposition~\ref{characterizations-1}(iv) and \eqref{expression_psi} guarantee that $b_0 G_2^0 v f \in L^2$ and that $f =-b_0 U v G_2^0 v f$. Thus, if $\langle E_{6,6} f, f \rangle = 0$, then $G_2^0 v f = 0$, which yields $f = 0$. This establishes the strict positivity of $E_{6,6}$ on $Q_6 L^2$, ensuring its invertibility (see also \cite[Lemma 4.9]{LSY21} for proof).
		
	\textbf{(iii) Invertibility of $\mathbf{A}^{\pm}$.} In this part, we not only establish the invertibility of $\mathbf{A}^{\pm}$, but also derive the formula for  $(\mathbf{A}^{\pm})^{-1}$, which is essential for obtaining the refined expansion of $(M^{\pm}(\lambda))^{-1}$ in the subsequent analysis.  
Our discussion proceeds according to whether $Q_4^0 = 0$ or not.
	
	\textbf{Case 1: $Q_4^0 = 0$.} Here $Q_4 = Q_4^1$ (see Definition \ref{defQ} that  $Q_4 =Q_4^0 +Q_4^1$ ), so
	\begin{equation}\label{eq:cancel_Q_4}
		Q_4 v = Q_4(x_j v) = Q_4(|x|^2 v) = 0.
	\end{equation}
	Consequently, $Q_4 v G_2 v Q_0 = Q_0 v G_2 v Q_4 = 0$, reducing $\mathbf{A}^{\pm}$ to the  form  
	\[
	\mathbf{A}^{\pm} = \begin{pmatrix}
		a_0^\pm Q_0 v G_0 v Q_0 & Q_0 T_0 Q_4^1 \\
		Q_4^1 T_0 Q_0 & a_2^\pm Q_4^1 v G_4 v Q_4^1
	\end{pmatrix}.
	\]
Note that $a_0^\pm Q_0 v G_0 v Q_0= a_0^\pm\|V\|_{L^1}Q_0$ with $a_0^\pm=\pm i/8$, then $(a_0^\pm Q_0 v G_0 v Q_0)^{-1}=(a_0^\pm)^{-1}\|V\|_{L^1}^{-1}Q_0.$
	By Lemma~\ref{lemma_2}, $\mathbf{A}^{\pm}$ is invertible if and only if the Schur complement 
	\[
	\boldsymbol{d}^\pm := a_2^\pm Q_4^1 v G_4 v Q_4^1 - Q_4^1 T_0 Q_0 (a_0^\pm Q_0 v G_0 v Q_0)^{-1} Q_0 T_0 Q_4^1
	\]
	is invertible on $Q_4L^2$ (here $Q_4=Q_4^1$). For any $f \in Q_4 L^2$, using \eqref{eq:cancel_Q_4}, we 
	derive that
	\[
		\langle \boldsymbol{d}^\pm f, f \rangle = \pm i \Bigl[ \sum_{j,k=1}^2 \tfrac{1}{128} |\langle x_j x_k v, f \rangle|^2 + \langle 8 (Q_0 v G_0 v Q_0)^{-1} Q_0 T_0 Q_4^1 f, Q_0 T_0 Q_4^1 f \rangle \Bigr].
	\]
	Because $(Q_0 v G_0 v Q_0)^{-1}$ is strictly positive on $Q_0 L^2$, the condition $\langle \boldsymbol{d}^\pm f, f \rangle = 0$ forces $\langle x_j x_k v, f \rangle = 0$ for all $j,k$, as well as $Q_0 T_0 Q_4^1 f = 0$. This implies $f \in S_4 L^2$. Since $Q_4 L^2 \cap S_4 L^2 = \{0\}$, it follows that $f = 0$, thereby confirming the invertibility of $\boldsymbol{d}^\pm$. The inverse $(\mathbf{A}^{\pm})^{-1} := \bigl(\mathcal{D}_{i,j}^{\pm}\bigr)_{i,j\in\{0,4\}}$ can then be explicitly constructed via \eqref{A-m}, yielding  
\begin{equation}\label{D00}  
	\begin{aligned}  
		\mathcal{D}_{0,0}^{\pm} &= (a_0^\pm)^{-1}\|V\|_{L^1}^{-1}Q_0 + (a_0^\pm)^{-2} \mathcal{W} (\boldsymbol{d}^\pm)^{-1} \mathcal{W}^*,  \quad  
		\mathcal{D}_{0,4}^{\pm} = - (a_0^\pm)^{-1} \mathcal{W} (\boldsymbol{d}^\pm)^{-1},  \\  
		\mathcal{D}_{4,0}^{\pm} &= - (a_0^\pm)^{-1} (\boldsymbol{d}^\pm)^{-1} \mathcal{W}^*,\quad  
		\mathcal{D}_{4,4}^{\pm} = (\boldsymbol{d}^\pm)^{-1},  
	\end{aligned}  
\end{equation}  
where we have denoted $\mathcal{W} := \|V\|_{L^1}^{-1} Q_0 T_0 Q_4^1$.  
	
\textbf{Case 2: $Q_4^0 \neq 0$.} Utilizing the projection splitting $Q_4 = Q_4^0 + Q_4^1$, we represent $\mathbf{A}^{\pm}$ as a $2 \times 2$ block matrix $\mathbf{A}^{\pm} = \bigl( \mathbf{A}_{ij}^{\pm}\bigr)_{i,j=1}^2$ with respect to the orthogonal decomposition $(Q_0 L^2 \oplus Q_4^0 L^2) \oplus Q_{4}^1 L^2$. The block components are explicitly given by:
	\begin{align}\label{Aij}
		\mathbf{A}_{11}^{\pm} &= \begin{pmatrix}
			a_0^\pm Q_0 v G_0 v Q_0 & g_0^{\pm}(\lambda) Q_0 v G_2 v Q_4^0 + Q_0 T_0 Q_4^0 \\[6pt]
			g_0^{\pm}(\lambda) Q_4^0 v G_2 v Q_0 + Q_4^0 T_0 Q_0 & a_2^\pm Q_4^0 v G_4 v Q_4^0
		\end{pmatrix}, \nonumber\\[8pt]
		\mathbf{A}_{12}^{\pm} &= \begin{pmatrix} Q_0 T_0 Q_{4}^1  \\ a_2^\pm Q_4^0 v G_4 v Q_{4}^1 \end{pmatrix}, \quad
		\mathbf{A}_{21}^{\pm} = \begin{pmatrix} Q_{4}^1 T_0 Q_0 & a_2^\pm Q_4^1 v G_4 v Q_4^0 \end{pmatrix}, \quad
		\mathbf{A}_{22}^{\pm} = a_2^\pm Q_4^1 v G_4 v Q_4^1.
	\end{align}
	
	First, we show that $\mathbf{A}_{11}^{\pm}$ is invertible on $Q_0 L^2 \oplus Q_4^0 L^2$. Write $\mathbf{A}_{11}^{\pm} = (\log \lambda) \mathbf{A}_0 + \mathbf{A}_1^\pm$ with
	\[
	\mathbf{A}_0 = \begin{pmatrix}
		0 & b_0 Q_0 v G_2 v Q_4^0 \\
		b_0 Q_4^0 v G_2 v Q_0 & 0
	\end{pmatrix},\quad
	\mathbf{A}_1^\pm = \begin{pmatrix}
		a_0^\pm Q_0 v G_0 v Q_0 & Q_0 T_0 Q_4^0 + a_1^{\pm} Q_0 v G_2 v Q_4^0 \\
		Q_4^0 T_0 Q_0 + a_1^{\pm} Q_4^0 v G_2 v Q_0 & a_2^\pm Q_4^0 v G_4 v Q_4^0
	\end{pmatrix}.
	\]
	Observe that the operators $Q_4^0 v G_2 v Q_0 \colon Q_0 L^2 \to Q_4^0 L^2$ and $Q_0 v G_2 v Q_4^0 \colon Q_4^0 L^2 \to Q_0 L^2$ are both invertible. Indeed, for any $f\in Q_0 L^2$,
	$$
	Q_4^0 v G_2 v Q_0 f = \langle f, v \rangle Q_4^0 (|x|^2 v)=0 \Longleftrightarrow
	\langle f, v \rangle = 0 \Longleftrightarrow f = 0.
	$$
	Similarly, for any $f \in Q_4^0 L^2$,
		$$
	Q_0 v G_2 v Q_4^0 f = \langle f, |x|^2 v \rangle v = 0 \Longleftrightarrow \langle f, |x|^2 v \rangle = 0 \Longleftrightarrow f \in S_4^0 L^2 \Longleftrightarrow f=0.
	$$
    Because $\dim(Q_0 L^2) = \dim(Q_4^0 L^2) = 1$, this injectivity guarantees that both operators are invertible. Thus $\mathbf{A}_0$ is invertible with
	\[
	\mathbf{A}_0^{-1} = \begin{pmatrix}
		0 & (b_0 Q_4^0 v G_2 v Q_0)^{-1} \\
		(b_0 Q_0 v G_2 v Q_4^0)^{-1} & 0
	\end{pmatrix}.
	\]
	For sufficiently small $\lambda>0$, the Neumann series yields
	\begin{equation}\label{eq:inverse_A_11}
		(\mathbf{A}_{11}^{\pm})^{-1} = (\log \lambda)^{-1} \mathbf{A}_0^{-1} - (\log \lambda)^{-2} \mathbf{A}_0^{-1} \mathbf{A}_1^\pm \mathbf{A}_0^{-1} + (\log \lambda)^{-3}\Lambda^\pm(\lambda).
	\end{equation}
	By Lemma~\ref{lemma_2}, $\mathbf{A}^{\pm} = \bigl( \mathbf{A}_{ij}^{\pm}\bigr)_{i,j=1}^2$ is invertible if and only if
	$$
	\begin{aligned}
		\boldsymbol{d}^\pm 
		&= a_2^\pm Q_4^1 v G_4 v Q_4^1
		- \begin{pmatrix} Q_4^1 T_0 Q_0 & a_2^\pm Q_4^1 v G_4 v Q_4^0 \end{pmatrix}
		(\mathbf{A}_{11}^{\pm})^{-1}
		\begin{pmatrix} Q_0 T_0 Q_4^1 \\ a_2^\pm Q_4^0 v G_4 v Q_4^1 \end{pmatrix}
	\end{aligned}
	$$
	is invertible on $Q_4^1 L^2$.   We analyze this  invertibility  by splitting $Q_4^1 L^2 = Q_{41}^1 L^2 \oplus Q_{42}^1 L^2$  (see Definition  \ref{defQ} and  Remark \ref{cancellationQ} for more details):

	$\bullet$ \textbf{Subcase 2a ($Q_{42}^1 = 0$):}  
		Here $Q_4^1 = Q_{41}^1$. We first consider the nontrivial case: $
		Q_{41}^1L^2\neq\{0\}.
		$  Since
\begin{equation}\label{eq:positive}
			a_2^\pm \langle Q_{4}^1 v G_4 v Q_{4}^1 f, f \rangle	=a_2^\pm \langle Q_{41}^1 v G_4 v Q_{41}^1 f, f \rangle = a_2^\pm \sum_{j,\ell=1}^2 4 |\langle x_j x_\ell v, f \rangle|^2,\quad \forall f\in Q_4^1 L^2=Q_{41}^1 L^2,
		\end{equation}
		with $a_2^\pm = \pm i/512$, by definition of $Q_{41}^1,$  this quadratic form is nondegenerate. Thus, $a_2^\pm Q_{4}^1 v G_4 v Q_{4}^1$ is invertible on $Q_4^1 L^2$. Combining  this with $\|(\mathbf{A}_{11}^{\pm})^{-1}\|_{\mathbb{B}(L^2)} = O(|\log \lambda|^{-1})$
		yields
		 $\boldsymbol{d}^\pm$ is invertible and 
		$$
		(\boldsymbol{d}^\pm)^{-1} = (a_2^\pm Q_{41}^1 v G_4 v Q_{41}^1)^{-1} + (\log \lambda)^{-1}\Lambda^\pm(\lambda):= (a_2^\pm)^{-1} D_0+ (\log \lambda)^{-1}\Lambda^\pm(\lambda).
		$$
		Substituting the expressions \eqref{eq:inverse_A_11} for $(\mathbf{A}_{11}^{\pm})^{-1}$ and \eqref{Aij} for $\mathbf{A}_{ij}^{\pm}$ into \eqref{A-m}, we deduce the block representation
		$
		(\mathbf{A}^\pm)^{-1} = \big( \mathcal{D}_{\alpha,\beta}^{\pm}(\lambda) \big)_{\alpha,\beta \in \{0,4\}},
		$
		where
	\begin{equation}\label{D40D04D44_1}
			\begin{split}
				\mathcal{D}_{0,0}^{\pm}(\lambda) & = (\log \lambda)^{-2} Q_0 \Lambda^\pm(\lambda) Q_0, \\
				\mathcal{D}_{0,4}^{\pm}(\lambda) &= (\log \lambda)^{-1} \left[ Q_0 \Lambda^\pm(\lambda) Q_4^0 + Q_0 \Lambda^\pm(\lambda) Q_{4}^1 \right] , \\
				\mathcal{D}_{4,0}^{\pm}(\lambda) &= (\log \lambda)^{-1} \left[ Q_4^0 \Lambda^\pm(\lambda) Q_0 + Q_{4}^1 \Lambda^\pm(\lambda) Q_0 \right], \\
				\mathcal{D}_{4,4}^{\pm}(\lambda) &= (\log \lambda)^{-2} Q_4^0 \Lambda^\pm(\lambda) Q_4^0
				+ (\log \lambda)^{-1} \left[ Q_4^0 \Lambda^\pm(\lambda) Q_{4}^1 + Q_{4}^1 \Lambda^\pm(\lambda) Q_4^0 \right] \\
				& \quad + (\log \lambda)^{-1}Q_{4}^1 \Lambda^\pm(\lambda) Q_{4}^1+(a_2^\pm)^{-1} D_0.
			\end{split}
		\end{equation}
        
	It remains to observe that the same representation \eqref{D40D04D44_1} also applies to the degenerate case
		$Q_{41}^1L^2=\{0\}$.
		Indeed, since $Q_{42}^1=Q_4^1=0$, the $Q_4^1$-block is absent. Then
		$(\mathbf A^\pm)^{-1}=(\mathbf A_{11}^\pm)^{-1}$.
		Thus, by \eqref{eq:inverse_A_11}, the block representation \eqref{D40D04D44_1} remains valid, where we interpret the term $D_0$ as
		$
		D_0
		=
		\left(
		Q_{41}^1vG_4vQ_{41}^1
		\right)^{-1}=0.
		$
		
		$\bullet$ \textbf{Subcase 2b ($Q_{42}^1 \neq 0$):} Observe that $Q_4 v G_4 v Q_{42}^1 = 0$. With respect to the decomposition $Q_4^1 L^2 = Q_{41}^1 L^2 \oplus Q_{42}^1 L^2$, we can express $\boldsymbol{d}^\pm=\bigl(d^\pm_{ij}\bigr)_{i,j=1}^2$, where  
		\begin{equation}\label{eq:d_ij}  
			\begin{split}  
				d_{11}^\pm &= a_2^\pm Q_{41}^1 v G_4 v Q_{41}^1 + (\log \lambda)^{-1}\Lambda^\pm(\lambda), \ \  
				d_{12}^\pm = -(\log \lambda)^{-1} a_2^\pm B + (\log \lambda)^{-2}\Lambda^\pm(\lambda), \\  
				d_{21}^\pm &= -(\log \lambda)^{-1} a_2^\pm B^* + (\log \lambda)^{-2}\Lambda^\pm(\lambda), \ \  
				d_{22}^\pm = (\log \lambda)^{-2} a_2^\pm \mathcal{D} + (\log \lambda)^{-3}\Lambda^\pm(\lambda),  
			\end{split}  
		\end{equation}  
		with  
		\[  
		\begin{aligned}    
			B &= Q_{41}^1 v G_4 v Q_4^0 (b_0 Q_0 v G_2 v Q_4^0)^{-1} Q_0 T_0 Q_{42}^1, \\  
			\mathcal{D} &= Q_{42}^1 T_0 Q_0 (b_0 Q_4^0 v G_2 v Q_0)^{-1} Q_4^0 v G_4 v Q_4^0 (b_0 Q_0 v G_2 v Q_4^0)^{-1} Q_0 T_0 Q_{42}^1.  
		\end{aligned}  
		\] 
        
	We first assume that
		$
		Q_{41}^1L^2\neq\{0\}.
		$
		By \eqref{eq:positive}, the leading operator $Q_{41}^1 v G_4 v Q_{41}^1$ is invertible on $Q_{41}^1 L^2$. Define $D_0 := (Q_{41}^1 v G_4 v Q_{41}^1)^{-1}$. Then for sufficiently small $\lambda>0$, $d_{11}^\pm$ is invertible on $Q_{41}^1 L^2$ with its inverse given by  
		\[  
		(d_{11}^\pm)^{-1} = (a_2^\pm)^{-1} D_0 + (\log \lambda)^{-1}\Lambda^\pm(\lambda).
		\]  
		 By Lemma~\ref{lemma_2}, $\boldsymbol{d}^\pm=\bigl(d^\pm_{ij}\bigr)_{i,j=1}^2$ is invertible if and only if
		\begin{equation}\label{d_1}
			\boldsymbol{d}^\pm_1 := d_{22}^\pm - d_{21}^\pm (d_{11}^\pm)^{-1} d_{12}^\pm = (\log \lambda)^{-2} a_2^\pm \mathcal{S} + (\log \lambda)^{-3}\Lambda^\pm(\lambda)
		\end{equation}
		is invertible on $Q_{42}^1 L^2$, where
\begin{equation*}
			\mathcal{S} = \mathcal{D} - B^* D_0 B=\mathcal{W}_1^* \boldsymbol{d}_2 \mathcal{W}_1,
		\end{equation*}
		with $\mathcal{W}_1 = (b_0 Q_0 v G_2 v Q_4^0)^{-1} Q_0 T_0 Q_{42}^1$ and
		$$
		\boldsymbol{d}_2 = Q_4^0 v G_4 v Q_4^0 - Q_4^0 v G_4 v Q_{41}^1 (Q_{41}^1 v G_4 v Q_{41}^1)^{-1} Q_{41}^1 v G_4 v Q_4^0.
		$$
		Observe that $Q_{41}^1 v G_4 v Q_{41}^1$ is strictly positive on $Q_{41}^1 L^2$ and the block matrix
		\begin{equation}\label{QvGvQ}
			\begin{pmatrix}
				Q_{4}^0 v G_4 v Q_{4}^0 & Q_{4}^0 v G_4 v Q_{41}^1 \\
				Q_{41}^1 v G_4 v Q_{4}^0 & Q_{41}^1 v G_4 v Q_{41}^1
			\end{pmatrix}
		\end{equation}
		is strictly positive on $Q_{4}^0 L^2 \oplus Q_{41}^1 L^2$. Here, \eqref{QvGvQ} is exactly the block matrix representation of the strictly positive operator $(Q_4^0+Q_{41}^1)v G_4 v (Q_4^0+Q_{41}^1)$ with respect to $Q_{4}^0 L^2 \oplus Q_{41}^1 L^2$. By Lemma~\ref{lemma_2} (ii), it follows that $\boldsymbol{d}_2$ is strictly positive on $Q_{4}^0 L^2$.
		
		Therefore, for any $f \in Q_{42}^1 L^2$, the condition $\langle \mathcal{S} f, f \rangle = \langle \boldsymbol{d}_2 \mathcal{W}_1 f, \mathcal{W}_1 f \rangle = 0$ implies that $\mathcal{W}_1 f = 0$. This yields $Q_0 T_0 Q_{42}^1 f = P T_0 f = 0$, which means $f \in S_4 L^2$. Then  $f \in S_4 L^2 \cap Q_{42}^1 L^2 = \{0\}$. 
		This proves that $\mathcal{S}$ is positive definite. Consequently, by \eqref{d_1}, $\boldsymbol{d}_1^\pm$ is invertible on $Q_{42}^1 L^2$  and 
		\[
		(\boldsymbol{d}_1^\pm)^{-1} = (\log \lambda)^{2} (a_2^\pm)^{-1} \mathcal{S}^{-1} + (\log \lambda)\Lambda^\pm(\lambda).
		\]
		Thus $\boldsymbol{d}_1^\pm$ is invertible on $Q_{42}^1 L^2$, which in turn guarantees the invertibility of $\boldsymbol{d}^\pm$ and $\mathbf{A}^\pm$.
        
		Next, we turn to deriving the formula for $	(\mathbf{A}^{\pm})^{-1}.$
		Under the decomposition $Q_4^1 L^2 = Q_{41}^1 L^2 \oplus Q_{42}^1 L^2$, using the expression \eqref{eq:d_ij} for $d_{ij}^\pm$ together with \eqref{A-m}, we obtain
		\begin{align*}
			(\boldsymbol{d}^\pm)^{-1} 
			= \begin{pmatrix}
				(a_2^\pm)^{-1}\bigl(D_0 + D_0 B \mathcal{S}^{-1} B^* D_0\bigr) + (\log \lambda)^{-1}\Lambda^\pm(\lambda) & (\log \lambda) (a_2^\pm)^{-1}D_0 B \mathcal{S}^{-1} + \Lambda^\pm(\lambda) \\[4pt]
				(\log \lambda) (a_2^\pm)^{-1}\mathcal{S}^{-1} B^* D_0 + \Lambda^\pm(\lambda) & (\log \lambda)^2 (a_2^\pm)^{-1}\mathcal{S}^{-1} + (\log \lambda)\Lambda^\pm(\lambda)
			\end{pmatrix}.
		\end{align*}
		Moreover, recall that $\boldsymbol{d}^\pm = \mathbf{A}_{22}^{\pm} - \mathbf{A}_{21}^{\pm} (\mathbf{A}_{11}^{\pm})^{-1} \mathbf{A}_{12}^{\pm}$ and $\mathbf{A}^{\pm} = \bigl( \mathbf{A}_{ij}^{\pm}\bigr)_{i,j=1}^2$, where  $\mathbf{A}_{ij}^{\pm}$ are defined in \eqref{Aij}. Then, by \eqref{A-m}, with respect to the decomposition \( (Q_0 L^2 \oplus Q_4^0 L^2)\oplus (Q_{41}^1 L^2 \oplus Q_{42}^1 L^2)\), we obtain
		\begin{align*}
		(\mathbf{A}^{\pm})^{-1} = \begin{pmatrix}
			(\mathbf{A}_{11}^{\pm})^{-1} + (\mathbf{A}_{11}^{\pm})^{-1} \mathbf{A}_{12}^{\pm} (\boldsymbol{d}^\pm)^{-1} \mathbf{A}_{21}^{\pm} (\mathbf{A}_{11}^{\pm})^{-1} & -(\mathbf{A}_{11}^{\pm})^{-1} \mathbf{A}_{12}^{\pm} (\boldsymbol{d}^\pm)^{-1} \\ 
			-(\boldsymbol{d}^\pm)^{-1} \mathbf{A}_{21}^{\pm} (\mathbf{A}_{11}^{\pm})^{-1} & (\boldsymbol{d}^\pm)^{-1}
		\end{pmatrix}.
		\end{align*}
	Direct computations based on the previous inverse bounds reveal that  
		\[
		\begin{aligned}
			(\mathbf{A}_{11}^{\pm})^{-1} + (\mathbf{A}_{11}^{\pm})^{-1} \mathbf{A}_{12}^{\pm} (\boldsymbol{d}^\pm)^{-1} \mathbf{A}_{21}^{\pm} (\mathbf{A}_{11}^{\pm})^{-1}
			&= \begin{pmatrix}
				(\log \lambda)^{-2}\Lambda^\pm(\lambda) & (\log \lambda)^{-1}\Lambda^\pm(\lambda) \\[4pt]
				(\log \lambda)^{-1}\Lambda^\pm(\lambda) & P_{22}^\pm(\lambda)
			\end{pmatrix}, \\[8pt]
			-(\mathbf{A}_{11}^{\pm})^{-1} \mathbf{A}_{12}^{\pm} (\boldsymbol{d}^\pm)^{-1}
			&= \begin{pmatrix}
				(\log \lambda)^{-1}\Lambda^\pm(\lambda) & \Lambda^\pm(\lambda) \\[4pt]
				\Lambda^\pm(\lambda) & K_{22}^\pm(\lambda)
			\end{pmatrix}, \\[8pt]
			-(\boldsymbol{d}^\pm)^{-1} \mathbf{A}_{21}^{\pm} (\mathbf{A}_{11}^{\pm})^{-1}
			&= \begin{pmatrix}
				(\log \lambda)^{-1}\Lambda^\pm(\lambda) & \Lambda^\pm(\lambda) \\[4pt]
				\Lambda^\pm(\lambda) & N_{22}^\pm(\lambda)
			\end{pmatrix},
		\end{aligned}
		\]
		with  $\mathcal{W}_1 = (b_0 Q_0 v G_2 v Q_4^0)^{-1} Q_0 T_0 Q_{42}^1,$ and
		\[
		\begin{aligned}
			K_{22}^\pm(\lambda) &= -(\log \lambda) (a_2^\pm)^{-1}\mathcal{W}_1 \mathcal{S}^{-1}+\Lambda^\pm(\lambda)
			:=(\log \lambda)\mathcal{C}_1^\pm+ \Lambda^\pm(\lambda), \\
			N_{22}^\pm(\lambda) &= -(\log \lambda) (a_2^\pm)^{-1}\mathcal{S}^{-1} \mathcal{W}_1^* +\Lambda^\pm(\lambda)
			:=(\log \lambda)\mathcal{C}_2^\pm+ \Lambda^\pm(\lambda), \\
			P_{22}^\pm(\lambda) & = (a_2^\pm)^{-1}\mathcal{W}_1 \mathcal{S}^{-1} \mathcal{W}_1^* +(\log \lambda)^{-1}\Lambda^\pm(\lambda)
			:=\mathcal{C}^\pm+(\log \lambda)^{-1}\Lambda^\pm(\lambda).
		\end{aligned}
		\]
	Consequently, condense back to the representation $(\mathbf{A}^\pm)^{-1} = \big( \mathcal{D}_{\alpha,\beta}^{\pm}(\lambda) \big)_{\alpha,\beta \in \{0,4\}}$ on \( Q_0 L^2 \oplus Q_4 L^2 \), then
	\begin{equation}\label{D40D04D44_2}
        \begin{split}
				\mathcal{D}_{0,0}^{\pm}(\lambda) &= (\log \lambda)^{-2} Q_0 \Lambda^\pm(\lambda) Q_0,   \\
				\mathcal{D}_{0,4}^{\pm}(\lambda) &= (\log \lambda)^{-1} Q_0 \Lambda^\pm(\lambda) Q_{4}^0+  Q_0 \Lambda^\pm(\lambda) Q_{4}^1,   \\
				\mathcal{D}_{4,0}^{\pm}(\lambda) &= (\log \lambda)^{-1} Q_{4}^0 \Lambda^\pm(\lambda) Q_0 +Q_{4}^1 \Lambda^\pm(\lambda) Q_0,  \\
				\mathcal{D}_{4,4}^{\pm}(\lambda)  &=Q_4^0\mathcal{C}^\pm Q_4^0+(\log \lambda)^{-1} Q_4^0\Lambda^\pm(\lambda) Q_4^0
				+\bigl(Q_4^0 \Lambda^\pm(\lambda) Q_{41}^1 + Q_{41}^1 \Lambda^\pm(\lambda) Q_4^0\bigr) \\
				& + (\log \lambda)\Bigl[(Q_4^0 \mathcal{C}_1^\pm Q_{42}^1 +Q_{42}^1 \mathcal{C}_2^\pm Q_4^0) +(Q_{41}^1 \Lambda^\pm(\lambda) Q_{42}^1 +Q_{42}^1 \Lambda^\pm(\lambda) Q_{41}^1) \Bigr]     \\
				&+\bigl(Q_4^0 \Lambda^\pm(\lambda) Q_{42}^1 +Q_{42}^1 \Lambda^\pm(\lambda)  Q_4^0\bigr)+ Q_{41}^1 \Lambda^\pm(\lambda) Q_{41}^1 +(\log \lambda)^2 (a_2^\pm)^{-1} Q_{42}^1 \mathcal{S}^{-1} Q_{42}^1 \\
				& + (\log \lambda) Q_{42}^1 \Lambda^\pm(\lambda)Q_{42}^1.
        \end{split}
        \end{equation}

	It remains only to mention the  case $ Q_{41}^1L^2=\{0\}. $ In this case the $Q_{41}^1$-block is absent. Equivalently, the preceding formula remains valid if one interprets $D_0 = \left( Q_{41}^1vG_4vQ_{41}^1 \right)^{-1} $ as the zero operator.

	Thus, we have established the invertibility of $\mathbf{A}^{\pm}$ across all three possible cases: $Q^0_4=0$; $Q^0_4\neq 0$ with $Q_{42}^1=0$; and $Q^0_4\neq 0$ with $Q_{42}^1\neq 0$. Based on the invertibility of the respective components established in parts (i)--(iii), we conclude that $\mathbf{D}_{4}^{\pm}$ is invertible on $\medoplus_{\alpha=0}^{6} Q_\alpha L^2$, and its inverse is given by    
\begin{equation}\label{inverse_D4}    
\left(\mathbf{D}_{4}^{\pm}\right)^{-1} =  
	\begin{pmatrix}    
		\mathcal{D}_{0,0}^{\pm}(\lambda) & 0 & \mathcal{D}_{0,4}^{\pm}(\lambda) & 0 \\    
		0 & \left(\mathbf{D}_{1,1}\right)^{-1} & 0 & 0 \\    
		\mathcal{D}_{4,0}^{\pm}(\lambda) & 0 & \mathcal{D}_{4,4}^{\pm}(\lambda) & 0 \\    
		0 & 0 & 0 & \left(\mathbf{D}_{2,2}\right)^{-1}
	\end{pmatrix}    
	:= \left( \mathcal{D}_{\alpha,\beta}^\pm(\lambda) \right)_{\alpha,\beta=0}^6,    
\end{equation}    
where $\mathcal{D}_{\alpha,\beta}^{\pm}(\lambda) \in \mathbb{B}(L^2)$ for all $0\leq\alpha,\beta\leq6$. The entries $\mathcal{D}_{\alpha,\beta}^{\pm}(\lambda)$ with $\alpha,\beta \in \{0,4\}$ are defined in \eqref{D00}, \eqref{D40D04D44_1}, and \eqref{D40D04D44_2}, respectively corresponding to the three distinct cases.    
Given the structures of $\mathbf{D}_{1,1}$ and $\mathbf{D}_{2,2}$ in \eqref{D}, their inverses take the forms  
		\begin{align}\label{D-invert}  
	\left(\mathbf{D}_{1,1}\right)^{-1}=  
			\begin{pmatrix}  
				\mathcal{D}_{1,1} & 0 & \mathcal{D}_{1,3} \\  
				0 & \mathcal{D}_{2,2} & 0 \\  
				\mathcal{D}_{3,1} & 0 & \mathcal{D}_{3,3}  
			\end{pmatrix}, \quad   
			\left(\mathbf{D}_{2,2} \right)^{-1}=   
			\begin{pmatrix}  
				\mathcal{D}_{5,5} & 0 \\  
				0 & \mathcal{D}_{6,6}  
			\end{pmatrix}.  
		\end{align}  
		Here, we write $\mathcal{D}_{\alpha,\beta}$ instead of $\mathcal{D}_{\alpha,\beta}^\pm(\lambda)$ for brevity, as the entries of $\left(\mathbf{D}_{1,1}\right)^{-1}$ and $\left(\mathbf{D}_{2,2}\right)^{-1}$ depend on neither $\lambda$ nor the choice of sign $\pm$. Moreover, 
		\begin{align*}  
			\mathcal{D}_{2,2} = (Q_2 T_0 Q_2)^{-1}, \quad  
			\mathcal{D}_{5,5} = (-b_1 Q_5 v G_6 v Q_5)^{-1}, \quad  
			\mathcal{D}_{6,6} = (b_1 Q_6 v G_6^0 v Q_6)^{-1}.  
		\end{align*}

    {\textbf{Step 2. Asymptotic expansion of $(M^{\pm}(\lambda))^{-1}$ via Neumann series.}}
    
We begin by estimating the matrix operators $\mathbf{S}_{4}^{\pm}(\lambda)\bigl(\mathbf{D}_{4}^{\pm}\bigr)^{-1}=\left(\sum_{j=0}^6 r_{\alpha,j}^\pm(\lambda)\mathcal{D}_{j,\beta}^{\pm}(\lambda)\right)_{\alpha,\beta=0}^{6}$. From \eqref{D00}, \eqref{D40D04D44_1}, and \eqref{D40D04D44_2}, we observe that all leading entries satisfy 
\[
\Big\| \partial_\lambda^\ell \mathcal{D}_{\alpha,\beta}^{\pm}(\lambda) \Big\|_{\mathbb{B}(L^2)} \lesssim
\begin{cases}
\lambda^{-\ell}, & \text{if } (\alpha, \beta) \neq (4,4), \\[4pt]
|\log \lambda|^2 \lambda^{-\ell}, & \text{if } (\alpha, \beta) = (4,4),
\end{cases}
\]
for $\ell = 0,1,2$. Combining this with the remainder estimates \eqref{partial}--\eqref{partial2} for $r_{\alpha,j}^\pm(\lambda)$, and noting that each factor $\mathcal{D}_{4,4}^{\pm}(\lambda)$ is paired with a remainder entry $r_{\alpha,4}^\pm(\lambda)$ satisfying $r_{\alpha,4}^\pm(\lambda) = O\bigl(\lambda (-\log \lambda)^{3/2}\bigr)$, we obtain
\[
\mathbf{S}_{4}^{\pm}(\lambda)\bigl(\mathbf{D}_{4}^{\pm}\bigr)^{-1} = O\bigl((-\log \lambda)^{-1/2}\bigr).
\]
We then employ the Neumann series expansion in the small parameter $(-\log \lambda)^{-1/2}$:
\begin{align}
\left(\mathbf{B}_\lambda^* M^{\pm}(\lambda) \mathbf{B}_\lambda\right)^{-1}
&=\lambda^2 \left(\mathbf{D}_{4}^{\pm}\right)^{-1}\left(\Id +\mathbf{S}_{4}^{\pm}(\lambda)\left(\mathbf{D}_{4}^{\pm}\right)^{-1}\right)^{-1} \nonumber\\
&= \lambda^2 \Bigl[ \left(\mathbf{D}_{4}^{\pm}\right)^{-1} + \sum_{n=1}^{\infty}(-1)^n \left(\mathbf{D}_{4}^{\pm}\right)^{-1} \left( \mathbf{S}_{4}^{\pm}(\lambda) \left(\mathbf{D}_{4}^{\pm}\right)^{-1} \right)^n \Bigr] \nonumber\\
&:= \lambda^2 \left( \mathcal{D}_{\alpha,\beta}^\pm(\lambda) + \Upsilon_{\alpha,\beta}^\pm(\lambda) \right)_{\alpha,\beta=0}^{6}. \label{MM-inver}
\end{align}

To bound the remainder operators $\Upsilon_{\alpha,\beta}^\pm(\lambda)$, we examine the structure of matrix multiplication in the Neumann series.
Any term contributing to $\Upsilon_{\alpha,\beta}^\pm(\lambda)$ that involves the factor $\mathcal{D}_{4,4}^{\pm}(\lambda)$ must be multiplied by an adjacent remainder entry of the form $r_{4,j}^\pm(\lambda)$ or $r_{j,4}^\pm(\lambda)$. According to \eqref{partial}--\eqref{partial2}, all such entries carrying an index $4$ satisfy $r_{4,j}^{\pm}(\lambda), r_{i,4}^{\pm}(\lambda) = O\bigl(\lambda (-\log \lambda)^{3/2}\bigr)$. This polynomial decay in $\lambda$ compensates the $|\log \lambda|^2$ growth originating from $\mathcal{D}_{4,4}^{\pm}(\lambda)$. Consequently, from \eqref{partial}--\eqref{partial2} it follows that every remainder component $\Upsilon_{\alpha,\beta}^\pm(\lambda)$ belongs to $\mathbb{B}(L^2)$ and obeys the decay estimate
\begin{equation}\label{Upsilon}
\left\|\partial_\lambda^\ell \Upsilon_{\alpha,\beta}^\pm(\lambda)\right\|_{\mathbb{B}(L^2)} \lesssim (-\log \lambda)^{-1/2} \lambda^{-\ell}, \qquad \ell = 0,1,2. 
\end{equation}

Finally, reverting the preconditioning step via \eqref{M-nin}, we obtain the full inverse operator on $L^2$:
\[
\left(M^{\pm}(\lambda)\right)^{-1} = \sum_{0 \le \alpha, \beta \le 6} \lambda^{2 - k_\alpha - k_\beta} Q_\alpha \mathcal{M}_{\alpha,\beta}^{\pm}(\lambda) Q_\beta,
\]
where the unified block operators are  defined by
\[
\mathcal{M}_{\alpha,\beta}^{\pm}(\lambda) := \sigma_\alpha(\lambda) \sigma_\beta(\lambda) \left(\mathcal{D}_{\alpha,\beta}^\pm(\lambda) + \Upsilon_{\alpha,\beta}^\pm(\lambda) \right).
\]

	{\textbf{Step 3. Extraction of Decay Rates for $\mathcal{M}_{\alpha,\beta}^{\pm}(\lambda)$.}}
    
		Our final task is to track the powers of $\lambda$ and $\log \lambda$ appearing in $\mathcal{D}_{\alpha,\beta}^\pm$, $\Upsilon_{\alpha,\beta}^\pm$ and establish the asymptotic expansion for $\mathcal{M}_{\alpha,\beta}^{\pm}(\lambda)$.
	 The analysis reduces to examining the structure of these components.  
	
	\vspace{1.5mm}  
	\textbf{(1) Analysis of $\mathcal{M}_{\alpha,\beta}^{\pm}(\lambda)$ for $0 \leq \alpha, \beta \leq 4.$}
From \eqref{inverse_D4}--\eqref{D-invert}, we have
\[
\mathcal{D}_{0,3}^{\pm} = \mathcal{D}_{3,0}^{\pm} = \mathcal{D}_{2,3}^{\pm} = \mathcal{D}_{3,2}^{\pm} = 0,
\]
which implies that $\mathcal{M}_{\alpha,\beta}^{\pm}(\lambda) = (-\log \lambda)^{1/2} \Upsilon_{\alpha,\beta}^\pm(\lambda)$ for the pairs $(\alpha,\beta) \in \{(0,3), (3,0), (2,3), (3,2)\}$.
Combining this with the definition of $\sigma_\alpha(\lambda)$ in \eqref{tab:sigma_alpha} and the bound \eqref{Upsilon} for $\Upsilon_{\alpha,\beta}^\pm(\lambda)$, we deduce that for $0 \le \alpha, \beta \le 3$ and $\ell = 0,1,2$,
\[
\Big\| \partial_\lambda^\ell \mathcal{M}_{\alpha,\beta}^\pm(\lambda) \Big\|_{\mathbb{B}(L^2)} \lesssim
\begin{cases}
	\lambda^{-\ell}, & \text{for } (\alpha, \beta) \neq (3,3), \\
	|\log \lambda| \lambda^{-\ell}, &  \text{for } (\alpha, \beta) = (3,3).
	\end{cases}\]

For $\mathcal{M}_{0,0}^{\pm}(\lambda)=\mathcal{D}_{0,0}^\pm(\lambda) + \Upsilon_{0,0}^\pm(\lambda)$, we now examine the refined structure.
Since $\mathcal{D}_{0,\beta}^\pm(\lambda)$ and $\mathcal{D}_{\beta,0}^\pm(\lambda)$ are nonzero only for $\beta \in \{0,4\}$, every term in $\Upsilon_{0,0}^\pm(\lambda)$ contains at least a factor $r_{i,j}^\pm$ with $i$ or $j$ in $\{0,4\}$. The estimates \eqref{partial}--\eqref{partial2} therefore yield
\begin{equation}\label{Upsilon-00-rough}
	\left\|
	\partial_\lambda^\ell
	\Upsilon_{0,0}^\pm(\lambda)
	\right\|_{\mathbb B(L^2)}
	\lesssim
	\lambda^{-\ell+1-},
	\qquad
	\ell=0,1,2.
\end{equation}
If $Q_4^0\neq0,$ by \eqref{D40D04D44_1} and \eqref{D40D04D44_2} that  $\mathcal{D}_{0,0}^{\pm}(\lambda) = (\log \lambda)^{-2} Q_0 \Lambda^\pm(\lambda) Q_0,$ combining with \eqref{Upsilon-00-rough}, one has $$\mathcal{M}_{0,0}^{\pm}(\lambda) = (\log \lambda)^{-2} Q_0 \Lambda^\pm(\lambda) Q_0.$$
If  $Q_4^0 = 0$, then the operator norms of all entries $\mathcal{D}_{\alpha,\beta}^{\pm}(\lambda)$ are bounded without any logarithmic factor (see \eqref{D00}). Since every term in $\Upsilon_{0,0}^\pm(\lambda)$ contains at least a factor $r_{i,j}^\pm$ with $i$ or $j$ in $\{0,4\}$ satisfying the form $r_{i,j}^\pm=\lambda(-\log\lambda)^{3/2}\Lambda^\pm(\lambda)$,  we obtain $\Upsilon_{0,0}^\pm(\lambda) = \lambda (-\log \lambda)^{3/2} \Lambda^\pm(\lambda)$. Consequently, by \eqref{D00}, we arrive at the refined expansion of \begin{align*}
\mathcal M_{0,0}^\pm(\lambda)
=
\mathcal D_{0,0}^\pm
+
\lambda(-\log\lambda)^{3/2}
Q_0\Lambda^\pm(\lambda)Q_0.\end{align*}

Next, we analyze the operators $\mathcal{M}_{\alpha,4}^{\pm}(\lambda)$ and $\mathcal{M}_{4,\alpha}^{\pm}(\lambda)$ for $0 \le \alpha \le 4$. By symmetry, we focus only on $\mathcal{M}_{\alpha,4}^{\pm}(\lambda) = \sigma_\alpha(\lambda) \mathcal{D}_{\alpha,4}^\pm(\lambda) + \sigma_\alpha(\lambda) \Upsilon_{\alpha,4}^\pm(\lambda)$. Since $\mathcal{D}_{j,4}^\pm(\lambda)$ is nonzero only for $j \in \{0,4\}$, every term in $\Upsilon_{\alpha,4}^\pm(\lambda)$ contains at least one factor $r_{i,j}^\pm(\lambda)$ with  $j \in\{0,4\}$. Combining this fact with the estimates in \eqref{partial}--\eqref{partial2}, and noting that all leading entries satisfy $\|\partial_\lambda^\ell \mathcal{D}_{\alpha,\beta}^{\pm}(\lambda)\|_{\mathbb{B}(L^2)} \lesssim \lambda^{-\ell}$, with the sole exception
\[
\|\partial_\lambda^\ell \mathcal{D}_{4,4}^{\pm}(\lambda)\|_{\mathbb{B}(L^2)} \lesssim 
\begin{cases}
|\log \lambda|^2 \lambda^{-\ell}, & \text{if } Q_4^0 \neq 0 \text{ and } Q_{42}^1 \neq 0, \\
\lambda^{-\ell}, & \text{otherwise},
\end{cases}
\]
for $\ell = 0,1,2$, we obtain the following refined behavior.

If $Q_4^0 = 0$ or $Q_{42}^1 = 0$, then
\begin{align}\label{Upsilon44-2}
\Upsilon_{\alpha,4}^\pm(\lambda) = \lambda (-\log \lambda)^{3/2} \Lambda^\pm(\lambda) \qquad \text{for all } \alpha \in \{0,1,2,3,4\}.
\end{align}

If $Q_4^0 \neq 0$ and $Q_{42}^1 \neq 0$, the slowest-decaying term in $\Upsilon_{\alpha,4}^\pm(\lambda)$ is
\begin{align*}
\mathcal{D}_{\alpha,j}^{\pm}(\lambda)r_{j,4}^\pm(\lambda) \mathcal{D}_{4,4}^{\pm}(\lambda) &= \lambda (-\log \lambda)^{7/2} \Lambda^\pm(\lambda) \quad \text{for } \alpha \in \{0,1,2,3\},\\
\mathcal{D}_{4,4}^{\pm}(\lambda) r_{4,4}^\pm(\lambda) \mathcal{D}_{4,4}^{\pm}(\lambda) &= \lambda (-\log \lambda)^{11/2} \Lambda^\pm(\lambda)\quad \text{for } \alpha = 4,
\end{align*}
which implies that
\begin{align}\label{Upsilon44}
\Upsilon_{\alpha,4}^\pm(\lambda)= \lambda (-\log \lambda)^{7/2} \Lambda^\pm(\lambda) \quad \text{for } \alpha \in \{0,1,2,3\},\ \
	\Upsilon_{4,4}^\pm(\lambda)= \lambda (-\log \lambda)^{11/2} \Lambda^\pm(\lambda)\quad \text{for } \alpha = 4,
\end{align}
Thus, combining these estimates with the definition of $\sigma_\alpha(\lambda)$ and the expansions of $\mathcal{D}_{i,j}^{\pm}(\lambda)$ ($i,j \in \{0,4\}$) from \eqref{D00}, \eqref{D40D04D44_1}, and \eqref{D40D04D44_2}, we obtain the desired results for all $\mathcal{M}_{\alpha,4}^{\pm}(\lambda) = \sigma_\alpha(\lambda) \mathcal{D}_{\alpha,4}^\pm(\lambda) + \sigma_\alpha(\lambda) \Upsilon_{\alpha,4}^\pm(\lambda)$ with $0 \le \alpha \le 4$.

\textbf{(2) Analysis of $\mathcal{M}_{\alpha,\beta}^{\pm}(\lambda)$ and $\mathcal{M}_{\beta,\alpha}^{\pm}(\lambda)$ for $\beta \in \{5,6\}$ and $0 \leq \alpha \leq 6.$}    
	
	Firstly, consider the cases $0 \leq \alpha \leq 4.$    
	Here we restrict our attention to $\mathcal{M}_{\beta,\alpha}^{\pm}(\lambda)$, as the corresponding results for $\mathcal{M}_{\alpha,\beta}^{\pm}(\lambda)$ follow from identical arguments by symmetry.   
	From \eqref{inverse_D4}--\eqref{D-invert}, when $\beta \in \{5,6\}$, the operators $\mathcal{D}_{\alpha,\beta}^\pm(\lambda)$ and $\mathcal{D}_{\beta,\alpha}^\pm(\lambda)$ are non-vanishing only if $\alpha = \beta$. Thus, for $0 \le \alpha \le 4$ and $\beta \in \{5,6\}$,    
	\begin{align*}    
		\mathcal{M}_{\beta,\alpha}^{\pm}(\lambda) &= \sigma_\beta(\lambda) \sigma_\alpha(\lambda) \Upsilon_{\beta,\alpha}^\pm(\lambda).
	\end{align*}    
By the definition of $\sigma_\alpha(\lambda)$ in  \eqref{tab:sigma_alpha} and the estimate $\eqref{Upsilon}$,  we deduce that for $0 \le \alpha \le 4$ and $\beta \in \{5,6\}$,    
	\[  
\left\|\partial_\lambda^\ell \mathcal{M}_{\beta,\alpha}^\pm(\lambda)\right\|_{\mathbb{B}(L^2)} \lesssim \lambda^{-\ell}, \quad \ell = 0, 1, 2.  
\]  
	
	Secondly, consider the case $(5,5).$  
 Note that $\mathcal{D}_{\alpha,5}^\pm(\lambda)$ and $\mathcal{D}_{5,\alpha}^\pm(\lambda)$ are nonzero only for $\alpha = 5$. Combining this with \eqref{MM-inver} and the structure of matrix multiplication, we derive
	\begin{align*}
		\mathcal{M}_{5,5}^{\pm}(\lambda) &= (-\log \lambda)^{-1} \Big[  \mathcal{D}_{5,5}-\mathcal{D}_{5,5}r_{5,5}^{\pm}(\lambda)\mathcal{D}_{5,5}+\sum_{j,l=0}^{6}\mathcal{D}_{5,5}r_{5,j}^{\pm}(\lambda)\mathcal{D}_{j,l}^{\pm}(\lambda)r_{l,5}^{\pm}(\lambda)\mathcal{D}_{5,5}+ (-\log \lambda)^
		{-\frac{3}{2}} \Lambda^\pm(\lambda)
		\Big].
	\end{align*}
	By the expression  \eqref{partial}--\eqref{partial2} of $r_{i,j}^{\pm}(\lambda)$ and most $\mathcal{D}_{j,l}^{\pm}(\lambda)=0$ from \eqref{inverse_D4}--\eqref{D-invert}, it follows that
	\begin{align*}
		&\mathcal{D}_{5,5}r_{5,5}^{\pm}(\lambda)\mathcal{D}_{5,5}= (-\log \lambda)^{-1}\Big[a_3^\pm b_1^{-2}(Q_5 v G_6 v Q_5)^{-1}+b_1  \mathcal{D}_{5,5}(Q_5 v G_6^0 v Q_5) \mathcal{D}_{5,5}\Big]+ \lambda^2 (-\log \lambda)^{-1} \Lambda^\pm(\lambda),\\
		&\sum_{j,l=0}^{6} \mathcal{D}_{5,5} r_{5,j}^{\pm}(\lambda) \mathcal{D}_{j,l}^{\pm}(\lambda) r_{l,5}^{\pm}(\lambda) \mathcal{D}_{5,5} = b_1^2 (-\log \lambda)^{-1} \mathcal{D}_{5,5} (Q_5 v G_6^0 v Q_6) \mathcal{D}_{6,6} (Q_6 v G_6^0 v Q_5) \mathcal{D}_{5,5} \\
		& \ \ \ \ \ \ \ \ \ \ \ \ \ \ \ \ \ \ \ \ \ \ \  \ \ \ \ \ \ \ \ \ \ \ \ \ \  \ \  \ \ \ \ \ \ \  + (-\log \lambda)^{-2} \Lambda^\pm(\lambda).
	\end{align*}
	Therefore, $\mathcal{M}_{5,5}^{\pm}(\lambda)$ admits the representation
	\[
	\mathcal{M}_{5,5}^\pm(\lambda) =  \mathcal{A}_{5,5}(\lambda) + (\log \lambda)^{-2} \Gamma_{5,5}^{\pm}(\lambda),
	\]
	where
	\begin{align}
		\mathcal{A}_{5,5}(\lambda) &= -(\log \lambda)^{-1}\mathcal{D}_{5,5} + (\log \lambda)^{-2} \left[ -b_1 \mathcal{D}_{5,5} (Q_5 v G_6^0 v Q_5) \mathcal{D}_{5,5}  + b_1^2 \mathcal{D}_{5,5} (Q_5 v G_6^0 v Q_6) \mathcal{D}_{6,6} (Q_6 v G_6^0 v Q_5) \mathcal{D}_{5,5} \right], \nonumber\\
		\Gamma_{5,5}^{\pm}(\lambda) &= -a_3^\pm b_1^{-2} (Q_5 v G_6 v Q_5)^{-1} + (-\log \lambda)^{-1/2} \Lambda^\pm(\lambda).\label{Gamma55}
	\end{align}
	This establishes the desired bound for $\mathcal{M}_{5,5}^{\pm}(\lambda)$.
	
	Thirdly, consider the cases $(5,6)$ and $(6,5).$ 
	The analysis follows a similar pattern to the previous case, so we omit some details.
	For $\mathcal{M}_{5,6}^{\pm}(\lambda)$, we have
	\begin{align*}
		\mathcal{M}_{5,6}^{\pm}(\lambda) &= (-\log \lambda)^{-1/2} \Upsilon_{5,6}^\pm(\lambda) \\
		&= (-\log \lambda)^{-1/2} \Big[ -\mathcal{D}_{5,5} r_{5,6}^{\pm}(\lambda) \mathcal{D}_{6,6} + \mathcal{D}_{5,5} r_{5,5}^{\pm}(\lambda) \mathcal{D}_{5,5} r_{5,6}^{\pm}(\lambda) \mathcal{D}_{6,6} \\ 
		&\quad - \mathcal{D}_{5,5} r_{5,6}^{\pm}(\lambda) \mathcal{D}_{6,6} r_{6,5}^{\pm}(\lambda) \mathcal{D}_{5,5} r_{5,6}^{\pm}(\lambda) \mathcal{D}_{6,6} + (-\log \lambda)^{-2} \Lambda^\pm(\lambda) \Big] \\
		&=  \mathcal{A}_{5,6}(\lambda) + (\log \lambda)^{-2} \Gamma_{5,6}^{\pm}(\lambda),
	\end{align*}
	where
	\begin{align}
		\mathcal{A}_{5,6}(\lambda) &=(\log \lambda)^{-1} b_1 \mathcal{D}_{5,5} (Q_5 v G_6^0 v Q_6) \mathcal{D}_{6,6} + (\log \lambda)^{-2} \Big[ b_1^2 \mathcal{D}_{5,5} (Q_5 v G_6^0 v Q_5) \mathcal{D}_{5,5} (Q_5 v G_6^0 v Q_6) \mathcal{D}_{6,6} \nonumber\\
		&\quad -b_1^3 \mathcal{D}_{5,5} (Q_5 v G_6^0 v Q_6) \mathcal{D}_{6,6} (Q_6 v G_6^0 v Q_5) \mathcal{D}_{5,5} (Q_5 v G_6^0 v Q_6) \mathcal{D}_{6,6} \Big], \nonumber\\
		\Gamma_{5,6}^{\pm}(\lambda) &= -a_3^\pm \mathcal{D}_{5,5} (Q_5 v G_6^0 v Q_6) \mathcal{D}_{6,6} + (-\log \lambda)^{-1/2} \Lambda^\pm(\lambda).\label{Gamma56}
	\end{align}
	This establishes the desired result for $\mathcal{M}_{5,6}^{\pm}(\lambda)$.
	Similarly, for $\mathcal{M}_{6,5}^{\pm}(\lambda)$, we obtain
	\begin{align*}
		\mathcal{M}_{6,5}^{\pm}(\lambda) &= \mathcal{A}_{6,5}(\lambda) + (\log \lambda)^{-2} \Gamma_{6,5}^{\pm}(\lambda),
	\end{align*}
	where
	\begin{align}
		\mathcal{A}_{6,5}(\lambda) &= (\log \lambda)^{-1}b_1 \mathcal{D}_{6,6} (Q_6 v G_6^0 v Q_5) \mathcal{D}_{5,5} + (\log \lambda)^{-2} \Big[ b_1^2 \mathcal{D}_{6,6} (Q_6 v G_6^0 v Q_5) \mathcal{D}_{5,5} (Q_5 v G_6^0 v Q_5) \mathcal{D}_{5,5} \nonumber\\
		&\quad -b_1^3 \mathcal{D}_{6,6} (Q_6 v G_6^0 v Q_5) \mathcal{D}_{5,5} (Q_5 v G_6^0 v Q_6) \mathcal{D}_{6,6} (Q_6 v G_6^0 v Q_5) \mathcal{D}_{5,5} \Big],\nonumber \\
		\Gamma_{6,5}^{\pm}(\lambda) &= -a_3^\pm \mathcal{D}_{6,6} (Q_6 v G_6^0 v Q_5) \mathcal{D}_{5,5} + (-\log \lambda)^{-1/2} \Lambda^\pm(\lambda).\label{Gamma65}
	\end{align}
	This completes the analysis for $\mathcal{M}_{6,5}^{\pm}(\lambda)$.
	
Finally,
	it remains to analyze the case $\mathcal{M}_{6,6}^{\pm}(\lambda)$. We have
	\begin{align*}
		\mathcal{M}_{6,6}^{\pm}(\lambda) &=\mathcal{D}_{6,6}+ \Upsilon_{6,6}^\pm(\lambda) \\
	&=\mathcal{D}_{6,6}+\mathcal{D}_{6,6} r_{6,5}^{\pm}(\lambda) \mathcal{D}_{5,5}r_{5,6}^{\pm}(\lambda) \mathcal{D}_{6,6} - \mathcal{D}_{6,6} r_{6,5}^{\pm}(\lambda) \mathcal{D}_{5,5} r_{5,5}^{\pm}(\lambda) \mathcal{D}_{5,5}r_{5,6}^{\pm}(\lambda) \mathcal{D}_{6,6} \\
		&\quad +\mathcal{D}_{6,6} r_{6,5}^{\pm}(\lambda) \mathcal{D}_{5,5} r_{5,6}^{\pm}(\lambda) \mathcal{D}_{6,6} r_{6,5}^{\pm}(\lambda) \mathcal{D}_{5,5} r_{5,6}^{\pm}(\lambda) \mathcal{D}_{6,6}+ (-\log \lambda)^{-5/2} \Lambda^\pm(\lambda)  \\
		&=  \mathcal{A}_{6,6}(\lambda) + (\log \lambda)^{-2} \Gamma_{6,6}^{\pm}(\lambda),
	\end{align*}
	where
	\begin{align}
		\mathcal{A}_{6,6}(\lambda) &=\mathcal{D}_{6,6}+(-\log \lambda)^{-1}
		b_1^2 \mathcal{D}_{6,6} (Q_6 v G_6^0 v Q_5) \mathcal{D}_{5,5} (Q_5 v G_6^0 v Q_6) \mathcal{D}_{6,6}\nonumber \\
		&+(-\log \lambda)^{-2} \Big[-b_1^3\mathcal{D}_{6,6} (Q_6 v G_6^0 v Q_5) \mathcal{D}_{5,5} (Q_5 v G_6^0 v Q_5) \mathcal{D}_{5,5}(Q_5 v G_6^0 v Q_6) \mathcal{D}_{6,6}\nonumber\\
	&+b_1^4\mathcal{D}_{6,6}\left((Q_6 v G_6^0 v Q_5) \mathcal{D}_{5,5} (Q_5 v G_6^0 v Q_6) \mathcal{D}_{6,6}\right)^2 \Big],\nonumber\\
		\Gamma_{6,6}^{\pm}(\lambda) &= a_3^{\pm }b_1 \mathcal{D}_{6,6} (Q_6 v G_6^0 v Q_5) \mathcal{D}_{5,5}(Q_5v G_6^0 v Q_6) \mathcal{D}_{6,6}  + (-\log \lambda)^{-1/2} \Lambda^\pm(\lambda).\label{Gamma66}
	\end{align}
	This establishes the desired result for $\mathcal{M}_{6,6}^{\pm}(\lambda)$.
\end{proof}

\section{The proof of Lemmas \ref{lemma_projection} and \ref{oscillatory}}\label{section 8}
This section is primarily devoted to completing the proofs of Lemma~\ref{lemma_projection} and Lemma~\ref{oscillatory}. Moreover, we also provide the asymptotic expansion of two special oscillatory integrals.

\subsection{Proof of Lemma \ref{lemma_projection}}
The proof of Lemma \ref{lemma_projection} relies heavily on the directional Taylor expansion provided by Lemma \ref{taylor} (see \cite[Lemma 3.5]{LSY21}), which allows us to systematically exploit the  cancellation properties of the projections $Q_{\alpha}$.     

\begin{lemma}[{\cite[Lemma 3.5]{LSY21}}]\label{taylor}
Assume that $\lambda>0$,  $\tilde{x}=(-x_2,x_1) \in \mathbb{R}^2$, $ \rho \equiv \rho(x)=\frac{x}{|x|}$ for $x\neq 0$ and $\rho(x)=0$ for $x=0$.
Let $\theta\in [0,1]$ and $ |x| \cos\xi=|x| \cos\xi(x, y, \theta )= \langle x, \rho(y-\theta x)\rangle$,
 where $\xi\equiv \xi(x, y, \theta )$ is the angle between the vectors $x$ and $y-\theta x$.
Then
\begin{enumerate}
\item[(i)] If $F \in C^{1}(\mathbb{R})$, then
$$
F(\lambda|x-y|) = F(\lambda|y|) - \lambda |x| \int_{0}^{1} \mathcal{L}_{1}[F](\lambda|y-\theta x|, \xi) \, d\theta.
$$

\item[(ii)] If $F \in C^{2}(\mathbb{R})$ with $F'(0) = 0$, then
$$
F(\lambda|x-y|)
= F(\lambda|y|) - \lambda F'(\lambda|y|) \langle \rho(y), x \rangle
 + \lambda^{2}|x|^{2} \int_{0}^{1} (1-\theta) \mathcal{L}_{2}[F](\lambda|y-\theta x|, \xi) \, d\theta.
$$

\item[(iii)] If $F \in C^{3}(\mathbb{R})$ with $F'(0)=F''(0)=0$, then
$$
\begin{aligned}
F(\lambda|x-y|)
&= F(\lambda|y|) - \lambda F'(\lambda|y|) \langle \rho(y), x \rangle
 + \frac{\lambda^{2}}{2} \Bigl( \frac{F'(\lambda|y|)}{\lambda|y|} \langle \rho(y), \tilde{x}\rangle^{2}
+ F''(\lambda|y|) \langle \rho(y), x\rangle^{2} \Bigr) \\
&\quad - \frac{\lambda^{3}|x|^{3}}{2} \int_{0}^{1} (1-\theta)^{2} \mathcal{L}_{3}[F](\lambda|y-\theta x|, \xi) \, d\theta,
\end{aligned}
$$
where $\langle \rho(y), \tilde{x}\rangle^{2} = |x|^{2} - \langle \rho(y), x\rangle^{2}$.

\item[(iv)] If $F \in C^{4}(\mathbb{R})$ with $F^{(k)}(0)=0$ for $k=1,2,3$, then
$$
\begin{aligned}
F(\lambda|x-y|)
&= F(\lambda|y|) - \lambda F'(\lambda|y|) \langle \rho(y), x \rangle  + \frac{\lambda^{2}}{2} \Bigl( \frac{F'(\lambda|y|)}{\lambda|y|} \langle \rho(y), \tilde{x}\rangle^{2}
+ F''(\lambda|y|) \langle \rho(y), x\rangle^{2} \Bigr) \\
&\quad - \frac{\lambda^{3}}{3!} \Bigl[ \Bigl( -\frac{F'(\lambda|y|)}{\lambda^{2}|y|^{2}}
+ \frac{F''(\lambda|y|)}{\lambda|y|} \Bigr) 3\langle \rho(y), x\rangle \langle \rho(y), \tilde{x}\rangle^{2}
 + F^{(3)}(\lambda|y|) \langle \rho(y), x\rangle^{3} \Bigr] \\
&\quad + \frac{\lambda^{4}|x|^{4}}{3!} \int_{0}^{1} (1-\theta)^{3} \mathcal{L}_{4}[F](\lambda|y-\theta x|, \xi) \, d\theta.
\end{aligned}
$$
\end{enumerate}
Here $F^{(\ell)}$ denotes the $\ell$-th derivative of $F$, and define
\begin{align*}
\mathcal{L}_{1}[F](r,\xi) &:= F'(r)\cos\xi, \ \ \
\mathcal{L}_{2}[F](r,\xi) := F''(r)\cos^{2}\xi + \frac{F'(r)}{r}\sin^{2}\xi, \\
\mathcal{L}_{3}[F](r,\xi) &:= \Bigl( -\frac{F'(r)}{r^{2}} + \frac{F''(r)}{r} \Bigr) 3\cos\xi\sin^{2}\xi + F^{(3)}(r)\cos^{3}\xi, \\
\mathcal{L}_{4}[F](r,\xi) &:= \Bigl( \frac{F'(r)}{r^{3}} - \frac{F''(r)}{r^{2}} \Bigr)(15\cos^{2}\xi\sin^{2}\xi - 3\sin^{4}\xi)
 + \frac{F^{(3)}(r)}{r} 6\cos^{2}\xi\sin^{2}\xi + F^{(4)}(r)\cos^{4}\xi.
\end{align*}
\end{lemma}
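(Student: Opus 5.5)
The plan is to reduce everything to a one-variable Taylor formula along the segment from $y$ to $y-x$. Set $u(\theta)=y-\theta x$, $r(\theta)=|u(\theta)|$ and
\[
g(\theta):=F\bigl(\lambda r(\theta)\bigr),\qquad \theta\in[0,1].
\]
Since $|y-x|=|x-y|$, we have $g(0)=F(\lambda|y|)$ and $g(1)=F(\lambda|x-y|)$. In each part (i)--(iv), with $n\in\{1,2,3,4\}$, I would apply
\[
g(1)=\sum_{k<n}\frac{g^{(k)}(0)}{k!}+\int_0^1\frac{(1-\theta)^{n-1}}{(n-1)!}\,g^{(n)}(\theta)\,d\theta .
\]
The case $x=0$ is trivial, so I assume $x\neq0$ throughout.

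\textbf{Step 1: differentiation rules.} Away from the point where $u(\theta)=0$, compute
\[
\frac{d}{d\theta}r=-\langle x,\rho(u)\rangle=-|x|\cos\xi .
\]
Next, differentiating $|x|\cos\xi=\langle x,u/|u|\rangle$ gives
\[
\frac{d}{d\theta}\bigl(|x|\cos\xi\bigr)=-\frac{|x|^2-\langle x,\rho(u)\rangle^2}{r}=-\frac{|x|^2\sin^2\xi}{r}.
\]
From this one also gets $\frac{d}{d\theta}\sin^2\xi=\frac{2|x|\cos\xi\,\sin^2\xi}{r}$. These three identities form a closed differentiation calculus in the variables $(r,\cos\xi,\sin\xi)$.

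\textbf{Step 2: derivatives of $g$.} Using Step 1 inductively:
\begin{itemize}
\item $g'(\theta)=-\lambda|x|\,F'(\lambda r)\cos\xi=-\lambda|x|\,\mathcal L_1[F](\lambda r,\xi)$;
\item $g''(\theta)=\lambda^2|x|^2\bigl(F''\cos^2\xi+\tfrac{F'}{\lambda r}\sin^2\xi\bigr)=\lambda^2|x|^2\mathcal L_2[F]$;
\item $g'''=-\lambda^3|x|^3\mathcal L_3[F]$ and $g^{(4)}=\lambda^4|x|^4\mathcal L_4[F]$.
\end{itemize}
The third and fourth formulas come from one and two further differentiations, tracking the combinations $\cos\xi\sin^2\xi$, $\cos^2\xi\sin^2\xi$ and $\sin^4\xi$. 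This is routine but tedious bookkeeping, which I would verify by collecting coefficients.

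\textbf{Step 3: boundary values at $\theta=0$.} Here $u=y$, so $|x|\cos\xi=\langle\rho(y),x\rangle$ and $|x|^2\sin^2\xi=|x|^2-\langle\rho(y),x\rangle^2=\langle\rho(y),\tilde x\rangle^2$. Substituting these into the derivatives from Step 2 gives exactly the explicit polynomial terms in (ii)--(iv), with the prefactors $1/2$ and $1/3!$ coming from the Taylor formula.

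\textbf{Main obstacle: the degenerate point.} The delicate point is the possible $\theta_0\in[0,1]$ with $y=\theta_0 x$, where $r=0$ and $\rho$, $\xi$ are undefined. I need $g\in C^n[0,1]$, so that the integral-remainder formula holds across $\theta_0$. Near $\theta_0$ the function $g(\theta)=F(\lambda|x||\theta-\theta_0|)$ is an even reflection of $F$. The hypotheses $F^{(k)}(0)=0$ for $1\le k\le n-1$ are exactly what make this reflection $C^n$ at $\theta_0$, and they also make the quotients $F'(r)/r$, $F'/r^2-F''/r$, etc. stay bounded. For instance, $F'(r)/r\to F''(0)$, and in (iv) the combination $F'/r^3-F''/r^2$ remains bounded by Taylor expanding $F'$ and $F''$ at $0$. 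Thus $g^{(n)}$ is bounded on $[0,1]\setminus\{\theta_0\}$ and coincides with the formula of Step 2 there. The single point $\theta_0$ has measure zero, so the remainder integrals are unaffected.

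If $\theta_0=0$, i.e.\ $y=0$, the convention $\rho(0)=0$ must be checked against the explicit terms. The vanishing $F'(0)=0$ (and $F''(0)=0$ where assumed) kills those terms. I would handle this case by continuity in $y$, or by directly evaluating both sides.
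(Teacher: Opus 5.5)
\textbf{The deferred $g^{(4)}$ computation fails, because (iv) as stated is false.} The paper does not prove this lemma; it imports it from Lemma 3.5 of LSY21. Your route is the natural one: Taylor's formula for $g(\theta)=F(\lambda|y-\theta x|)$, driven by $r'=-|x|\cos\xi$ and $(|x|\cos\xi)'=-|x|^2\sin^2\xi/r$. It correctly gives (i)--(iii) and every explicit term in (iv). The problem is the step you call routine bookkeeping. Differentiating $g'''=-\lambda^3|x|^3\mathcal{L}_3[F]$ once more gives
\[
g^{(4)}(\theta)=\lambda^4|x|^4\Bigl[\Bigl(\frac{F'(R)}{R^3}-\frac{F''(R)}{R^2}\Bigr)\bigl(12\cos^2\xi\sin^2\xi-3\sin^4\xi\bigr)+\frac{F'''(R)}{R}\,6\cos^2\xi\sin^2\xi+F^{(4)}(R)\cos^4\xi\Bigr],\qquad R=\lambda|y-\theta x|,
\]
with $12$, not $15$. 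So your identity $g^{(4)}=\lambda^4|x|^4\mathcal{L}_4[F]$ is false, and so is (iv).

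Take $F(r)=r^4$, which satisfies $F'(0)=F''(0)=F'''(0)=0$, with $\lambda=1$, $x=(1,0)$, $y=(0,1)$. Then $g(\theta)=(1+\theta^2)^2$ and $g^{(4)}\equiv 24$. The explicit terms of (iv) sum to $3$ and $F(|x-y|)=4$, so the remainder must be $\frac16\int_0^1(1-\theta)^3\,24\,d\theta=1$. The stated $\mathcal{L}_4$ differs from $24$ by $-24\cos^2\xi\sin^2\xi=-24\,\theta^2/(1+\theta^2)^2$, which is negative on $(0,1]$; at $\theta=1$ it equals $18$. Hence the stated right-hand side is strictly less than $4$. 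A quick check fixes the coefficient: for $F(r)=r^4$ one needs $\mathcal{L}_4\equiv 24$ for every $\xi$, and with $\cos^2\xi+\sin^2\xi=1$ this forces $12$.

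Your argument therefore proves (iv) only with the corrected $\mathcal{L}_4$. The paper uses $\mathcal{L}_4$ only through size bounds, so nothing downstream is affected.

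\textbf{A smaller inaccuracy at the degenerate segment $y=\theta_0 x$.} With $c=\lambda|x|$, the reflection $\theta\mapsto F(c|\theta-\theta_0|)$ is $C^n$ if and only if $F^{(k)}(0)=0$ for all odd $k\le n$. That holds in (ii) and (iv). In (i), however, $g'$ jumps by $2cF'(0)$ at an interior $\theta_0$, and in (iii) $g'''$ jumps by $2c^3F'''(0)$. So your claim that the hypotheses give $g\in C^n$ is wrong for $n=1,3$.

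The argument survives because Taylor's formula with integral remainder only needs $g^{(n-1)}$ absolutely continuous. That holds here, since $g^{(n-1)}$ is continuous and piecewise $C^1$ with bounded derivative. Two further remarks:
\begin{itemize}
\item On such a segment $\sin\xi\equiv 0$ away from $\theta_0$, so the quotients $F'/r$ and the like never actually enter there.
\item At $y=0$ the explicit terms in (iii)--(iv) only make sense with the convention $F'(r)/r|_{r=0}=F''(0)=0$. With $\rho(0)=0$, the stated identity $\langle\rho(y),\tilde x\rangle^2=|x|^2-\langle\rho(y),x\rangle^2$ fails.
\end{itemize}
Apart from this, your direct reflection argument is, for $C^n$ profiles, a cleaner substitute for the $\varepsilon$-regularization plus dominated convergence that the paper uses in its remark on the logarithmic profile.
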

However, the proof of Lemma \ref{lemma_projection} below  involves the logarithmic profile
\[
	\widetilde{G}_1(p):=b_0p^2\log p\,\chi_1(p),
\]
which is  \(C^1\bigl([0, \infty)\bigr)\) but not \(C^2\bigl([0, \infty)\bigr)\) at the origin and hence does not satisfy the
hypotheses of Lemma~\ref{taylor}\textup{(ii)}. The following remark
shows that the conclusion of that part nevertheless remains valid for
\(\widetilde{G}_1(p)\).
\begin{remark}\label{remark:taylor-log}
{\rm	Although
$	\widetilde{G}_1(p)$
	does not belong to \(C^2([0,\infty))\), the conclusion of
	Lemma~\ref{taylor}\textup{(ii)} remains valid for
	\(F(\lambda|x-y|)=\widetilde{G}_1(\lambda|x-y|)\) whenever \(x\neq0\), with the integrand
	understood almost everywhere in \(\theta\).

	Indeed, 
	for \(\varepsilon>0\), set
	$
	r_\varepsilon(\theta)
	:=
	\sqrt{\varepsilon^2+|y-\theta x|^2}$ and 
	$h_\varepsilon(\theta)
	:=
	\widetilde{G}_1\bigl(\lambda r_\varepsilon(\theta)\bigr).
	$
 Since \(r_\varepsilon(\theta)>0\),
	\(h_\varepsilon\in C^2([0,1])\) and  \(\chi_1\in C_c^\infty(\mathbb{R})\), a direct calculation gives
	\[
	|h_\varepsilon''(\theta)|
	\lesssim
	\lambda^2|x|^2
	\Big(
	\bigl|\widetilde{G}_1''(\lambda r_\varepsilon(\theta))\bigr|
	+
	\frac{
		\bigl|\widetilde{G}_1'(\lambda r_\varepsilon(\theta))\bigr|
	}{
		\lambda r_\varepsilon(\theta)
	}
	\Big)\lesssim \lambda^2|x|^2\Bigl(1+\big|\log\bigl(\lambda r_\varepsilon(\theta)\bigr)\big|\chi_1\bigl(\lambda r_\varepsilon(\theta)\bigr)\Bigr).
	\]
	If the segment
	\(\{y-\theta x:0\le\theta\le1\}\) meets the origin, there is a
	unique \(\theta_0\in[0,1]\) such that \(y-\theta_0x=0\), and
	\[
	|y-\theta x|=|x|\,|\theta-\theta_0|.
	\]
	Consequently, for \(0<\varepsilon\le1\) and almost every
	\(\theta\in[0,1]\),
	\[
	|h_\varepsilon''(\theta)|
	\lesssim C_{\lambda,x}
	\bigl(1+|\log|\theta-\theta_0||\bigr)\in L_\theta^1\bigl([0,1]\bigr).
	\]
	If the segment does not meet
	the origin, then \(|y-\theta x|\) is bounded away from zero, and
	\(h_\varepsilon''\) is uniformly bounded for small \(\varepsilon\).
	Moreover, for almost every \(\theta\),
	\[
	h_\varepsilon''(\theta)
	\longrightarrow
	\lambda^2|x|^2
	\mathcal L_2[\widetilde{G}_1]
	\bigl(\lambda|y-\theta x|,\xi\bigr)
	\qquad\text{as }\varepsilon\to0.
	\]
	Although this pointwise limit  fails at the single point
	\(\theta=\theta_0\), this is immaterial for the integral remainder.
	Lebesgue's dominated convergence theorem  permits passing
	to the limit \(\varepsilon\to0\), proving the claimed identity.}
\end{remark}

Now, we start to use Lemma \ref{taylor} to prove Lemma \ref{lemma_projection}.

\begin{proof}
	We organize the proof according to the order of the cancellation
	properties of the projections.
	
	To streamline the  analysis, we first formulate a universal bound for the derivatives of the kernel. For any smooth profile $\mathcal{G}$ on $\mathbb{R}$ satisfying $|\partial_p^\ell \big(e^{\mp ip}\mathcal{G}(p)\big)| \lesssim \langle p \rangle^{-1/2}|p|^{-\ell}$,  applying Leibniz's rule together with the   inequality $\langle \lambda(\theta x-y) \rangle^{-1/2} \lesssim \langle \lambda y \rangle^{-1/2} \langle x \rangle^{1/2}$ ($\theta \in [0,1],  0<\lambda\ll1$) yields:
\begin{align}
	\left|\partial_\lambda^\ell \left( e^{\mp i\lambda|y|} \mathcal{G}(\lambda|\theta x-y|) \right)\right|
	&= \left|\partial_\lambda^\ell \left[ e^{\pm i\lambda(|\theta x-y|-|y|)} \left( e^{\mp i\lambda|\theta x-y|} \mathcal{G}(\lambda|\theta x-y|) \right) \right]\right| \nonumber \\
	&\lesssim \sum_{\ell_1+\ell_2=\ell} |x|^{\ell_1} \lambda^{-\ell_2} \langle \lambda(\theta x-y) \rangle^{-1/2} \lesssim \lambda^{-\ell} \langle \lambda y \rangle^{-1/2} \langle x \rangle^{\ell+1/2}. \label{eq:general_oscillatory_bound}
\end{align}
	The decay assumption \eqref{condition} ensures that all additional
	polynomial weights in $x$ below
	are integrable against $v$ in $L^2_x$. We shall use
	\eqref{eq:general_oscillatory_bound} repeatedly without further
	comment.
	
	\medskip
	
	\textit{Proof of part~{\rm (i)}.}
	Recall from \eqref{def:F_pm} that $R^\pm_0(\lambda^4)(x,y) = \lambda^{-2} F_\pm(\lambda|x-y|)$, where 
\begin{align}\label{Fpm}
	{F}_\pm(p)=&\Bigl(a_0^\pm+b_0p^2\log p+a_1^\pm p^2
	+a_2^\pm p^4+O\bigl(p^{5}\bigr)\Bigr)\chi_1(p)+\frac{i}{8}\Bigl(\pm e^{\pm ip} w_\pm(p)-e^{-p}w_+(ip)\Bigr)\chi_2(p).
\end{align}
Observe that $F_\pm(p)$ comprising both low-frequency polynomial expansions and high-frequency oscillations satisfies:
\[
\bigl|\partial_p^\ell \bigl(e^{\mp ip} F_\pm(p)\bigr)\bigr| + \bigl|\partial_p^\ell \bigl(e^{\mp ip} F_\pm'(p)\bigr)\bigr| \lesssim \langle p \rangle^{-1/2} |p|^{-\ell}, \quad \ell = 0,1,2.
\]
For $\alpha = 0$, defining $\Omega_{0,\pm}(\lambda,x,y) := \left(Q_0 v F_\pm(\lambda|\cdot-y|)\right)(x)$ and directly applying \eqref{eq:general_oscillatory_bound} gives:
\begin{align*}
	\left\| \partial_\lambda^\ell \left(e^{\mp i \lambda|y|} \Omega_{0,\pm}(\lambda, x, y)\right) \right\|_{L^2_x}
	\lesssim \lambda^{-\ell} \langle \lambda y \rangle^{-1/2} \left\| v(x) \langle x \rangle^{\ell+1/2} \right\|_{L^2_x} \lesssim \lambda^{-\ell} \langle \lambda y \rangle^{-1/2}.
\end{align*}
For $\alpha \ge 1$, we exploit Lemma~\ref{taylor}(i) to write $$F_\pm(\lambda|x-y|) = F_\pm(\lambda|y|) - \lambda |x| \int_0^1 F'_\pm(\lambda|\theta x-y|) \cos\xi \, d\theta.$$
Since $Q_\alpha v = 0$ for $\alpha \ge 1$ (see Remark \ref{cancellationQ}), the leading term $F_\pm(\lambda|y|)$ is annihilated, yielding:
\begin{align*}
	\left(Q_\alpha v R_0^{\pm}(\lambda^4)(\cdot, y)\right)(x)
	&= -\lambda^{-1} Q_\alpha \Big(|\cdot| v \int_0^1 F'_\pm(\lambda| \theta \cdot-y|)\cos\xi \, d\theta \Big)(x) := \lambda^{-1} \Omega_{\alpha,\pm}(\lambda, x, y).
\end{align*}
Applying \eqref{eq:general_oscillatory_bound} to $F'_\pm$ guarantees $\|\partial_\lambda^\ell (e^{\mp i \lambda|y|} \Omega_{\alpha,\pm})\|_{L^2_x} \lesssim \lambda^{-\ell} \langle \lambda y \rangle^{-1/2}$, concluding Part (i).

	\medskip
	
	\textit{Proof of part~{\rm (ii)}.}
	By Remark \ref{cancellationQ},
	$
	Q_\alpha v
	=
	Q_\alpha(z_jv)
	=
	0
	$ for $ j=1,2$ and $ 2\leq\alpha\leq6.$
 Lemma \ref{taylor}(ii) and Remark \ref{remark:taylor-log} with $z\neq0$ give
	\begin{align*}
		&
		Q_\alpha
		\bigl(
		vF_\pm(\lambda|z-y|)
		\bigr)(x)=
		\lambda^2
		Q_\alpha
		\Bigl(
		|z|^2v
		\int_0^1
		(1-\theta)
		\mathcal{L}_2[F_\pm]
		\bigl(
		\lambda|y-\theta z|,
		\xi
		\bigr)
		\,d\theta
		\Bigr)(x),
	\end{align*}
	where \(Q_\alpha\) acts on the \(z\)-variable and outputs a function of \(x\). Modifying the input function on the
	null set \(\{z=0\}\) does not alter its image  in \(L_x^2\).
   Thus, define
	$
	\bigl(Q_\alpha vR_0^\pm(\lambda^4)\bigr)(x,y)
	:=
	\mathcal{J}_{\alpha,\pm}(\lambda,x,y),
	$
	where
\begin{equation}\label{def-J}
		\mathcal{J}_{\alpha,\pm}(\lambda,x,y)
		=
		Q_\alpha
		\Bigl(
		|z|^2v
		\int_0^1
		(1-\theta)
		\mathcal{L}_2[F_\pm]
		\bigl(
		\lambda|y-\theta z|,
		\xi
		\bigr)
		\,d\theta
		\Bigr)(x).
	\end{equation}
	The expansion \eqref{Fpm} of $F_\pm$ implies
	\begin{equation}\label{L2-decomposition}
		\mathcal{L}_2[F_\pm](p,\xi)
		=
		2b_0(\log p)\chi_1(p)
		+
		\Theta_\pm(p,\xi),
	\end{equation}
		where $\Theta^\pm$ consolidates the bounded low-frequency derivatives and the high-frequency oscillatory components, which  satisfies the bound \eqref{eq:general_oscillatory_bound}. 
	It therefore remains only to estimate the logarithmic term. For
 $\ell=0,1,2$,
	\begin{align}
		&
		\left|
		\partial_\lambda^\ell
		\Bigl[
		e^{\mp i\lambda|y|}
		\log(\lambda |y-\theta x|)
		\chi_1(\lambda |y-\theta x|)
		\Bigr]
		\right|
	\lesssim
		\lambda^{-\ell}
		\langle\lambda y\rangle^{-1/2}
		\langle x\rangle^{\ell+1/2}
		\Bigl(
		|\log\lambda|
		+
		|y-\theta x|^{-1/2}
		\Bigr).
		\label{log-profile-bound}
	\end{align}
	Moreover, the decay assumption \eqref{condition} yields
	\begin{equation}\label{weighted-singular-bound}
		\left\|
		v(x)|x|^2
		\langle x\rangle^{\ell+1/2}
		|y-\theta x|^{-1/2}
		\right\|_{L^2_x}
		\lesssim
		\theta^{-1/2}
		\left\langle\theta^{-1}y\right\rangle^{-1/2},
		\qquad \theta\in (0,1].
	\end{equation}
	Combining with  \eqref{def-J}--\eqref{weighted-singular-bound}, we obtain
	\begin{align*}
		&
		\left\|
		\partial_\lambda^\ell
		\Bigl(
		e^{\mp i\lambda|y|}
		\mathcal{J}_{\alpha,\pm}(\lambda,\cdot,y)
		\Bigr)
		\right\|_{L^2}
	\lesssim
		\lambda^{-\ell}
		|\log\lambda|
		\langle\lambda y\rangle^{-1/2}
		\int_0^1
		(1-\theta)\theta^{-1/2}
		\,d\theta
	\lesssim
		\lambda^{-\ell}
		|\log\lambda|
		\langle\lambda y\rangle^{-1/2}.
	\end{align*}
	This proves part~{\rm (ii)}.
	
	\medskip
	
	\textit{Proof of part~{\rm (iii)}.}
	Define
	$
	\widetilde{G}_j(r)
	:=
	b_0\chi_j(r)r^2\log r
$ for $	 j=1,2.$
	Note that
	$$
	\lambda^{-2}
	\Bigl(
	\widetilde{G}_1(\lambda r)
	+
	\widetilde{G}_2(\lambda r)
	\Bigr)
	=
	b_0r^2\log r
	+
	b_0(\log\lambda)r^2.
	$$
	Therefore,
	\begin{align}
		R_0^\pm(\lambda^4)(x,y)
		={}&
		b_0G_2^0(x,y)
		+
		b_0(\log\lambda)G_2(x,y)
	-
		\lambda^{-2}
		\widetilde{G}_2(\lambda|x-y|)
		+
		\mathcal{R}_\pm(\lambda,x,y),
		\label{basic-four-term-decomposition}
	\end{align}
	where
	$
		\mathcal{R}_\pm(\lambda,x,y)
		:=
		R_0^\pm(\lambda^4)(x,y)
		-
		\lambda^{-2}
		\widetilde{G}_1(\lambda|x-y|).
$
	We focus on  the four terms in
	\eqref{basic-four-term-decomposition} separately.
	
	\medskip
	
	\textit{Step 1: Deal with $(b_0QvG_2^0)(x,y)$.}
We decompose $b_0G_2^0(x,y)$ as 
	$
	b_0G_2^0(x,y)
	=
	\widetilde{G}_1(|x-y|)
	+
	\widetilde{G}_2(|x-y|).
	$
Note that  the compactly supported piece $\widetilde{G}_1$ is uniformly bounded,  trivially contributing ${O}(1)$ to the $L^2_x$ norm. Then  only the
	term
	$
	Q
	\bigl(
	v\widetilde{G}_2(|\cdot-y|)
	\bigr)
	$
	requires consideration.
	
First, for
	$
	Q
	\in
	\{Q_\alpha:2\leq\alpha\leq4\}
	\cup
	\{Q_4^0\},
	$
	the cancellations
	$
	Qv=Q(x_jv)=0
	$ ($j=1,2$)
	and Lemma \ref{taylor}(ii) give
	\begin{align}
		&
		Q
		\bigl(
		v\widetilde{G}_2(|\cdot-y|)
		\bigr)(x)
	=
		Q
		\Bigl(
		|\cdot|^2v
		\int_0^1
		(1-\theta)
		\mathcal{L}_2[\widetilde{G}_2]
		\bigl(
		|y-\theta\cdot|,
		\xi
		\bigr)
		\,d\theta
		\Bigr)(x).
		\label{QvG20-1}
	\end{align}
	 Moreover,
	the second-order derivative produces a logarithmic growth:
	$$
	\left|\mathcal{L}_2[\widetilde{G}_2](|\theta x-y|,\xi)\right| \lesssim 1 + |\log(|\theta x-y|)| \chi_2(|\theta x-y|) \lesssim \log(2+|x|)\log(2+|y|).
	$$
	Upon evaluating the $L^2_x$-norm against $v(x)$, the $\log(2+|x|)$ factor is absorbed, resulting in \begin{align}\label{static-log-bound}
		\|(Q v \widetilde{G}_2)(\cdot,y)\|_{L^2} \lesssim \log(2+|y|), \quad \text{for } 	Q
		\in
		\{Q_\alpha:2\leq\alpha\leq4\}
		\cup
		\{Q_4^0\}.\end{align}

	Next, for
	$
	Q
	\in
	\{Q_4^1,Q_{41}^1,Q_{42}^1,Q_5,Q_6\},
	$
	in addition to the zeroth- and first-order cancellations, these
	projections satisfy
	$
	Q\bigl(|x|^2v\bigr)=0.
	$ Consequently, 
	applying Lemma \ref{taylor}(iii), 
	we obtain
	\begin{align}
		Q
		\bigl(
		v\widetilde{G}_2(|\cdot-y|)
		\bigr)(x)
=&
		\frac{1}{2}
		\Bigl(
		\widetilde{G}_2''(|y|)
		-
		\frac{\widetilde{G}_2'(|y|)}{|y|}
		\Bigr)
		Q
		\Bigl(
		v\langle\rho(y),\cdot\rangle^2
		\Bigr)(x)
	\nonumber\\
	&	-
		\frac{1}{2}
		Q
		\Bigl(
		|\cdot|^3v
		\int_0^1
		(1-\theta)^2
		\mathcal{L}_3[\widetilde{G}_2]
		\bigl(
		|y-\theta\cdot|,
		\xi
		\bigr)
		\,d\theta
		\Bigr)(x).
		\label{QvG20-2}
	\end{align}
	Since
	$
	\big|
	\widetilde{G}_2''(r)
	-
	r^{-1}\widetilde{G}_2'(r)
	\big|
	\lesssim1,
	$
	and
	$
	\big|
	\mathcal{L}_3[\widetilde{G}_2](r,\xi)
	\big|
	\lesssim
	\langle r\rangle^{-1},
	$
	it follows that
	\begin{equation}\label{static-uniform-bound}
		\Big\|
		Q
		\bigl(
		v\widetilde{G}_2(|\cdot-y|)	\bigr)(x)
		\Big\|_{L_x^2}
		\lesssim1,\quad \text{for}	\ Q
		\in
		\{Q_4^1,Q_{41}^1,Q_{42}^1,Q_5,Q_6\}.
	\end{equation}
	Together with the bounded contribution of $\widetilde{G}_1$,
	\eqref{static-log-bound} and \eqref{static-uniform-bound} prove
	\eqref{lemma_projection_G}.
	
	\medskip
	
	\textit{Step 2: Deal with $-\lambda^{-2}Q(v	\widetilde{G}_2(\lambda|\cdot-y|))(x)$.}
	Applying the same Taylor expansions as in
	\eqref{QvG20-1} and \eqref{QvG20-2}, now to 
	$\widetilde{G}_2(\lambda|\cdot-y|)$, gives
	\begin{equation*}
	\left\|
		\partial_\lambda^\ell\mathcal{T}(\lambda,x,y)
		\right\|_{L_x^2}=	\left\|
		\partial_\lambda^\ell\left(
		-\lambda^{-2}
	Q
	\bigl(
	v\widetilde{G}_2(\lambda|\cdot-y|)
	\bigr)(x)\right)
		\right\|_{L_x^2}
		\lesssim
		\begin{cases}
			\lambda^{-\ell}\log(2+|y|),
			&
			Q\in
			\{Q_\alpha:2\leq\alpha\leq4\}
			\cup
			\{Q_4^0\},
			\\
			\lambda^{-\ell},
			&
			Q\in
			\{Q_4^1,Q_{41}^1\}.
		\end{cases}
	\end{equation*}
	Here $\mathcal{T}(\lambda, x,y ):=-\lambda^{-2}\bigl(Q \bigl(v\widetilde{G}_2(\lambda|\cdot-y|)\bigr)\bigr)(x)$ with $(\mathcal{T}, Q)\in\{ (\mathcal{T}_\alpha, Q_\alpha)|_{2\le\alpha\le4}, (\mathcal{T}_4^0, Q_4^0), (\mathcal{T}_4^1, Q_4^1), (\mathcal{T}_4^{1,1}, Q_{41}^1) \}.$

	For
	$
	Q\in\{Q_{42}^1,Q_5,Q_6\},
	$
	define
	$$
	m(Q)
	=
	\begin{cases}
		1,
		&
		Q=Q_{42}^1\ \text{or}\ Q=Q_5,
		\\
		2,
		&
		Q=Q_6.
	\end{cases}
	$$
	The projections $Q_{42}^1$ and $Q_5$ annihilate all quadratic
	moments, whereas $Q_6$ also annihilates all cubic moments.
	Accordingly, Lemma \ref{taylor}(iii)--(iv) gives
	\begin{align*}
	-\frac{1}{\lambda^2}Q\bigl(v	\widetilde{G}_2(\lambda|\cdot-y|)\bigr)(x)
		&=	\frac{(-1)^{m(Q)+1}\lambda^{m(Q)}}{(m(Q)+1)!}
		Q
		\Bigl(
		|\cdot|^{m(Q)+2}v
		\int_0^1
		(1-\theta)^{m(Q)+1}
		\mathcal{L}_{m(Q)+2}[\widetilde{G}_2]
		\bigl(
		\lambda|y-\theta\cdot|,
		\xi
		\bigr)
		\,d\theta
		\Bigr)(x)\\
		&:=\lambda^{m(Q)}\mathcal{T}(\lambda,x,y).
	\end{align*}
Here $(\mathcal{T},Q)\in\{(\mathcal{T}_4^{1,2}, Q_{42}^1), (\mathcal{T}_5, Q_5), (\mathcal{T}_6, Q_6)\}.$
	The bound $|\partial_\lambda^\ell \mathcal{L}_{m(Q)+2}[\widetilde{G}_2](\lambda| \theta \cdot-y|, \xi)| \lesssim  \lambda^{-\ell}$  implies
	$$
	\left\|
	\partial_\lambda^\ell
	\mathcal{T}(\lambda,\cdot,y)
	\right\|_{L^2}
	\lesssim
	\lambda^{-\ell},
	\qquad
	\mathcal{T}
	\in
	\{\mathcal{T}_4^{1,2},\mathcal{T}_5,\mathcal{T}_6\}.
	$$
	Hence we obtain
	\eqref{lemma_projection_T}.
	
	\medskip
	
	\textit{Step 3: Deal with $	b_0(\log\lambda)(QvG_2)(x,y)$.}
	The cancellations
	$
	Qv=Q(x_jv)=0
	$ ($ j=1,2$)
	yield
	\[
		(QvG_2)(x,y)
		=
		Q
		\Bigl(
		v(\cdot)|\cdot-y|^2
		\Bigr)(x)=
		Q
		\bigl(
		|\cdot|^2v
		\bigr)(x).
	\]
	Hence, for
	$
	Q
	\in
	\{Q_\alpha:2\leq\alpha\leq4\}
	\cup
	\{Q_4^0\},
	$
	the logarithmic contribution is precisely
	$
	b_0(\log\lambda)
	Q\bigl(|\cdot|^2v\bigr)(x).
	$
	
	For
	$
	Q
	\in
	\{Q_4^1,Q_{41}^1,Q_{42}^1,Q_5,Q_6\},
	$
	the additional cancellation
	$
	Q\bigl(|x|^2v\bigr)=0
	$
	shows that this term vanishes identically, i.e., $	b_0(\log\lambda)(QvG_2)(x,y)=0.$
	
	\medskip
	
	\textit{Step 4: Deal with $Qv	\mathcal{R}_\pm(\lambda,x,y)$.}
	Define
	$
	F_1^\pm(p)
	:=
	F_\pm(p)-\widetilde{G}_1(p).
	$
	Then
	$
	\mathcal{R}_\pm(\lambda,x,y)
	=
	\lambda^{-2}
	F_1^\pm(\lambda|x-y|)
	$ and
		\begin{align*}
		F_1^\pm(p) &= \left(a_0^\pm  + a_1^\pm p^2 + a_2^\pm p^4 + O(p^5)\right)\chi_1(p) + \Big(\pm \frac{i}{8} e^{\pm ip} w_\pm(p) - \frac{i}{8} e^{-p} w_+(ip)\Big)\chi_2(p).
	\end{align*}
	
	For
	$
	Q
	\in
	\{Q_\alpha:2\leq\alpha\leq4\}
	\cup
	\{Q_4^0,Q_4^1,Q_{41}^1\},
	$
	Lemma \ref{taylor}(ii) gives
	\[
		Q
		\bigl(
		v\mathcal{R}_\pm(\lambda,\cdot,y)
		\bigr)(x)
	=
		Q
		\Bigl(
		|\cdot|^2v
		\int_0^1
		(1-\theta)
		\mathcal{L}_2[F_1^\pm]
		\bigl(
		\lambda|y-\theta\cdot|,
		\xi
		\bigr)
		\,d\theta
		\Bigr)(x).
	\]
	These kernels $Q
	\bigl(
	v\mathcal{R}_\pm(\lambda,\cdot,y)
	\bigr)(x)$  are denoted by
	$
	\mathcal{T}_{\alpha,\pm},
	\mathcal{T}_{4,\pm}^0,
	\mathcal{T}_{4,\pm}^1,
	\mathcal{T}_{4,\pm}^{1,1},
	$
	respectively. The estimate \eqref{lemma_projection_1} follows
	 from \eqref{eq:general_oscillatory_bound}.
	
	It remains to consider
	$
	Q\in\{Q_{42}^1,Q_5,Q_6\}.
	$
	Set
	$
	F_2^\pm(p)
	:=
	F_1^\pm(p)-a_1^\pm p^2,
	$ then $$
	(F_2^\pm)'(0)
	=
	(F_2^\pm)''(0)
	=
	(F_2^\pm)^{(3)}(0)
	=
	0.
	$$
	Moreover,  these projections annihilate all quadratic moments yielding 
	$
	Q
	\bigl(
	vF_1^\pm(\lambda|\cdot-y|)
	\bigr)
	=
	Q
	\bigl(
	vF_2^\pm(\lambda|\cdot-y|)
	\bigr).
	$
	For $Q=Q_{42}^1,Q_5$, set $m(Q)=1$, while for $Q=Q_6$, set
	$m(Q)=2$. Lemma \ref{taylor}(iii)--(iv) then yields
	\begin{align*}
		&
		Q
		\bigl(
		v\mathcal{R}_\pm(\lambda,\cdot,y)
		\bigr)(x)
	=
		\frac{(-1)^{m(Q)}\lambda^{m(Q)}}{(m(Q)+1)!}
		Q
		\Bigl(
		|\cdot|^{m(Q)+2}v
		\int_0^1
		(1-\theta)^{m(Q)+1}
		\mathcal{L}_{m(Q)+2}[F_2^\pm]
		\bigl(
		\lambda|y-\theta\cdot|,
		\xi
		\bigr)
		\,d\theta
		\Bigr)(x).
	\end{align*}
	We denote the corresponding kernels by
	$
	\lambda\mathcal{T}_{4,\pm}^{1,2},
	\lambda\mathcal{T}_{5,\pm},
	\lambda^2\mathcal{T}_{6,\pm},
	$
	respectively. Moreover since   $\mathcal{L}_{m(Q)+2}[F_2^\pm](\lambda|y-\theta\cdot|,\xi)$ obey the  bound \eqref{eq:general_oscillatory_bound}, then it ensures that 
	all $\mathcal{T}_{\pm} \in \{ \mathcal{T}_{4,\pm}^{1,2},\mathcal{T}_{5,\pm}, \mathcal{T}_{6,\pm} \}$
	satisfy the  estimate \eqref{lemma_projection_1}.

	Combining Steps 1--4 with the decomposition
	\eqref{basic-four-term-decomposition} proves all the expansions and
	estimates in part~{\rm (iii)}, and completes the proof.
\end{proof}

\subsection{Oscillatory integral estimates}
In this subsection we establish the estimates for the oscillatory integrals that arise in this paper. We begin by proving Lemma~\ref{oscillatory}.
\begin{proof}
Assume $t > 0$ without loss of generality and abbreviate $\mathcal{E}(\lambda) := \mathcal{E}^\pm(\lambda, x,y)$ and $h := h(x,y)$. 
	\medskip
	
	\textbf{Step 1: Reduction to a model oscillatory integral.}
	By Euler's identities, the estimate for $\mathcal K^\pm$ reduces to
	estimating integrals of the form
	$$
	K(t,x,y)
	:=
	\int_0^\infty
	e^{i\Phi(\lambda)}
	\lambda\mathcal E(\lambda)d\lambda,
	\qquad
	\Phi(\lambda)
	:=
	\alpha t\lambda^2+\beta r\lambda,
	$$
	where $\alpha,\beta\in\{-1,1\}$.
	The same reduction applies to $\mathcal N^\pm$. Indeed, decompose
	$
	1
	=
	\chi_1(t\lambda^2)
	+
	\chi_2(t\lambda^2).
	$
	On the support of $\chi_1(t\lambda^2)$, it follows that
	$ 
	|\sin(t\lambda^2)|
	\lesssim
	t\lambda^2,
	$
	and hence
	$
	t^{-1}
	|\sin(t\lambda^2)|
	\lambda^{-1}
	|\mathcal E(\lambda)|
	\lesssim
	\lambda|\mathcal E(\lambda)|.
	$
	Thus the low-frequency contribution to $\mathcal N^\pm$ is estimated
	in the same way as the corresponding contribution to $K$.
	For the high-frequency contribution, after expanding the sine into
	exponentials, we obtain the relevant amplitude
	$
	\mathcal E_t(\lambda)
	:=
	t^{-1}\lambda^{-2}
	\chi_2(t\lambda^2)
	\mathcal E(\lambda).
	$
		On the support of $\chi_2(t\lambda^2)$ that
$	t\lambda^2\gtrsim1,
	$
it follows that
	$$
	\big|
	\partial_\lambda^\ell
	\left(
	t^{-1}\lambda^{-2}\chi_2(t\lambda^2)
	\right)
	\big|
	\lesssim
	\lambda^{-\ell}.
	$$
	Hence $\mathcal E_t$ satisfies the same
	amplitude estimates as $\mathcal E$.
	It therefore suffices to estimate
	$K$.

	We decompose
	$
	K(t,x,y)
	=
	K_{\mathrm{lo}}(t,x,y)
	+
	K_{\mathrm{hi}}(t,x,y)
	$
by
	$
	1
	=
	\chi_1(t\lambda^2)
	+
	\chi_2(t\lambda^2).
	$
Note that	$
	\Phi'(\lambda)
	=
	2\alpha t\lambda+\beta r,
$
then	a positive stationary point can occur only
 when \begin{align}\label{critial}
 \Phi'(\rho)
 =0
 \  \text{ with}
	\ 
	\rho
	=
	\frac{|r|}{2t}>0.
	\end{align}
	Choose $\eta\in C_c^\infty((0,\infty))$ such that $	\operatorname{supp}\eta
	\subset
	[\frac14,4]$ and
	$
	\eta(s)=1$ for
	$\frac12\leq s\leq2.
	$
	If there exists $\rho$ satisfying the condition \eqref{critial}, define
	$
	\eta_\rho(\lambda)
	=
	\eta\left({\lambda}/{\rho}\right);
	$
	otherwise, set $\eta_\rho=0$.
	 We then write
	$
	K_{\mathrm{hi}}
	=
	K_{\mathrm{stat}}
	+
	K_{\mathrm{non}}
	$
	where
	$$
	K_{\mathrm{stat}}
	=
	\int_0^\infty
	e^{i\Phi(\lambda)}
	\eta_\rho(\lambda)
	\chi_2(t\lambda^2)
	\lambda\mathcal E(\lambda)d\lambda
	,\quad 
	K_{\mathrm{non}}
	=
	\int_0^\infty
	e^{i\Phi(\lambda)}
	\bigl(1-\eta_\rho(\lambda)\bigr)
	\chi_2(t\lambda^2)
	\lambda\mathcal E(\lambda)d\lambda.
	$$
	On the support of $\eta_\rho$, one has
	$
	\lambda\sim\rho,
	$
	whereas on the support of the non-stationary amplitude, 
	$
	|\Phi'(\lambda)|
	\gtrsim
	t\lambda.
	$
	
	\medskip
	
	\textbf{Step 2: Proof of Part \emph{(i)}.}
	For the low-frequency contribution, assumption \eqref{condit 1}
	implies
	$$
	\begin{aligned}
		|K_{\mathrm{lo}}|
		\lesssim
		\int_0^{Ct^{-1/2}}
		\lambda|\mathcal E(\lambda)| d\lambda 
		\lesssim
		h\int_0^{Ct^{-1/2}}\lambda d\lambda 
		\lesssim
		ht^{-1}.
	\end{aligned}
	$$
	
	For the stationary contribution in the high-frequency part, set
	$
	a_{\mathrm{stat}}(\lambda)
	:=
	\eta_\rho(\lambda)
	\chi_2(t\lambda^2)
	\lambda\mathcal E(\lambda).
	$
	Since $|\Phi''(\lambda)|=2t$, the van der Corput lemma yields
	\begin{align}\label{van-1}
	|K_{\mathrm{stat}}|
	\lesssim
	t^{-1/2}
	\left(
	\|a_{\mathrm{stat}}\|_{L^\infty}
	+
	\|\partial_\lambda a_{\mathrm{stat}}\|_{L^1}
	\right).
	\end{align}
	By the
	assumption \eqref{condit 1} and $\lambda\sim\rho$ on the support of $a_{\mathrm{stat}}$, it follows that
	$$
	\|a_{\mathrm{stat}}\|_{L^\infty}
	+
	\|\partial_\lambda a_{\mathrm{stat}}\|_{L^1}
	\lesssim
	h\rho
	\langle\rho r\rangle^{-1/2}\lesssim h  \frac{|r|}{2t} \left(\frac{r^2}{2t}\right)^{-1/2} \lesssim h t^{-1/2}.
	$$
	Therefore,
	$
	|K_{\mathrm{stat}}|
	\lesssim
	ht^{-1}.
	$
    
	It remains to estimate $K_{\mathrm{non}}$. Choose
	$\psi\in C_c^\infty((1/2,2))$ such that
	$$
	\sum_{j\in\mathbb Z}
	\psi(2^{-j}s)
	=
	1,
	\qquad
	s>0.
	$$
	Set
	$
	\Lambda_j
	:=
	2^j t^{-{1}/{2}}
	$ and $
	\psi_j(\lambda)
	:=
	\psi\left(\frac{\lambda}{\Lambda_j}\right),
	$
	together with $t^{-{1}/{2}}\lesssim\lambda,$ then
	\begin{align}\label{K-non}
	K_{\mathrm{non}}
	=
		\sum_{t^{-1/2}\lesssim\Lambda_j}
	K_j,
	\end{align}
	where
	$$
	K_j
	:=
	\int_0^\infty
	e^{i\Phi(\lambda)}
	a_j(\lambda) d\lambda,\quad
	a_j(\lambda)
	:=
	\psi_j(\lambda)
	\bigl(1-\eta_\rho(\lambda)\bigr)
	\chi_2(t\lambda^2)
	\lambda\mathcal E(\lambda).
	$$
	Each $a_j$ is compactly supported in $(0,\infty)$, so integration by
	parts produces no boundary terms.
	Hence two integrations by parts give
	\begin{align}\label{Ij}
	K_j
	=
	\int_0^\infty
	e^{i\Phi(\lambda)}
	L^2a_j(\lambda)d\lambda,
	\end{align}
where the operator is defined by $
Lf
=
\partial_\lambda
\Big(
\frac{f}{i\Phi'(\lambda)}
\Big).
$	
	 	Using assumption \eqref{condit 1}, together with 
	$
	|\Phi'(\lambda)|
	\gtrsim
	t\Lambda_j$
and 
$	|\Phi''(\lambda)|
	\lesssim
	t,
	$
we obtain the estimate
	$
	\big|
	\left(L\right)^2a_j(\lambda)
	\big|
	\lesssim
	h t^{-2}
	\Lambda_j^{-3}
	\langle\Lambda_j r\rangle^{-1/2}.
	$
	Since the support of $a_j$ has length $O(\Lambda_j)$, it follows that
	$$
	\begin{aligned}
		|K_j|
		&\leq
		\int_{\operatorname{supp}a_j}
		\left|
	(L^2a_j)(\lambda)
		\right|
		\,d\lambda\lesssim
		h t^{-2}
		\Lambda_j^{-2}
		\langle\Lambda_j r\rangle^{-1/2}\lesssim
		h t^{-2}
		\Lambda_j^{-2}.
	\end{aligned}
	$$
	Consequently,
	$$
	\begin{aligned}
		|K_{\mathrm{non}}|
		&\lesssim
		h t^{-2}
		\sum_{\Lambda_j\gtrsim t^{-1/2}}
		\Lambda_j^{-2}
	\lesssim
		h t^{-1}
		\sum_{j=0}^{\infty}
	2^{-2j} 
		\lesssim
		ht^{-1}.
	\end{aligned}
	$$

	\textbf{Step 3: Proof of Part \emph{(ii)}.}
	For $0<t\lesssim1$,  by the compact
	support of $\widetilde{\chi}_1(\lambda/2)$ and assumption
	\eqref{condit 2},
	\begin{align}\label{t bound}
	|K|
	\lesssim
	h
	\int_0^{1/2}
	\lambda^{1-\sigma}
	|\log\lambda|^{-\nu} d\lambda.
	\end{align}
	This integral is finite when $\sigma<2$ or when $\sigma=2$ and
	$\nu>1$. Hence the asserted estimates hold.
	
	Suppose henceforth that $t\gg1$. 
The low-frequency component extracts the precise logarithmic decay via the upper incomplete Gamma function $\Gamma(a, x) = \int_x^\infty u^{a-1} e^{-u} du$:
	\begin{align*}
		|K_{\mathrm{lo}}|
	\lesssim h	\int_0^{t^{-1/2}} \frac{\lambda}{\lambda^{\sigma} |\log \lambda|^{\nu}} d\lambda 
		= h\int_{\frac{1}{2}\log t}^{\infty} \frac{u^{-\nu}}{e^{u(2-\sigma)}} du	
		= h\begin{cases}
			\frac{2^{\nu-1}}{(\nu-1)(\log t)^{\nu-1}}, & \sigma=2, \nu>1, \\[4pt]
			(2-\sigma)^{\nu-1} \Gamma( 1-\nu, \frac{2-\sigma}{2} \log t ), & \sigma<2, \nu \in \mathbb{R}.
		\end{cases}
	\end{align*}
	Using the standard asymptotic expansion $\Gamma(a,x) \sim x^{a-1}e^{-x}$ as $x \to \infty$, we obtain:
	\begin{align}\label{low-fre}
		|K_{\mathrm{lo}}| \lesssim 
		\begin{cases}
			h(\log t)^{-\nu+1},               & \sigma=2, \nu>1,              \\[4pt]
			ht^{-1+\sigma/2} (\log t)^{-\nu}, & \sigma<2, \nu \in \mathbb{R}.
		\end{cases}
	\end{align}

	For the high-frequency part, note that under the given parameter conditions ($0 < \sigma < 2$, $\nu \in \mathbb{R}$; or $\sigma = 0$, $\nu \le 0$; or $\sigma = 2$, $\nu > 1$),
	the map $\lambda \mapsto \lambda^{-\sigma}|\log \lambda|^{-\nu}$ is decreasing  on the sufficiently small support of
	$\widetilde{\chi}_1(\lambda/2)$.  Hence, whenever
	$\lambda\gtrsim t^{-1/2}$, it follows that
	$
	\lambda^{-\sigma}
	|\log\lambda|^{-\nu}
	\lesssim
	t^{\sigma/2}
	(\log t)^{-\nu}.
	$

	Applying the Van der Corput estimate  \eqref{van-1} and
	assumption \eqref{condit 2}, we obtain
	\begin{align}\label{Ksat}
		|K_{\mathrm{stat}}|
		&\lesssim
		h t^{-1/2}
		\rho^{1-\sigma}
		|\log\rho|^{-\nu}
		\langle\rho r\rangle^{-1/2}\lesssim
		h t^{-1+\sigma/2}
		(\log t)^{-\nu}.
	\end{align}

	We next estimate the non-stationary contribution. By the compact
	support of $\widetilde{\chi}_1(\lambda/2)$, the same dyadic
	decomposition as in \eqref{K-non} applies, except that only the scales
	satisfying
	$
	t^{-1/2}
	\lesssim
	\Lambda_j
	\leq
	\lambda_*
	$
	contribute, where $\lambda_*\in(0,1)$ is a fixed sufficiently small
	constant. 	After performing two integrations by parts with no boundary terms, we arrive at the same identity as in \eqref{Ij}. Applying assumption \eqref{condit 2} for \(\ell=0,1,2\), together with the estimates
	$
	|\Phi'(\lambda)| \gtrsim t\Lambda_j
	$ and
	$|\Phi''(\lambda)| \lesssim t,
	$
	we obtain
	$$
	|K_j|
	\lesssim
	h t^{-2}
	\Lambda_j^{-\sigma-2}
	|\log\Lambda_j|^{-\nu}
	\langle\Lambda_j r\rangle^{-1/2}\lesssim
	h t^{-2}
	\Lambda_j^{-\sigma-2}
	|\log\Lambda_j|^{-\nu}.
	$$
	Hence,  it follows that
	$$
	|K_{\mathrm{non}}|
	\lesssim
	h t^{-2}
	\sum_{t^{-1/2}\lesssim\Lambda_j\le \lambda_*}
	\Lambda_j^{-\sigma-2}
	|\log\Lambda_j|^{-\nu}.
	$$
It suffices to prove 
	\begin{align}\label{est-b}
	\sum_{t^{-1/2}\lesssim\Lambda_j\le \lambda_*}
	\Lambda_j^{-\sigma-2}
	|\log\Lambda_j|^{-\nu}
	\lesssim
	t^{1+\sigma/2}
	(\log t)^{-\nu}.
	\end{align}
	To prove this, we introduce the positive integer $J$ satisfying
	 $
	2^Jt^{-1/2}\sim\lambda_*.
	$
	Thus
	$
	J\sim\log t
	$ and 

	$$
	\sum_{t^{-1/2}\lesssim\Lambda_j\le \lambda_*}
\Lambda_j^{-\sigma-2}
|\log\Lambda_j|^{-\nu}\lesssim	t^{1+\frac{\sigma}{2}}
	\sum_{j=0}^{J}
	2^{-(\sigma+2)j}
	\left|
	\log\Lambda_j
	\right|^{-\nu}.
	$$
	We split the sum into the ranges $0\leq j\leq J/2$ and
	$J/2<j\leq J$. If $0\leq j\leq J/2$, then
	$
	\left|
	\log\Lambda_j
	\right|
	\sim
	\log t,
	$
	and hence for $\sigma>-2$,
	$$
	\sum_{0\leq j\leq J/2}
	2^{-(\sigma+2)j}
	\left|
	\log\Lambda_j
	\right|^{-\nu}
	\lesssim
	(\log t)^{-\nu}
	\sum_{j=0}^\infty2^{-(\sigma+2)j}
	\lesssim
	(\log t)^{-\nu}.
	$$
	For $J/2<j\leq J$, the fixed restriction
	$
t^{-\frac{1}{2}}	\lesssim\Lambda_j\leq\lambda_*<1
	$
	implies
	$
0<	|\log\lambda_*|\leq	|\log\Lambda_j|
\lesssim\log t
.
	$
Then, 
	$$
	\sum_{J/2<j\leq J}
	2^{-(\sigma+2)j}
	\left|\log
\Lambda_j
	\right|^{-\nu}\lesssim
	(\log t)^{\max\{-\nu,0\}}\sum_{J/2<j\leq J}
	2^{-(\sigma+2)j}
	\lesssim
	t^{-(\sigma+2)/4}
	(\log t)^{\max\{-\nu,0\}}	\lesssim
	(\log t)^{-\nu},
	$$
	for $\sigma>-2$. Therefore,
	we derive \eqref{est-b}.
	Consequently,
	\begin{align}\label{Knon}
	|K_{\mathrm{non}}|
	\lesssim
	h t^{-2}
	t^{1+\sigma/2}
	(\log t)^{-\nu}
	=
	h t^{-1+\sigma/2}
	(\log t)^{-\nu},\quad \text{for all}\ \sigma>-2 \ \text{and}\ \nu\in\mathbb{R}.
	\end{align}
	Combining the low-frequency, stationary, and non-stationary estimates (i.e., \eqref{low-fre}, \eqref{Ksat} and \eqref{Knon})
	proves the desired  estimate  of Part \emph{(ii)}. In particular, 
	if \(r=0\), only the
	low-frequency and non-stationary estimates  are needed. Thus, the bound  $h t^{-1+\sigma/2}
		(\log t)^{-\nu}$ is valid whenever
	$
	-2<\sigma<2$ and $
	\nu\in\mathbb R.
	$
\end{proof}
\begin{remark}\label{remark:osci} {\rm
	Based on the proof of Lemma~\ref{oscillatory}, we make the following two observations.
	\begin{itemize}
        \item[(i)] In the proof of Lemma~\ref{oscillatory}, we essentially establish that under assumption \eqref{condit 1},
\[
\int_{0}^{\infty} e^{i\Phi(\lambda)} \lambda \mathcal{E}(\lambda,x,y) \, d\lambda\lesssim h(x,y)|t|^{-1},
\]
while under assumption \eqref{condit 2}, we have
  \[
    \int_{0}^{\infty} e^{i\Phi(\lambda)} \lambda \mathcal{E}(\lambda,x,y) \, d\lambda \lesssim 
    \begin{cases}
        \dfrac{h(x,y)}{\langle t \rangle^{1-\frac{\sigma}{2}} \bigl(\log(2+|t|)\bigr)^{\nu}}, & 0 < \sigma < 2,\ \nu \in \mathbb{R}\ \text{or}\ \sigma = 0,\ \nu \le 0, \\[8pt]
h(x,y)\bigl(\log(2+|t|)\bigr)^{-\nu+1}, & \sigma = 2,\ \nu > 1,
    \end{cases}
    \]
   where $\Phi(\lambda)= \alpha t\lambda^2 + \beta \lambda r$ with  $\alpha, \beta \in \{+1, -1\}.$

		\item[(ii)] By estimates \eqref{t bound}, \eqref{low-fre}, \eqref{Ksat} and \eqref{Knon}, in the case where $\sigma = 2$ and $\nu > 1$, we obtain 
		\[
		\Big| \int_0^{\infty} e^{i\Phi} \chi_1(t\lambda^2) \lambda \mathcal{E}(\lambda) \, d\lambda \Big| \lesssim \frac{h(x,y)}{(\log |t|)^{\nu-1}}, \quad
		\Big| \int_0^{\infty} e^{i\Phi} \chi_2(t\lambda^2) \lambda \mathcal{E}(\lambda) \, d\lambda \Big| \lesssim \frac{h(x,y)}{(\log |t|)^{\nu}},\quad |t|\geq2.
		\]
		The high-frequency decay $O(h(x,y)(\log |t|)^{-\nu})$ is strictly faster (by one full logarithmic power) than the low-frequency integral $O(h(x,y)(\log |t|)^{-(\nu-1)})$.
	\end{itemize}}
\end{remark}
Next, we evaluate the following oscillatory integrals, which are important in the resonance analysis, especially in deriving sharp bounds when zero is a resonance of the second kind of $H$.
\begin{lemma}\label{lemma:I_J}    
	For $|t| \ge 2$, the following asymptotic expansions hold:    
	\[
    \begin{aligned}    
		I(t) &:= \int_0^\infty \cos(t\lambda^2) \lambda \widetilde{\chi}_1(\lambda) (\log \lambda)^2 \, d\lambda = \frac{\pi}{8} \frac{\log |t|}{|t|} + O\big(|t|^{-1}\big),  \\    
		J(t) &:= t^{-1} \int_0^\infty \sin(t\lambda^2) \lambda^{-1} \widetilde{\chi}_1(\lambda) (\log \lambda)^2 \, d\lambda = \frac{\pi}{16} \frac{(\log |t|)^2}{|t|} + O\Big(\frac{\log |t|}{|t|}\Big).    
	\end{aligned}
    \]
\end{lemma}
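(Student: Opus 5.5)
By evenness of the integrands in $t$ it suffices to take $t\ge2$. In both integrals I substitute $u=t\lambda^2$, so that $\lambda=\sqrt{u/t}$, $\lambda\,d\lambda=du/(2t)$, $\lambda^{-1}d\lambda=du/(2u)$ and
\[
\log\lambda=\tfrac12(\log u-\log t).
\]
This gives
\[
I(t)=\frac{1}{8t}\int_0^\infty \cos u\,(\log u-\log t)^2\,\widetilde{\chi}_1\bigl(\sqrt{u/t}\bigr)\,du,
\qquad
J(t)=\frac{1}{8t}\int_0^\infty \frac{\sin u}{u}\,(\log u-\log t)^2\,\widetilde{\chi}_1\bigl(\sqrt{u/t}\bigr)\,du .
\]
I then expand $(\log u-\log t)^2=(\log t)^2-2\log t\,\log u+(\log u)^2$ and treat the three resulting pieces separately. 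The cutoff $\widetilde{\chi}_1(\sqrt{u/t})$ equals $1$ for $u\le c\,t$ and vanishes for $u\ge C\,t$.

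For $J$, the plan is to show that each cutoff integral $\int_0^\infty \frac{\sin u}{u}(\log u)^k\,\widetilde{\chi}_1(\sqrt{u/t})\,du$, for $k=0,1,2$, equals its full-line value plus a tail. The full-line value is $\pi/2$ when $k=0$ and a finite constant when $k=1,2$. To control the tail on $u\gtrsim t$, I integrate by parts once, using that $\partial_u$ of $(\log u)^k u^{-1}\widetilde{\chi}_1(\sqrt{u/t})$ is $O(u^{-2}(\log u)^k)$ there. The tail is therefore $O(t^{-1}(\log t)^k)$. Weighting these by the coefficients $(\log t)^2$, $-2\log t$ and $1$ gives
\[
J(t)=\frac{1}{8t}\Bigl(\frac{\pi}{2}(\log t)^2+O(\log t)\Bigr)=\frac{\pi}{16}\frac{(\log t)^2}{t}+O\Bigl(\frac{\log t}{t}\Bigr),
\]
which is the claim.

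For $I$, the main obstacle is that $\int_0^\infty \cos u\,du$ and $\int_0^\infty \cos u\log u\,du$ exist only as oscillatory (Abel) limits, while the near-origin behaviour matters. My first step is to note that the $(\log t)^2$ term contributes
\[
\int_0^\infty\cos u\,\widetilde{\chi}_1\bigl(\sqrt{u/t}\bigr)\,du,
\]
which is $O(t^{-N})$ after repeated integration by parts in $u$: the boundary term at $u=0$ is $\sin 0=0$, and each derivative of the cutoff gains a factor $t^{-1}$. Thus the $(\log t)^2$ coefficient vanishes up to negligible error. Next I evaluate $\int_0^\infty\cos u\log u\,\widetilde{\chi}_1(\sqrt{u/t})\,du$ by integrating by parts once. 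This yields $-\int_0^\infty \sin u\,\bigl(u^{-1}\widetilde{\chi}_1+\log u\,\partial_u\widetilde{\chi}_1\bigr)\,du$, and there is no boundary term because $\sin u\log u\to0$ as $u\to0$. The first part tends to $-\pi/2$ with an $O(t^{-1})$ tail error. The second part is supported on $u\sim t$ with $\partial_u\widetilde{\chi}_1=O(t^{-1})$ and is oscillatory, so one further integration by parts makes it $O(t^{-1}\log t)$. Hence the $-2\log t\,\log u$ piece contributes $-2\log t\,(-\pi/2)+o(1)=\pi\log t+O(1)$. Finally, the $(\log u)^2$ piece is handled the same way: integrating by parts once gives $-\int_0^\infty\sin u\,\bigl(2\log u/u\bigr)\widetilde{\chi}_1\,du+\dots$, which is bounded uniformly in $t$. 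Collecting the pieces,
\[
I(t)=\frac{1}{8t}\bigl(\pi\log t+O(1)\bigr)=\frac{\pi}{8}\frac{\log t}{t}+O(t^{-1}).
\]

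In each step the delicate point is to justify the integrations by parts near $u=0$, where $\log u$ is singular but integrable against $\sin u$ and $\sin u/u$, and near $u\sim t$, where the derivatives of the cutoff must be tracked. I would check explicitly that every boundary term vanishes.
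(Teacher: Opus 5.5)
Your proposal is correct and follows the paper's route. The paper only sketches it: the substitution $u=|t|\lambda^2$, the identity $(\log\lambda)^2=\frac14(\log u-\log|t|)^2$, and repeated integrations by parts, with details omitted. You fill those details in correctly, including why the $(\log t)^2$ coefficient in $I(t)$ is $O(t^{-N})$ (the cutoff $\widetilde{\chi}_1(\sqrt{u/t})$ is constant near $u=0$) and the extra integration by parts on the transition region $u\sim t$ that keeps the $\log u$ and $(\log u)^2$ pieces under control.
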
    
\begin{proof}
The asserted expansions follow from the change of variables
$
u=|t|\lambda^2,
$
the identity
$
(\log\lambda)^2
=
\frac14(\log u-\log|t|)^2,
$
and repeated integrations by parts. We omit the
routine details.
\end{proof}



{\bf Acknowledgements:} The authors are partially supported by NSFC (grants No. 12171182 and 12531005). The authors would like to express their thanks to Professor Avy Soffer for his interests and insightful discussions.


\end{document}